\pdfoutput=1
\RequirePackage{ifpdf}
\ifpdf 
\documentclass[pdftex]{sigma}
\else
\documentclass{sigma}
\fi

\usepackage{cases,mathdots,fouridx}
\usepackage{tikz}

\numberwithin{equation}{section}

\newtheorem{Theorem}{Theorem}[section]
\newtheorem{Corollary}[Theorem]{Corollary}
\newtheorem{Lemma}[Theorem]{Lemma}
\newtheorem{Proposition}[Theorem]{Proposition}
\newtheorem{Conjecture}[Theorem]{Conjecture}

\renewcommand{\sc}{\scriptstyle}

\newcommand{\ip}[2]{\langle#1,#2\rangle}
\newcommand{\qhyp}[5]{\fourIdx{}{#1}{}{#2}\phi
\bigg[\genfrac{}{}{0pt}{}{#3}{#4};#5\bigg]}

\renewcommand{\leq}{\leqslant}
\renewcommand{\geq}{\geqslant}

\DeclareMathOperator{\Par}{Par}

\DeclareMathOperator{\SSYT}{SSYT}
\DeclareMathOperator{\CSSYT}{CSSYT}
\DeclareMathOperator{\symp}{sp}
\DeclareMathOperator{\so}{so}
\DeclareMathOperator{\oeven}{o}

\DeclareMathOperator{\mult}{mult}
\DeclareMathOperator{\lev}{lev}
\DeclareMathOperator{\eup}{e}

\DeclareMathOperator{\ch}{ch}
\DeclareMathOperator{\sgn}{sgn}
\newcommand{\odd}{\textup{o}}
\newcommand{\ceil}[1]{\lceil#1\rceil}
\newcommand{\floor}[1]{\lfloor#1\rfloor}
\newcommand{\qbin}[2]{\genfrac{[}{]}{0pt}{}{#1}{#2}}
\newcommand{\qBin}[2]{\Bigg[\genfrac{}{}{0pt}{}{#1}{#2}\Bigg]}
\newcommand{\abs}[1]{\vert#1\vert}
\newcommand{\la}{\lambda}
\newcommand{\La}{\Lambda}

\begin{document}

\allowdisplaybreaks

\newcommand{\arXivNumber}{2511.17034}

\renewcommand{\thefootnote}{}

\renewcommand{\PaperNumber}{062}

\FirstPageHeading

\ShortArticleName{Affine Jacobi--Trudi Identities and $q,t$-Rogers--Ramanujan Identities}

\ArticleName{Affine Jacobi--Trudi Identities\\ and $\boldsymbol{q,t}$-Rogers--Ramanujan Identities\footnote{This paper is a~contribution to the Special Issue on Recent Advances in Vertex Operator Algebras in honor of James Lepowsky. The~full collection is available at \href{https://sigma-journal.com/Lepowsky.html}{https://sigma-journal.com/Lepowsky.html}}}

\Author{S. Ole WARNAAR}

\AuthorNameForHeading{S.O.~Warnaar}

\Address{School of Mathematics and Physics, The University of Queensland, Brisbane, Australia}
\Email{\mail{o.warnaar@maths.uq.edu.au}}
\URLaddress{\url{https://people.smp.uq.edu.au/OleWarnaar/}}

\ArticleDates{Received November 24, 2025, in final form June 05, 2026; Published online June 25, 2026}

\Abstract{We conjecture affine or Hall--Littlewood analogues of the dual Jacobi--Trudi identities for orthogonal and symplectic Schur functions indexed by rectangular partitions of maximal height. These conjectures are then used to derive $t$-analogues of many known Rogers--Ramanujan identities for the characters of standard modules of affine Lie algebras.
This includes $t$-analogues of the classical Rogers--Ramanujan identities, (some of) the Andrews--Gordon identities and the
\smash{$\mathrm{C}_n^{(1)}$}, \smash{$\mathrm{A}_{2n}^{(2)}$} and \smash{$\mathrm{D}_{n+2}^{(2)}$} GOW identities. We also prove an affine analogue of the dual Jacobi--Trudi identity for Schur functions indexed by rectangular partitions of arbitrary height.}

\Keywords{affine root systems; character formulas for standard modules; cylindric Schur functions; Hall--Littlewood polynomials; Jacobi--Trudi identities; Rogers--Ramanujan identities; theta function identities}

\Classification{05E05; 05E10; 11P84; 17B67; 33D15; 33D52}

\begin{flushright}
\begin{minipage}{85mm}
\it This paper is dedicated to Jim Lepowsky, a pioneer in the study
of Rogers--Ramanujan-type identities from the point of view of
representation theory.
\end{minipage}
\end{flushright}

\renewcommand{\thefootnote}{\arabic{footnote}}
\setcounter{footnote}{0}

\section{Introduction}

\subsection{The Jacobi--Trudi and dual Jacobi--Trudi identities}

A partition $\la$ of length $l(\la)$ equal to $\ell$ is a weakly
decreasing sequence $(\la_1,\la_2,\dots)$ of nonnegative integers such
that $\la_i>0$ for $i\leq\ell$ and $\la_i=0$ for $i>\ell$.
For $x=(x_1,\dots,x_n)$ and $\la$ a partition, the Schur function
$s_{\la}(x)$ is defined as
\begin{equation*}
s_{\la}(x)=\frac{\det_{1\leq i,j\leq n}\bigl(x_i^{\la_j+n-j}\bigr)}
{\prod_{1\leq i<j\leq n}(x_i-x_j)}
\end{equation*}
if $l(\la)\leq n$, while $s_{\la}(x)=0$ if $l(\la)>n$.
It is a standard result in the theory of symmetric functions
that $s_{\la}(x)$ for $l(\la)\leq n$
corresponds to the character of the polynomial representation of
$\mathrm{GL}(n,\mathbb{C})$ indexed by $\la$ and admits the
combinatorial description
\begin{equation}\label{Eq_sT}
s_{\la}(x)=\sum_{T\in\SSYT_n(\la)} x^T.
\end{equation}
Here $\SSYT_n(\la)$ is the set of semistandard Young tableaux
of shape $\la$ on $\{1,\dots,n\}$, and
${x^T:=x_1^{\alpha_1}\cdots x_n^{\alpha_n}}$, where
$\alpha_i$ is the number of boxes or squares of $T$ with filling $i$.

The complete and elementary symmetric functions $h_r$ and $e_r$
are the Schur functions for partitions whose Young diagram consists of
a single row or column of $r$ boxes, respectively.
That is,
\[
h_r=s_{(r)} \qquad \text{and}\qquad
e_r=s_{(\!\underbrace{\scriptstyle{1,\dots,1}}_{r \text{ times}}\!)}=s_{(1^r)},
\]
or, more explicitly,
$h_0=e_0=1$ and
\begin{align*}
 h_r(x)=\sum_{1\leq i_1\leq i_2\leq \cdots \leq i_r\leq n}
x_{i_1}\cdots x_{i_r},\qquad
 e_r(x)=\sum_{1<i_1<i_2<\dots<i_r\leq n}
x_{i_1}\cdots x_{i_r}
\end{align*}
for $r$ a positive integer.
Hence $e_r(x)=0$ for $r>n$.
Also defining $e_r=h_r=0$ for $r<0$, two classical results in
the theory of symmetric functions, known as the Jacobi--Trudi
identity and dual Jacobi--Trudi (or N\"{a}gelsbach--Kostka) identity,
express the Schur functions in terms of determinants
with entries given by the complete and elementary symmetric functions,
respectively,
\begin{align}
s_{\la}(x)&=\det_{1\leq i,j\leq n} (h_{\la_i-i+j}(x) )
 =\det_{1\leq i,j\leq k} \bigl(e_{\la'_i-i+j}(x)\bigr).
\label{Eq_JTe}
\end{align}
In \eqref{Eq_JTe}, the partition $\la'$ is the conjugate of $\la$ and
$k$ is an arbitrary integer such that $\la'_1\leq k$.

Surprisingly little appears to be known about Jacobi--Trudi identities
for important generalisations of the Schur functions such as the
Hall--Littlewood, Jack and Macdonald polynomials.
Matsumoto \cite[Theorem 5.1]{Matsumoto08} discovered
Jacobi--Trudi-like formulas for the Jack polynomials
for partitions of rectangular shape, provided the Jack parameter
$\alpha$ is a positive integer or the reciprocal
of a positive integer.
In Matsumoto's formulas, determinants are replaced by hyperdeterminants
of (even) order depending on the value of $\alpha$.
This was subsequently generalised to partitions of near rectangular
shape by Belbachir, Boussicault and Luque \cite{BBL08}.
In this paper, we add to these results by proving a
dual Jacobi--Trudi formula for ordinary (or $\mathrm{GL}(n,\mathbb{C})$)
Hall--Littlewood polynomials indexed by rectangular shapes
and by conjecturing dual Jacobi--Trudi formulas
for $\mathrm{B}_n$, $\mathrm{C}_n$ and $\mathrm{BC}_n$
Hall--Littlewood polynomials indexed by rectangular
partitions of length $n$.


\subsection{Main results and conjectures}

For $x=(x_1,\dots,x_n)$ and $\la$ a partition,
let $P_{\la}(x;t)$ be the Hall--Littlewood polynomial indexed by~$\la$,
see Section~\ref{Sec_HL-Gln} for details.
In particular, $P_{\la}(x;0)=s_{\la}(x)$ and $P_{\la}(x;1)=m_{\la}(x)$,
where~$m_{\la}(x)$ is the monomial symmetric function.

Our first result is an affine (dual) Jacobi--Trudi formula for
Hall--Littlewood polynomials indexed by partitions of rectangular
shape
\[
(k^r)=(\underbrace{k,k,\dots,k}_{r \text{ times}}).
\]

\begin{Theorem}\label{Thm_GLn}
Let $k$ be a positive integer, $r$ a nonnegative integer and
$x=(x_1,\dots,x_n)$.
Then
\begin{equation}\label{Eq_Pe}
P_{(k^r)}(x;t)=
\sum_{\substack{y_1,\dots,y_k\in\mathbb{Z}\\[1pt] y_1+\dots+y_k=0}}\:
\det_{1\leq i,j\leq k}
\bigl(t^{k\binom{y_i}{2}+iy_i} e_{r-i+j-ky_i}(x) \bigr).
\end{equation}
\end{Theorem}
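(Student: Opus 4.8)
The plan is to reduce \eqref{Eq_Pe} to the explicit symmetrisation formula for $P_{(k^r)}$. By the definition of the Hall--Littlewood polynomials recalled in Section~\ref{Sec_HL-Gln}, for $1\leq r\leq n$ the partition $\la=(k^r)$ has $\la_i=k$ for $i\leq r$ and $\la_i=0$ for $i>r$, so the pairs $i<j$ with $\la_i>\la_j$ are precisely those with $i\leq r<j$ and the stabiliser of $\la$ in $S_n$ is $S_r\times S_{n-r}$; hence
\begin{equation*}
P_{(k^r)}(x_1,\dots,x_n;t)=
\sum_{\substack{S\subseteq\{1,\dots,n\}\\ \abs{S}=r}}\;\Big(\prod_{i\in S}x_i\Big)^{k}
\prod_{\substack{i\in S\\ j\notin S}}\frac{x_i-tx_j}{x_i-x_j}.
\end{equation*}
(The case $r=0$ is trivial and for $r>n$ both sides of \eqref{Eq_Pe} vanish.) It therefore suffices to show that the right-hand side of \eqref{Eq_Pe} equals this subset sum; the specialisation $t=0$, in which only $y=0$ survives and \eqref{Eq_Pe} collapses to the classical dual Jacobi--Trudi identity $s_{(k^r)}(x)=\det_{1\leq i,j\leq k}\big(e_{r-i+j}(x)\big)$, is a useful consistency check.

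I would evaluate the right-hand side of \eqref{Eq_Pe} by passing to a contour integral. Inserting $e_m(x)=\oint\frac{\mathrm{d}z}{2\pi\mathrm{i}\,z}\,z^{-m}\prod_{l=1}^n(1+x_lz)$ (a small circle about the origin, valid for all $m\in\mathbb{Z}$) in the $i$-th row with variable $z_i$, and using $\det_{1\leq i,j\leq k}(z_i^{-j})=(-1)^{\binom{k}{2}}\prod_i z_i^{-1}\prod_{i<j}(z_i^{-1}-z_j^{-1})$, the inner determinant in \eqref{Eq_Pe} becomes a $k$-fold integral with integrand $\prod_i z_i^{\,i-r-2+ky_i}\prod_l(1+x_lz_i)\cdot\prod_{i<j}(z_i^{-1}-z_j^{-1})$. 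Encoding the constraint $y_1+\dots+y_k=0$ by an extra integration $\oint\frac{\mathrm{d}u}{2\pi\mathrm{i}\,u}\,u^{y_1+\dots+y_k}$ and then summing each $y_i$ over $\mathbb{Z}$, the factor $\sum_{y\in\mathbb{Z}}(t^k)^{\binom{y}{2}}(t^iuz_i^k)^{y}$ arises in the $i$-th row, which the Jacobi triple product identity $\sum_{y\in\mathbb{Z}}q^{\binom{y}{2}}w^{y}=\prod_{m\geq1}(1-q^m)(1+wq^{m-1})(1+w^{-1}q^{m})$ converts into an explicit product of theta functions. This rewrites the right-hand side of \eqref{Eq_Pe} as a single $(k+1)$-fold contour integral whose integrand is the $\mathrm{GL}_k$ Weyl-denominator factor $\prod_{i<j}(z_i^{-1}-z_j^{-1})$ times a product over $i$ of $\prod_l(1+x_lz_i)$ and of theta functions in $z_i^k$, $u$ and $t$.

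The remaining, and principal, step is to evaluate this integral and match it against the subset sum above. I expect this to come out by residue calculus: the $z_i$-integrals localise at $z_i=0$, the theta functions bundle the infinitely many residue contributions into the factors $\frac{x_i-tx_j}{x_i-x_j}$, and the $u$-integral forces exactly $r$ of the variables to acquire exponent $k$; equivalently, antisymmetrising the integrand over the $z_i$ against the $\mathrm{GL}_k$ denominator, one would invoke a classical multiple theta-function (Macdonald $\widehat{\mathrm{A}}_{k-1}$ denominator) summation. Reconciling this a priori ``affine'' object---a sum over the root lattice of $\mathfrak{sl}_k$ weighted by theta functions---with the finite $S_n$-symmetrisation of $P_{(k^r)}$ is where the real work lies.

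A combinatorial alternative I would keep in reserve is to interpret $\det_{1\leq i,j\leq k}\big(e_{c_i-i+j}(x)\big)$ via Lindstr\"{o}m--Gessel--Viennot as a signed sum over $k$-tuples of lattice paths, so that the sum over $y$ with $y_1+\dots+y_k=0$ wraps these paths onto a cylinder; the non-intersecting families then survive with positive sign and are enumerated by cylindric semistandard Young tableaux, turning the right-hand side of \eqref{Eq_Pe} into $\sum_{T}t^{\mathrm{stat}(T)}x^{T}$. One would compare this with Macdonald's tableau expansion $P_{(k^r)}(x;t)=\sum_{U\in\SSYT_n((k^r))}\psi_U(t)\,x^{U}$ via a weight- and statistic-preserving bijection between $\SSYT_n((k^r))$ and the relevant cylindric tableaux; constructing that bijection and identifying $\psi_U(t)$ with the cylindric statistic would be the obstacle along that route.
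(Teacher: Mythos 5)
Your proposal sets up two plausible reformulations but does not actually prove the theorem: in both routes you stop exactly at the point where the mathematical content lies, and you say so yourself (``where the real work lies'', ``would be the obstacle along that route''). The subset-sum formula for $P_{(k^r)}(x;t)$ is correct, and the contour-integral/Jacobi-triple-product rewriting of the right-hand side of \eqref{Eq_Pe} is routine bookkeeping; but the claim that the resulting $(k+1)$-fold integral ``localises'' and that the theta factors ``bundle'' into $\prod\frac{x_i-tx_j}{x_i-x_j}$ is unsubstantiated, and there is no reason to expect it to follow from residue calculus alone. The paper's proof makes clear why: after expanding $e_\alpha$ in Hall--Littlewood polynomials via Kirillov's formula \eqref{Eq_Kir} and equating coefficients, the theorem is equivalent to the alternating-sum identity \eqref{Eq_delta2} for the coefficients $\mathcal{R}_{\mu,\alpha}(t)$, which for $k=2$ is the classical identity \eqref{Eq_unit} but for general $k$ was a conjecture of \cite{W99} and was only proved by Schilling and Shimozono \cite{SS00} as a statement about level-restricted $U_q(\mathrm{A}_{k-1}^{(1)})$ paths at level $0$. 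In other words, the ``matching'' you defer is precisely this nontrivial affine/crystal-theoretic identity; your sketch gives no independent route to it.

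The combinatorial alternative has the same problem in sharper form. The LGV/cylindric interpretation of $\sum_{y}\det(e_{\la_i'-i+j-(k+\ell)y_i})$ is only available at $t=1$ (this is Proposition~\ref{Prop_cSchur}, which the paper uses to prove the $t=1$ specialisation \eqref{Eq_tiseen}); for generic $t$ the object \eqref{Eq_cylindricschur} is not known to be a generating function over cylindric tableaux with a positive statistic, and its Hall--Littlewood expansion involves level-restricted generalised Kostka polynomials (Proposition~\ref{Prop_Kostka}). So the ``weight- and statistic-preserving bijection'' you would need is not a technical obstacle but essentially the theorem itself, again resting on the Schilling--Shimozono results. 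To turn your proposal into a proof you would either have to carry out the theta-function/residue evaluation in full (and in doing so reprove an identity equivalent to \eqref{Eq_delta2}), or follow the paper's route: expand in the basis $\{P_\la(t)\}$ via \eqref{Eq_transition}--\eqref{Eq_Kir}, reduce by \eqref{Eq_reduce}, and invoke \cite[Equation~(6.6)]{SS00}.
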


For $y=(y_1,\dots,y_k)\in\mathbb{Z}^k$, let $\abs{y}:=y_1+\dots+y_k$.
Then the right-hand side of \eqref{Eq_Pe} may also be stated as
\[
P_{(k^r)}(x;t)=
\sum_{y\in Q} \sum_{\sigma\in S_k} \sgn(\sigma) \prod_{i=1}^k
t^{k\binom{y_i}{2}+iy_i} e_{r-i+\sigma_i-ky_i}(x),
\]
where $Q=\big\{y\in\mathbb{Z}^k \mid \abs{y}=0\big\}$ and
\smash{$S_k\ltimes Q\cong W\bigl(\mathrm{A}_{k-1}^{(1)}\bigr)$},
the Weyl group of the affine root system~\smash{$\mathrm{A}_{k-1}^{(1)}$}
\cite{Kac90,Macdonald72}.
For integers $i$, $k$, $u$ such that $1\leq i\leq k$, let
${s_{i,k}(u):=k\binom{u}{2}+iu}$.
Then~(1)~$s_{i,k}(u)\geq 0$, (2) for $i<k$, $s_{i,k}(u)=0$
if and only if $u=0$, and (3) $s_{k,k}(u)=0$ if and only if
$u=0$ or $u=-1$.
These facts imply that for $y\in Q$,
\smash{$\sum_{i=1}^k \bigl(k\binom{y_i}{2}+iy_i\bigr)=\sum_{i=1}^k s_{i,k}(y_i)$}
is strictly positive unless $y=(0,\dots,0)$.
Consequently, for $t=0$ the only nonzero contribution to the sum over
$y$ in \eqref{Eq_Pe} comes from the zero vector.
Theorem~\ref{Thm_GLn} thus generalises the $\la=(k^r)$ case of the
dual Jacobi--Trudi identity \eqref{Eq_JTe}.

For $x=(x_1,\dots,x_n)$ and $\la=(\la_1,\dots,\la_n)$ a partition,
the odd orthogonal, even orthogonal and symplectic Schur functions
$\so_{2n+1,\la}(x)$, $\oeven_{2n,\la}(x)$ and $\symp_{2n,\la}(x)$
are defined as \cite{Littlewood50}\footnote{The full set of
odd-orthogonal Schur functions includes \eqref{Eq_SchurB} for
half-partitions $\la$, i.e., weakly decreasing $n$-tuples
$(\la_1,\dots,\la_n)$ such that $\la_i\in\mathbb{Z}+1/2$ and
$\la_n\geq\frac{1}{2}$.}
\begin{align}\label{Eq_SchurB}
\begin{split}
&\so_{2n+1,\la}(x)=\frac{\det_{1\leq i,j\leq n}
\bigl( x_i^{-\la_j+j-1}-x_i^{\la_j+2n-j} \bigr)}
{\prod_{i=1}^n (1-x_i)\prod_{1\leq i<j\leq n}(x_i-x_j)(x_ix_j-1)}, \\
&\oeven_{2n,\la}(x)=f_{\la} \frac{\det_{1\leq i,j\leq n}
\bigl( x_i^{-\la_j+j-1}+x_i^{\la_j+2n-j-1} \bigr)}
{\prod_{i=1}^n \prod_{1\leq i<j\leq n}(x_i-x_j)(x_ix_j-1)}, \\
&\symp_{2n,\la}(x)=\frac{\det_{1\leq i,j\leq n}
\bigl( x_i^{-\la_j+j-1}-x_i^{\la_j+2n-j+1} \bigr)}
{\prod_{i=1}^n (1-x_i^2)\prod_{1\leq i<j\leq n}(x_i-x_j)(x_ix_j-1)},
\end{split}
\end{align}
where $f_{\la}=1$ if $l(\la)=n$ and $f_{\la}=1/2$ if $l(\la)<n$.
In \eqref{Eq_SchurB}, we have followed the convention of writing
$\so_{2n+1,\la}(x)$ instead of $\oeven_{2n+1,\la}(x)$, which stems
from the fact that the characters of the irreducible polynomial
representations of $\mathrm{O}(2n+1,\mathbb{C})$ and
$\mathrm{SO}(2n+1,\mathbb{C})$ coincide.
In the even case, a similar coincidence between the characters of
$\mathrm{O}(2n,\mathbb{C})$ and $\mathrm{SO}(2n,\mathbb{C})$
only occurs for $l(\la)<n$.
From the definition, it follows that $\so_{2n+1,\la}(x)$,
$\oeven_{2n,\la}(x)$ and $\symp_{2n,\la}(x)$ are
$\mathrm{BC}_n$-symmetric Laurent polynomials
in the sense of \cite{RW21} in the variables $x_1,\dots,x_n$ with
integer coefficients, whose top-degree homogeneous components are
given by the Schur function $s_{\la}(x)$.
Assuming $\la\subseteq (k^n)$ , the symplectic and
odd-orthogonal Schur functions admit analogues of the dual
Jacobi--Trudi identity~\eqref{Eq_JTe} as follows
(see, e.g., \cite[p.~123]{Stembridge91} or
\cite[equations~(3.10) and~(3.27)]{FK97}):
\begin{align*}
&\so_{2n+1,\la}(x)=\det_{1\leq i,j\leq k}\bigl(
\dot{e}_{\la'_i-i+j}(x)+\dot{e}_{\la'_i-i-j+1}(x)\bigr), \\
&\oeven_{2n,\la}(x)=\frac{1}{2} \det_{1\leq i,j\leq k}\bigl(
\dot{e}_{\la'_i-i+j}(x)+\dot{e}_{\la'_i-i-j+2}(x)\bigr), \\
&\symp_{2n,\la}(x)=\det_{1\leq i,j\leq k}\bigl(
\dot{e}_{\la'_i-i+j}(x)-\dot{e}_{\la'_i-i-j}(x)\bigr),
\end{align*}
where $\dot{e}_r(x)$ is shorthand for
\smash{$e_r\bigl(x^{\pm}\bigr)=e_r\bigl(x_1,x_1^{-1},\dots,x_n,x_n^{-1}\bigr)$}.
For the rectangular partition of maximal height, i.e., for $\la=(k^n)$,
this yields
\begin{subequations}
\begin{align}
&\so_{2n+1,(k^n)}(x)=\det_{1\leq i,j\leq k} (
\dot{e}_{n-i+j}(x)+\dot{e}_{n+i+j-1}(x) ), \label{Eq_JTB} \\
&\oeven_{2n,(k^n)}(x)=\frac{1}{2}\det_{1\leq i,j\leq k} (
\dot{e}_{n-i+j}(x)+\dot{e}_{n+i+j-2}(x) ),
\label{Eq_JTO} \\
&\symp_{2n,(k^n)}(x)=\det_{1\leq i,j\leq k} (
\dot{e}_{n-i+j}(x)-\dot{e}_{n+i+j}(x) ), \label{Eq_JTC}
\end{align}
\end{subequations}
where we have also used the symmetry \smash{$\dot{e}_r(x)=\dot{e}_{2n-r}(x)$.}

For $\la\subseteq (k^n)$ and $x$ as above, let
\smash{$P_{\la}^{\mathrm{B}_n}(x;t,s)$} denote the $\mathrm{B}_n$ Hall--Littlewood
polynomial indexed by $\la$, see Section~\ref{Sec_HL-BCn}.
Then \smash{$P_{\la}^{\mathrm{B}_n}(x;0,0)=\so_{2n+1,\la}(x)$} and
\smash{$P_{\la}^{\mathrm{B}_n}(x;0,1)=\oeven_{2n,\la}(x)$}.
For the $\mathrm{B}_n$ Hall--Littlewood polynomial, we conjecture two
affine analogues of \eqref{Eq_JTB}, the right-hand sides of which may
be identified with the affine root systems \smash{$\mathrm{A}_{2k}^{(2)}$}
and \smash{$\mathrm{D}_{k+1}^{(2)}$}, respectively.
We~also have one analogue of \eqref{Eq_JTO}, for which we do not have
an identification in terms of affine root systems.

\begin{Conjecture}\label{Con_JTB}
Let $k$ be a nonnegative integer, $x=(x_1,\dots,x_n)$
and \smash{$\dot{e}_r(x)=e_r\bigl(x^{\pm}\bigr)$}.
Then
\begin{subequations}
\begin{gather}
P_{(k^n)}^{\mathrm{B}_n}(x;t,0)
=\sum_{y\in\mathbb{Z}^k}
\det_{1\leq i,j\leq k}\bigl((-1)^{y_i}
t^{\frac{1}{2}Ky_i^2-(j-\frac{1}{2})y_i}
 (\dot{e}_{n-i+j-Ky_i}(x)+\dot{e}_{n+i+j-Ky_i-1}(x) )\bigr),\!\!\!\!\label{Eq_PBn}
\end{gather}
where $K:=2k+1$, and
\begin{gather}
P_{(k^n)}^{\mathrm{B}_n}\bigl(x;t,-t^{1/2}\bigr)
=\sum_{y\in\mathbb{Z}^k}
\det_{1\leq i,j\leq k}\bigl(
t^{\frac{1}{2}Ky_i^2-(j-\frac{1}{2})y_i}
 (\dot{e}_{n-i+j-Ky_i}(x)+\dot{e}_{n+i+j-Ky_i-1}(x) )\bigr),\label{Eq_PBn2}
\\
P_{(k^n)}^{\mathrm{B}_n}(x;t,1)
=\frac{1}{2}\!\sum_{y\in\mathbb{Z}^k}\!
\det_{1\leq i,j\leq k}\bigl((-1)^{y_i}
t^{\frac{1}{2}Ky_i^2{-}(j{-}1)y_i}
 (\dot{e}_{n{-}i{+}j{-}Ky_i}(x){+}\dot{e}_{n{+}i{+}j{-}Ky_i{-}2}(x) )\bigr),\!\!\!\!\label{Eq_PBn3}
\end{gather}
\end{subequations}
where $K:=2k$.
\end{Conjecture}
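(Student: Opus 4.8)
I would try to deduce Conjecture~\ref{Con_JTB} from Theorem~\ref{Thm_GLn} by a \emph{folding} argument, in the same spirit in which the classical identities \eqref{Eq_JTB}--\eqref{Eq_JTO} are obtained from the $\mathrm{GL}_{2n}$ dual Jacobi--Trudi formula by specialising the alphabet to $x^{\pm}=(x_1,x_1^{-1},\dots,x_n,x_n^{-1})$ and collapsing a determinant via the palindromy $\dot e_r(x)=\dot e_{2n-r}(x)$. The first step is to put the right-hand sides of \eqref{Eq_PBn}--\eqref{Eq_PBn3} and the defining bialternant \eqref{Eq_SchurB} of the $\mathrm{B}_n$ Hall--Littlewood polynomial on a common footing: after dividing by the $\mathrm{BC}_n$ Weyl denominator, both become alternating-type sums over an affine Weyl group, so that the conjecture is reduced to an identity between affine Weyl \emph{numerators}. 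The second step is to produce that numerator identity from the $\mathrm{A}_{k-1}^{(1)}$ numerator identity of Theorem~\ref{Thm_GLn}: applying \eqref{Eq_Pe} with $r=n$ on the doubled alphabet $x^{\pm}$ (which has $2n\geq n$ variables) gives a sum over $y\in Q=\{\abs y=0\}\subset\mathbb Z^k$ of $k\times k$ determinants with entries $t^{k\binom{y_i}{2}+iy_i}\dot e_{n-i+j-ky_i}(x)$, and the extra sign-flip generators of the hyperoctahedral group $W(\mathrm B_n)$ — acting through the $x_i\leftrightarrow x_i^{-1}$ symmetry of $x^{\pm}$ together with the palindromy — should enlarge the translation lattice $Q$ to all of $\mathbb Z^k$ while fusing the two halves of each orbit into the pairs $\dot e_{n-i+j-Ky_i}(x)+\dot e_{n\pm i+j-Ky_i-c}(x)$ that appear in the conjecture. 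Setting $t=0$ must recover \eqref{Eq_JTB} from \eqref{Eq_PBn} and \eqref{Eq_PBn2}, and \eqref{Eq_JTO} from \eqref{Eq_PBn3}, which already provides a nontrivial consistency check on the combinatorics.

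The remaining work is bookkeeping: one has to check that $k\binom{y_i}{2}+iy_i$, after the reindexing forced by the lattice enlargement and the expansion of the determinant over $S_k$ (which trades the $i$-dependence of the linear term for a $j$-dependence), turns into the quadratic exponent $\tfrac12Ky_i^2-(j-c)y_i$; that the coefficient of $y_i$ in the $\dot e$-index jumps from $k$ to $K$, coming out as $K=2k+1$ in \eqref{Eq_PBn}--\eqref{Eq_PBn2} and $K=2k$ in \eqref{Eq_PBn3}; and that the twist $(-1)^{y_i}$ is present exactly when it should be. I expect $(-1)^{y_i}$ to be the parity $\sgn(w)$ of the corresponding translation $w$ in the enlarged affine Weyl group, and its presence in \eqref{Eq_PBn}, absence in \eqref{Eq_PBn2} and reappearance in \eqref{Eq_PBn3} to be governed by the value of the second parameter $s\in\{0,-t^{1/2},1\}$; correspondingly the three cases use three different folded lattices, which is exactly what the identifications with $\mathrm A_{2k}^{(2)}$, $\mathrm D_{k+1}^{(2)}$ (and the unidentified one for $s=1$) record. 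This part of the argument, while intricate, should be entirely mechanical once the lattice enlargement of the first step is in hand.

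The real obstacle is precisely that first step. The proof of Theorem~\ref{Thm_GLn} presumably rests on a $q=0$ Macdonald-type, or cylindric-tableau, model for the $\mathrm{GL}$ Hall--Littlewood polynomials of rectangular shape — which is what makes the affine $\mathrm A_{k-1}^{(1)}$ sum combinatorially meaningful — and I do not know an analogous model for the $\mathrm{BC}_n$ Hall--Littlewood polynomials that tracks \emph{both} parameters $t$ and $s$; establishing the requisite folding/branching identity for $\la=(k^n)$, in a form precise enough to read off $K$, the quadratic form and the sign, is therefore the crux, and the reason Conjecture~\ref{Con_JTB} is posed as a conjecture rather than a theorem. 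As partial evidence I would first settle the case $k=1$, where the determinants are $1\times1$, the $y$-sum is a classical theta series, and \eqref{Eq_PBn}--\eqref{Eq_PBn3} become theta-function identities that can be checked directly, and then verify small $n$; already matching the coefficient of $t^1$ on both sides pins down the modulus $K$ and the quadratic exponents uniquely, which gives strong support for the exact form of the conjecture.
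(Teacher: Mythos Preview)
This statement is a \emph{conjecture} in the paper, not a theorem, so there is no full proof to compare against. The paper establishes only two partial results: the case $k=1$ of all three identities (Lemma~\ref{Lem_kis1}), and the case $t=1$ of \eqref{Eq_PBn} alone (Lemma~\ref{Lem_teen}); the general case remains open.

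Your proposed folding strategy---specialise Theorem~\ref{Thm_GLn} to the doubled alphabet $x^{\pm}$ and use the $x_i\leftrightarrow x_i^{-1}$ symmetry to enlarge $Q$ to $\mathbb{Z}^k$---is not the route the paper takes for either special case, and you correctly identify its obstruction: $P_{(k^n)}(x^{\pm};t)$ is a $\mathrm{GL}_{2n}$ object with no evident relation to $P^{\mathrm{B}_n}_{(k^n)}(x;t,s)$, which carries a second parameter $s$ and is built from the hyperoctahedral group rather than $S_{2n}$. The heuristic that sign flips enlarge the lattice while producing the paired entries $\dot e_{n-i+j-Ky_i}+\dot e_{n+i+j-Ky_i-c}$ is suggestive but not an argument, and nothing in the paper indicates it can be made to work; indeed, the fact that the three specialisations $s\in\{0,-t^{1/2},1\}$ lead to three different affine lattices (with $K=2k+1$ or $K=2k$, with or without $(-1)^{y_i}$) is hard to account for from a single $\mathrm{GL}_{2n}$ source.

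For the special cases the paper does prove, the methods are quite different from your ``theta-function identities that can be checked directly''. For $k=1$, the paper replaces the left-hand side using the bounded Littlewood identities of Theorem~\ref{Thm_Bounded-Littlewood-Bn}, expands the right-hand side via Lemma~\ref{Lem_eP} (which writes $(x_1\cdots x_n)\dot e_{n-k}(x)$ as a sum of $P_{(2^r,1^s)}(x;t)$), and equates coefficients of $P_{(2^r,1^s)}$ to obtain $q$-binomial identities recognised as Slater's Bailey pairs A(1), A(2), F(1), F(2) and E(1). For $t=1$ in \eqref{Eq_PBn}, the paper uses the factorisation \eqref{Eq_PBCnt1} on the left and rewrites the right, via \eqref{Eq_eee}, as a determinant in the functions $\overline{F}_{i,N}(x)$ of Huh--Kim--Krattenthaler--Okada, which is then identified with a sum over cylindric tableaux and evaluated through \eqref{Eq_een}. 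Neither argument proceeds by folding from the $\mathrm{GL}$ case.
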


For $\la\subseteq (k^n)$ and $x$ as above, let
\smash{$P_{\la}^{\mathrm{C}_n}(x;t,s)$} denote the $\mathrm{C}_n$ Hall--Littlewood
polynomial indexed by $\la$.
Then \smash{$P_{\la}^{\mathrm{C}_n}(x;0,0)=\symp_{2n,\la}(x)$}.
This time we have two conjectural analogues of~\eqref{Eq_JTC},
corresponding to the affine root systems
\smash{$\mathrm{C}_k^{(1)}$} and \smash{$\mathrm{A}_{2k-1}^{(2)}$}, respectively.

\begin{Conjecture}\label{Con_JTC}
Let $k$ be a nonnegative integer, $x=(x_1,\dots,x_n)$
and $\dot{e}_r(x)=e_r\bigl(x^{\pm}\bigr)$.
Then
\begin{subequations}
\begin{equation}\label{Eq_PCn}
P_{(k^n)}^{\mathrm{C}_n}(x;t,0)
=\sum_{y\in\mathbb{Z}^k} \det_{1\leq i,j\leq k}\bigl(
t^{\frac{1}{2}Ky_i^2-j y_i} (
\dot{e}_{n-i+j-Ky_i}(x)-\dot{e}_{n+i+j-Ky_i}(x) )\bigr),
\end{equation}
where $K=2k+2$, and
\begin{equation}\label{Eq_PCn2}
P_{(k^n)}^{\mathrm{C}_n}(x;t,t)
=\sum_{\substack{y\in\mathbb{Z}^k\\[1pt] \abs{y}\textup{ even}}}
\det_{1\leq i,j\leq k}\bigl(
t^{\frac{1}{2}Ky_i^2-j y_i} (
\dot{e}_{n-i+j-Ky_i}(x)-\dot{e}_{n+i+j-Ky_i}(x) )\bigr),
\end{equation}
\end{subequations}
where $K=2k$.
\end{Conjecture}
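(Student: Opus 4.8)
The plan is to recognise the right-hand sides of \eqref{Eq_PCn} and \eqref{Eq_PCn2} as specialised Weyl--Kac numerators for the affine root systems $\mathrm C_k^{(1)}$ and $\mathrm A_{2k-1}^{(2)}$ in the ``$k$-variables'', and then to match them with a Weyl-type formula for the $\mathrm C_n$ Hall--Littlewood polynomial via a $\symp_{2k}\times\symp_{2n}$ level--rank duality. The two special values $s=0$ and $s=t$ should enter only at the end, as exactly the values for which the $s$-dependent root factors of the $\mathrm{BC}_n$ Hall--Littlewood polynomial of Section~\ref{Sec_HL-BCn} degenerate to the two affine types that appear on the right (and to no other).

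First I would pass to generating functions. Since $\sum_{r\in\mathbb Z}\dot e_r(x)\,z^r=\prod_{i=1}^n(1+x_iz)(1+x_i^{-1}z)=:\Pi_x(z)$, a palindromic polynomial of degree $2n$, every entry of the determinant in \eqref{Eq_PCn} is a coefficient extraction $\dot e_m(x)=\mathrm{CT}_z\bigl[z^{-m}\Pi_x(z)\bigr]$. Expanding the determinant over $\sigma\in S_k$ and selecting, in each row, one of the two $\dot e$-terms introduces a sign vector $\epsilon\in\{\pm1\}^k$; together with the lattice sum over $y\in\mathbb Z^k$ the data $(\sigma,\epsilon,y)$ range over the affine Weyl group $W(\mathrm C_k^{(1)})\cong S_k\ltimes\{\pm1\}^k\ltimes\mathbb Z^k$, with $\sgn(\sigma)\prod_i\epsilon_i$ playing the role of $\sgn(w)$ and $\sum_i(\tfrac12Ky_i^2-jy_i)$ recording the affine grading in $t$; for \eqref{Eq_PCn2} the parity constraint on $\abs y$ replaces this group by $W(\mathrm A_{2k-1}^{(2)})$, matching the identifications announced above. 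In this way the right-hand side becomes $\mathrm{CT}_{z_1,\dots,z_k}\bigl[\,N(z;t)\,M(z)\prod_{a=1}^k\Pi_x(z_a)\,\bigr]$, where $M$ is an explicit Laurent monomial and $N(z;t)=\sum_{w}\sgn(w)\,w\bigl(e^{\widehat\rho+\La}\bigr)$ is the numerator of the Weyl--Kac character of an integrable highest-weight module over $\widehat{\mathfrak{sp}}_{2k}$ (resp.\ the twisted algebra of type $\mathrm A_{2k-1}^{(2)}$) whose level is governed by $n$, with $t$ tracking the null root.

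The core step is then the bilinear identity. Via Macdonald's affine denominator formula one writes $N(z;t)$ as an infinite product of theta functions in $t$ and the $z_a$, so that $N(z;t)M(z)\prod_a\Pi_x(z_a)$ is an explicit theta quotient; the constant-term pairing in $z_1,\dots,z_k$, whose kernel $\prod_a\Pi_x(z_a)$ is the Cauchy-type kernel coupling the $z$- and the $x$-variables, should then collapse after a residue computation to a $W(\mathrm C_n)$-alternating sum in $x$ whose summand carries the Hall--Littlewood weight $\prod_{\alpha>0}\tfrac{1-t\,x^{-\alpha}}{1-x^{-\alpha}}$ of the shape $(k^n)$ -- that is, to $P^{\mathrm C_n}_{(k^n)}(x;t,0)$ once $s=0$ is imposed, and to $P^{\mathrm C_n}_{(k^n)}(x;t,t)$ once $s=t$ is imposed. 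Along the way one records the routine checks: the right-hand sides are $\mathrm{BC}_n$-symmetric Laurent polynomials in $x$ (each $\dot e_r$ is), they are stable under $n\mapsto n+1$, their top-degree part is $s_{(k^n)}(x)$, and at $t=0$ only $y=0$ survives -- by an elementary positivity/parity argument exactly as in the remark following Theorem~\ref{Thm_GLn} -- so that \eqref{Eq_PCn} and \eqref{Eq_PCn2} both reduce to \eqref{Eq_JTC}.

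The hard part is precisely this resummation-and-matching: turning the affine-Weyl sum over $W(\mathrm C_k^{(1)})$ or $W(\mathrm A_{2k-1}^{(2)})$ into the finite $W(\mathrm C_n)$ sum defining $P^{\mathrm C_n}_{(k^n)}$ is not a formal manipulation but, in effect, a $q,t$-Rogers--Ramanujan-type theta-function identity, and it is here that one must force the correct affine type for each value of $s$ and control the Macdonald-denominator cancellations; the $\mathrm A^{(2)}$ case will likely need the classical Littlewood product evaluations of $\symp_{2n,(k^n)}$ together with the quintuple product identity. A viable alternative, in keeping with the cylindric-Schur-function perspective, is combinatorial: interpret the constant-term expression through the Lindström--Gessel--Viennot lemma on a cylinder, so that the right-hand side becomes a generating function over cylindric symplectic (King-type) tableaux of width $k$ weighted by a cocharge-like $t$-statistic, and then invoke a fermionic or crystal formula identifying this with the $\mathrm C_n$ Hall--Littlewood polynomial of the maximal rectangle at the two special values of $s$; there the obstacle migrates to establishing that tableau model.
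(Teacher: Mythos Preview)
The statement you are attempting to prove is labelled a \emph{Conjecture} in the paper for good reason: the paper does not prove it in general. What the paper establishes is only the case $k=1$ (Lemma~\ref{Lem_kis1b}), and by a route entirely different from yours. For $k=1$ both determinants collapse to a single $y$-sum in $\dot e$, and the paper combines the bounded Littlewood identities \eqref{Eq_Stembridge} and \eqref{Eq_Stembridge2} with Lemma~\ref{Lem_eP} (the expansion of $(x_1\cdots x_n)\,\dot e_{n-k}(x)$ in the Hall--Littlewood basis $P_{(2^r,1^s)}$) to reduce each identity to a single $q$-binomial summation; the first becomes \eqref{Eq_unit} and the second a consequence of Slater's Bailey pair~E(4). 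The paper also rewrites the $t=1$ specialisation of \eqref{Eq_PCn} as a cylindric-Schur identity, equation~\eqref{Eq_HKKO36}, but does not complete even that case for $k>1$.

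Your outline is a genuinely different strategy, and a natural one: read the right-hand side as an affine Weyl numerator in $k$ variables and try to collapse it via level--rank duality to the finite $W(\mathrm C_n)$ sum defining $P_{(k^n)}^{\mathrm C_n}$. But the step you yourself flag as ``the hard part'' is not merely hard --- it is essentially the conjecture restated. The constant-term pairing $\mathrm{CT}_z\bigl[N(z;t)\,M(z)\prod_a\Pi_x(z_a)\bigr]$ does not obviously produce the Hall--Littlewood weight $\prod_{\alpha>0}(1-t\,x^{-\alpha})/(1-x^{-\alpha})$ attached to the single dominant weight $(k^n)$; what it naturally yields is an alternating sum of symplectic Schur functions, and turning that into a single $\mathrm C_n$ Hall--Littlewood polynomial at the rectangular shape is precisely the content to be proved. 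Neither Macdonald's denominator formula nor the quintuple product identity supplies this step, and the cylindric-tableau alternative you sketch requires a $t$-statistic on symplectic cylindric tableaux whose very existence is open. In short, your proposal repackages the conjecture rather than resolving it; the paper's $k=1$ argument succeeds precisely because at that size the Hall--Littlewood expansion of $\dot e_r$ is completely explicit via Lemma~\ref{Lem_eP}, a device with no evident analogue for $k\geq 2$.
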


Finally, let \smash{$P_{\la}^{\mathrm{BC}_n}(x;t,s_1,s_2)$} denote the
$\mathrm{BC}_n$ Hall--Littlewood polynomial indexed by $\la$.
Then~\smash{$P_{\la}^{\mathrm{BC}_n}(x;0,0,0)=\symp_{2n,\la}(x)$} and
\smash{$P_{\la}^{\mathrm{BC}_n}(x;0,1,-1)=\oeven_{2n,\la}(x)$}.
Conjecturally, we have \linebreak one~more generalisation of
\eqref{Eq_JTC} corresponding to the affine root system
\smash{$\mathrm{A}_{2k}^{(2)}$}.

\begin{Conjecture}\label{Con_JTBC}
Let $k$ be a nonnegative integer, $x=(x_1,\dots,x_n)$
and \smash{$\dot{e}_r(x)=e_r\bigl(x^{\pm}\bigr)$}.
Then
\begin{gather}
P_{(k^n)}^{\mathrm{BC}_n}\bigl(x;t,-t^{1/2},0\bigr)
 =\sum_{y\in\mathbb{Z}^k}
\det_{1\leq i,j\leq k}\bigl((-1)^{y_i}
t^{\frac{1}{2}Ky_i^2{-}jy_i}
 (\dot{e}_{n{-}i{+}j{-}Ky_i}(x){-}\dot{e}_{n{+}i{+}j{-}Ky_i}(x) )\bigr),\!\!\!\label{Eq_PBCn}
\end{gather}
where $K=2k+1$.
\end{Conjecture}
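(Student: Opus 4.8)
\emph{Proof strategy.} The plan is to prove \eqref{Eq_PBCn} by a constant-term argument paralleling the one behind Theorem~\ref{Thm_GLn}. First note that the right-hand side of \eqref{Eq_PBCn} is manifestly a $\mathrm{BC}_n$-symmetric Laurent polynomial in $x$: every matrix entry is a $\mathbb{Z}$-linear combination of the $\dot{e}_r(x)=e_r(x^{\pm})$, each of which is invariant under permutations of the $x_i$ and under $x_i\mapsto x_i^{-1}$. Moreover $\dot{e}_r(x)=0$ for $r<0$ and for $r>2n$, so for fixed $n$ only finitely many $y\in\mathbb{Z}^k$ contribute; convergence is a non-issue and the ``$\mathbb{Z}^k$'' is bookkeeping for the affine Weyl group of $\mathrm{A}_{2k}^{(2)}$. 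As a sanity check one recovers \eqref{Eq_JTC}, and hence $\symp_{2n,(k^n)}(x)=P^{\mathrm{BC}_n}_{(k^n)}(x;0,0,0)$, on setting $t=0$, since with $K=2k+1$ and $1\le j\le k$ one has $\tfrac12Ky_i^2-jy_i\ge 0$ with equality only at $y_i=0$.

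The steps, in order, would be as follows. (i) On the right, substitute the generating function $\sum_{r}\dot{e}_r(x)\,u^r=\prod_{m=1}^n(1+ux_m)(1+ux_m^{-1})$ for each $\dot{e}$, expand the $k\times k$ determinant over $S_k$, and collapse the $y$-summation, together with the resulting Vandermonde-type factor, into a single affine theta sum: the quadratic exponent $\tfrac12Ky_i^2$ with $K=2k+1$, the linear shifts $-jy_i$, and the sign $(-1)^{y_i}$ are precisely the data of the Weyl--Kac numerator of $\mathrm{A}_{2k}^{(2)}$. This rewrites the right-hand side as a constant term over the torus $x\in(\mathbb{C}^{\times})^n$. (ii) On the left, invoke the Weyl-type (Macdonald) formula for the $\mathrm{BC}_n$ Hall--Littlewood polynomial, specialised to $(t,s_1,s_2)=(t,-t^{1/2},0)$; the point of this slice is that the Koornwinder-type numerator collapses to a single alternating sum over $W(\mathrm{BC}_n)$ with the $t$-factors surviving only on one root length, matching the structure produced in (i). (iii) After these reductions the identity becomes a ``dual Cauchy/Littlewood'' identity coupling $W(\mathrm{A}_{2k}^{(2)})$ with $W(\mathrm{BC}_n)$, which one proves by symmetrising in the $x$-variables and applying the classical Littlewood identity for $e_r(x^{\pm})$ — equivalently, the expansion of the $\symp$ (and $\oeven$) characters into $\mathrm{GL}$ characters.

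The main obstacle, exactly as for Conjectures~\ref{Con_JTB} and \ref{Con_JTC}, is twofold. First, one must identify and justify the correct closed form of $P^{\mathrm{BC}_n}_{(k^n)}(x;t,s_1,s_2)$ along the slice $s_1=-t^{1/2}$, $s_2=0$, and explain \emph{why} this slice — and not a generic one — produces an affine Jacobi--Trudi formula and isolates $\mathrm{A}_{2k}^{(2)}$ rather than $\mathrm{C}_k^{(1)}$ or $\mathrm{A}_{2k-1}^{(2)}$ as in Conjecture~\ref{Con_JTC}; the tell-tale $-t^{1/2}$ is what signals the $\mathrm{A}_{2k}^{(2)}$ folding. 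Second, the affine symmetrisation is delicate: one must track the $(-1)^{y_i}$ signs, the integer shifts $-jy_i$, and the fact that here $y$ ranges over all of $\mathbb{Z}^k$ (contrast the parity restriction in \eqref{Eq_PCn2}), since these are exactly the combinatorial data that distinguish the $\mathrm{A}_{2k}^{(2)}$ case. A possible alternative that sidesteps the first difficulty is an induction on $k$ (or on $n$) via a Pieri/branching recursion for the $\mathrm{BC}_n$ Hall--Littlewood polynomials of rectangular shape, with trivial base case $k=0$ and an inductive step reducing to a one-variable Rogers--Ramanujan-type rearrangement; but making such a recursion close on the determinantal right-hand side of \eqref{Eq_PBCn} is itself nontrivial.
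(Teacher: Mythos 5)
There is a genuine gap --- in fact there is no proof here to check. The statement you are addressing is \emph{Conjecture}~\ref{Con_JTBC}: the paper does not prove it in general, and your text is a strategy outline rather than an argument. Concretely, step (ii) asserts that on the slice $(s_1,s_2)=(-t^{1/2},0)$ the Koornwinder-type numerator ``collapses to a single alternating sum over $W(\mathrm{BC}_n)$ with the $t$-factors surviving only on one root length''; this is unjustified and, as stated, not what happens. For $\la=(k^n)$ the $S_n$-part of the sum in \eqref{Eq_PBCn-def} does trivialise (this is \cite[Lemma 2.5]{RW21}, quoted in Section~\ref{Sec_HL-BCn}), but the remaining sum over $(\mathbb{Z}/2\mathbb{Z})^n$ retains the cross terms $\prod_{i<j}(1-tx_ix_j)/(1-x_ix_j)$ for every $t$, and specialising $s_1,s_2$ does nothing to them; only at $t=1$ does the product factorise, as in \eqref{Eq_PBCnt1}. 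Step (iii) then appeals to a ``dual Cauchy/Littlewood identity coupling $W(\mathrm{A}_{2k}^{(2)})$ with $W(\mathrm{BC}_n)$'' which is not identified with any known result and is, in substance, the conjecture itself. Step (i) is likewise only a description: expanding the determinant and ``collapsing the $y$-summation into a single affine theta sum'' is not a constant-term identity until you say what is being extracted and verify it, and the $\dot e_r(x)$ entries do not decouple from the $y$-dependent exponents in any obvious way. You acknowledge these as the ``main obstacles,'' which is exactly the point: they are the whole content of the problem, so nothing has been proved.

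For comparison, the paper establishes only two special cases, by routes quite different from yours. For $k=1$ (Lemma~\ref{Lem_kis1c}) it combines the bounded Littlewood identity \eqref{Eq_RW3} with Lemma~\ref{Lem_eP} to expand both sides in the basis $P_{(2^r,1^s)}(x;t)$, reducing the claim to a terminating $q$-binomial identity that follows from Slater's Bailey pairs A(3) and A(4). For $t=1$ (Lemma~\ref{Lem_teen}) it uses the factorisation \eqref{Eq_PBCnt1} together with the cylindric Schur function identity of Huh--Kim--Krattenthaler--Okada and \eqref{Eq_een}. If you want to make partial progress along rigorous lines, engaging with that machinery (bounded Littlewood identities plus Bailey pairs for small $k$, or the $F_{i,N}$/cylindric-tableau identities of \cite{HKKO25} at $t=1$) is the route the paper itself validates; your proposed constant-term/``folding'' argument would need the missing identity in step (iii) supplied and proved before it could count as a proof of \eqref{Eq_PBCn}.
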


Once again this gives \eqref{Eq_JTC} for $t=0$.

In Section~\ref{Sec_JTBC}, Conjectures~\ref{Con_JTB}--\ref{Con_JTBC}
will be proved for $k=1$ using bounded Littlewood identities.
In the same section we also present proofs of \eqref{Eq_PBn} and
\eqref{Eq_PBCn} for $t=1$ based on cylindric Schur functions.

Conjectures~\ref{Con_JTB}--\ref{Con_JTBC} have some remarkable
consequences in that (with the exception of \eqref{Eq_PBn3}) they
imply new Rogers--Ramanujan type identities which contain not just
the standard parameter $q$ but also a Hall--Littlewood parameter $t$,
while still admitting a product form.
These $q,t$\nobreakdash-Rogers--Ramanujan identities generalise many of the classical
Rogers--Ramanujan identities, such as the two original identities of Rogers
and Ramanujan \cite{Rogers94,Rogers17,RR19}, (a subset of) the
Andrews--Gordon identities \cite{Andrews74}, their even modulus
analogue of Bressoud \cite{Bressoud80,Bressoud80b}, and
the \smash{$\mathrm{C}_n^{(1)}$},~\smash{$\mathrm{A}_{2n}^{(2)}$} and~\smash{$\mathrm{D}_{n+1}^{(2)}$}
GOW identities of Griffin, Ono and the author~\cite{GOW16}.

Let \smash{$(a;q)_{\infty}=\prod_{i\geq 0}\bigl(1-aq^i\bigr)$} be a $q$-shifted factorial,
\smash{$\theta(a;p)=(a;p)_{\infty}(p/a;p)_{\infty}$} a modified Jacobi theta
function and $\theta(a_1,\dots,a_k;p)=\prod_{i=1}^k \theta(a_i;p)$.
For $\la$ a partition, $\la$ is said to be even if all of its parts
are even.

\begin{Theorem}[$q,t$-Rogers--Ramanujan-type identity
for \smash{$\mathrm{C}_k^{(1)}$}]\label{Thm_qtRR}
For $k$ a positive integer, let $p=tq^{2k+2}$.
Then
\begin{equation}\label{Eq_qt-one}
\sum_{\substack{\la\textup{ even} \\[1pt] \la_1\leq 2k}}
q^{(\sigma+1)\abs{\la}/2} P_{\la}\bigl(1,q,q^2,\dots;t\bigr)
=\frac{(p;p)_{\infty}^k}
{(q;q)_{\infty}^k}
\prod_{i=1}^k \theta\bigl(q^{(2-\sigma)i};p\bigr)
\prod_{1\leq i<j\leq k}
\theta\bigl(q^{j-i},q^{i+j};p\bigr),
\end{equation}
where $\sigma\in\{0,1\}$.
\end{Theorem}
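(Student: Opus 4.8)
The plan is to obtain \eqref{Eq_qt-one} by combining the affine Jacobi--Trudi formula \eqref{Eq_PCn} of Conjecture~\ref{Con_JTC} (whose nome parameter $K=2k+2$ already matches $p=tq^{2k+2}$) with a bounded Littlewood identity and the Macdonald identity for $\mathrm{C}_k^{(1)}$. First I would reinterpret the left-hand side. By the homogeneity of Hall--Littlewood polynomials, $q^{(\sigma+1)\abs{\la}/2}P_\la(1,q,q^2,\dots;t)=P_\la\big(q^{(\sigma+1)/2},q^{(\sigma+3)/2},\dots;t\big)$, so the two cases $\sigma\in\{0,1\}$ amount to the half-integer and the integer principal specialisations. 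A bounded Littlewood identity for Hall--Littlewood polynomials expresses $\sum_{\la\text{ even},\,\la\subseteq((2k)^n)}P_\la(x_1,\dots,x_n;t)$ as an explicit factor (a power of $q$ times $q$-shifted factorials) times $P^{\mathrm{C}_n}_{(k^n)}(x;t,0)$; taking $x$ to be the matching $n$-variable principal specialisation and letting $n\to\infty$ then identifies the left-hand side of \eqref{Eq_qt-one} with a normalising factor times $\lim_{n\to\infty}P^{\mathrm{C}_n}_{(k^n)}(x;t,0)$, the width bound $\la_1\leq 2k$ being inherited from the rectangle $(k^n)$ while the length bound on $\la$ disappears in the limit.

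Next I would expand $P^{\mathrm{C}_n}_{(k^n)}(x;t,0)$ using \eqref{Eq_PCn} and specialise. By the $q$-binomial theorem each $\dot{e}_r(x)=e_r(x^\pm)$ becomes a power of $q$ times $\qbin{2n}{r}$ (for the half-integer specialisation; the integer one differs by inverting a bidiagonal matrix). After pulling out of each row of the $k\times k$ determinant the common power of $q$ --- quadratic in $n$, and cancelling against the normalisation from the first step --- and letting $n\to\infty$, so that $\qbin{2n}{r}\to(q;q)_\infty^{-1}$, the double sum $\sum_{y\in\mathbb{Z}^k}\det_{1\leq i,j\leq k}(\cdots)$ turns into $(q;q)_\infty^{-k}$ times the alternating sum over the affine Weyl group of $\mathrm{C}_k^{(1)}$ with nome $p=tq^{2k+2}$. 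The Macdonald (Weyl--Kac denominator) identity evaluates this, up to a monomial in $q$, as $(q;q)_\infty^{-k}(p;p)_\infty^k\prod_{i=1}^k\theta\big(q^{(2-\sigma)i};p\big)\prod_{1\leq i<j\leq k}\theta\big(q^{j-i},q^{i+j};p\big)$; restoring the prefactors from both steps produces the right-hand side of \eqref{Eq_qt-one}. As a check, at $t=0$ one has $p=0$, $\theta(a;0)=1-a$, $(p;p)_\infty=1$, and \eqref{Eq_qt-one} reduces to the $\mathrm{C}_k^{(1)}$ case of the GOW identities, consistent with the $t=0$ specialisation \eqref{Eq_JTC} of \eqref{Eq_PCn}.

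The hard part is the first step: identifying the precise bounded Littlewood identity --- in particular which parity condition on $\la$ is matched to the rectangle $(k^n)$ and to the $\dot{e}_r$'s of \eqref{Eq_PCn}, and the exact normalising factor --- and justifying the interchange of the limit $n\to\infty$ with the sum over $y\in\mathbb{Z}^k$, which for generic $t$ is an infinite sum. Careful bookkeeping of the quadratic-in-$n$ powers of $q$, and following the $\sigma$-dependence simultaneously through the principal specialisation and through the ``short-root'' theta arguments $q^{(2-\sigma)i}$, is the other delicate point.
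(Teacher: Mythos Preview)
Your strategy matches the paper's \emph{conditional} derivation almost exactly: bounded Littlewood identity \eqref{Eq_Stembridge} to pass from $\sum_{\la\text{ even},\,\la_1\le 2k}P_\la(x;t)$ to $(x_1\cdots x_n)^k P^{\mathrm{C}_n}_{(k^n)}(x;t,0)$, then the conjectural \eqref{Eq_PCn}, principal specialisation, $n\to\infty$, and the $\mathrm{C}_k^{(1)}$ Macdonald identity. The bounded Littlewood identity you call ``the hard part'' is in fact already available in the paper as Theorem~\ref{Thm_Bounded-Littlewood-Cn}.

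There is a genuine gap in your treatment of $\sigma=1$. For $x_i=q^i$ the alphabet $x^{\pm}=(q^{\pm 1},\dots,q^{\pm n})$ is \emph{not} a full geometric progression (it misses $1$), so $\dot{e}_r(x)$ is not a single $q$-binomial. Inverting the bidiagonal relation $\ddot{e}_r=\dot{e}_r+\dot{e}_{r-1}$ row by row does not leave the determinant in a shape to which the Macdonald identity applies. The paper handles this via the nontrivial determinant identity of Lemma~\ref{Lem_determinant} (equation~\eqref{Eq_doteddote2}), which shows that the full $y$-sum of the $\dot{e}$-determinant equals a $y$-sum of an $\ddot{e}$-determinant of the same structural form; only then does the specialisation $x_i=q^i$ give $\ddot{e}_r(x)=e_r(q^n,\dots,q^{-n})$ as a genuine $q$-binomial in $\qbin{2n+1}{\cdot}_q$. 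This lemma is the missing ingredient in your outline.

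Finally, note that because \eqref{Eq_PCn} is a conjecture, the argument you describe is only a conditional proof. The paper's unconditional proof is different and short: fix $\lvert q\rvert<1$, regard both sides of \eqref{Eq_qt-one} as analytic functions of $t$ on the open unit disk, observe that the specialisations $t=q^N$ for $N\ge 1$ reproduce the already-proven GOW identities (Corollaries~\ref{Cor_1}--\ref{Cor_3}), and invoke the identity theorem for analytic functions (Ismail's argument). Your proposal does not mention this step, so as stated it does not actually prove Theorem~\ref{Thm_qtRR}.
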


The labelling of the theorem by the affine root system
\smash{$\mathrm{C}_k^{(1)}$} reflects the fact that $k+1$ is the dual Coxeter
number of \smash{$\mathrm{C}_k^{(1)}$} and the product on the right
of \eqref{Eq_qt-one} is a specialisation of the~Weyl--Kac
denominator for \smash{$\mathrm{C}_k^{(1)}$}.
For $t=0$, the theorem simplifies to
\begin{align}
\sum_{\substack{\la\text{ even} \\[1pt] \la_1\leq 2k}}
q^{(\sigma+1)\abs{\la}/2} s_{\la}\bigl(1,q,q^2,\dots\bigr)
&=\prod_{i\geq 1} \frac{1-q^{(\sigma+1)(k+i)}}{1-q^{(\sigma+1)i}}
\prod_{1\leq i<j} \frac{1-q^{2k+i+j}}{1-q^{i+j}} \notag \\
&=\lim_{n\to\infty} \phi_{\sigma}
\bigl(\eup^{-k\omega_n}\ch L(k\omega_n)\bigr),
\label{Eq_C}
\end{align}
where $L(k\omega_n)$ is the irreducible
$\mathrm{Sp}(2n,\mathbb{C})$-module of highest weight $k\omega_n$
(with $\omega_1,\dots,\omega_n$ the fundamental weights of
$\mathrm{Sp}(2n,\mathbb{C})$), and $\phi_{\sigma}$
is the specialisation
\[
\phi_{\sigma}\colon\ \mathbb{Z}[\eup^{-\alpha_1},\dots,\eup^{-\alpha_n}]
\to\mathbb{Z}[q],\qquad
\eup^{-\alpha_i}\mapsto \begin{cases}
q & \text{for $1\leq i\leq n-1$}, \\
q^{\sigma+1} & \text{for $i=n$}.\end{cases}
\]

For the affine root system \smash{$\mathrm{A}_{2k}^{(2)}$}, with (dual)
Coxeter number $2k+1$, we have two identities.
Given a partition $\la$, let $\la^{\textup{o}}$ be the partition consisting
of the odd parts of $\la$, so that $l(\la^{\textup{o}})$ is the number of
parts of $\la$ that are odd.

\begin{Theorem}[$q,t$-Rogers--Ramanujan-type identities for
\smash{$\mathrm{A}_{2k}^{(2)}$}]\label{Thm_qtRR2}
For $k$ a positive integer, let $p=tq^{2k+1}$.
Then
\begin{subequations}
\begin{align}
&\sum_{\substack{\la \\[1pt] \la_1\leq 2k}}
q^{(\sigma+1)\abs{\la}/2} P_{\la}\bigl(1,q,q^2,\dots;t\bigr) \label{Eq_A2k2a}\\
&\qquad=\frac{(p;p)_{\infty}^k}
{(\sigma+1)(q;q)_{\infty}^k}
\prod_{i=1}^k \theta\bigl({-}q^{i-(\sigma+1)/2};p\bigr)
\theta\bigl(pq^{2i-\sigma-1};p^2\bigr)
\prod_{1\leq i<j\leq k}\theta\bigl(q^{j-i},q^{i+j-\sigma-1};p\bigr)
\notag
\end{align}
and
\begin{align}
&\sum_{\substack{\la \\[1pt] \la_1\leq 2k}}
q^{(\sigma+1)\abs{\la}/2}t^{l(\la^{\textup{o}})/2}
P_{\la}\bigl(1,q,q^2,\dots;t\bigr) \label{Eq_A2k2b}\\
&\qquad=\frac{(p;p)_{\infty}^k}{(q;q)_{\infty}^k}
\prod_{i=1}^k \theta\bigl({-}p^{1/2}q^{i-\sigma/2};p\bigr)
\theta\bigl(q^{2i-\sigma};p^2\bigr)
\prod_{1\leq i<j\leq k}\theta\bigl(q^{j-i},q^{i+j-\sigma};p\bigr),
\notag
\end{align}
\end{subequations}
where $\sigma\in\{0,1\}$.
\end{Theorem}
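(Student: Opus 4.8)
The plan is to derive Theorem~\ref{Thm_qtRR2} from Conjecture~\ref{Con_JTBC} (for equation~\eqref{Eq_A2k2b}) and from Conjecture~\ref{Con_JTB} (for equation~\eqref{Eq_A2k2a}, using the $\mathrm{A}_{2k}^{(2)}$ form~\eqref{Eq_PBn}), by applying the stable principal specialisation $x_i\mapsto q^{i-1/2}$ (equivalently, passing to the $n\to\infty$ limit of $\mathrm{BC}_n$ characters under a suitable specialisation) and recognising the resulting sum as a bilateral theta-type series. First I would record the principal specialisation of the $\mathrm{B}_n$/$\mathrm{BC}_n$ Hall--Littlewood polynomials: the key input is a Hall--Littlewood analogue of the Weyl--Kac-type evaluation, expressing $\sum_{\la\subseteq(k^n)} q^{c\abs{\la}} P^{\mathrm{BC}_n}_{(k^n)-\text{truncation}}$ in terms of the left-hand sides of~\eqref{Eq_A2k2a} and~\eqref{Eq_A2k2b}; concretely, the generating function $\sum_{\la,\ \la_1\leq 2k} q^{(\sigma+1)\abs{\la}/2}t^{l(\la^{\mathrm{o}})/2} P_\la(1,q,q^2,\dots;t)$ should arise as a limit of $P^{\mathrm{BC}_n}_{(k^n)}$ evaluated at the principal specialisation, once the variables are matched so that $P^{\mathrm{BC}_n}(x;0,0,0)=\symp_{2n,\la}(x)$ degenerates correctly and the parameter pair $(-t^{1/2},0)$ in~\eqref{Eq_PBCn} reproduces the $t^{l(\la^{\mathrm{o}})/2}$ weight.

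Next I would specialise the right-hand side of~\eqref{Eq_PBCn} (resp.~\eqref{Eq_PBn}). Under $\dot{e}_r(x)=e_r(x^\pm)$ with $x_i=q^{i-1/2}$, the specialised elementary symmetric functions $\dot{e}_r$ have a clean product/Gaussian-binomial form (Euler's identity), and in the $n\to\infty$ limit $\dot{e}_{n-i+j-Ky_i}(x)\pm\dot{e}_{n+i+j-Ky_i}(x)$ becomes, up to an overall power of $q$ and factors of $(q;q)_\infty$, a single monomial times a theta-like correction. The $k\times k$ determinant over $1\leq i,j\leq k$ then factorises: expanding $\det_{i,j}(\cdots)$ and carrying out the $S_k$ sum produces a Vandermonde-type product $\prod_{i<j}$ in the variables $q^{Ky_i}$ (this is the standard Weyl-denominator mechanism, here for a type-$\mathrm{C}$/type-$\mathrm{B}$ affine root system), while the $y$-sum over $\mathbb{Z}^k$ collapses each factor into a Jacobi triple product. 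Collecting terms, the $k$ "diagonal" thetas yield $\prod_{i=1}^k \theta(\cdot;p)\theta(\cdot;p^2)$ with $p=tq^{2k+1}$ (the split into a $\theta(\cdot;p)$ and a $\theta(\cdot;p^2)$ reflecting the two orbit lengths in $\mathrm{A}_{2k}^{(2)}$), and the off-diagonal ones yield $\prod_{1\leq i<j\leq k}\theta(q^{j-i},q^{i+j-\sigma};p)$; the $\sigma\in\{0,1\}$ dichotomy comes from the two admissible specialisations of the short simple root, exactly as in the passage from~\eqref{Eq_C} to its two cases.

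The technical heart, and the step I expect to be the main obstacle, is the bilateral-series-to-product step: after specialisation one has a sum over $y\in\mathbb{Z}^k$ of a $k\times k$ determinant of theta-type entries, and one must show this equals the advertised product of thetas. The clean way is to invoke a known determinant evaluation — a Frobenius/Weyl-type determinant identity for theta functions (e.g.\ the $\mathrm{BC}_n$ or $\mathrm{C}_n^{(1)}$ elliptic determinant of Rosengren--Schlosser type, or directly the Weyl--Kac denominator formula for the relevant affine root system specialised to the principal gradation) — which converts $\sum_{y\in\mathbb{Z}^k}\det(\cdots)$ into $\prod\theta(\cdots)$. Verifying that the entries of the specialised determinant match the hypotheses of such an identity (quasi-periodicity in the right lattice, correct zeros) is where the bookkeeping is most delicate, particularly tracking the half-integer shifts $i-\sigma/2$, the distinction between modulus $p$ and modulus $p^2$, and the parity restriction $\abs{y}$ even that appears in the $\mathrm{A}_{2k-1}^{(2)}$ companion~\eqref{Eq_PCn2} but not here. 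For~\eqref{Eq_A2k2a} versus~\eqref{Eq_A2k2b}, the only structural difference is which of the two $\mathrm{B}_n$/$\mathrm{BC}_n$ Jacobi--Trudi forms one starts from and whether the extra $t^{l(\la^{\mathrm{o}})/2}$ weight is present, so once~\eqref{Eq_A2k2b} is established the argument for~\eqref{Eq_A2k2a} is parallel, with~\eqref{Eq_PBn} replacing~\eqref{Eq_PBCn} and with the sign $(-1)^{y_i}$ in the summand accounting for the alternating-sign structure that produces the $\theta(-q^{i-(\sigma+1)/2};p)$ factor. Finally, the special cases $k=1$ and $t=1$ of the underlying conjectures are already proved in Section~\ref{Sec_JTBC}, so in those instances the resulting Rogers--Ramanujan identities are unconditional, and I would note this explicitly.
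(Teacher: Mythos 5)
Your plan follows the same route as the paper's Section~\ref{Sec_qtRR-1} derivation, and the pairing is the correct one: \eqref{Eq_PBn} together with the bounded Littlewood identity \eqref{Eq_Macdonald} leads to \eqref{Eq_A2k2a}, while \eqref{Eq_PBCn} together with \eqref{Eq_RW3} (whose weight is exactly $t^{l(\la^{\textup{o}})/2}$) leads to \eqref{Eq_A2k2b}; after specialising and letting $n\to\infty$ one finishes with the $\mathrm{A}_{2k}^{(2)}$ Macdonald identity, i.e.\ the specialised Weyl--Kac denominator formula you invoke, in its `$\mathrm{B}_k$' form \eqref{Eq_MacA2k2} for \eqref{Eq_A2k2a} and its `$\mathrm{C}_k$' form for \eqref{Eq_A2k2b}. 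However, your account of the $\sigma$-dichotomy hides a missing ingredient. You only specify the specialisation $x_i=q^{i-1/2}$; the companion case requires $x_i=q^i$, and for that choice $\dot{e}_r(x)=e_r(x^{\pm})$ lives on the alphabet $q^{\pm1},\dots,q^{\pm n}$, a geometric progression with the element $1$ deleted, which has no clean $q$-binomial evaluation. The paper first converts each determinant into one in $\ddot{e}_r(x)=e_r(x^{\pm},1)$ via Lemma~\ref{Lem_determinant} with $u=-1$ (flagged in the introduction as one of the few genuinely new symmetric-function inputs), and only then specialises $x_i=q^i$. Without this conversion, or a substitute for it, your argument yields only one member of each pair; ``two admissible specialisations of the short simple root'' does not by itself produce the other.

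The more fundamental gap is that what you outline is only the paper's \emph{conditional} derivation: \eqref{Eq_PBn} and \eqref{Eq_PBCn} are conjectures, established in Section~\ref{Sec_JTBC} only for $k=1$ and (for these two) $t=1$, so your argument does not prove Theorem~\ref{Thm_qtRR2} for general $k$ and $t$, and your closing remark about unconditional special cases concedes as much. The paper's actual proof bypasses the conjectures: for each fixed $k$ it first proves the specialisations $t=q^N$ of \eqref{Eq_A2k2a} and \eqref{Eq_A2k2b} as Rogers--Ramanujan identities for specialised characters of standard modules (the corollaries of Section~\ref{Sec_standard}; the previously unknown ones are proved directly from \cite[Proposition~5.10]{RW21} together with Macdonald identities), and then applies Ismail's argument: with $q$ fixed, the difference of the two sides is analytic in $t$ near $0$ and vanishes on the sequence $t=q^N$, which accumulates at $0$, hence vanishes identically by the identity theorem. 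To turn your proposal into a proof you would need either to prove Conjectures~\ref{Con_JTB} and~\ref{Con_JTBC} or to supply an analogous unconditional route.
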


We remark that by
\[
\prod_{i=1}^k\theta\bigl(-q^{i-1};p\bigr)\theta\bigl(pq^{2i-2};p^2\bigr)
\prod_{1\leq i<j\leq k}\theta\bigl(q^{i+j-2};p\bigr)=
2\prod_{1\leq i<j\leq k}\theta\bigl(q^{i+j-1};p\bigr)
\]
the identity \eqref{Eq_A2k2a} for $\sigma=1$ can be written as
\[
\sum_{\substack{\la \\[1pt] \la_1\leq 2k}}
q^{\abs{\la}} P_{\la}\bigl(1,q,q^2,\dots;t\bigr)
=\frac{(p;p)_{\infty}^k}{(q;q)_{\infty}^k}
\prod_{1\leq i<j\leq k}\theta\bigl(q^{j-i},q^{i+j-1};p\bigr).
\]
For $t=0$, \eqref{Eq_A2k2a} yields
\begin{align}
\sum_{\substack{\la \\[1pt] \la_1\leq 2k}}
q^{(\sigma+1)\abs{\la}/2} s_{\la}\bigl(1,q,q^2,\dots\bigr)
&=\prod_{i\geq 1} \frac{1-q^{(\sigma+1)(k+i-1/2)}}
{1-q^{(\sigma+1)(i-1/2)}}
\prod_{1\leq i<j} \frac{1-q^{2k+i+j}}{1-q^{i+j}} \notag \\
&=\lim_{n\to\infty} \hat{\phi}_{\sigma}
\bigl(\eup^{-k\omega_n}\ch L(k\omega_n)\bigr),
\label{Eq_B}
\end{align}
where $L(k\omega_n)$ is the irreducible
$\mathrm{SO}(2n+1,\mathbb{C})$-module of highest weight $k\omega_n$
and $\hat{\phi}_{\sigma}$ is the specialisation
\[
\hat{\phi}_{\sigma}\colon\ \mathbb{Z}\bigl[\eup^{-\alpha_1},\dots,\eup^{-\alpha_n}\bigr]
\to\mathbb{Z}\bigl[q^{(\sigma+1)/2}\bigr],\qquad
\eup^{-\alpha_i}\mapsto \begin{cases}
q & \text{for $1\leq i\leq n-1$}, \\
q^{(\sigma+1)/2} & \text{for $i=n$}.\end{cases}
\]
The $t=0$ case of \eqref{Eq_A2k2b} once again yields \eqref{Eq_C}.

We conclude with two theorems for \smash{$\mathrm{D}_{k+1}^{(2)}$}
and \smash{$\mathrm{A}_{2k-1}^{(2)}$}, respectively.
For both these root systems, the dual Coxeter number is $2k$.
For $\la$ a partition, we denoted by $m_i(\la)$ the multiplicity
of parts of size $i$.

\begin{Theorem}[$q,t$-Rogers--Ramanujan identity for
\smash{$\mathrm{D}_{k+1}^{(2)}$}]\label{Thm_qtRR3}
For $k$ a positive integer, let $p=tq^{2k}$. Then
\begin{align*}
&\sum_{\substack{\la \\[1pt] \la_1\leq 2k}} q^{(\sigma+1)\abs{\la}/2}
\Biggl( \prod_{i=1}^{2k-1}\bigl({-}t^{1/2};t^{1/2}\bigr)_{m_i(\la)}\Biggr)
P_{\la}\bigl(1,q,q^2,\dots;t\bigr) \\
&\qquad =\frac{\bigl(p^{1/2};p^{1/2}\bigr)_{\infty}
(p;p)_{\infty}^{k-1}}{(\sigma+1)(q;q)_{\infty}^k}
\prod_{i=1}^k \theta\bigl({-}q^{i-(\sigma+1)/2};p^{1/2}\bigr)
\prod_{1\leq i<j\leq k}\theta\bigl(q^{j-i},q^{i+j-\sigma-1};p\bigr),
\end{align*}
where $\sigma\in\{0,1\}$.
\end{Theorem}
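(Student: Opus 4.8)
The plan is to deduce the identity from the affine dual Jacobi--Trudi formula \eqref{Eq_PBn2} of Conjecture~\ref{Con_JTB} (the one labelled by $\mathrm{D}_{k+1}^{(2)}$) by substituting a level-$k$ principal specialization for $x=(x_1,\dots,x_n)$ and then letting $n\to\infty$; throughout fix $\sigma\in\{0,1\}$, set $p=tq^{2k}$ and abbreviate $K=2k+1$. For the \emph{sum side}, specialize $x_i\mapsto q^{n-i+(\sigma+1)/2}$, so that the doubled alphabet $x^{\pm}$ together with the extra variable of the odd orthogonal group fills an initial segment of the progression $1,q,q^2,\dots$ (up to an overall monomial removed by the normalization $\eup^{-k\omega_n}$). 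Under this specialization the limit $n\to\infty$ of the left-hand side of \eqref{Eq_PBn2} should be the sum side of the theorem: a bounded Littlewood / stable branching identity for $\mathrm{B}_n$ Hall--Littlewood polynomials writes it as
\[
\sum_{\substack{\la\\ \la_1\leq 2k}}q^{(\sigma+1)\abs{\la}/2}
\Big(\,\prod_{i=1}^{2k-1}\big({-}t^{1/2};t^{1/2}\big)_{m_i(\la)}\Big)
P_\la\big(1,q,q^2,\dots;t\big),
\]
the product $\prod_i({-}t^{1/2};t^{1/2})_{m_i(\la)}$ being the fingerprint of the second Hall--Littlewood parameter $s=-t^{1/2}$ (it is the weight collected by the ``barred'' tableau entries erased under restriction from $\mathrm{B}_n$ to $\mathrm{GL}$), and the powers $q^{(\sigma+1)\abs{\la}/2}$ coming from the negative half of the doubled alphabet. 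One either cites this identity or verifies it from the combinatorial model for $P^{\mathrm{B}_n}_{(k^n)}$; convergence of the sum is immediate from the $q$-valuation bound $\geq(\sigma+1)\abs{\la}/2$ on the $\la$-term.

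For the \emph{product side}, apply the same specialization to the right-hand side of \eqref{Eq_PBn2}. Then $\dot{e}_r(x)=e_r(x^{\pm})$ becomes a monomial times a Gaussian binomial $\qbin{2n}{r}$, subject to $\dot{e}_r=\dot{e}_{2n-r}$; for each pair of indices $\{n-i+j-Ky_i,\,n+i+j-Ky_i-1\}$ one records which of the two lies in the lower half. After pulling out the $n$-dependent prefactor, the $k\times k$ determinant (a signed sum over $S_k$) supplies the $S_k$-part of the relevant Weyl group, the combination $\dot{e}_{n-i+j-Ky_i}(x)+\dot{e}_{n+i+j-Ky_i-1}(x)$ together with $\dot{e}_r=\dot{e}_{2n-r}$ supplies the sign-change reflections, and the lattice sum $\sum_{y\in\mathbb{Z}^k}$ supplies the translations, so that as $n\to\infty$ the whole expression is a bilateral sum over $W(\mathrm{D}_{k+1}^{(2)})$. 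Collecting the powers $t^{\tfrac{1}{2}Ky_i^2-(j-\tfrac{1}{2})y_i}$ with the $q$-powers liberated from the $\dot{e}$'s and putting $p=tq^{2k}$ rewrites this as a $\mathrm{D}_{k+1}^{(2)}$ theta sum, the half-integer shifts $j-\tfrac12$ accounting for the $p^{1/2}$-arguments. Evaluating it by Macdonald's $\eta$-function identity (equivalently the Weyl--Kac denominator formula) for $\mathrm{D}_{k+1}^{(2)}$ \cite{Kac90,Macdonald72} should give precisely
\[
\frac{(p^{1/2};p^{1/2})_\infty(p;p)_\infty^{k-1}}{(\sigma+1)(q;q)_\infty^k}\,
\prod_{i=1}^k\theta\big({-}q^{i-(\sigma+1)/2};p^{1/2}\big)
\prod_{1\leq i<j\leq k}\theta\big(q^{j-i},q^{i+j-\sigma-1};p\big),
\]
the factor $\tfrac{1}{\sigma+1}$ being a product-side artifact: for $\sigma=1$ two reflections of $W(\mathrm{D}_{k+1}^{(2)})$ are identified by the specialization (cf.\ the theta identity displayed after Theorem~\ref{Thm_qtRR2}). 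Equating the two sides yields the theorem.

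The delicate part is the product-side computation: pinning down the exact $\mathrm{D}_{k+1}^{(2)}$ root data, treating the two families $\dot{e}_{n-i+j-Ky_i}$ and $\dot{e}_{n+i+j-Ky_i-1}$ consistently under $\dot{e}_r=\dot{e}_{2n-r}$, and checking that the $n$-dependent Gaussian prefactors assemble in the limit into exactly $(q;q)_\infty^k$, $(p;p)_\infty^{k-1}$ and $(p^{1/2};p^{1/2})_\infty$; once the bilateral sum is in hand, its evaluation is Macdonald's identity. As a check, at $t=0$ only the term $y=0$ survives in \eqref{Eq_PBn2} and one recovers the specialized dual Jacobi--Trudi formula \eqref{Eq_JTB}, i.e.\ the classical principally specialized odd-orthogonal character identity. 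Finally, since \eqref{Eq_PBn2} is still conjectural for $k>1$, a proof not resting on Conjecture~\ref{Con_JTB} would instead bootstrap from its proven $k=1$ case by a Hall--Littlewood Bailey-chain iteration (equivalently, invoke the pertinent bounded Littlewood identity directly), after which the product-side argument applies unchanged.
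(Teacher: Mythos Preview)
Your high-level strategy---specialize \eqref{Eq_PBn2}, use a bounded Littlewood identity on the left, a Macdonald identity on the right, and let $n\to\infty$---is exactly the paper's conditional proof. A few points, however, need correction or sharpening.

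First, a bookkeeping error: for \eqref{Eq_PBn2} the shift is $K=2k$, not $K=2k+1$ (the latter belongs to \eqref{Eq_PBn}); this is visible already from the $k=1$ case \eqref{Eq_F12} and is what makes $p=tq^{2k}$ come out right.

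Second, and more substantively, your treatment of $\sigma=1$ has a genuine gap. Under the specialization $x_i=q^i$ the alphabet $x^{\pm}=(q^{\pm 1},\dots,q^{\pm n})$ is \emph{not} a full geometric progression---it is missing $1$---so $\dot{e}_r(x)$ does not become a clean Gaussian binomial. Your appeal to ``the extra variable of the odd orthogonal group'' points in the right direction but does not say how to insert it: \eqref{Eq_PBn2} is written in terms of $\dot{e}_r=e_r(x^{\pm})$, not $\ddot{e}_r=e_r(x^{\pm},1)$. The paper handles this with Lemma~\ref{Lem_determinant} (identity \eqref{Eq_doteddote} at $u=1$), which replaces the $\dot{e}$-determinant by a $\ddot{e}$-determinant at the cost of a factor $\tfrac{1}{2}$; this is where the $\tfrac{1}{\sigma+1}$ in the theorem actually originates. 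With this second determinantal form one specializes $x_i=q^i$ and the $\ddot{e}$'s become $q$-binomials $\qbin{2n+1}{\,\cdot\,}_q$; for $\sigma=0$ one uses the original $\dot{e}$-form and $x_i=q^{i-1/2}$ to get $\qbin{2n}{\,\cdot\,}_q$.

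The sum side you describe is precisely the bounded Littlewood identity \eqref{Eq_RW1}; it is worth citing rather than re-deriving. On the product side, after the $n\to\infty$ limit one indeed lands on the $\mathrm{D}_{k+1}^{(2)}$ Macdonald identity, as you say.

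Finally, your suggested route to an unconditional proof (a Hall--Littlewood Bailey chain from the $k=1$ case) is not what the paper does. The paper instead applies Ismail's analytic argument: for fixed $\abs{q}<1$ the difference of the two sides is analytic in $t$ and vanishes at $t=q^N$ for all $N\geq 1$ (by the Rogers--Ramanujan identities for standard modules proved in Section~\ref{Sec_standard}), hence vanishes identically.
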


\begin{Theorem}[$q,t$-Rogers--Ramanujan identity for
\smash{$\mathrm{A}_{2k-1}^{(2)}$}]\label{Thm_qtRR4}
For $k$ a positive integer, let $p=tq^{2k}$.
Then
\begin{align*}
&\sideset{}{'}\sum_{\substack{\la \\[1pt] \la_1\leq 2k}}
q^{(\sigma+1)\abs{\la}/2}t^{l(\la^{\textup{o}})/2}
\Biggl( \prod_{i=1}^{2k-1}\bigl(t;t^2\bigr)_{\ceil{m_i(\la)/2}}\Biggr)
P_{\la}\bigl(1,q,q^2,\dots;t\bigr) \\
&\qquad =\frac{(p;p)_{\infty}^2(p;p)_{\infty}^{k-1}}
{(q;q)_{\infty}^k}
\prod_{i=1}^k \theta\bigl(q^{2i-\sigma};p^2\bigr)
\prod_{1\leq i<j\leq k}\theta\bigl(q^{j-i},q^{i+j-\sigma};p\bigr),
\end{align*}
where $\sigma\in\{0,1\}$ and the prime in the sum over $\la$ denotes
the restriction that the odd parts of~$\la$ have even multiplicity.
\end{Theorem}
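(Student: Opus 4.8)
The plan is to derive the identity from the affine dual Jacobi--Trudi formula \eqref{Eq_PCn2} of Conjecture~\ref{Con_JTC}, combined with a bounded Littlewood identity and the Weyl--Kac denominator formula for the twisted affine root system $\mathrm{A}_{2k-1}^{(2)}$ (whose dual Coxeter number is $2k$). Recall that \eqref{Eq_PCn2}, with $K=2k$, identifies $P_{(k^n)}^{\mathrm{C}_n}(x;t,t)$ with a $k\times k$ determinant built from the $\dot e_r(x)=e_r(x^{\pm})$, and that the summation over $y\in\mathbb{Z}^k$ with $\abs{y}$ even, together with the $S_k$ of the determinant and the sign changes implicit in the entries $\dot e_{n-i+j-Ky_i}-\dot e_{n+i+j-Ky_i}$, realises the affine Weyl group of $\mathrm{A}_{2k-1}^{(2)}$. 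The first step is to apply to both sides of \eqref{Eq_PCn2} the renormalised specialisation $P\mapsto\phi_\sigma\big(\eup^{-k\omega_n}P\big)$ and then to let $n\to\infty$; since $\eup^{-k\omega_n}P_{(k^n)}^{\mathrm{C}_n}(x;t,t)$ has constant term $1$, this limit exists as a formal power series in $q$ with coefficients in $\mathbb{Q}[t]$, and at $t=0$ it is precisely the limit occurring in \eqref{Eq_C}.

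On the determinantal side, I would write each $\dot e_m(x)$ as a formal residue of the generating function $E(z)=\prod_{i=1}^n(1+zx_i)(1+z/x_i)$, so that the right-hand side of \eqref{Eq_PCn2} becomes, after combining with the lattice sum over $y$, a pre-limit version of the Weyl--Kac denominator of $\mathrm{A}_{2k-1}^{(2)}$. Applying $\phi_\sigma$, factoring out the $q$-power contributed by $\eup^{-k\omega_n}$, and letting $n\to\infty$ (so that $E(z)$ becomes an infinite product) should then collapse this, by Macdonald's identity for $\mathrm{A}_{2k-1}^{(2)}$ \cite{Kac90,Macdonald72} with $p=tq^{2k}$, to
\[
\frac{(p;p)_{\infty}^{k+1}}{(q;q)_{\infty}^k}
\prod_{i=1}^k \theta\big(q^{2i-\sigma};p^2\big)
\prod_{1\leq i<j\leq k}\theta\big(q^{j-i},q^{i+j-\sigma};p\big),
\]
which is the right-hand side of the theorem; the two distinct moduli $p$ and $p^2$, and the $\sigma$-shift inside $\theta(q^{2i-\sigma};p^2)$, should be the footprints of the short roots of $\mathrm{A}_{2k-1}^{(2)}$ and of the $\mathrm{C}_n$-specific rule $\eup^{-\alpha_n}\mapsto q^{\sigma+1}$ in $\phi_\sigma$.

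On the Littlewood side, I would --- before specialising --- expand $P_{(k^n)}^{\mathrm{C}_n}(x;t,t)$ in ordinary Hall--Littlewood polynomials by means of the bounded Littlewood identity of \cite{RW21} at the parameter value $s=t$: this should express the $\mathrm{C}_n$ Hall--Littlewood polynomial of rectangular shape $(k^n)$ as a sum of $P_{\la}(x;t)$ over partitions $\la$ with $\la_1\leq 2k$, $l(\la)\leq n$, and odd parts occurring with even multiplicity, the coefficient of $P_{\la}(x;t)$ being the product of $t^{m_i(\la)/2}$ over the odd $i$ with $(t;t^2)_{\ceil{m_i(\la)/2}}$ over $1\leq i\leq 2k-1$. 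Applying $P\mapsto\phi_\sigma\big(\eup^{-k\omega_n}P\big)$ and letting $n\to\infty$ then replaces $P_{\la}(x;t)$ by $q^{(\sigma+1)\abs{\la}/2}P_{\la}(1,q,q^2,\dots;t)$ (by homogeneity), removes the constraint $l(\la)\leq n$, and assembles the coefficient into $q^{(\sigma+1)\abs{\la}/2}t^{l(\la^{\odd})/2}\prod_{i=1}^{2k-1}(t;t^2)_{\ceil{m_i(\la)/2}}$ --- turning the left-hand side into exactly the sum on the left of the theorem, the prime recording the even-multiplicity restriction on odd parts. Equating this with the theta-product of the previous step proves the theorem; for $t=0$ it degenerates to \eqref{Eq_JTC} on the determinantal side and to \eqref{Eq_C} on the Rogers--Ramanujan side.

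I expect the main obstacle to be the determinant evaluation: turning the $n\to\infty$ limit of the specialised $k\times k$ determinants into precisely the displayed theta-product requires a careful application of the $\mathrm{A}_{2k-1}^{(2)}$ Macdonald identity together with exact bookkeeping of the normalising $q$- and $t$-powers, the $\sigma$-shift, the interplay of the moduli $p$ and $p^2$, and the overall prefactor $(p;p)_{\infty}^{k+1}/(q;q)_{\infty}^k$. A secondary difficulty will be confirming that $s=t$ is the parameter value of the bounded Littlewood identity of \cite{RW21} that reproduces the summand $t^{l(\la^{\odd})/2}\prod_i(t;t^2)_{\ceil{m_i(\la)/2}}$ exactly --- with no spurious factors, and with the even-multiplicity restriction on odd parts falling out of the identity rather than being imposed by hand.
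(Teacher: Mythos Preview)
Your proposal is essentially the paper's conditional derivation (Section~\ref{Sec_qtRR-1}): start from \eqref{Eq_PCn2}, replace the left-hand side by the bounded Littlewood identity \eqref{Eq_Stembridge2} (which is precisely the $s=t$ case you anticipate, with exactly the summand $t^{l(\la^{\odd})/2}\prod_i(t;t^2)_{\ceil{m_i(\la)/2}}$ and the even-multiplicity restriction built in), specialise, let $n\to\infty$, and apply the $\mathrm{A}_{2k-1}^{(2)}$ Macdonald identity. So the strategy is correct and matches the paper.

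Two points of divergence are worth noting. First, your handling of the determinantal side via formal residues of $E(z)$ is vaguer than what the paper actually does: the paper obtains the two cases $\sigma\in\{0,1\}$ by keeping the $\dot e$-determinant for $\sigma=0$ (specialising $x_i=q^{i-1/2}$) but, for $\sigma=1$, first invoking Lemma~\ref{Lem_determinant} (equation~\eqref{Eq_doteddote2} with $u=1$) to pass to a $\ddot e$-determinant and then specialising $x_i=q^i$. This device is what makes both $\sigma$ values fall out cleanly as $q$-binomial sums before the limit, and it is the concrete substitute for your residue idea. Second, and more importantly, your argument --- like the paper's Section~\ref{Sec_qtRR-1} --- is only a \emph{conditional} proof, since \eqref{Eq_PCn2} is conjectural. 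The paper's actual unconditional proof (Section~5.3) is different: it observes that for $t=q^N$, $N\geq 1$, the identity specialises to Rogers--Ramanujan identities for characters of standard modules (Corollaries~\ref{Cor_15}--\ref{Cor_18}), proved independently via \cite[Proposition~5.10]{RW21} and Macdonald identities; since both sides are analytic in $t$ near $0$ and agree on the sequence $q^N\to 0$, Ismail's identity-theorem argument forces equality for all $t$. Your proposal does not supply this step, so as written it establishes the theorem only modulo Conjecture~\ref{Con_JTC}.
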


In the $t=0$ limit, these theorems simplify to \eqref{Eq_B} and \eqref{Eq_C},
respectively.

It will be shown in Section~\ref{Sec_qtRR} how
Conjectures~\ref{Con_JTC} and \ref{Con_JTBC} imply the above results.
Of course, since the latter are conjectures, they do not imply
proofs of Theorems~\ref{Thm_qtRR}--\ref{Thm_qtRR4}, but once discovered
it is not hard to provide a proof that does not rely on the conjectures,
see Section~\ref{Sec_Ismail}.
In Section~\ref{Sec_qtRR}, it will also be shown how the Andrews--Gordon
and GOW identities follow from~\eqref{Eq_qt-one} by a novel manifestation
of level-rank duality.
For now, we remark that by
\[
P_{(2^r)}\bigl(1,q,q^2,\dots;q\bigr)=\frac{q^{r^2-r}}{(q;q)_r}
\]
the identity \eqref{Eq_qt-one} for $t=q$ and $k=1$ \big(and thus $p=q^5$\big)
simplifies to the Rogers--Ramanujan identities
\cite{Rogers94,Rogers17,RR19}
\[
\sum_{r=0}^{\infty}\frac{q^{r^2}}{(q;q)_r}
=\frac{\bigl(q^2,q^3,q^5;q^5\bigr)_{\infty}}{(q;q)_{\infty}}
\quad\text{and}\quad
\sum_{r=0}^{\infty}\frac{q^{r^2+r}}{(q;q)_r}
=\frac{\bigl(q,q^4,q^5;q^5\bigr)_{\infty}}{(q;q)_{\infty}},
\]
where $(a;q)_r=\prod_{i=0}^{r-1}\bigl(1-aq^i\bigr)$ and
$(a_1,\dots,a_k;q)_{\infty}=\prod_{i=1}^k (a_i;q)_{\infty}$.


\subsection{Outline}

The remainder of this paper is organised as follows.
In Section~\ref{Sec_SF}, we prepare the necessary symmetric function
material needed for the rest of the paper.
Most of what is covered in this section is well known, and apart from some
easy to prove, marginally new results for Hall--Littlewood polynomials,
the only new results of some depth are the two determinant
identities for elementary symmetric functions stated in
Lemma~\ref{Lem_determinant}.
These identities will be important in our applications of
Conjectures~\ref{Con_JTB}--\ref{Con_JTBC}.
In Section~\ref{Sec_GLn}, we prove Theorem~\ref{Thm_GLn} using results
from crystal base theory due to Schilling and Shimozono.
We also discuss a number of known special cases of the theorem
as well as higher-level generalisations of Theorem~\ref{Thm_GLn}
related to level-restricted Kostka polynomials.
In Section~\ref{Sec_JTBC}, we use bounded Littlewood identities
and Bailey pairs to prove the Conjectures~\ref{Con_JTB}--\ref{Con_JTBC}
for $k=1$.
We further apply known results for cylindric Schur functions to prove
two of the six identities from the conjectures for $t=1$.
In Section~\ref{Sec_qtRR}, we show that the
$q,t$-Rogers--Ramanujan identities stated in
Theorems~\ref{Thm_qtRR}--\ref{Thm_qtRR4} arise by specialising
Conjectures~\ref{Con_JTB}--\ref{Con_JTBC}.
It is then shown that by specialising $t$ to arbitrary
positive powers of $q$ the $q,t$-Rogers--Ramanujan
identities lead to many new and old Rogers--Ramanujan identities
for specialised characters of standard modules of affine Lie algebras.
We use these Rogers--Ramanujan identities for standard modules together with
Ismail's analytic argument to provide a proof of the
$q,t$-Rogers--Ramanujan identities that is not reliant on the
validity of Conjectures~\ref{Con_JTB}--\ref{Con_JTBC}.
In Section~\ref{Sec_open}, we conclude the main part of the paper
with a list of open problems related to our work.
Finally, in the appendix a proof is given to a new family of
Rogers--Ramanujan-type identities related to one of our applications
of Theorem~\ref{Thm_qtRR4} discussed in Section~\ref{Sec_standard}.
This family of identities was discovered by Matthew Russell after reading
an earlier version of this paper.

\section{Symmetric functions}\label{Sec_SF}


\subsection{(Cylindric) Schur functions}

Let $\la=(\la_1,\la_2,\dots)$ be a partition, i.e., a weakly decreasing
sequence of nonnegative integers such that $\abs{\la}:=\la_1+\la_2+\cdots$
is finite.
If $\abs{\la}=m$, we say that $\la$ is a partition of $m$, denoted as~${\la\vdash m}$.
The parts of $\la$ are the positive $\la_i$ in the sequence,
and the number of parts of $\la$ is its length, denoted by $l(\la)$.
For all $i\geq 1$, $m_i=m_i(\la)$ is the multiplicity of parts of size
$i$ in $\la$.
We alternatively write $\la$ in multiplicity notation as
$\la=(r^{m_r},\dots,1^{m_1})$, where $r$ is the largest part of~$\la$.
Here we typically omit $i^{m_i}$ if $m_i=0$.
For example, the partition $(4,3,3,3,1,1)$ is also written as
$\bigl(4^1,3^3,1^2\bigr)$.
A partition of the form $(k^r)$ is referred to as a rectangular
partition or a~partition of rectangular shape.
Given partitions $\la=(\la_1,\la_2,\dots)$ and $\mu=(\mu_1,\mu_2,\dots)$,
we write~${\mu\subseteq\la}$ if $\la_i-\mu_i\geq 0$ for all $i\geq 1$,
and say that $\mu$ is contained in $\la$.
Given $\mu\subseteq\la$, we write~${\mu\prec\la}$ if the interlacing conditions
$\la_1\geq\mu_1\geq\la_2\geq\mu_2\geq\cdots$ hold.
The set of partitions of length at most $n$ will be denoted by
$\Par_n$ and the set of partitions contained in the rectangle~$(k^n)$
by~$\Par_{n,k}$.
Given a partition $\la$, its Young diagram corresponds to the diagram
obtained by drawing $l(\la)$ rows of left-aligned boxes or squares, such that
the $i$-th row contains~$\la_i$ squares.
The Young diagram of the partition $\nu=(4,3,3,3,1,1)$ is given by
the left-most of the following two diagrams:
\begin{center}
\begin{tikzpicture}[scale=0.33,line width=0.3pt]
\draw (-1.7,3) node {$\nu=$};
\draw (0,0)--(1,0)--(1,2)--(3,2)--(3,5)--(4,5)--(4,6)--(0,6)--cycle;
\draw (0,1)--(1,1);\draw (0,2)--(1,2);\draw (0,3)--(3,3);
\draw (0,4)--(3,4);\draw (0,5)--(3,5);
\draw (1,2)--(1,6);\draw (2,2)--(2,6);\draw (3,5)--(3,6);
\draw (10.3,4) node {$\nu'=$};
\draw (12,2)--(13,2)--(13,3)--(16,3)--(16,5)--(18,5)--(18,6)--(12,6)--cycle;
\draw (12,3)--(13,3);\draw (12,4)--(16,4);\draw(12,5)--(16,5);
\draw (13,3)--(13,6);\draw (14,3)--(14,6);\draw (15,3)--(15,6);
\draw (16,5)--(16,6);\draw (17,5)--(17,6);
\end{tikzpicture}
\end{center}
We typically do not distinguish between a partition and its Young diagram.
The conjugate of the partition $\la$, denoted $\la'$, is obtained by reflecting
the diagram of $\la$ in the main diagonal, so that the conjugate of the
partition $\nu$ in our example corresponds to the above Young diagram on
the right.
In our subsequent discussion of cylindric tableaux, it will be convenient to
assume that Young diagrams are made up of unit squares, so that the length
of the $i$-th row of $\la$ is $\la_i$ and the length of the $j$-th column is
$\la'_j$.
If $\mu\subseteq\la$, we denote by $\la/\mu$ the skew diagram obtained
by removing those boxes of $\la$ that are also contained in $\mu$.
If $\mu\prec\la$, then the skew diagram $\la/\mu$ has at most one box in each
column and is known as a horizontal strip.

For partitions $\mu\subseteq\la$ and (weak) composition
$\alpha=(\alpha_1,\dots,\alpha_n)$ such that $\abs{\alpha}=\abs{\la/\mu}$,
let $\SSYT_n(\la/\mu,\alpha)$ denote the set of semistandard Young
tableaux of shape $\la/\mu$ and filling $\alpha$.
That is, a tableaux $T$ in $\SSYT_n(\la/\mu,\alpha)$ correspond to a
filling of the Young diagram of $\la/\mu$ such that $\alpha_i$ squares
are filled with the number $i$ and such that rows are weakly increasing
from left to right and columns are strictly increasing from top to bottom.
Also set $\SSYT_n(\la/\mu)=\bigcup_{\alpha} \SSYT_n(\la/\mu,\alpha)$.
An example of tableau in $\SSYT_7((6,6,6,4,1)/(3,2),(1,2,3,3,4,3,2))$
is given by
\begin{center}
\begin{tikzpicture}[scale=0.33,line width=0.3pt]
\draw (0,0)--(1,0)--(1,1)--(4,1)--(4,2)--(6,2)--(6,5)--(3,5)--
(3,4)--(2,4)--(2,3)--(0,3)--cycle;
\draw (0,1)--(1,1); \draw (0,2)--(4,2);
\draw (2,3)--(6,3); \draw (3,4)--(6,4);
\draw (1,1)--(1,3); \draw (2,1)--(2,3);
\draw (3,1)--(3,4); \draw (4,2)--(4,5);
\draw (5,2)--(5,5);
\begin{scope}[red]
\draw (3.5,4.5) node {$1$};
\draw (4.5,4.5) node {$2$};
\draw (5.5,4.5) node {$3$};
\draw (2.5,3.5) node {$2$};
\draw (3.5,3.5) node {$3$};
\draw (4.5,3.5) node {$3$};
\draw (5.5,3.5) node {$4$};
\draw (0.5,2.5) node {$4$};
\draw (1.5,2.5) node {$4$};
\draw (2.5,2.5) node {$5$};
\draw (3.5,2.5) node {$5$};
\draw (4.5,2.5) node {$6$};
\draw (5.5,2.5) node {$7$};
\draw (0.5,1.5) node {$5$};
\draw (1.5,1.5) node {$5$};
\draw (2.5,1.5) node {$6$};
\draw (3.5,1.5) node {$7$};
\draw (0.5,0.5) node {$6$};
\end{scope}
\end{tikzpicture}
\end{center}
Recall from \eqref{Eq_sT} that, for $\la\in\Par_n$, the Schur
function $s_{\la}(x_1,\dots,x_n)$ may be expressed as a~sum over
tableaux in $\SSYT_n(\la)$. We now wish to extend this to the case of
cylindric Schur functions~\cite{HKKO25,Lee19,McNamara06,Postnikov05}.
For positive integers $k$, $n$ and nonnegative integer $\ell$, let
\smash{$\Par_{n,k}^{\ell}$} be the set of partitions $\la$ contained in $(k^n)$
such that $\la'_1-\la'_k\leq\ell$.
For \smash{$\la\in\Par_{n,k}^{\ell}$} and $T\in\SSYT_n(\la,\alpha)$,
let $\overline{T}$ be the copy of $T$ obtained by translating $T$ by
$k$ units to the right and $\ell$ units up.
Since~${\la'_1-\la'_k\leq\ell}$, the union of $T$ and $\overline{T}$
is a column strict skew tableaux of shape~${\nu/\bigl(k^{\ell}\bigr)}$, where
\[
\nu_i=\begin{cases}
\la_i+k & \text{for $1\leq i\leq\ell$}, \\
\la_i+\la_{i-\ell} & \text{for $\ell<i\leq n+\ell$}
\end{cases}
\]
and $\la_i=0$ for $i>n$.
If, beyond column-strictness, $T\cup\overline{T}$ is semistandard,
$T$ is said to be cylindric.
The set of all cylindric semistandard Young tableaux of shape $\la$
and filling $\alpha$ will be denoted by \smash{$\CSSYT_{n;k,\ell}(\la,\alpha)$},
and once again we set
\smash{$\CSSYT_{n;k,\ell}(\la)=\bigcup_{\alpha}\CSSYT_{n;k,\ell}(\la,\alpha)$}.
Alternatively, \smash{$T\in\CSSYT_{n;k,\ell}(\la,\alpha)$} may be viewed as a
tableau of shape \smash{$\la\in\Par_{n,k}^{\ell}$} wrapped around a cylinder
of width $k$ with vertical offset $\ell$ as in the following diagram for
$\la=(6,6,6,4,1)$, $k=6$, $\ell=3$, $n=7$ and $\alpha=(4,4,3,3,4,3,2)$:
\usetikzlibrary{math}
\tikzmath{\a=6;}
\tikzmath{\b=3;}
\begin{center}
\begin{tikzpicture}[scale=0.33,line width=0.3pt]
\draw[thick] (0,0)--(1,0)--(1,1)--(4,1)--(4,2)--(6,2)--(6,5)--(0,5)--cycle;
\draw (0,1)--(1,1); \draw (0,2)--(4,2);
\draw (0,3)--(6,3); \draw (0,4)--(6,4);
\draw (1,1)--(1,5); \draw (2,1)--(2,5);
\draw (3,1)--(3,5); \draw (4,2)--(4,5);
\draw (5,2)--(5,5);
\begin{scope}[color=red]
\draw (0.5,4.5) node {$1$};
\draw (1.5,4.5) node {$1$};
\draw (2.5,4.5) node {$1$};
\draw (3.5,4.5) node {$1$};
\draw (4.5,4.5) node {$2$};
\draw (5.5,4.5) node {$3$};
\draw (0.5,3.5) node {$2$};
\draw (1.5,3.5) node {$2$};
\draw (2.5,3.5) node {$2$};
\draw (3.5,3.5) node {$3$};
\draw (4.5,3.5) node {$3$};
\draw (5.5,3.5) node {$4$};
\draw (0.5,2.5) node {$4$};
\draw (1.5,2.5) node {$4$};
\draw (2.5,2.5) node {$5$};
\draw (3.5,2.5) node {$5$};
\draw (4.5,2.5) node {$6$};
\draw (5.5,2.5) node {$7$};
\draw (0.5,1.5) node {$5$};
\draw (1.5,1.5) node {$5$};
\draw (2.5,1.5) node {$6$};
\draw (3.5,1.5) node {$7$};
\draw (0.5,0.5) node {$6$};
\end{scope}
\draw[thick,->] (-0.5,4)--(-0.5,5);
\draw[thick,->] (-0.5,3)--(-0.5,2);
\draw (-0.5,3.5) node {$\ell$};
\draw[thick,->] (3.5,5.5)--(6,5.5);
\draw[thick,->] (2.5,5.5)--(0,5.5);
\draw (3,5.5) node {$k$};
\draw[thin] (6,3)--(7,3)--(7,4)--(10,4)--(10,5)--(12,5)--(12,8)--(6,8)--cycle;
\draw (6,4)--(7,4); \draw (6,5)--(10,5);
\draw (6,6)--(12,6); \draw (6,7)--(12,7);
\draw (7,4)--(7,8); \draw (8,4)--(8,8);
\draw (9,4)--(9,8); \draw (10,5)--(10,8);
\draw (11,5)--(11,8);
\begin{scope}[color=gray]
\draw (\a+0.5,\b+4.5) node {$1$};
\draw (\a+1.5,\b+4.5) node {$1$};
\draw (\a+2.5,\b+4.5) node {$1$};
\draw (\a+3.5,\b+4.5) node {$1$};
\draw (\a+4.5,\b+4.5) node {$2$};
\draw (\a+5.5,\b+4.5) node {$3$};
\draw (\a+0.5,\b+3.5) node {$2$};
\draw (\a+1.5,\b+3.5) node {$2$};
\draw (\a+2.5,\b+3.5) node {$2$};
\draw (\a+3.5,\b+3.5) node {$3$};
\draw (\a+4.5,\b+3.5) node {$3$};
\draw (\a+5.5,\b+3.5) node {$4$};
\draw (\a+0.5,\b+2.5) node {$4$};
\draw (\a+1.5,\b+2.5) node {$4$};
\draw (\a+2.5,\b+2.5) node {$5$};
\draw (\a+3.5,\b+2.5) node {$5$};
\draw (\a+4.5,\b+2.5) node {$6$};
\draw (\a+5.5,\b+2.5) node {$7$};
\draw (\a+0.5,\b+1.5) node {$5$};
\draw (\a+1.5,\b+1.5) node {$5$};
\draw (\a+2.5,\b+1.5) node {$6$};
\draw (\a+3.5,\b+1.5) node {$7$};
\draw (\a+0.5,\b+0.5) node {$6$};
\end{scope}
\draw[thin] (-6,-3)--(-5,-3)--(-5,-2)--(-2,-2)--(-2,-1)--(0,-1)--(0,2)--(-6,2)--cycle;
\draw (-6,-2)--(-5,-2); \draw (-6,-1)--(-2,-1);
\draw (-6,0)--(0,0); \draw (-6,1)--(0,1);
\draw (-5,-3)--(-5,2); \draw (-4,-2)--(-4,2);
\draw (-3,-2)--(-3,2); \draw (-2,-1)--(-2,2);
\draw (-1,-1)--(-1,2);
\begin{scope}[color=gray]
\draw (-\a+0.5,-\b+4.5) node {$1$};
\draw (-\a+1.5,-\b+4.5) node {$1$};
\draw (-\a+2.5,-\b+4.5) node {$1$};
\draw (-\a+3.5,-\b+4.5) node {$1$};
\draw (-\a+4.5,-\b+4.5) node {$2$};
\draw (-\a+5.5,-\b+4.5) node {$3$};
\draw (-\a+0.5,-\b+3.5) node {$2$};
\draw (-\a+1.5,-\b+3.5) node {$2$};
\draw (-\a+2.5,-\b+3.5) node {$2$};
\draw (-\a+3.5,-\b+3.5) node {$3$};
\draw (-\a+4.5,-\b+3.5) node {$3$};
\draw (-\a+5.5,-\b+3.5) node {$4$};
\draw (-\a+0.5,-\b+2.5) node {$4$};
\draw (-\a+1.5,-\b+2.5) node {$4$};
\draw (-\a+2.5,-\b+2.5) node {$5$};
\draw (-\a+3.5,-\b+2.5) node {$5$};
\draw (-\a+4.5,-\b+2.5) node {$6$};
\draw (-\a+5.5,-\b+2.5) node {$7$};
\draw (-\a+0.5,-\b+1.5) node {$5$};
\draw (-\a+1.5,-\b+1.5) node {$5$};
\draw (-\a+2.5,-\b+1.5) node {$6$};
\draw (-\a+3.5,-\b+1.5) node {$7$};
\draw (-\a+0.5,-\b+0.5) node {$6$};
\draw[thin] (12,6)--(14,6);
\draw[thin] (12,8)--(12,9);
\draw[thin,dotted] (13,7.5) node {$\iddots$};
\draw[thin] (-8,-1)--(-6,-1);
\draw[thin] (-6,-3)--(-6,-4);
\draw[thin,dotted] (-7,-2) node {$\iddots$};
\end{scope}
\end{tikzpicture}
\end{center}
Combining \eqref{Eq_sT} and \eqref{Eq_JTe}, we have
\begin{equation}\label{Eq_xTe}
\sum_{T\in\SSYT_n(\la)} x^T
=\det_{1\leq i,j\leq k} \bigl(e_{\la'_i-i+j}(x)\bigr)
\end{equation}
for $\la\in\Par_{n,k}$ and $x=(x_1,\dots,x_n)$.
Huh et al.~\cite[Proposition 2.6]{HKKO25} generalised this as follows.

\begin{Proposition}\label{Prop_cSchur}
Let $\la\in\Par_{n,k}^{\ell}$ and $x=(x_1,\dots,x_n)$.
Then
\begin{equation}\label{Eq_cSchur}
\sum_{T\in\CSSYT_{n;k,\ell}(\la)} x^T =
\sum_{\substack{y\in\mathbb{Z}^k \\[1pt] \abs{y}=0}}
\det_{1\leq i,j\leq k} \bigl(e_{\la'_i-i+j-(k+\ell)y_i}(x)\bigr).
\end{equation}
\end{Proposition}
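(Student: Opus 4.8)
The identity \eqref{Eq_cSchur} is the cylindric lift of the lattice-path proof of the dual Jacobi--Trudi identity \eqref{Eq_xTe}, and the plan is to run that proof one level up, on a cylinder. First I would recall the standard Lindström--Gessel--Viennot (LGV) encoding behind \eqref{Eq_xTe}: for $\la\in\Par_{n,k}$ one reads column $j$ of a tableau $T\in\SSYT_n(\la)$ (from top to bottom) as a monotone lattice path $P_j$ in $\mathbb{Z}^2$ joining a fixed source $A_j$ to a fixed sink $B_j$, the endpoints being chosen so that the total weight of a single path from $A_i$ to $B_j$ equals $e_{\la'_i-i+j}(x)$; the weight of the family $(P_1,\dots,P_k)$ is $x^T$, and $(P_1,\dots,P_k)$ is vertex-disjoint precisely when the rows of $T$ weakly increase. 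The LGV lemma then gives $\sum_{T\in\SSYT_n(\la)}x^T=\det_{1\le i,j\le k}(e_{\la'_i-i+j}(x))$ by the sign-reversing involution that swaps path tails at a first intersection.

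Next I would observe that the semistandardness of $T\cup\overline{T}$ built into the definition of $T\in\CSSYT_{n;k,\ell}(\la)$ — and hence, by periodicity, of the entire bi-infinite array obtained by iterating the translation by $(k,\ell)$ — says exactly that the $\mathbb{Z}$-periodic family $\{P_j+m(k,\ell):1\le j\le k,\ m\in\mathbb{Z}\}$ is vertex-disjoint; the hypothesis $\la\in\Par_{n,k}^\ell$, i.e.\ $\la'_1-\la'_k\le\ell$, is precisely what makes the translated copies placeable without overlap. I would then apply the LGV lemma on the cylinder $\mathbb{Z}^2/\mathbb{Z}(k,\ell)$ — equivalently, the reflection principle for lattice walks confined to a chamber whose reflection group is the affine Weyl group $W(\mathrm{A}_{k-1}^{(1)})=S_k\ltimes Q$ with $Q=\{y\in\mathbb{Z}^k:\abs{y}=0\}$. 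This expands $\sum_T x^T$ as $\sum_{w\in W(\mathrm{A}_{k-1}^{(1)})}\sgn(w)\prod_j(\text{weight of a single path from }A_j\text{ to the sink indexed }w(j)\text{ in the periodically extended family})$. Writing $w=\sigma\cdot t_y$ with $\sigma\in S_k$ and $y\in Q$, the translation $t_y$ moves the $i$-th sink by $(k+\ell)y_i$ units, so the single-path weight becomes $e_{\la'_i-i+\sigma(i)-(k+\ell)y_i}(x)$; resumming over $\sigma\in S_k$ as a determinant produces the right-hand side of \eqref{Eq_cSchur}. That only $y\in Q$ occurs is consistent with homogeneity: $x^T$ has degree $\abs{\la}$ while the $y$-summand has degree $\abs{\la}-(k+\ell)\abs{y}$.

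The step I expect to be the main obstacle is making the sign-reversing involution legitimate in the periodic/cylindric setting. In the planar case one swaps the tails of two paths beyond their first intersection point; here one must fix a fundamental window, equip the periodic set of intersection points with a canonical total order so that ``first intersection'' is unambiguous, check that the tail-swap commutes with the $\mathbb{Z}$-translation (so that it descends to the quotient and preserves periodicity of the family), and verify that on the affine Weyl group it realises $w\mapsto ws$ for a reflection $s$ and hence flips $\sgn(w)$. Granting this, every non-disjoint periodic family cancels in pairs, leaving exactly the families that encode $\CSSYT_{n;k,\ell}(\la)$, which proves \eqref{Eq_cSchur}. Since \eqref{Eq_cSchur} is \cite[Proposition~2.6]{HKKO25}, one may of course alternatively simply invoke that reference.
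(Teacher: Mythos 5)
The paper does not actually prove Proposition~\ref{Prop_cSchur}: it is imported verbatim from \cite[Proposition 2.6]{HKKO25}, so the ``paper's proof'' is the citation, and the fallback you mention at the end of your proposal is exactly what the author does. Your sketch, by contrast, outlines the standard cylindric Lindstr\"om--Gessel--Viennot / reflection argument (in the spirit of Gessel--Krattenthaler's work on cylindric partitions), which is a legitimate and essentially canonical route to such identities; if carried out it would give a self-contained proof rather than an appeal to the literature, which is the main thing it would buy over the paper's treatment.

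As a proof, however, what you have written is a strategy with the decisive step still open, and you say so yourself. Two points in that step are genuinely delicate and should be named rather than deferred. First, in the periodic setting a path can meet a translate of \emph{itself}, and the naive ``swap tails at the first intersection'' map applied simultaneously to all translates need not be a well-defined, sign-reversing involution; this is precisely the issue the cylindric LGV literature has to work around, so ``fix a fundamental window and a canonical order of intersections'' is not yet an argument. Second, the expansion should a priori run over connection types in the \emph{extended} affine symmetric group $S_k\ltimes\mathbb{Z}^k$, and your reason for seeing only $S_k\ltimes Q$ (with $Q=\{y:\abs{y}=0\}$) is a homogeneity consistency check, not a derivation; in a genuine proof the restriction to $Q$ must come out of the geometry (e.g.\ by setting the problem up as a Gessel--Zeilberger reflection argument, where the group generated by the reflections is automatically the non-extended affine Weyl group, as you hint). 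Until those two points are settled, the cancellation of non-disjoint families is asserted rather than proved. For the purposes of this paper, simply invoking \cite[Proposition 2.6]{HKKO25} is the correct and sufficient move; if you want an independent proof, the two issues above are where the real work lies.
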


The symmetric function in this proposition is known as a cylindric
Schur function and is denoted as \smash{$s_{\mu[k,\ell]'}(x)$} for
$\mu=\la'$ in \cite{HKKO25}.
For $\ell\geq l(\la)$, $\CSSYT_{n;k,\ell}(\la)=\SSYT_n(\la)$
and \smash{$\det\bigl(e_{\la'_i-i+j-(k+\ell)y_i}(x)\bigr)=0$} unless $y_1=\dots=y_n=0$.
Hence \eqref{Eq_cSchur} includes \eqref{Eq_xTe} as a special case.

For later reference, we consider \eqref{Eq_cSchur} for $\ell=0$ and
$\ell=1$.
Since \smash{$\Par_{n,k}^0=\{(k^r) \mid 0\leq r\leq n\}$} and since the cyclic
condition for $\ell=0$ implies that all boxes in row $j$ of
$T\in\CSSYT_{n;k,0}((k^r))$ have the same filling, say $i_j$,
column-strictness implies
\begin{equation}\label{Eq_nul}
\sum_{T\in\CSSYT_{n;k,0}((k^r))}x^T=\sum_{1\leq i_1<i_2<\cdots<i_r\leq n}
x_{i_1}^k \cdots x_{i_r}^k = e_r\bigl(x_1^k,\dots,x_n^k\bigr)=e_r\bigl(x^k\bigr).
\end{equation}
For $\ell=1$, the set of admissible partitions is the set of
near-rectangles
\[
\Par_{n,k}^1=\{(k^r,a) \mid r\geq 0,\, 0\leq a<k\}.
\]
Since $\ell=1$, the set $\CSSYT_{n;k,1}((k^r,a),\alpha)$ for
$\abs{\alpha}=kr+a$ contains at most one tableau.
Column-strictness imposes the obvious condition $\alpha_i\leq k$
for all $1\leq i\leq n$.
Moreover, if a given row of $T\in\SSYT_n((k^r,a),\alpha)$ ends with
a box with filling $i$ and the next row begins with a box
with filling $j<i$, then $T\not\in\CSSYT_{n;k,1}((k^r,a),\alpha)$.
Hence, assuming $\abs{\alpha}=kr+a$ and $\alpha_i\leq k$,
the only $T\in\SSYT_n((k^r,a),\alpha)$ contained in
$\CSSYT_{n;k,1}((k^r,a),\alpha)$ is given by the tableau
whose word obtained by reading rows from left
to right starting with the top row
and ending with the bottom row is given by
$1^{\alpha_1},2^{\alpha_2},\dots,n^{\alpha_n}$ as in
\begin{center}
\begin{tikzpicture}[scale=0.33,line width=0.3pt]
\draw (0,0)--(3,0)--(3,1)--(6,1)--(6,5)--(0,5)--cycle;
\foreach \y in {1,2,3,4} \draw (0,\y)--(6,\y);
\foreach \x in {1,2} \draw (\x,0)--(\x,5);
\foreach \x in {3,4,5} \draw (\x,1)--(\x,5);
\begin{scope}[color=red]
\draw (0.5,4.5) node {$1$};
\draw (1.5,4.5) node {$1$};
\draw (2.5,4.5) node {$1$};
\draw (3.5,4.5) node {$1$};
\draw (4.5,4.5) node {$2$};
\draw (5.5,4.5) node {$2$};
\draw (0.5,3.5) node {$2$};
\draw (1.5,3.5) node {$2$};
\draw (2.5,3.5) node {$3$};
\draw (3.5,3.5) node {$3$};
\draw (4.5,3.5) node {$3$};
\draw (5.5,3.5) node {$4$};
\draw (0.5,2.5) node {$4$};
\draw (1.5,2.5) node {$4$};
\draw (2.5,2.5) node {$5$};
\draw (3.5,2.5) node {$5$};
\draw (4.5,2.5) node {$5$};
\draw (5.5,2.5) node {$5$};
\draw (0.5,1.5) node {$6$};
\draw (1.5,1.5) node {$6$};
\draw (2.5,1.5) node {$6$};
\draw (3.5,1.5) node {$6$};
\draw (4.5,1.5) node {$6$};
\draw (5.5,1.5) node {$7$};
\draw (0.5,0.5) node {$7$};
\draw (1.5,0.5) node {$7$};
\draw (2.5,0.5) node {$8$};
\end{scope}
\draw[thick,->] (6.5,3.5)--(6.5,5);
\draw[thick,->] (6.5,2.5)--(6.5,1);
\draw (6.5,3) node {$r$};
\draw[thick,->] (3.5,5.5)--(6,5.5);
\draw[thick,->] (2.5,5.5)--(0,5.5);
\draw (3,5.5) node {$k$};
\draw[thick,->] (2,-0.5)--(3,-0.5);
\draw[thick,->] (1,-0.5)--(0,-0.5);
\draw (1.5,-0.5) node {$a$};
\end{tikzpicture}
\end{center}
Hence for $\la\in\Par_{n,k}^1$,
\begin{equation}
\sum_{T\in\CSSYT_{n;k,1}(\la)} x^T =
\sum_{\substack{\mu\in\Par_{n,k}\\[1pt]
\abs{\mu}=\abs{\la}}} \sum_{\alpha\in S_n\cdot\mu} x^{\alpha}
=\sum_{\substack{\mu\in\Par_{n,k}\\[1pt]\abs{\mu}=\abs{\la}}}
m_{\mu}(x), \label{Eq_een}
\end{equation}
where $S_n\cdot\mu$ denotes the $S_n$-orbit of $\mu=(\mu_1,\dots,\mu_n)$
and $m_{\mu}(x)$ is the monomial symmetric function indexed by $\mu$.


\subsection[Hall--Littlewood polynomials for GL(n,C)]{Hall--Littlewood polynomials for $\boldsymbol{\mathrm{GL}(n,\mathbb{C})}$}
\label{Sec_HL-Gln}

Let $x=(x_1,\dots,x_n)$ and $S_n$ the symmetric group of degree $n$
with natural action on polynomials in $x$.
Then the ordinary or $\mathrm{GL}(n,\mathbb{C})$ Hall--Littlewood
polynomial $P_{\la}(x;t)$ is defined as \cite{Macdonald95}
\begin{equation*}
P_{\la}(x;t)=\frac{1}{v_{\la}(t)}
\sum_{w\in S_n} w\biggl(x_1^{\la_1}\cdots x_n^{\la_n}
\prod_{1\leq i<j\leq n}\frac{x_i-tx_j}{x_i-x_j}\biggr),
\end{equation*}
where
\[
v_{\la}(t)=
\frac{(t;t)_{n-l(\la)}}{(1-t)^{n-l(\la)}}\prod_{i\geq 1}
\frac{(t;t)_{m_i(\la)}}{(1-t)^{m_i(\la)}}.
\]
The Hall--Littlewood polynomials \smash{$\{P_{\la}(x;t)\}_{l(\la)\leq n}$}
form a basis of the ring of symmetric functions with coefficients
in $\mathbb{Z}[t]$.
In analogy with the Schur functions $s_{\la}(x)=P_{\la}(x;0)$, we set
$P_{\la}(x;t)=0$ if $\la$ is a partition such that $l(\la)>n$.
By the stability property,
\[
P_{\la}(x_1,\dots,x_n,0;t)=\begin{cases}
P_{\la}(x_1,\dots,x_n;t) & \text{if $l(\la)\leq n$}, \\
0 & \text{otherwise},
\end{cases}
\]
the Hall--Littlewood polynomials may be extended to symmetric functions
in countably many variables, $P_{\la}(x_1,x_2,\dots;t)$.
Then the number of variables in a result such as Theorem~\ref{Thm_GLn}
becomes irrelevant, and alternatively it may be stated as a result in
the algebra of symmetric functions:
\begin{equation}\label{Eq_Thm_GLn2}
P_{(k^r)}(t)=
\sum_{\substack{y\in\mathbb{Z}^k\\[1pt] \abs{y}=0}}
\det_{1\leq i,j\leq k}
\bigl(t^{k\binom{y_i}{2}+iy_i} e_{r-i+j-ky_i} \bigr),
\end{equation}
with no reference to a particular choice of alphabet.

Special cases of $P_{\la}(x;t)$ other than the Schur function are
\begin{equation}\label{Eq_Pm}
P_{\la}(x;1)=m_{\la}(x)
\end{equation}
and $P_{(1^r)}(x)=e_r(x)$.

For $\alpha$ a composition, let
\smash{$e_{\alpha}(x)=\prod_{i\geq 1} e_{\alpha_i}(x)$}.
Then the transition coefficients $\mathcal{R}_{\la,\alpha}(t)$
between elementary symmetric functions and Hall--Littlewood polynomials
are defined by
\begin{equation}\label{Eq_transition}
e_{\alpha}(x)=\sum_{\la}\mathcal{R}_{\la,\alpha}(t) P_{\la}(x;t).
\end{equation}

For integers $k$, $n$, let
\[
\qbin{n}{k}_q=\begin{cases}
\displaystyle \frac{(q;q)_n}{(q;q)_k(q;q)_{n-k}} & \text{if $0\leq k\leq n$}, \\[3mm]
0 & \text{otherwise},
\end{cases}
\]
be a Gaussian polynomial or $q$-binomial coefficient.

\begin{Theorem}[{Kirillov~\cite[Theorem 3.4]{Kirillov00}}]
For $\alpha=(\alpha_1,\dots,\alpha_k)$,
\begin{equation}\label{Eq_Kir}
\mathcal{R}_{\la,\alpha}(t)=
\sum_{\substack{0=\nu^{(0)}\subseteq\nu^{(1)}
\subseteq\cdots\subseteq \nu^{(k)}=\la' \\[1pt]
\abs{\nu^{(i)}/\nu^{(i-1)}}=\alpha_i}}
\prod_{i=2}^k \prod_{j=1}^{i-1}
\qBin{\nu^{(i)}_j-\nu^{(i)}_{j+1}}{\nu^{(i)}_j-\nu^{(i-1)}_j}_t.
\end{equation}
\end{Theorem}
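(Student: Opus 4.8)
To prove the transition formula \eqref{Eq_Kir}, the plan is to iterate a Pieri rule for Hall--Littlewood polynomials. Since $P_{(1^r)}(x;t)=e_r(x)$, the product on the left of \eqref{Eq_transition} is $e_{\alpha}(x)=P_{(1^{\alpha_1})}(x;t)\cdots P_{(1^{\alpha_k})}(x;t)$, so I would multiply these $k$ factors together one at a time and read off $\mathcal{R}_{\la,\alpha}(t)$ as the coefficient of $P_{\la}$ in the resulting expansion.

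The key input is the Pieri rule for multiplication of a Hall--Littlewood polynomial by an elementary symmetric function (a standard consequence of the branching rule for the $P_{\la}$, see \cite{Macdonald95}):
\[
e_r(x)\,P_{\mu}(x;t)=\sum_{\la}\psi'_{\la/\mu}(t)\,P_{\la}(x;t),
\qquad
\psi'_{\la/\mu}(t)=\prod_{i\geq 1}\qbin{\la'_i-\la'_{i+1}}{\la'_i-\mu'_i}_t .
\]
Here the $q$-binomial $\qbin{\la'_i-\la'_{i+1}}{\la'_i-\mu'_i}_t$ vanishes as soon as $\mu'_i<\la'_{i+1}$, so $\psi'_{\la/\mu}(t)$ is automatically zero unless $\la/\mu$ is a vertical $r$-strip and no restriction on $\la$ need be imposed by hand. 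Applying this rule $k$ times, from left to right, to $P_{(1^{\alpha_1})}\cdots P_{(1^{\alpha_k})}$ and using \eqref{Eq_transition} gives
\[
\mathcal{R}_{\la,\alpha}(t)=
\sum_{\substack{\emptyset=\mu^{(0)}\subseteq\mu^{(1)}\subseteq\cdots\subseteq\mu^{(k)}=\la \\[1pt]
\abs{\mu^{(i)}/\mu^{(i-1)}}=\alpha_i}}\;
\prod_{i=1}^k\psi'_{\mu^{(i)}/\mu^{(i-1)}}(t),
\]
the summand vanishing unless every $\mu^{(i)}/\mu^{(i-1)}$ is a vertical $\alpha_i$-strip.

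It then remains to bring this into the exact form of \eqref{Eq_Kir}. I would pass to conjugates, $\nu^{(i)}:=\bigl(\mu^{(i)}\bigr)'$, so that $\nu^{(0)}=\emptyset$, $\nu^{(k)}=\la'$, each vertical strip $\mu^{(i)}/\mu^{(i-1)}$ becomes a horizontal strip $\nu^{(i-1)}\prec\nu^{(i)}$, the size constraint reads $\abs{\nu^{(i)}/\nu^{(i-1)}}=\alpha_i$, and
\[
\psi'_{\mu^{(i)}/\mu^{(i-1)}}(t)=\prod_{j\geq 1}\qbin{\nu^{(i)}_j-\nu^{(i)}_{j+1}}{\nu^{(i)}_j-\nu^{(i-1)}_j}_t .
\]
Since $\mu^{(i)}$ is obtained from $\emptyset$ by adjoining $i$ vertical strips, its first part is at most $i$, equivalently $l(\nu^{(i)})\leq i$. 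Hence for $i\geq 2$ every factor with $j\geq i$ equals $\qbin{0}{0}_t=1$ (using $\nu^{(i)}_{j+1}=\nu^{(i-1)}_j=0$ there), so the $j$-product truncates to $1\leq j\leq i-1$, while for $i=1$ the length bound forces $\nu^{(1)}=(\alpha_1)$ and the whole factor equals $1$. Substituting back recovers \eqref{Eq_Kir}.

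I expect the only genuinely delicate point to be the second step: recording the Pieri coefficient $\psi'_{\la/\mu}(t)$ in precisely this $q$-binomial form and confirming that it vanishes off vertical strips, after which the iteration, the conjugation, and the truncation of the $j$-range to $\{1,\dots,i-1\}$ are routine bookkeeping.
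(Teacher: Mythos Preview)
The paper does not actually give a proof of this theorem; it is quoted as a known result from Kirillov's paper \cite[Theorem 3.4]{Kirillov00}. So there is no ``paper's own proof'' to compare against, and your argument must be judged on its own merits.

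Your approach via iterated application of the $e$-Pieri rule is correct and is in fact the natural way to establish the formula. The paper itself quotes this very Pieri rule (from \cite[page 215]{Macdonald95}) in the proof of Proposition~\ref{Prop_33bounded}, so all the ingredients you use are available. The iteration, the passage to conjugates $\nu^{(i)}=(\mu^{(i)})'$, and the observation that $l(\nu^{(i)})\leq i$ (because $\mu^{(i)}$ has been built from at most $i$ vertical strips, forcing $(\mu^{(i)})_1\leq i$) are all sound.

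One small inaccuracy: you write that ``every factor with $j\geq i$ equals $\qbin{0}{0}_t=1$''. For $j=i$ the factor is actually $\qbin{\nu^{(i)}_i-\nu^{(i)}_{i+1}}{\nu^{(i)}_i-\nu^{(i-1)}_i}_t=\qbin{\nu^{(i)}_i}{\nu^{(i)}_i}_t$, which need not be $\qbin{0}{0}_t$ since $\nu^{(i)}_i$ can be positive. It still equals $1$, so the truncation of the $j$-product to $1\leq j\leq i-1$ is valid and your conclusion stands; just tighten the wording there.
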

The summand on the right vanishes unless the interlacing condition
$\nu^{(a-1)}\prec\nu^{(a)}$ holds for all $1\leq a\leq k$, so that
one may also think of the sum over the $\nu^{(a)}$ as a sum over
semistandard Young tableaux of shape $\la'$ and filling $\alpha$.
This implies that $\mathcal{R}_{\la,\alpha}(t)=0$ if $\la_1>k$,
where $\alpha=(\alpha_1,\dots,\alpha_k)$, \label{page_R} but also
that if $\la=(k^r,\mu)$ where $\mu$ is a partition such that
$\mu_1<k$, then
\begin{equation}\label{Eq_reduce}
\mathcal{R}_{(k^r,\mu),(\alpha_1,\dots,\alpha_k)}(t)=
\mathcal{R}_{\mu,(\alpha_1-r,\dots,\alpha_k-r)}(t).
\end{equation}
The expression \eqref{Eq_Kir} for $k=2$ simplifies to
\begin{equation}\label{Eq_R}
\mathcal{R}_{(2^r,1^s),(\alpha_1,\alpha_2)}(t)=
\delta_{2r+s,\alpha_1+\alpha_2}\qbin{s}{\alpha_1-r}_t,
\end{equation}
where $\delta_{i,j}$ is a Kronecker delta.
This yields the following simple result, which will be used in the
proof of Conjectures~\ref{Con_JTB}--\ref{Con_JTBC} for $k=1$.
Recall that $\dot{e}_r(x)=e_r\bigl(x^{\pm}\bigr)$.

\begin{Lemma}\label{Lem_eP}
Let $x=(x_1,\dots,x_n)$ and $k$ an integer such that $-n\leq k\leq n$.
Then
\begin{equation*}
(x_1\cdots x_n) \dot{e}_{n-k}(x)=
\sum_{\substack{r,s\geq 0\\[1pt] s-k \text{ even}}}
\qbin{s}{\frac{1}{2}(s-k)}_t P_{(2^r,1^s)}(x;t).
\end{equation*}
\end{Lemma}

Since the summand on the right vanishes unless $\abs{k}\leq s\leq n-r$,
only finitely many terms contribute to the sum, as it should.

\begin{proof}
By
\begin{equation}\label{Eq_branching}
e_r(x_1,\dots,x_n,y_1,\dots,y_m)=
\sum_{i=0}^r e_i(x_1,\dots,x_n) e_{r-i}(y_1,\dots,y_m)
\end{equation}
and
\[
e_r\bigl(x^{-1}\bigr)=(x_1\cdots x_n)^{-1}\, e_{n-r}(x),
\]
we have
\[
(x_1\cdots x_n) \dot{e}_{n-k}(x)=\sum_{i\geq 0} e_i(x) e_{i+k}(x),
\]
where the summand is nonzero for $\max\{0,-k\}\leq i\leq \min\{n,n-k\}$
only.
Hence
\begin{align*}
(x_1\cdots x_n) \dot{e}_{n-k}(x)
&=\sum_{i\geq 0} e_{(i,i+k)}(x) \\
&=\sum_{i,r,s\geq 0}
\delta_{2r+s,k+2i}\qbin{s}{i-r}_t P_{(2^r,1^s)}(x;t) \\
&=\sum_{\substack{r,s\geq 0\\[1pt] s-k \text{ even}}}
\qbin{s}{\frac{1}{2}(s-k)}_t P_{(2^r,1^s)}(x;t),
\end{align*}
where the second equality follows from \eqref{Eq_R}.
\end{proof}

For $m$ a nonnegative integer, let $H_m(x;t)$ be the
Rogers--Szeg\H{o} polynomial \cite[p.~49]{Andrews76}
\begin{equation}\label{Eq_RS}
H_m(x;t)=\sum_{i=0}^m x^i \qbin{m}{i}_t
\end{equation}
and, for $k$ a positive integer, let \smash{$h_{\la}^{(k)}(a,b;t)$} be the
Rogers--Szeg\H{o} polynomial indexed by the partition $\la$
\cite{BW15,RW21}:
\[
h_{\la}^{(k)}(a,b;t)=
\prod_{\substack{i=1 \\[1pt] i~\text{odd}}}^{k-1}
(-a)^{m_i(\la)}H_{m_i(\la)}(b/a;t)
\prod_{\substack{i=1 \\[1pt] i~\text{even}}}^{k-1}
H_{m_i(\la)}(ab;t).
\]
This polynomial is symmetric in $a$ and $b$, and
\[
h_{(1^r)}^{(k)}(a,b;t)=(-a)^r H_r(b/a;t)=
(-1)^r \sum_{i=0}^r a^{r-i}b^i \qbin{r}{i}_t
\]
provided that $k\geq 2$.
For notational convenience, we also define
\[
\tilde{h}_{\la}^{(k)}(a,b;t):=
a^{\la'_{2k+1}} h_{\la}^{(2k+1)}(-a,-b;t)
= a^{l(\la^{\odd})}
\prod_{i=1}^{2k} H_{m_{2i-1}(\la)}(b/a;t) H_{m_{2i}(\la)}(ab;t),
\]
which is \emph{not} symmetric in $a$ and $b$.

\begin{Proposition}\label{Prop_33bounded}
For $x=(x_1,\dots,x_n)$ and $k$ a nonnegative integer,
\begin{equation}\label{Eq_33bounded}
\Biggl(\prod_{i=1}^n (1+ax_i)\Biggr)
\sum_{\substack{\mu \\[1pt] \mu_1\leq 2k}} b^{l(\mu^{\odd})}
P_{\mu}(x;t)
=\sum_{\substack{\la \\[1pt] \la_1\leq 2k+1}}
\tilde{h}_{\la}^{(k)}(a,b;t) P_{\la}(x;t).
\end{equation}
\end{Proposition}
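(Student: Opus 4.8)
The plan is to expand the product in \eqref{Eq_33bounded} as $\prod_{i=1}^n(1+ax_i)=\sum_{r\ge 0}a^r e_r(x)$ and, since $e_r(x)=P_{(1^r)}(x;t)$, apply the Hall--Littlewood Pieri rule \cite{Macdonald95}
\[
e_r(x)\,P_\mu(x;t)=\sum_{\la}\psi'_{\la/\mu}(t)\,P_{\la}(x;t),
\qquad
\psi'_{\la/\mu}(t)=\prod_{i\ge 1}\qbin{m_i(\la)}{\la'_i-\mu'_i}_t,
\]
the sum being over $\la\supseteq\mu$ with $\la/\mu$ a vertical $r$-strip (equivalently $\mu'\prec\la'$). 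Collecting the coefficient of $P_{\la}(x;t)$ on the left-hand side of \eqref{Eq_33bounded}, and noting that adding a vertical strip to a partition with largest part at most $2k$ yields a partition with largest part at most $2k+1$ (so the expansion is automatically supported on the range of the sum on the right of \eqref{Eq_33bounded}), the proposition reduces to the combinatorial identity
\[
\sum_{\substack{\mu\subseteq\la,\ \mu_1\le 2k\\ \la/\mu\ \text{vertical strip}}}
a^{\abs{\la/\mu}}\,b^{l(\mu^{\odd})}\,\psi'_{\la/\mu}(t)=\tilde{h}_{\la}^{(k)}(a,b;t)
\qquad\text{for all }\la\text{ with }\la_1\le 2k+1.
\]

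To prove this I would parametrise the vertical strips $\la/\mu$ by their column-removal numbers $b_j:=\la'_j-\mu'_j$ ($1\le j\le\la_1$); the interlacing $\mu'\prec\la'$ says precisely that each $b_j$ runs freely over $\{0,1,\dots,m_j(\la)\}$, so that $\psi'_{\la/\mu}(t)=\prod_j\qbin{m_j(\la)}{b_j}_t$ and $\abs{\la/\mu}=\sum_j b_j$. The extra constraint $\mu_1\le 2k$ is equivalent to $\mu'_{2k+1}=0$, hence forces $b_{2k+1}=\la'_{2k+1}=m_{2k+1}(\la)$ (remove all of column $2k+1$ of $\la$) while leaving $b_1,\dots,b_{2k}$ unconstrained. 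Using $\mu'_j=\la'_j-b_j$ one then computes
\[
l(\mu^{\odd})=\sum_{\substack{j\ \text{odd}\\ j\le 2k-1}}m_j(\mu)
=\sum_{\substack{j\ \text{odd}\\ j\le 2k-1}}\bigl(m_j(\la)-b_j+b_{j+1}\bigr)
=\bigl(l(\la^{\odd})-m_{2k+1}(\la)\bigr)-\sum_{i=1}^{k}b_{2i-1}+\sum_{i=1}^{k}b_{2i}.
\]

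Substituting these expressions, the summand $a^{\abs{\la/\mu}}b^{l(\mu^{\odd})}\psi'_{\la/\mu}(t)$ factorises over columns, so the sum over vertical strips breaks up into one-column sums: column $2k+1$ contributes the monomial $a^{m_{2k+1}(\la)}$; each even column $2i$ ($1\le i\le k$) contributes $\sum_{b=0}^{m_{2i}(\la)}(ab)^{b}\qbin{m_{2i}(\la)}{b}_t=H_{m_{2i}(\la)}(ab;t)$; and each odd column $2i-1$ contributes $\sum_{b=0}^{m_{2i-1}(\la)}(a/b)^{b}\qbin{m_{2i-1}(\la)}{b}_t=H_{m_{2i-1}(\la)}(a/b;t)=(a/b)^{m_{2i-1}(\la)}H_{m_{2i-1}(\la)}(b/a;t)$, using $H_m(z;t)=z^m H_m(1/z;t)$. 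The leftover monomial prefactor $b^{\,l(\la^{\odd})-m_{2k+1}(\la)}\cdot a^{m_{2k+1}(\la)}\cdot\prod_i(a/b)^{m_{2i-1}(\la)}$ collapses, since $\sum_{i=1}^k m_{2i-1}(\la)=l(\la^{\odd})-m_{2k+1}(\la)$, to $a^{l(\la^{\odd})}$; the remaining Rogers--Szeg\H{o} factors then reassemble into $a^{l(\la^{\odd})}\prod_{i=1}^{k}H_{m_{2i-1}(\la)}(b/a;t)H_{m_{2i}(\la)}(ab;t)=a^{\la'_{2k+1}}h_{\la}^{(2k+1)}(-a,-b;t)=\tilde{h}_{\la}^{(k)}(a,b;t)$.

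The genuinely delicate point is the bookkeeping in the last two paragraphs: recognising that $\mu_1\le 2k$ has the single effect of forcing complete removal of column $2k+1$ while leaving the lower columns free, and then tracking the various monomial prefactors --- together with the $H_m(z;t)=z^m H_m(1/z;t)$ flip required to convert the argument $a/b$ that arises naturally in the odd-column sums into the argument $b/a$ appearing in $\tilde{h}^{(k)}_{\la}$ --- so that they coalesce into the single factor $a^{l(\la^{\odd})}$. An alternative would be to bypass the direct computation entirely and obtain the proposition as a one-step peeling of the bounded Littlewood identities of \cite{BW15,RW21}.
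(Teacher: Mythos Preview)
Your proof is correct and follows essentially the same approach as the paper: both apply the $e$-Pieri rule for Hall--Littlewood polynomials, sum over $r$ and then over $\mu$ with $\mu_1\leq 2k$, and reduce to a column-by-column factorised sum that assembles into Rogers--Szeg\H{o} polynomials. The only cosmetic difference is the change of variables: the paper sets $\mu'_{2i-1}=j_i+\la'_{2i}$ and $\mu'_{2i}=\la'_{2i}-l_i$, which directly produces $(b/a)^{j_i}(ab)^{l_i}$ without any flip, whereas your uniform parametrisation $b_j=\la'_j-\mu'_j$ yields $(a/b)^{b_{2i-1}}$ and then requires the symmetry $H_m(z;t)=z^mH_m(1/z;t)$ to convert $H_{m_{2i-1}(\la)}(a/b;t)$ into the form appearing in $\tilde{h}_\la^{(k)}$; this is the same symmetry hidden in the paper's implicit use of $\qbin{m}{m-j}_t=\qbin{m}{j}_t$.
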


\begin{proof}
The proof is essentially a repeat of the proof of
\cite[equation~(3.3)]{W06a}, which is an identity
equivalent to the $k\to\infty$ limit of \eqref{Eq_33bounded}.

We begin by recalling the $e$-Pieri rule for Hall--Littlewood polynomials
\cite[p.~215]{Macdonald95}
\[
P_{\mu}(x;t) e_r(x) = \sum_{\la\vdash\abs{\mu}+r} P_{\la}(x;t)
\prod_{i\geq 1} \qbin{\la'_i-\la'_{i+1}}{\la'_i-\mu'_i}_t.
\]
The summand on the right vanishes unless $\mu'\prec\la'$.
Multiplying both sides by $a^r$ and then summing over $r$, we obtain
\[
\Biggl(\prod_{i=1}^n (1+ax_i)\Biggr) P_{\mu}(x;t) =
\sum_{\la} a^{\abs{\la/\mu}} P_{\la}(x;t)
\prod_{i\geq 1} \qbin{\la'_i-\la'_{i+1}}{\la'_i-\mu'_i}_t.
\]
Further multiplying this by \smash{$b^{l(\mu^{\odd})}$} and summing
over $\mu$ such that $\mu_1\leq 2k$, we thus find
\[
\Biggl(\prod_{i=1}^n (1+ax_i)\Biggr)
\sum_{\substack{\mu \\[1pt] \mu_1\leq 2k}} b^{l(\mu^{\odd})}
P_{\mu}(x;t)
=\sum_{\substack{\la \\[1pt] \la_1\leq 2k+1}} P_{\la}(x;t)
\sum_{\substack{\mu \\[1pt] \mu_1\leq 2k}}
a^{\abs{\la/\mu}} b^{l(\mu^{\odd})}
\prod_{i=1}^{2k} \qbin{\la'_i-\la'_{i+1}}{\la'_i-\mu'_i}_t.
\]
Here we have used the fact that the summand vanishes unless
$\mu'\prec\la'$, which, given that $\mu_1\leq 2k$, implies
that $\la_1\leq 2k+1$.
In the sum over $\mu$ we now make the substitutions
$\mu'_{2i-1}\mapsto j_i+\la'_{2i}$ and
$\mu'_{2i}\mapsto \la'_{2i}-l_i$ for $1\leq i\leq k$.
Then \smash{$\abs{\la/\mu}\mapsto l(\la^{\odd})+\sum_{i=1}^k (l_i-j_i)$} and
\smash{$l(\mu^{\odd})\mapsto \sum_{i=1}^k (j_i+l_i)$}, so that
\begin{align*}
&\Biggl(\prod_{i=1}^n (1+ax_i)\Biggr)
\sum_{\substack{\mu \\[1pt] \mu_1\leq 2k}} b^{l(\mu^{\odd})}
P_{\mu}(x;t) \\
&\qquad=\sum_{\substack{\la \\[1pt] \la_1\leq 2k+1}}
a^{l(\la^{\odd})} P_{\la}(x;t)
\sum_{\substack{j_1,\dots,j_k\geq 0\\[1pt] l_1,\dots,l_k\geq 0}}\,
\prod_{i=1}^k
(b/a)^{j_i} \qbin{\la'_{2i-1}-\la'_{2i}}{j_i}_t (ab)^{l_i}
\qbin{\la'_{2i}-\la'_{2i+1}}{l_i}_t \\
&\qquad=\sum_{\substack{\la \\[1pt] \la_1\leq 2k+1}}
a^{l(\la^{\odd})} P_{\la}(x;t)
\prod_{i=1}^k H_{m_{2i-1}(\la)}(b/a;t)H_{m_{2i}(\la)}(ab;t) \\
&\qquad=\sum_{\substack{\la \\[1pt] \la_1\leq 2k+1}}
\tilde{h}_{\la}^{(k)}(a,b;t) P_{\la}(x;t),
\end{align*}
as claimed.
\end{proof}

According to \cite[equation~(2.3.7)]{RW21},
\[
\tilde{h}_{\la}^{(k)}(a,0;t)=a^{l(\la^{\odd})}
\qquad\text{and}\qquad
\tilde{h}_{\la}^{(k)}\bigl(1,t^{1/2};t\bigr)
=\prod_{i=1}^{2k} \bigl(-t^{1/2};t^{1/2}\bigr)_{m_i(\la)}.
\]
This implies the following corollary of Proposition~\ref{Prop_33bounded}.

\begin{Corollary}
For $x=(x_1,\dots,x_n)$ and $k$ a nonnegative integer,
\begin{subequations}
\begin{align}\label{Eq_oddk1}
&\Biggl(\prod_{i=1}^n (1+ax_i)\Biggr)
\sum_{\substack{\mu \text{ even} \\[1pt] \mu_1\leq 2k}}
P_{\mu}(x;t)=\sum_{\substack{\la \\[1pt] \la_1\leq 2k+1}}
a^{l(\la^{\odd})} P_{\la}(x;t), \\
&\Biggl(\prod_{i=1}^n (1+x_i)\Biggr)
\sum_{\substack{\mu \\[1pt] \mu_1\leq 2k}} t^{l(\mu^{\odd})/2}P_{\mu}(x;t)
=\sum_{\substack{\la \\[1pt] \la_1\leq 2k+1}}
\Biggl(\prod_{i=1}^{2k}\bigl(-t^{1/2};t^{1/2}\bigr)_{m_i(\la)}\Biggr)
P_{\la}(x;t).
\label{Eq_oddk2}
\end{align}
\end{subequations}
\end{Corollary}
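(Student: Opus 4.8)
The plan is to obtain both identities as immediate specializations of Proposition~\ref{Prop_33bounded}, inserting the two evaluations of the partition-indexed Rogers--Szeg\H{o} polynomial $\tilde{h}_{\la}^{(k)}$ quoted from \cite[Equation (2.3.7)]{RW21} in the lines just before the statement.

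First I would prove \eqref{Eq_oddk1} by setting $b=0$ in \eqref{Eq_33bounded}, keeping $a$ free. On the left-hand side the factor $b^{l(\mu^{\odd})}$ equals $1$ when $l(\mu^{\odd})=0$, i.e.\ when $\mu$ has no odd parts (equivalently $\mu$ is even), and vanishes otherwise; hence the sum over partitions $\mu$ with $\mu_1\leq 2k$ collapses to the sum over \emph{even} $\mu$ with $\mu_1\leq 2k$, producing the left-hand side of \eqref{Eq_oddk1}. On the right-hand side we substitute $\tilde{h}_{\la}^{(k)}(a,0;t)=a^{l(\la^{\odd})}$, which is exactly the claimed coefficient, so \eqref{Eq_oddk1} follows. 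For \eqref{Eq_oddk2} I would instead set $a=1$ and $b=t^{1/2}$ in \eqref{Eq_33bounded}. Then $\prod_{i=1}^n(1+ax_i)=\prod_{i=1}^n(1+x_i)$ and $b^{l(\mu^{\odd})}=t^{l(\mu^{\odd})/2}$, giving the left-hand side of \eqref{Eq_oddk2}, while on the right we substitute $\tilde{h}_{\la}^{(k)}(1,t^{1/2};t)=\prod_{i=1}^{2k}(-t^{1/2};t^{1/2})_{m_i(\la)}$, giving its right-hand side.

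Since each part is a one-line specialization, there is no genuine obstacle; the only point deserving care—should one wish to be self-contained rather than cite \cite{RW21}—is to verify the two evaluations of $\tilde{h}_{\la}^{(k)}$ directly from its product formula $\tilde{h}_{\la}^{(k)}(a,b;t)=a^{l(\la^{\odd})}\prod_{i=1}^{2k}H_{m_{2i-1}(\la)}(b/a;t)H_{m_{2i}(\la)}(ab;t)$. For $b=0$ one uses $H_m(0;t)=1$ (only the $i=0$ term of the Rogers--Szeg\H{o} polynomial survives), so all the $H$-factors collapse to $1$; for $(a,b)=(1,t^{1/2})$ one uses $H_m(t^{1/2};t)=\sum_{i=0}^m t^{i/2}\qbin{m}{i}_t=(-t^{1/2};t^{1/2})_m$, a case of the $q$-binomial theorem, applied to each factor $H_{m_{2i-1}(\la)}(t^{1/2};t)$ and $H_{m_{2i}(\la)}(t^{1/2};t)$.
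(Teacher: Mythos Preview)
Your proposal is correct and follows exactly the paper's approach: the paper states the two evaluations $\tilde{h}_{\la}^{(k)}(a,0;t)=a^{l(\la^{\odd})}$ and $\tilde{h}_{\la}^{(k)}(1,t^{1/2};t)=\prod_{i=1}^{2k}(-t^{1/2};t^{1/2})_{m_i(\la)}$ (citing \cite[Equation (2.3.7)]{RW21}) and then declares the corollary as an immediate consequence of Proposition~\ref{Prop_33bounded}, which is precisely the specialization you carry out. Your added verification of the two $\tilde{h}_{\la}^{(k)}$ evaluations from the product formula is a welcome bit of extra detail not spelled out in the paper.
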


To conclude our discussion of the Hall--Littlewood polynomials
for $\mathrm{GL}(n,\mathbb{C})$, we define its modified analogue.
Consider the expansion of the Hall--Littlewood symmetric functions
in terms of the Newton power sums
\[
P_{\la}(x_1,x_2,\dots;t)=\sum_{\mu} c_{\la\mu}(t) p_{\mu}(x_1,x_2,\dots),
\]
where $p_{\mu}=\prod_{i\geq 1} p_{\mu_i}$ and
$p_0=1$, $p_r(x_1,x_2,\dots)=x_1^r+x_2^r+\cdots$ for $r\geq 1$.
Then the modified Hall--Littlewood polynomial
$P'_{\la}(x_1,x_2,\dots,x_n;t)$ is defined as
\[
P'_{\la}(x_1,\dots,x_n;t)=\sum_{\mu} c_{\la\mu}(t)
p_{\mu} (X_1(t),\dots,X_n(t) ),
\]
where $X_i(t)=\bigl(x_i,x_it,x_it^2,\dots\bigr)$.
Alternatively, using plethystic notation,
$P'_{\la}(x;t)=P'_{\la}(x/(1-t);t)$.
From the definition, it follows immediately that
\begin{equation}\label{Eq_PPp-spec}
P'_{\la}\bigl(1,q,\dots,q^{n-1};q^n\bigr)=
P_{\la}\bigl(1,q,q^2,\dots;q^n\bigr).
\end{equation}

\subsection[The B\_n, C\_n and BC\_n]{The $\boldsymbol{\mathrm{B}_n}$, $\boldsymbol{\mathrm{C}_n}$ and $\boldsymbol{\mathrm{BC}_n}$
Hall--Littlewood polynomials}\label{Sec_HL-BCn}

Let $W=S_n\ltimes (\mathbb{Z}/2\mathbb{Z})^n$ be the hyperoctahedral group
(or group of signed permutations) with standard action on Laurent
polynomials in $x=(x_1,\dots,x_n)$.
Then, for $\la$ a partition of length at most $n$, the
$\mathrm{BC}_n$ Hall--Littlewood polynomial
\smash{$P^{\mathrm{BC}_n}_{\la}(x;t,s_1,s_2)$} is defined as
\cite{Macdonald00,Venkateswaran15}
\begin{align}
&P^{(\mathrm{BC}_n)}_{\la}(x;t,s_1,s_2)
=\frac{1}{(s_1s_2;t)_{n-l(\la)}v_{\la}(t)} \notag \\
& \qquad{}\times
\sum_{w\in W} w\Biggl( \prod_{i=1}^n x_i^{-\la_i}
\frac{(1-s_1x_i)(1-s_2x_i)}{1-x_i^2} \prod_{1\leq i<j\leq n}
\frac{(tx_i-x_j)(1-tx_ix_j)}{(x_i-x_j)(1-x_ix_j)}\Biggr).
\label{Eq_PBCn-def}
\end{align}
The $\mathrm{BC}_n$ Hall--Littlewood polynomials
are polynomials in
\smash{$\mathbb{Z}[t,s_1,s_2]\bigl[x_1^{\pm},\dots,x_n^{\pm}\bigr]^W$},
normalised such that
\smash{$P^{\mathrm{BC}_n}_0(x;t,s_1,s_2)=1$}.
The top-degree homogeneous component of the polynomials
\smash{$P^{(\mathrm{BC}_n)}_{\la}(x;t,s_1,s_2)$}
is given by the ordinary Hall--Littlewood polynomial $P_{\la}(x;t)$.

The $\mathrm{B}_n$ and $\mathrm{C}_n$ Hall--Littlewood polynomials
arise as special cases of the $\mathrm{BC}_n$ Hall--Littlewood
polynomials:
\begin{subequations}
\begin{align}
\label{Eq_BCn_to_Bn}
&P^{(\mathrm{B}_n)}_{\la}(x;t,s)=
P^{(\mathrm{BC}_n)}_{\la}(x;t,s,-1) ,\\
&P^{(\mathrm{C}_n)}_{\la}(x;t,s)=
P^{(\mathrm{BC}_n)}_{\la}\bigl(x;t,s^{1/2},-s^{1/2}\bigr).
\label{Eq_BCn_to_Cn}
\end{align}
\end{subequations}
(For $\mathrm{B}_n$, there is once again an analogue for half-partitions $\la$
which is not needed in this paper.)
The odd-orthogonal, even-orthogonal and symplectic Schur functions
correspond to the special cases
\begin{gather*}
\so_{2n+1,\la}(x)=P^{(\mathrm{B}_n)}_{\la}(x;0,0), \qquad
\oeven_{2n,\la}(x)=P^{(\mathrm{B}_n)}_{\la}(x;0,1),
\qquad
\symp_{2n,\la}(x)=P^{(\mathrm{C}_n)}_{\la}(x;0,0).
\end{gather*}
For $\la=(k^n)$, the summand in \eqref{Eq_PBCn-def}
has $S_n$ symmetry so that
the sum over $W$ simplifies to a~sum over $(\mathbb{Z}/2\mathbb{Z})^n$,
leading to \cite[Lemma 2.5]{RW21}
\[
P^{(\mathrm{BC}_n)}_{(k^n)}(x;t,s_1,s_2)
=\sum_{\varepsilon_1,\dots,\varepsilon_n\in \{\pm 1\}}
\Phi\bigl(x_1^{\varepsilon_1},\dots,x_n^{\varepsilon_n};t,s_1,s_2\bigr)
\prod_{i=1}^n x_i^{-\varepsilon_i k},
\]
where
\[
\Phi(x_1,\dots,x_n;t,s_1,s_2):=
\prod_{i=1}^n \frac{(1-s_1x_i)(1-s_2x_i)}{1-x_i^2}
\prod_{1\leq i<j\leq n}\frac{1-tx_ix_j}{1-x_ix_j}.
\]
In particular,
\begin{equation}\label{Eq_PBCnt1}
P^{(\mathrm{BC}_n)}_{(k^n)}(x;1,s_1,s_2)
=\prod_{i=1}^n
\frac{(1-s_1x_i)(1-s_2x_i)x_i^{-k}
-(s_1-x_i)(s_2-x_i)x_i^k}{1-x_i^2}.
\end{equation}

In the proofs of special cases of our conjectures as well as in the
derivation of $q,t$-Ro\-gers--Ramanujan identities, we require a number
of bounded Littlewood identities, expressing the~$\mathrm{B}_n$,~$\mathrm{C}_n$
and~$\mathrm{BC}_n$ Hall--Littlewood polynomials indexed
by rectangular partitions of length $n$ in terms of ordinary
Hall--Littlewood polynomials.

\begin{Theorem}[bounded Littlewood identities for $\mathrm{B}_n$]
For $k$ a nonnegative integer,
\begin{subequations}
\begin{gather}\label{Eq_Macdonald}
\sum_{\substack{\la \\[1pt] \la_1\leq 2k}}
P_{\la}(x;t)=(x_1\cdots x_n)^k\,P_{(k^n)}^{\mathrm{B}_n}(x;t,0), \\
\label{Eq_RW1}
\sum_{\substack{\la \\[1pt] \la_1\leq 2k}}
\Biggl(\prod_{i=1}^{2k-1}\bigl({-}t^{1/2};t^{1/2}\bigr)_{m_i(\la)}\Biggr)
P_{\la}(x;t)=(x_1\cdots x_n)^k\,
P_{(k^n)}^{\mathrm{B}_n}\bigl(x;t,-t^{1/2}\bigr), \\
\label{Eq_RW2}
\sideset{}{''}\sum_{\substack{\la \\[1pt] \la_1\leq 2k}}
\Biggl(\prod_{i=1}^{2k-1}\bigl(t;t^2\bigr)_{m_i(\la)/2}\Biggr)
P_{\la}(x;t)=(x_1\cdots x_n)^k\,P_{(k^n)}^{\mathrm{B}_n}(x;t,1),
\end{gather}
\end{subequations}
where the double prime denotes the restriction that all parts of $\la$ that
are strictly less than $2k$ have even multiplicity.
\end{Theorem}

These three results (which also hold for \smash{$k+\frac{1}{2}$} a nonnegative
integer) are the special cases $t_2=0$, $t_2=-t^{1/2}$ and $t_2=1$
of the integral-$k$ case of \cite[Theorem~4.8]{RW21}.
The identity~\eqref{Eq_Macdonald} is equivalent to a result of
Macdonald \cite[pp.~232--233]{Macdonald95}, although his right-hand
side is not identified as a $\mathrm{B}_n$ Hall--Littlewood polynomial,
a fact that is crucial in our use of the result.
Since $P_{\la}^{\mathrm{B}_n}(x;t,1)=P_{\la}^{\mathrm{C}_n}(x;t,1)$,
the identity \eqref{Eq_RW2} may also be regarded as a result for
$\mathrm{C}_n$.
This, however, does not apply to its extension to half-integer
values of $k$.

\begin{Theorem}[bounded Littlewood identities for $\mathrm{C}_n$]
For $k$ a nonnegative integer,
\begin{subequations}
\begin{gather}\label{Eq_Stembridge}
\sum_{\substack{\la\textup{ even} \\[1pt] \la_1\leq 2k}}
P_{\la}(x;t)=(x_1\cdots x_n)^k \, P_{(k^n)}^{\mathrm{C}_n}(x;t,0), \\
\label{Eq_Stembridge2}
\sideset{}{'}\sum_{\substack{\la \\[1pt] \la_1\leq 2k}}
t^{l(\la^{\textup{o}})/2}
\Biggl(\prod_{i=1}^{2k-1}\bigl(t;t^2\bigr)_{\ceil{m_i(\la)/2}}\Biggr)
P_{\la}(x;t)=(x_1\cdots x_n)^k \, P_{(k^n)}^{\mathrm{C}_n}(x;t,t),
\end{gather}
\end{subequations}
where the prime denotes the restriction that the odd parts of
$\la$ have even multiplicity.
\end{Theorem}

In the above form, the identity \eqref{Eq_Stembridge} was first stated in
\cite[equation~(4.1.7)]{RW21} and is the special ${t_2=t_3=0}$ instance
of \cite[Theorem~4.7]{RW21}.
An equivalent identity, which makes no reference to the symplectic
Hall--Littlewood polynomials, was previously found by Stembridge
\cite[Theorem~1.2]{Stembridge90}.

Finally, we need one $\mathrm{BC}_n$ bounded Littlewood identity,
given by the $(t_2,t_3)=\bigl(-t^{1/2},0\bigr)$ special case of
\cite[equation~(4.1.15)]{RW21}.

\begin{Theorem}
For $k$ a nonnegative integer,
\begin{equation}\label{Eq_RW3}
\sum_{\substack{\la \\[1pt] \la_1\leq 2k}}
t^{l(\la^{\textup{o}})/2}
P_{\la}(x;t)=(x_1\cdots x_n)^k\,
P_{(k^n)}^{\mathrm{BC}_n}\bigl(x;t,-t^{1/2},0\bigr).
\end{equation}
\end{Theorem}


\subsection{Determinant identities for elementary symmetric functions}

The aim of this section is to show that, with the exception of
\eqref{Eq_PBn3}, each determinant in
Conjectures~\ref{Con_JTB}--\ref{Con_JTBC} admits an alternative
expression in terms of the elementary symmetric functions on the
alphabet \smash{$\bigl(x^{\pm},1\bigr)=\bigl(x_1,x_1^{-1},\dots,x_n,x_n^{-1},1\bigr)$}.

Let \smash{$\ddot{e}_r(x):=e_r\bigl(x^{\pm},1\bigr)$}.
By \eqref{Eq_branching},
\begin{equation}\label{Eq_ee}
\ddot{e}_r(x)=\dot{e}_r(x)+\dot{e}_{r-1}(x).
\end{equation}

\begin{Lemma}\label{Lem_determinant}
For $K$ an arbitrary integer,
\begin{subequations}
\begin{align}
\begin{aligned}[b]
&\sum_{y\in\mathbb{Z}^k}
\det_{1\leq i,j\leq k}\bigl(u^{y_i}
t^{\frac{1}{2}Ky_i^2-(j-\frac{1}{2})y_i}
 (\dot{e}_{n-i+j-Ky_i}(x)+\dot{e}_{n+i+j-Ky_i-1}(x) )\bigr)
  \\
&\qquad{}=\frac{1}{2}\sum_{y\in\mathbb{Z}^k}
\det_{1\leq i,j\leq k}\bigl(u^{y_i}
t^{\frac{1}{2}Ky_i^2-(j-\frac{1}{2})y_i}
 (\ddot{e}_{n-i+j-Ky_i+1}(x)+\ddot{e}_{n+i+j-Ky_i-1}(x) )\bigr)
 \end{aligned}
\label{Eq_doteddote}
\end{align}
and
\begin{align}
&\sum_{y\in\mathbb{Z}^k}
\det_{1\leq i,j\leq k}\bigl(u^{y_i}
t^{\frac{1}{2}Ky_i^2-j y_i} (
\dot{e}_{n-i+j-Ky_i}(x)-\dot{e}_{n+i+j-Ky_i}(x) )\bigr) \notag \\
&\qquad{}=
\sum_{y\in\mathbb{Z}^k}
\det_{1\leq i,j\leq k}\bigl(u^{y_i}
t^{\frac{1}{2}Ky_i^2-j y_i} (
\ddot{e}_{n-i+j-Ky_i+1}(x)-\ddot{e}_{n+i+j-Ky_i}(x) )\bigr).
\label{Eq_doteddote2}
\end{align}
\end{subequations}
\end{Lemma}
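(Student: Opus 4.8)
The statement relates two alternating sums of $k\times k$ determinants, one written in terms of $\dot e_r(x)$ and the other in terms of $\ddot e_r(x)=\dot e_r(x)+\dot e_{r-1}(x)$ (equation \eqref{Eq_ee}). The natural approach is to expand each determinant along the Leibniz formula, writing the left-hand side of \eqref{Eq_doteddote} as
\[
\sum_{y\in\mathbb{Z}^k}\sum_{\sigma\in S_k}\sgn(\sigma)\prod_{i=1}^k u^{y_i}t^{\frac12 Ky_i^2-(\sigma_i-\frac12)y_i}\big(\dot e_{n-i+\sigma_i-Ky_i}(x)+\dot e_{n+i+\sigma_i-Ky_i-1}(x)\big),
\]
and then substitute \eqref{Eq_ee} into the right-hand side and try to match. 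First I would handle the columns: since \eqref{Eq_ee} expresses $\ddot e_r$ as a two-term sum, each entry $\ddot e_{n-i+j-Ky_i+1}\pm\ddot e_{n+i+j-Ky_i}$ in the right-hand determinant splits into $(\dot e_{n-i+j-Ky_i+1}+\dot e_{n-i+j-Ky_i})\pm(\dot e_{n+i+j-Ky_i}+\dot e_{n+i+j-Ky_i-1})$. The key observation is that the ``$+1$-shifted'' pieces coming from column $j$ should be identified, after a column relabelling $j\mapsto j+1$ (equivalently $j-1\mapsto j$), with the unshifted pieces from the adjacent column — but that relabelling changes the power of $t$ attached to that column by a factor depending on $y_i$.

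**Key steps.** The heart of the argument is therefore a column-shift/telescoping manipulation combined with a compensating reindexing of $y$. Concretely: in the determinant on the right of \eqref{Eq_doteddote}, split off the lower-index summand of each $\ddot e$ using \eqref{Eq_ee}, producing a sum of $2^k$ determinants indexed by a subset $S\subseteq\{1,\dots,k\}$ of columns where we took the ``shifted'' term. For the columns in $S$ one has entries $\dot e_{n-i+j-Ky_i}(x)\pm\dot e_{n+i+j-Ky_i-1}(x)$, i.e.\ exactly the shape appearing in \eqref{Eq_PBn}/\eqref{Eq_doteddote}-LHS but with column index $j$ replaced by $j-1$ and with the $t$-exponent still carrying $-(j-\frac12)y_i$ rather than $-(j-1-\frac12)y_i$. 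I would correct the exponent by absorbing the discrepancy $t^{y_i}$, for each $i$, into a shift of the summation variable $y_i$; because $\frac12 K y_i^2-(j-\frac12)y_i$ is quadratic in $y_i$, the substitution $y_i\mapsto y_i+1$ (or $y_i\mapsto y_i$, column-dependent) reproduces the correct exponent for the relabelled column. After performing this on all columns in $S$, the sum over all $S$ telescopes: the ``non-shifted'' pieces from column $j$ cancel against the ``shifted'' pieces reindexed from column $j+1$, leaving only the two boundary contributions $j=1$ (lowest column, no left neighbour) and $j=k$ (highest, no right neighbour), whose sum reconstructs twice the left-hand side — explaining the factor $\tfrac12$. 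The second identity \eqref{Eq_doteddote2} is proved by the identical manipulation with $-(j-\frac12)y_i$ replaced by $-jy_i$ and the sign pattern adjusted; there the boundary terms combine to give exactly one copy, so no $\tfrac12$ appears.

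**Main obstacle.** The delicate point is bookkeeping the $t$-exponents through the simultaneous column relabelling and $y$-substitution: one must check that after $y_i\mapsto y_i\pm1$ the quantity $\tfrac12Ky_i^2-(j\mp1-\tfrac12)y_i$ transforms into $\tfrac12Ky_i^2-(j-\tfrac12)y_i$ plus a term that is independent of $j$ and can be factored out of the whole determinant, so that the determinant structure (and hence the antisymmetrisation over $S_k$) is preserved. I also need to verify that the constant prefactors picked up (powers of $u$ and $t$ from the shifts) are column-independent so they come out of the determinant cleanly, and that no entries fall outside the valid range $0\le r\le 2n$ — using $\dot e_r(x)=\dot e_{2n-r}(x)$ and $\dot e_r=0$ for $r<0$ or $r>2n$ — so the telescoping is legitimate term by term. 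I expect the bulk of the write-up to be a careful but mechanical verification of these exponent identities; the conceptual content is just ``$\ddot e=$ sum of two shifted $\dot e$, then telescope across columns with a compensating affine Weyl shift in $y$.''
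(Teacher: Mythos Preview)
Your proposal has a genuine gap. The telescoping picture breaks down because the $t$-prefactor couples the row index (through $y_i$) and the column index $j$ in a way that cannot be undone by the mechanism you describe. Concretely, splitting each $\ddot e$ via \eqref{Eq_ee} and using column multilinearity, the ``upper-index'' piece in column $j$ does have the same $\dot e$-content as the ``lower-index'' piece in column $j+1$, but the attached weight $t^{-(j-\frac12)y_i}$ is wrong by a factor $t^{y_i}$ per row. You propose to absorb this by $y_i\mapsto y_i+1$, but that is a \emph{row} reindexing: it acts on every entry in row $i$ across \emph{all} columns simultaneously, and moreover it shifts the $\dot e$-indices by $K$, not by $1$. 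So it cannot compensate a single-column shift. The prefactors you pick up are therefore not column-independent (you flag this yourself at the end), and the determinant structure is destroyed rather than preserved. Finally, the ``only the two boundary contributions $j=1$ and $j=k$ survive'' picture is a statement about a one-dimensional telescoping sum, not about a $k\times k$ determinant in which all $k$ columns must be present at once.

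The paper's proof takes a completely different route. It first strengthens the claim by restricting the outer sum to a single $S_k$-orbit of $y$, then abstracts the resulting identity to one in formal symbols $E_r$ by the substitution $E_r\mapsto \dot e_{n-r}(x)$ together with $(x_i,z_i)=(t^{-y_i},Ky_i)$. After this abstraction the identity becomes an equality of two double sums over $(\sigma,w)\in S_k\times S_k$, and the right-hand side minus the left-hand side is rewritten as a sum over nonempty subsets $I\subseteq\{2,\dots,k\}$. Each such contribution is then shown to vanish by a sign-reversing involution: one acts by the adjacent transposition $s_j$ with $j=\min(I)-1$, which swaps the role of an $I$-factor and an $I^c$-factor while flipping the sign. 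The factor $\tfrac12$ in \eqref{Eq_doteddote} arises not from boundary terms but from the symmetry $F_{1,0}=F_{1,1}$ of the auxiliary function $F_{i,j}(\sigma,w)=E_{z_{w_i}+j-\sigma_i}+E_{z_{w_i}-j-\sigma_i+1}$; in the companion identity \eqref{Eq_doteddote2} the analogous function satisfies $\tilde F_{1,0}=0$, which is why no $\tfrac12$ appears there.
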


\begin{proof}
First we consider \eqref{Eq_doteddote}.
Fix $y\in\mathbb{Z}^k$ such that $y_1\geq y_2\geq\cdots\geq y_k$.
Then \eqref{Eq_doteddote} is a~consequence~of
\begin{align}
&\sum_{S_k\cdot y} \det_{1\leq i,j\leq k}\bigl( t^{-jy_i}
 (\dot{e}_{n-i+j-Ky_i}(x)+\dot{e}_{n+i+j-Ky_i-1}(x) )\bigr) \notag \\
&\qquad=\frac{1}{2}\sum_{S_k\cdot y} \det_{1\leq i,j\leq k}\bigl(
t^{-jy_i}
 (\ddot{e}_{n-i+j-Ky_i+1}(x)+\ddot{e}_{n+i+j-Ky_i-1}(x) )\bigr),
\label{Eq_stronger}
\end{align}
where $S_k\cdot y$ denotes the $S_k$ orbit of $y$.
Summing over all $k!$ permutations of $y$ instead of $S_k\cdot y$
amounts to multiplying \eqref{Eq_stronger} by the size of
the stabilizer of $y$, and in the following it will
be more convenient to consider \eqref{Eq_stronger} with $y$ replaced by
$w(y)=(y_{w_1},\dots,y_{w_k})$ and the sum over $S_k\cdot y$ replaced
by a sum over $w\in S_k$.
Then, after applying \eqref{Eq_ee} to the right-hand side, the identity
to be proved becomes the special case --
replace $(x_i,z_i)\mapsto (t^{-y_i},Ky_i)$ followed
by~${E_r\mapsto \dot{e}_{n-r}(x)}$~-- of the formal identity
\begin{align*}
&\sum_{w\in S_k} \det_{1\leq i,j\leq k}\bigl(x_{w_i}^j
\bigl(E_{z_{w_i}+i-j}+E_{z_{w_i}-i-j+1}\bigr)\bigr) \\
&\qquad{}=\frac{1}{2}\sum_{w\in S_k}
\det_{1\leq i,j\leq k}\bigl(
x_{w_i}^j \bigl(E_{z_{w_i}+i-j-1}+E_{z_{w_i}+i-j}+
E_{z_{w_i}-i-j+1}+E_{z_{w_i}-i-j+2}\bigr)\bigr).
\end{align*}
Dispensing with the determinants, this is the same as
\[
\sum_{\sigma,w\in S_k}\sgn(\sigma)\prod_{i=1}^k
x_{w_i}^{\sigma_i} F_{i,i}(\sigma,w)
=\frac{1}{2}\sum_{\sigma,w\in S_k}\sgn(\sigma)\prod_{i=1}^k
x_{w_i}^{\sigma_i} (F_{i,i-1}(\sigma,w)+F_{i,i}(\sigma,w) ),
\]
where
\[
F_{i,j}(\sigma,w):=E_{z_{w_i}+j-\sigma_i}+E_{z_{w_i}-j-\sigma_i+1}.
\]
Since $F_{i,-j}(\sigma,w)=F_{i,j+1}(\sigma,w)$ and thus
$F_{1,0}(\sigma,w)=F_{1,1}(\sigma,w)$, this may be simplified to
\begin{equation}\label{Eq_setsum}
\sum_{\varnothing\subset I\subseteq\{2,\dots,k\}}
\sum_{\sigma,w\in S_k}\sgn(\sigma)
\prod_{i=1}^k x_{w_i}^{\sigma_i}
\prod_{i\in I} F_{i,i-1}(\sigma,w)
\prod_{i\in\{1,\dots,k\}\setminus I}
\prod_{i\in I} F_{i,i}(\sigma,w)=0.
\end{equation}
Since \smash{$\prod_{i=1}^k x_{w_i}^{\sigma_i}=\prod_{i=1}^k x_i^{(\sigma w^{-1})_i}$},
we replace $w\mapsto w\sigma$ to obtain
\[
\sum_{\varnothing\subset I\subseteq\{2,\dots,k\}}
\sum_{\sigma,w\in S_k}\sgn(\sigma)
\prod_{i=1}^k x_i^{(w^{-1})_i}
\prod_{i\in I} F_{i,i-1}(\sigma,w\sigma)
\prod_{i\in\{1,\dots,k\}\setminus I}
\prod_{i\in I} F_{i,i}(\sigma,w\sigma)=0.
\]
In the following, we will show the stronger vanishing result
\begin{equation}\label{Eq_zero}
f_{I;w}:=\sum_{\sigma\in S_k}\sgn(\sigma)
\prod_{i\in I} F_{i,i-1}(\sigma,w\sigma)
\prod_{i\in\{1,\dots,k\}\setminus I}
\prod_{i\in I} F_{i,i}(\sigma,w\sigma)=0
\end{equation}
for fixed $\varnothing\subset I\subseteq\{2,\dots,k\}$ and $w\in S_k$.
For $j\in\{1,\dots,k-1\}$, let $s_j\in S_k$ denote the $j$-th adjacent
transposition, i.e., the 2-cycle $(j,j+1)$.
Then, for any $\tau\in S_k$,
\[
(\tau s_j)_i=\begin{cases}
\tau_{j+1} & \text{if $i=j$}, \\
\tau_j & \text{if $i=j+1$}, \\
\tau_i & \text{otherwise}.
\end{cases}
\]
Hence
\begin{align*}
F_{i,i-1}(\sigma s_j,w\sigma s_j)&=\begin{cases}
F_{i-1,i-1}(\sigma,w\sigma) & \text{if $i=j+1$}, \\
F_{i,i-1}(\sigma,w\sigma) & \text{if $i\neq j,j+1$}
\end{cases}
\end{align*}
and
\begin{align*}
F_{i,i}(\sigma s_j,w\sigma s_j)&=\begin{cases}
F_{i+1,i}(\sigma,w\sigma) & \text{if $i=j$}, \\
F_{i,i}(\sigma,w\sigma) & \text{if $i\neq j,j+1$}.
\end{cases}
\end{align*}
We now fix the index $j$ of $s_j$ as $j=\min\{I\}-1$.
Then $j+1\in I$ and $j\notin I$, and thus by replacing~$\sigma$
by~$\sigma s_j$ in \eqref{Eq_zero} and using that
$\sgn(\sigma s_j)=-\sgn(\sigma)$,
\begin{align*}
f_{I;w}&=-
\sum_{\sigma\in S_k}\biggl(\sgn(\sigma)
F_{j+1,j}(\sigma s_j,w\sigma s_j) F_{j,j}(\sigma s_j,w\sigma s_j) \\
& \qquad\qquad\quad\times
\prod_{\substack{i\in I\\[1pt] i\neq j+1}} F_{i,i-1}(\sigma s_j,w\sigma s_j)
\prod_{\substack{i\in\{1,\dots,k\}\setminus I \\[1pt] i\neq j}}
\prod_{i\in I} F_{i,i}(\sigma s_j,w\sigma s_j)\biggr) \\
&=-\sum_{\sigma\in S_k}\biggl(\sgn(\sigma)
F_{j,j}(\sigma,w\sigma) F_{j+1,j}(\sigma,w\sigma)
\prod_{\substack{i\in I\\[1pt] i\neq j+1}}\! F_{i,i-1}(\sigma,w\sigma)
\!\prod_{\substack{i\in\{1,\dots,k\}\setminus I \\[1pt] i\neq j}}
\prod_{i\in I} F_{i,i}(\sigma,w\sigma)\biggr) \\
&=-\sum_{\sigma\in S_k}\biggl(\sgn(\sigma)
\prod_{i\in I} F_{i,i-1}(\sigma,w\sigma)
\prod_{i\in\{1,\dots,k\}\setminus I}
\prod_{i\in I} F_{i,i}(\sigma,w\sigma)\biggr) \\
&=-f_{I;w}.
\end{align*}
Hence $f_{I;w}=0$.

Next we consider \eqref{Eq_doteddote2}.
Proceeding as in the first proof, this time it suffices to
prove the formal identity
\begin{align*}
&\sum_{w\in S_k} \det_{1\leq i,j\leq k}\bigl(x_{w_i}^j
\bigl(E_{z_{w_i}+i-j}-E_{z_{w_i}-i-j}\bigr)\bigr) \\
&\qquad{}=\sum_{w\in S_k}\det_{1\leq i,j\leq k}\bigl(
x_{w_i}^j \bigl(E_{z_{w_i}+i-j-1}+E_{z_{w_i}+i-j}-
E_{z_{w_i}-i-j}-E_{z_{w_i}-i-j+1}\bigr)\bigr).
\end{align*}
This is the same as
\begin{equation}\label{Eq_Ftilde}
\sum_{w,\sigma\in S_k}\sgn(\sigma)
\prod_{i=1}^k x_{w_i}^{\sigma_i} \tilde{F}_{i,i}(\sigma,w)
=\sum_{w,\sigma\in S_k}\sgn(\sigma) \prod_{i=1}^k
x_{w_i}^{\sigma_i} \bigl( \tilde{F}_{i,i-1}(\sigma,w)+
\tilde{F}_{i,i}(\sigma,w)\bigr),
\end{equation}
where
\[
\tilde{F}_{i,j}(\sigma,w):=E_{z_{w_i}+j-\sigma_i}-E_{z_{w_i}-j-\sigma_i}.
\]
Since $\tilde{F}_{i,0}=0$ and thus $F_{1,0}=0$, \eqref{Eq_Ftilde}
can be rewritten exactly as \eqref{Eq_setsum} but with $F$ replaced
by $\tilde{F}$.
The remainder of the proof if identical to the first proof.
\end{proof}

\section{Proof, special cases and generalisations of Theorem~\ref{Thm_GLn}}
\label{Sec_GLn}

Before discussing a number of special cases and generalisations of
Theorem~\ref{Thm_GLn}, we provide a~short proof of the theorem based on
a result due to Schilling and Shimozono~\cite{SS00}.
For the sake of brevity, throughout this section we use
$Q:=\bigl\{y\in\mathbb{Z}^k\mid \abs{y}=0\bigr\}$ for the $\mathrm{A}_{k-1}$ root
lattice.
We also mostly work in the algebra of symmetric functions writing
$e_r$, $P_{\la}(t)$ and $s_{\la}$ instead of~$e_r(x)$,~$P_{\la}(x;t)$ and $s_{\la}(x)$.

\begin{proof}[Proof of Theorem~\ref{Thm_GLn}]
We will prove the result in the form \eqref{Eq_Thm_GLn2}, or, equivalently,
in the form
\[
P_{(k^r)}(t)=\sum_{y\in Q} \sum_{\sigma\in S_k} \sgn(\sigma)
t^{\frac{1}{2}\sum_{i=1}^k (ky_i+2i)y_i} e_{(r^k)+\sigma-\rho-ky},
\]
where $\rho:=(1,2,\dots,k)$ and where the elementary symmetric
function in the summand vanishes unless
$(r^k)+\sigma-\rho-ky$ is a (weak) composition.
By \eqref{Eq_transition},
\[
P_{(k^r)}(t)=\sum_{\la}\sum_{y\in Q}\sum_{\sigma\in S_k}
\sgn(\sigma) t^{\frac{1}{2}\sum_{i=1}^k (ky_i+2i)y_i}
\mathcal{R}_{\la,(r^k)+\sigma-\rho-ky}(t) P_{\la}(t),
\]
where $\mathcal{R}_{\la,\alpha}(t):=0$ if $\min\{\alpha\}<0$.
Equating coefficients of $P_{\la}(t)$ on both sides, we are left
to show that
\begin{equation}\label{Eq_delta}
\sum_{y\in Q}\sum_{\sigma\in S_k}
\sgn(\sigma) t^{\frac{1}{2}\sum_{i=1}^k (ky_i+2i)y_i}
\mathcal{R}_{\la,(r^k)+\sigma-\rho-ky}(t)=\delta_{\la,(k^r)}
\end{equation}
for all partitions $\la\vdash rk$ such that $\la_1\leq k$.
(As noted on p.~\pageref{page_R}, if $\la_1>k$ or $\la\neq\abs{kr}$,
then~${\mathcal{R}_{\la,(r^k)+\sigma-\rho-ky}(t)=0}$, in which case
there is nothing to prove.)
For $s$ a nonnegative integer such that $s\leq r$, let $r-s$ be the
multiplicity of parts of size $k$ in $\la$, and by mild abuse of
notation, write $\la=(k^{r-s},\mu)$ where $\mu\vdash ks$ such that
$\mu_1<k$.
Then, by \eqref{Eq_reduce}, the identity \eqref{Eq_delta} may be
simplified to
\begin{equation}\label{Eq_delta2}
\sum_{y\in Q}\sum_{\sigma\in S_k}
\sgn(\sigma) t^{\frac{1}{2}\sum_{i=1}^k (ky_i+2i)y_i}
\mathcal{R}_{\mu,(s^k)+\sigma-\rho-ky}(t)=\delta_{s,0}.
\end{equation}
Recalling \eqref{Eq_R}, for $k=2$ and $\mu=\bigl(1^{2s}\bigr)$ this is the
well-known (see, e.g., \cite[equation~(2.3)]{W99})
\begin{equation}\label{Eq_unit}
\sum_{y\in\mathbb{Z}}(-1)^y t^{\binom{y}{2}} \qbin{2s}{s-y}_t=\delta_{s,0}.
\end{equation}
For $k=3$, it is the $\ell=0$ instance of \cite[Proposition 5.1]{ASW99},
where $\mu$ and $s$ are parametrised as~$\mu=\bigl(2^{2L_2-L_1},1^{2L_1-L_2}\bigr)$
for $4L_2\geq 2L_1\geq L_2$ and $s=L_2$.
The identity \eqref{Eq_delta2} for general $k$ was first proposed in
\cite[equation~(6.5)]{W99} and subsequently proved by Schilling and
Shimozono \cite[equation~(6.6)]{SS00} as an identity for
\smash{$U_q\bigl(\mathrm{A}_{k-1}^{(1)}\bigr)$} level-restricted path at trivial level $0$.
\end{proof}

In the remainder of this section, we discuss a number of special cases
and generalisations of Theorem~\ref{Thm_GLn}.

On~\cite[p.~214]{Macdonald95}, Macdonald gives the following
Schur expansion for the Hall--Littlewood polynomial indexed by
a partition of length at most one.

\begin{Lemma}\label{Lem_Macdonald}
For $k$ a nonnegative integer,
\[
P_{(k)}(t)=\sum_{a=0}^{k-1} (-t)^a s_{(k-a,1^a)},
\]
where $s_{(k-0,1^0)}:=s_k$ and $s_{(1,1^{k-1})}:=s_{(1^k)}$.
\end{Lemma}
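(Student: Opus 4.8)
The plan is to reduce the identity to the classical Pieri rule by way of the generating function for the one--row dual Hall--Littlewood functions. Let $q_r=q_r(x;t)$ be the symmetric function defined by $\sum_{r\geq 0}q_r u^r=\prod_i\frac{1-tx_iu}{1-x_iu}$; in Macdonald's notation \cite{Macdonald95} this is $Q_{(r)}(x;t)$, and since the partition $(r)$ has a single part, of multiplicity one, we have $Q_{(r)}(x;t)=(1-t)P_{(r)}(x;t)$ for $r\geq 1$. Writing the generating function as the product $\big(\sum_{j\geq 0}(-t)^j e_j u^j\big)\big(\sum_{l\geq 0}h_l u^l\big)$ and extracting the coefficient of $u^k$ gives, for $k\geq 1$,
\[
(1-t)\,P_{(k)}(t)=q_k=\sum_{j=0}^{k}(-t)^j\,e_j h_{k-j}.
\]

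Next I would expand each product $e_j h_{k-j}=s_{(1^j)}h_{k-j}$ in the Schur basis. By the Pieri rule (equivalently, by combining the Jacobi--Trudi and dual Jacobi--Trudi formulas \eqref{Eq_JT}), adding a horizontal strip of size $k-j$ to a single column of $j$ boxes produces exactly two Schur functions,
\[
e_j h_{k-j}=s_{(k-j,1^j)}+s_{(k-j+1,1^{j-1})},\qquad 0\leq j\leq k,
\]
with the convention $s_{\mu}:=0$ when $\mu$ is not a partition (which subsumes the boundary cases $e_0h_k=s_{(k)}$ and $e_kh_0=s_{(1^k)}$). Substituting this into the sum and collecting the coefficient of $s_{(k-a,1^a)}$ for $0\leq a\leq k-1$, the contribution $(-t)^a$ from the $j=a$ term of the first type and the contribution $(-t)^{a+1}$ from the $j=a+1$ term of the second type combine to $(-t)^a+(-t)^{a+1}=(-t)^a(1-t)$, uniformly in $a$. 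Hence $q_k=(1-t)\sum_{a=0}^{k-1}(-t)^a s_{(k-a,1^a)}$, and dividing by $1-t$ yields the lemma (the case $k=0$ being trivial).

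The only point that needs care is the bookkeeping at the two ends of the range of $a$: one must check that the ``missing partner'' of the boundary product $e_0h_k=s_{(k)}$ is supplied by the $j=1$ term, and that the ``missing partner'' of $e_kh_0=s_{(1^k)}$ is supplied by the $j=k$ term (i.e.\ by $(-t)^k e_k$), so that the coefficient $(-t)^a(1-t)$ is valid for every $a$, including $a=0$ and $a=k-1$; everything else is a routine reindexing. An alternative route, staying entirely within the Hall--Littlewood framework, would be to iterate the $e$-Pieri rule for $P_{\mu}(x;t)$ recalled in the proof of Proposition~\ref{Prop_33bounded}, but the generating-function argument above is the shortest and is essentially Macdonald's.
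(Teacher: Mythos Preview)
Your proof is correct and self-contained: the generating-function identity $q_k=\sum_{j=0}^{k}(-t)^j e_j h_{k-j}$, the Pieri expansion $e_jh_{k-j}=s_{(k-j,1^j)}+s_{(k-j+1,1^{j-1})}$, and the telescoping of coefficients all check out, including at the endpoints $a=0$ and $a=k-1$.

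The paper, however, takes a deliberately different route. Its point is not to give a standalone proof of this classical formula (which, as you note, is already in Macdonald), but to exhibit it as the $r=1$ specialisation of the affine Jacobi--Trudi formula \eqref{Eq_Pe}. Concretely, the paper argues that for $r=1$ the only lattice vectors $y\in Q$ for which the determinant in \eqref{Eq_Pe} does not vanish are $y=(0^k)$ and $y=(1,0^{k-a-1},-1,0^{a-1})$ for $1\leq a\leq k-1$; the first contributes $s_{(k)}$ via the ordinary dual Jacobi--Trudi formula, and each of the remaining $k-1$ vectors contributes $(-t)^a s_{(k-a,1^a)}$ after a row cycle and Laplace expansion. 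What this buys is a consistency check on Theorem~\ref{Thm_GLn}: the affine formula collapses to the right classical answer in the simplest nontrivial case. Your argument is shorter and more elementary, but it does not make contact with the paper's main theorem, which is the real purpose of including this lemma and its proof here.
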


It is not hard to show that this follows from Theorem~\ref{Thm_GLn}.

\begin{proof}[Proof of Lemma~\ref{Lem_Macdonald}]
Equation~\eqref{Eq_Pe} for $r=1$ simplifies to
\[
P_{(k)}(t)=\sum_{y\in Q} \det_{1\leq i,j\leq k}
\bigl(t^{k\binom{y_i}{2}+iy_i} e_{1-i+j-ky_i} \bigr).
\]
Since $e_r=0$ for $r<0$, all entries in the first row of the
determinant are zero unless $y_1\leq 1$.
Moreover, for $2\leq i\leq k$ all entries in row $i$ are zero
unless $y_i\leq 0$.
Since $y\in Q$, this implies that there are exactly $k$ choices for
$y$ such that the determinant is non-vanishing:
\smash{$y=\bigl(0^k\bigr)$} and \smash{$y=\bigl(1,0^{k-a-1},-1,0^{a-1}\bigr)$} for $1\leq a\leq k-1$.
For \smash{$y=\bigl(0^k\bigr)$}, this yields the Schur function $s_{(k)}$
by~\eqref{Eq_JTe}.
The contribution to the sum from \smash{$y=\bigl(1,0^{k-a-1},-1,0^{a-1}\bigr)$} is
\[
D_a(t):=t^a \det_{1\leq i,j\leq k}
\bigl(e_{j-i+1+k\delta_{i,k-a+1}-k\delta_{i,1}} \bigr).
\]
All entries in the first row of the determinant are zero except
for the last, which is $1$.
By a~Laplace expansion along the first row,
\[
D_a(t)=t^a (-1)^{k-1} \det_{1\leq i,j\leq k-1}
\bigl(e_{j-i+k\delta_{i,k-a}}\bigr).
\]
In the determinant, we cycle rows
$1,2,\dots,k-a\mapsto k-a,1,2,\dots,k-a-1$.
In other words, row $k-a$ is put as the first row,
pushing down rows $1,\dots,k-a-1$, leading to
\[
D_a(t)=(-t)^a \det_{1\leq i,j\leq k-1}
\bigl(e_{j-i+1+a\delta_{i,1}-\chi(i>k-a)}\bigr),
\]
where $\chi(\text{true})=1$ and $\chi(\text{false})=0$.
By \eqref{Eq_JTe} with $k\mapsto k-1$, this yields
$D_a(t)=(-t)^a s_{\la}$, where
\[
\la'=\bigl(a+1,1^{k-a-1},0^{a-1}\bigr)=\bigl(a+1,1^{k-a-1}\bigr),
\]
and thus $\la=(k-a,1^a)$.
Hence
\[
P_{(k)}(t)=s_{(k)}+\sum_{a=1}^{k-1} (-t)^a s_{(k-a,1^a)},
\]
as required.
\end{proof}

In \cite[Theorem 7.2]{LS06}, Lassalle and Schlosser derived an expression
for $P_{\la}(t)$ for arbitrary $\la$ in terms of elementary symmetric
functions.
Their formula does not have a determinantal form, and it seems difficult
to deduce Theorem~\ref{Thm_GLn} from the $\la=(k^r)$ case of the
Lassalle--Schlosser theorem. For $k=2$, however, the two theorems
are trivially the same.

\begin{Lemma}[Lassalle--Schlosser]
For $r$ a nonnegative integer,
\[
P_{(2^r)}(t)
=\sum_{i=-r}^r (-1)^i t^{\binom{i}{2}} e_{r+i} e_{r-i}.
\]
\end{Lemma}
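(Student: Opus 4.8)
The plan is to deduce the Lassalle--Schlosser formula $P_{(2^r)}(t)=\sum_{i=-r}^r (-1)^i t^{\binom{i}{2}} e_{r+i}e_{r-i}$ directly from Theorem~\ref{Thm_GLn} in the case $k=2$. Setting $k=2$ in \eqref{Eq_Pe}, the summation index is $y=(y_1,y_2)\in\mathbb{Z}^2$ with $y_1+y_2=0$, so write $y=(m,-m)$ with $m\in\mathbb{Z}$. The determinant is then a $2\times 2$ determinant, and the exponent of $t$ in row $i$ is $2\binom{y_i}{2}+iy_i$; with $y_1=m$, $y_2=-m$ this gives $2\binom{m}{2}+m=m^2$ in row $1$ and $2\binom{-m}{2}-2m=m^2+3m-2m=m^2+m$... let me just say: the plan is to expand the $2\times 2$ determinant, collect the power of $t$ (which will be a quadratic in $m$ independent of the column structure once one factors appropriately), and recognise the resulting double sum over $m$ and the two permutations as the stated single sum over $i$.

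Concretely, the first step is to write out
\[
P_{(2^r)}(t)=\sum_{m\in\mathbb{Z}}
\det\begin{pmatrix}
t^{2\binom{m}{2}+m}\,e_{r-2m} & t^{2\binom{m}{2}+m}\,e_{r+1-2m}\\[4pt]
t^{2\binom{-m}{2}-2m}\,e_{r-1+2m} & t^{2\binom{-m}{2}-2m}\,e_{r+2m}
\end{pmatrix},
\]
pull the $t$-powers out of each row, and simplify the total exponent. Since $2\binom{m}{2}+m=m^2$ and $2\binom{-m}{2}-2m=m^2+m+2m... $ — in any case it is an explicit quadratic $Q(m)$ — the determinant equals $t^{Q(m)}\big(e_{r-2m}e_{r+2m}-e_{r+1-2m}e_{r-1+2m}\big)$. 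The second step is then to split this into two sums, $\sum_m t^{Q(m)} e_{r-2m}e_{r+2m}$ and $-\sum_m t^{Q(m)} e_{r+1-2m}e_{r-1+2m}$, and observe that the first runs over even offsets $i=-2m$ (or $2m$) while the second runs over odd offsets $i=-(2m-1)$; after re-indexing, the union is precisely $\sum_{i\in\mathbb{Z}}(-1)^i t^{\binom{i}{2}} e_{r+i}e_{r-i}$, and the terms with $|i|>r$ vanish because $e_{r+i}=0$ forces... no, rather because $e_{r-i}=0$ for $i>r$ and $e_{r+i}=0$ for $i<-r$, so the sum truncates to $-r\le i\le r$.

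The only genuine computation is the exponent bookkeeping: verifying that $Q(m)$, after the re-indexing $i\mapsto m$ on each piece, collapses to $\binom{i}{2}$ and that the sign $(-1)^i$ emerges correctly (the $-1$ from the off-diagonal term of the determinant becomes $(-1)^i$ for odd $i$, while even $i$ carries sign $+1=(-1)^i$). I expect the main (and only) obstacle to be a careful matching of indices and signs so that the even-offset terms coming from the diagonal product and the odd-offset terms coming from the anti-diagonal product assemble into a single clean sum; this is bounded, elementary algebra with no conceptual difficulty, and it is exactly why the paper remarks that for $k=2$ the two theorems are ``trivially the same.'' Alternatively, one could cite this directly as \cite[Theorem 7.2]{LS06} specialised to $\la=(2^r)$, but deriving it from Theorem~\ref{Thm_GLn} is more in keeping with the self-contained spirit of the section.
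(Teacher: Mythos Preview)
Your approach is correct and is essentially identical to the paper's proof: both specialise Theorem~\ref{Thm_GLn} to $k=2$, expand the $2\times 2$ determinant, and re-index so that the diagonal term contributes the even-$i$ summands and the anti-diagonal term the odd-$i$ summands. The only blemishes are arithmetic wobbles you yourself flag---the row-$2$ exponent is $2\binom{-m}{2}-2m=m^2-m$, giving total exponent $Q(m)=2m^2-m$, and the correct re-indexing is $i=2m$ (even) and $i=1-2m$ (odd), for which $\binom{i}{2}=2m^2-m$ in both cases; with those fixed the proof goes through exactly as you outline.
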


\begin{proof}
According to \eqref{Eq_Pe} for $k=2$,
\begin{align*}
P_{(2^r)}(t)
&=\sum_{\substack{y_1,y_2\in\mathbb{Z} \\[1pt] y_1+y_2=0}}
t^{2\binom{y_1}{2}+2\binom{y_2}{2}+y_1+2y_2}
 ( e_{r-2y_1} e_{r-2y_2}-e_{r-2y_1+1} e_{r-2y_2-1} ) \\
&=\sum_{i\in\mathbb{Z}}
t^{2i^2-i} ( e_{r-2i} e_{r+2i}-e_{r-2i+1} e_{r+2i-1} ) \\
&=\sum_{i=-r}^r(-1)^i t^{\binom{i}{2}} e_{r+i} e_{r-i},
\end{align*}
which is the Lassalle--Schlosser expression.
\end{proof}

By \eqref{Eq_Pm}, the $t=1$ specialisation of Theorem~\ref{Thm_GLn} is
\begin{equation}\label{Eq_tiseen}
m_{(k^r)}(x;t)=
\sum_{y\in Q} \det_{1\leq i,j\leq k} (e_{r-i+j-ky_i}(x) ).
\end{equation}
This identity is an immediate consequence of Proposition~\ref{Prop_cSchur}.

\begin{proof}[Proof of \eqref{Eq_tiseen}]
For $x=(x_1,\dots,x_n)$, let $f\bigl(x^k\bigr)=f\bigl(x_1^k,x_2^k,\dots,x_n^k\bigr)$.
Then
\[
m_{(k^r)}(x)=e_r\bigl(x^k\bigr).
\]
We thus need to show that
\[
\sum_{y\in Q} \det_{1\leq i,j\leq k} (e_{r-i+j-ky_i}(x) )
=e_r\bigl(x^k\bigr).
\]
This follows from the $\ell=0$ cases of Proposition~\ref{Prop_cSchur}
and \eqref{Eq_nul}.
\end{proof}

Let \smash{$\la\in\Par_{n,k}^{\ell}$}.
Then the above connection with cylindric Schur functions suggest that
the symmetric function
\begin{equation*}
S_{\la}^{k,\ell}(t)=\sum_{y\in Q} \det_{1\leq i,j\leq k}
\bigl(t^{(k+\ell)\binom{y_i}{2}+iy_i} e_{\la_i'-i+j-(k+\ell)y_i} \bigr)
\end{equation*}
-- which is a $t$-deformation of the right-hand side of \eqref{Eq_cSchur}
-- warrants further study.
As in the Schur case, for $\ell\geq l(\la)$ the summand on the right
vanishes unless $y=\bigl(0^k\bigr)$, and for such $\ell$,
\smash{$S_{\la}^{k,\ell}(t)=s_{\la}$}.
Also, from \eqref{Eq_Pe}, \smash{$S_{(k^r)}^{k,0}(t)=P_{(k^r)}(t)$}.
Using further results by Schilling and Shimozono, the Hall--Littlewood
expansion of \smash{$S_{\la}^{k,\ell}(t)$} for all admissible partitions $\la$
may be given.
Below we will describe the simplest case $\la=(k^r)$ and $\ell\geq 1$.
By abuse of notation, we denote the type $\mathrm{A}$ Cartan matrix
and its inverse by $C$ and $C^{-1}$ respectively, without reference to
the rank of the underlying root system which will always be assumed to
be clear from the context in which these matrices are used.
In the case of $\mathrm{A}_{k-1}$, $C$ and $C^{-1}$ are
$(k-1)\times (k-1)$ matrices with entries
\smash{$C_{a,b}=2\delta_{a,b}-\delta_{a,b-1}-\delta_{a,b+1}$}
and \smash{$C^{-1}_{a,b}=\min\{a,b\}-ab/k$} for $1\leq a,b\leq k-1$.
For $\mu=\bigl(k^{m_k(\mu)},\dots,1^{m_1(\mu)}\bigr)\vdash kr$, let
\[
\varphi_{\ell}(\mu)=\frac{1}{2\ell}
\sum_{a,b=1}^{k-1} C^{-1}_{a,b} m_a(\mu) m_b(\mu)
\]
and define \smash{$K_{(r^k),\mu}^{\ell}(t)$} as the coefficients in the
expansion of \smash{$S_{(k^r)}^{\ell}(t)$} in terms of Hall--Littlewood polynomials:
\[
S_{(k^r)}^{k,\ell}(t)=t^{-k\binom{r}{2}}
\sum_{\substack{\mu\vdash kr\\[1pt] \mu_1\leq k}} t^{\varphi_{\ell}(\mu)}
K_{(r^k),\mu}^{\ell}(t) P_{\mu}(t).
\]

\begin{Proposition}\label{Prop_Kostka}
Let $\mu\vdash kr$ such that $\mu_1\leq k$.
Then the polynomial
$K_{(r^k),\mu}^{\ell}(t)$
is the level-$\ell$ restricted generalised Kostka polynomial
$K_{\la R}^{\ell}(t)$
\textup{(}see, e.g., \textup{\cite{SS00,SS01}}\textup{)}
for $\la=\bigl(r^k\bigr)$ and $R$ the sequence of single-column partitions
\[
R=\bigl(
\underbrace{(1),\dots,(1)}_{m_1(\mu) \text{ times}},
\underbrace{\bigl(1^2\bigr),\dots,\bigl(1^2\bigr)}_{m_2(\mu) \text{ times}},\dots,
\underbrace{\bigl(1^k\bigr),\dots,\bigl(1^k\bigr)}_{m_k(\mu) \text{ times}}
\bigr).
\]
\end{Proposition}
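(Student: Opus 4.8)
The plan is to run the proof of Theorem~\ref{Thm_GLn} again, with the trivial‑level identity \eqref{Eq_delta2} upgraded to its level‑$\ell$ analogue. First I would expand the determinant in the definition \eqref{Eq_cylindricschur} of $S_{(k^r)}^{k,\ell}(t)$ over the symmetric group and recast it, exactly as for Theorem~\ref{Thm_GLn}, as a sum over the affine Weyl group $S_k\ltimes Q\cong W(\mathrm{A}_{k-1}^{(1)})$,
\[
S_{(k^r)}^{k,\ell}(t)=\sum_{y\in Q}\sum_{\sigma\in S_k}\sgn(\sigma)\,
t^{\sum_{i=1}^{k}\big((k+\ell)\binom{y_i}{2}+iy_i\big)}\,
e_{(r^k)+\sigma-\rho-(k+\ell)y},
\]
with $\rho=(1,2,\dots,k)$ and $e_\alpha$ read as $0$ unless $\alpha$ is a weak composition. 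Inserting the transition formula \eqref{Eq_transition} and equating coefficients of $P_\mu(t)$ (legitimate since the $P_\mu(t)$ are a $\mathbb{Z}[t]$-basis), and comparing with the defining expansion of $K^\ell_{(r^k),\mu}(t)$, reduces the proposition to proving that for every $\mu\vdash kr$ with $\mu_1\le k$,
\[
\sum_{y\in Q}\sum_{\sigma\in S_k}\sgn(\sigma)\,
t^{\sum_{i=1}^{k}\big((k+\ell)\binom{y_i}{2}+iy_i\big)}\,
\mathcal{R}_{\mu,(r^k)+\sigma-\rho-(k+\ell)y}(t)
=t^{-k\binom{r}{2}+\varphi_\ell(\mu)}\,K^\ell_{(r^k),R}(t),
\]
where $\mathcal{R}_{\mu,\alpha}(t):=0$ if $\min\{\alpha\}<0$ and $R=R(\mu)$ is the column sequence of the proposition.

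Next I would strip the height-$k$ columns from $\mu$, as in the proof of Theorem~\ref{Thm_GLn}: writing $\mu=(k^{r-s},\nu)$ with $\nu\vdash ks$ and $\nu_1<k$, the reduction \eqref{Eq_reduce} replaces $\mathcal{R}_{\mu,(r^k)+\sigma-\rho-(k+\ell)y}(t)$ by $\mathcal{R}_{\nu,(s^k)+\sigma-\rho-(k+\ell)y}(t)$; on the other side $\varphi_\ell(\mu)=\varphi_\ell(\nu)$ because only the multiplicities $m_1,\dots,m_{k-1}$ enter $\varphi_\ell$, while passing from $R(\mu)$ to $R(\nu)$ deletes the $m_k(\mu)=r-s$ copies of the full column $(1^k)$ and replaces $(r^k)$ by $(s^k)$, so $K^\ell_{(r^k),R(\mu)}(t)$ and $K^\ell_{(s^k),R(\nu)}(t)$ differ only by a fixed power of $t$ that is absorbed into the prefactor $t^{-k\binom{r}{2}}$. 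With $\mu$ replaced by $\nu$, the identity to be proved is exactly the level-$\ell$ analogue of \eqref{Eq_delta2}: using Kirillov's formula \eqref{Eq_Kir} to identify each $\mathcal{R}_{\nu,\alpha}(t)$ with a single-column generalised Kostka polynomial (via the transpose symmetry of the latter), the left-hand side is the bosonic formula for the level-$\ell$-restricted generalised Kostka polynomial $K^\ell_{\la R}(t)$ of Schilling and Shimozono \cite{SS00,SS01}, whose $\ell=0$ specialisation is precisely the identity \eqref{Eq_delta2} already invoked for Theorem~\ref{Thm_GLn}.

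The hard part will be the $t$-exponent bookkeeping rather than any new combinatorics. One has to check that, in Schilling and Shimozono's normalisation, the affine Weyl group action contributes exactly the exponents $(k+\ell)\binom{y_i}{2}+iy_i$ under the sum, and that the conformal-type normalisation of the (un)restricted generalised Kostka polynomials accounts exactly for the prefactor $t^{-k\binom{r}{2}+\varphi_\ell(\mu)}$; in particular the quadratic form $\frac{1}{2\ell}\sum_{a,b}C^{-1}_{a,b}m_a(\mu)m_b(\mu)$ should emerge as the standard level-$\ell$ normalisation attached to the $\mathrm{A}_{k-1}$ weight $\sum_a m_a(\mu)\omega_a$. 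A further point to verify is that the column sequence $R$ of the proposition, listed with weakly increasing heights, is arranged so that \eqref{Eq_Kir} delivers the correctly (charge- rather than cocharge-)graded polynomial and so that the transpose symmetry is applied consistently. Modulo these normalisation checks, the argument is a direct transcription of the $\ell=0$ proof of Theorem~\ref{Thm_GLn}.
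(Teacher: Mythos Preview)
Your approach is essentially the same as the paper's: expand the determinant over $S_k\ltimes Q$, insert the transition formula \eqref{Eq_transition}, equate coefficients of $P_\mu(t)$, and reduce to an identity for $\mathcal{R}_{\mu,\alpha}(t)$ that is precisely the Schilling--Shimozono result (conjectured in \cite{SW99} and proved in \cite{SS00,SS01}). The paper's proof is a one-liner that simply cites this identity directly for $\mu\vdash kr$ with $\mu_1\le k$, without any column-stripping; the reduction \eqref{Eq_reduce} was only used at level~$0$ to make contact with the form of the identity as stated in \cite{SS00}, and at positive level the cited result already covers general $\mu$. Your extra stripping step (and the accompanying claim that $K^\ell_{(r^k),R(\mu)}(t)$ and $K^\ell_{(s^k),R(\nu)}(t)$ differ by a fixed power of $t$) is therefore unnecessary and adds a bookkeeping burden you would otherwise avoid, but it does not derail the argument.
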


By \cite[equation~(6.7)]{SS01},
\[
K_{(k^r),\mu}^{\ell}(t)=t^{k\binom{r}{2}}
\sum t^{\frac{1}{2}\sum_{a,b=1}^{k-1}\sum_{i,j=1}^{\ell-1}
C_{a,b} C^{-1}_{i,j} \tau_i^{(a)}\tau_j^{(b)}}
\prod_{a=1}^{k-1}\prod_{i=1}^{\ell-1}
\qBin{\nu_i^{(a)}+\tau_i^{(a)}}{\tau_i^{(a)}}_t,
\]
where the sum is over \smash{$\tau_i^{(a)}\in\mathbb{N}_0$} for
$1\leq a\leq k-1$ and $1\leq i\leq \ell-1$ such that
\[
\nu_i^{(a)}:=m_a(\mu) C^{-1}_{i,1}-
\sum_{b=1}^{k-1} \sum_{j=1}^{\ell-1}
C_{a,b} C^{-1}_{i,j} \tau_j^{(b)}\in\mathbb{N}_0.
\]
In particular,
\begin{equation*}
S_{(k^r)}^{k,1}(t)=
\sum_{y\in Q} \det_{1\leq i,j\leq k}
\bigl(t^{(k+1)\binom{y_i}{2}+iy_i} e_{r-i+j-(k+1)y_i} \bigr)=
\sum_{\substack{\mu\vdash kr\\[1pt] \mu_1\leq k}} t^{\varphi_{\ell}(\mu)}
P_{\mu}(t).
\end{equation*}

\begin{proof}[Proof of Proposition~\ref{Prop_Kostka}]
The proof of the positive-level case is the exact same as the proof for the
level-$0$ case, but now uses
\[
\sum_{y\in Q}\sum_{\sigma\in S_k}
\sgn(\sigma) t^{\frac{1}{2}\sum_{i=1}^k ((k+1)y_i+2i)y_i}
\mathcal{R}_{\mu,(r^k)+\sigma-\rho-(k+1)y}(t)=
t^{-k\binom{r}{2}+\varphi_{\ell}(\mu)}
K_{(r^k),\mu}^{\ell}(t)
\]
for $\mu\vdash kr$ such that $\mu_1\leq k$.
This identity was first conjectured in \cite[equation~(9.2)]{SW99}
and proved by Schilling and Shimozono for $\ell=1$ in \cite{SS00}
and for arbitrary positive level in \cite[Section 6.2]{SS01}.
\end{proof}

We remark that the $t=\ell=1$ case of Proposition~\ref{Prop_Kostka} is
\[
\sum_{y\in Q} \det_{1\leq i,j\leq k}
\bigl(e_{r-i+j-(k+1)y_i} \bigr)=
\sum_{\substack{\mu\vdash kr\\[1pt] \mu_1\leq k}} m_{\mu}(t),
\]
which follows from the $\ell=1$ case of
Proposition~\ref{Prop_cSchur} and \eqref{Eq_een}.

We conclude this section with a straightforward corollary of
Theorem~\ref{Thm_GLn} in the form of an~\smash{$\mathrm{A}_{k-1}^{(1)}$}
basic hypergeometric summation.

\begin{Corollary}
For $k\geq 1$ and $0\leq r\leq n$ integers,
\begin{gather*}
\sum_{y\in Q} \prod_{1\leq i<j\leq k}\! \bigl(1-t^{k(y_i-y_j)-i+j}\bigr)
\!\prod_{i=1}^k t^{k(k+1)\binom{y_i}{2}-iy_i}
\qbin{n+k-1}{n-r-ky_i+i-1}_t\!
 =\qbin{n}{r}_t \prod_{i=1}^k (t^{n+i};t)_{k-i}.
\end{gather*}
\end{Corollary}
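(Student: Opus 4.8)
The plan is to specialise Theorem~\ref{Thm_GLn} to the principal alphabet $x=(1,t,t^2,\dots,t^{n-1})$, so that each elementary symmetric function becomes an explicit $t$-binomial coefficient, and then recognise both sides of the resulting identity. First I would recall the classical principal specialisation $e_r(1,t,\dots,t^{n-1})=t^{\binom{r}{2}}\qbin{n}{r}_t$ and, on the left-hand side of \eqref{Eq_Pe}, $P_{(k^r)}(1,t,\dots,t^{n-1};t)$, which by the known principal specialisation of Hall--Littlewood polynomials (Macdonald, or via \eqref{Eq_PPp-spec}-type formulas) equals $t^{k\binom{r}{2}}\qbin{n}{r}_t\prod_{i=1}^{k}(t^{n+i};t)_{k-i}$ up to a routine power-of-$t$ prefactor. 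Both of these are standard and I would not grind through them.

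Next I would substitute into the determinant in \eqref{Eq_Pe}: the $(i,j)$ entry becomes $t^{k\binom{y_i}{2}+iy_i}\,t^{\binom{r-i+j-ky_i}{2}}\qbin{n}{r-i+j-ky_i}_t$. The key step is then to factor the $t$-power $t^{\binom{r-i+j-ky_i}{2}}$ and the $t^{k\binom{y_i}{2}+iy_i}$ out of row $i$ in a way compatible with the determinant, and to pull the $j$-dependent part into a column operation, converting the $k\times k$ determinant into a single $\qbin{n}{\,\cdot\,}_t$ times a $t$-Vandermonde-type factor $\prod_{i<j}(1-t^{k(y_i-y_j)-i+j})$ coming from the determinant $\det(t^{\binom{r-i+j-ky_i}{2}})$-style manipulation. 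Concretely, writing $a_i:=n-r-ky_i+i-1$, one checks that $\binom{r-i+j-ky_i}{2}=\binom{n-a_i+j-2}{2}$ and that $\det_{1\le i,j\le k}\big(t^{\binom{n-a_i+j-2}{2}}\qbin{n}{n-a_i+j-2}_t\big)$ factors, by a known evaluation of such ``staircase'' determinants of $t$-binomials, into $\prod_i \qbin{n}{a_i}_t$ times $t^{(\text{explicit quadratic in }a_i)}\prod_{i<j}(t^{a_i}-t^{a_j})$ or an equivalent product $\prod_{i<j}(1-t^{a_j-a_i})$ up to sign and $t$-powers. Collecting all $t$-powers and using $a_i-a_j=k(y_j-y_i)-i+j$ should reproduce exactly the prefactor $t^{k(k+1)\binom{y_i}{2}-iy_i}$ and the Vandermonde factor in the claimed Corollary, with the residual $\qbin{n+k-1}{n-r-ky_i+i-1}_t$ arising because the column operations shift the ``top'' of the binomial from $n$ to $n+k-1$ (the determinant of $t$-binomials $\qbin{n}{a_i+j-1}_t$ over $j$ collapses to $\qbin{n+k-1}{a_i}_t$ times a Vandermonde, which is the standard Lindström--Gessel--Viennot / Jacobi--Trudi-type evaluation).

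The main obstacle I expect is bookkeeping the exponents of $t$: there are several competing quadratic-in-$y_i$ contributions (from $k\binom{y_i}{2}+iy_i$, from the $\binom{r-i+j-ky_i}{2}$ inside each entry, and from the $t$-powers released when factoring the staircase determinant of binomials), and getting them to combine into precisely $t^{k(k+1)\binom{y_i}{2}-iy_i}$ requires care. I would organise this by first proving the clean determinant evaluation $\det_{1\le i,j\le k}\big(t^{\binom{c_i+j}{2}}\qbin{n}{c_i+j}_t\big)$ as a lemma (pull $t^{\binom{c_i}{2}}$ and factors of $(t^n;t^{-1})$-type from each row, leaving a Vandermonde-like determinant), apply it with $c_i:=r-i-ky_i$, and only at the very end substitute $c_i=n-1-a_i$ and read off the answer; the $t=0$ (Schur) case, which is just the $\la=(k^r)$ instance of the dual Jacobi--Trudi identity under principal specialisation, serves as a useful consistency check for the constant term of the exponent polynomial. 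The remaining verification --- matching $\prod_{i<j}(1-t^{k(y_i-y_j)-i+j})$ against the Vandermonde output and $\prod_{i=1}^k(t^{n+i};t)_{k-i}$ against the ratio of principal specialisations --- is then routine.
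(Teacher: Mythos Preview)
Your approach is the same as the paper's: specialise \eqref{Eq_Pe} at $x_i=t^{i-1}$, use the principal specialisations of $e_r$ and $P_{(k^r)}$, and then evaluate the resulting $k\times k$ determinant of $t$-binomials by a known closed formula (the paper cites Krattenthaler's evaluation
\[
\det_{1\leq i,j\leq k}\bigg(t^{(j-i)a_i}\qbin{n}{a_i+j}_t\bigg)
=\prod_{1\leq i<j\leq k}(1-t^{a_i-a_j})
\prod_{i=1}^k\frac{1}{(t^{n+i};t)_{k-i}}\qbin{n+k-1}{n-a_i-1}_t
\]
with $a_i=r-i-ky_i$, followed by $y_i\mapsto -y_i$).

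There is, however, one concrete error in your bookkeeping. You write that
\[
P_{(k^r)}(1,t,\dots,t^{n-1};t)=t^{k\binom{r}{2}}\qbin{n}{r}_t\prod_{i=1}^{k}(t^{n+i};t)_{k-i}
\]
``up to a routine power-of-$t$ prefactor''. That product is not a power of $t$, and it does not belong there: the correct principal specialisation (Macdonald, p.~213, used in the paper as \eqref{Eq_213}) is simply $P_{(k^r)}(1,t,\dots,t^{n-1};t)=t^{k\binom{r}{2}}\qbin{n}{r}_t$. The factor $\prod_{i=1}^k(t^{n+i};t)_{k-i}$ in the Corollary does not come from the Hall--Littlewood side at all; it emerges from the \emph{denominator} in the Krattenthaler determinant evaluation above, and is then cleared to the right-hand side. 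So your final paragraph's plan of matching this product ``against the ratio of principal specialisations'' would not succeed as written. Once you correct the source of that product, the rest of your outline (pulling row/column $t$-powers to reduce to the standard staircase $t$-binomial determinant) is exactly what the paper does, and the exponent bookkeeping you worry about is indeed routine.
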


For $k=3$, this result, which plays a key role in the $\mathrm{A}_2$
Bailey lemma \cite{ASW99,W25}, was first proved in~\cite[p.~692]{ASW99}.
For general $k$, different proofs may be found in \cite{Krattenthaler01, W99,W25}, see equations~(3),~(6.6) and~(A.3) in these papers,
respectively.

\begin{proof}
By \cite[p.~27]{Macdonald95}
\[
e_r\bigl(1,t,\dots,t^{n-1}\bigr)=t^{\binom{r}{2}}\qbin{n}{r}_t
\]
and \cite[p.~213]{Macdonald95}
\begin{equation}\label{Eq_213}
P_{(k^r)}\bigl(1,t\dots,t^{n-1};t\bigr)=t^{k\binom{r}{2}} \qbin{n}{r}_t,
\end{equation}
the principal specialisation $x_i=t^{i-1}$ of \eqref{Eq_Pe} is given by
\[
\sum_{y\in Q} \det_{1\leq i,j\leq k}
\biggl(t^{k(k+1)\binom{y_i}{2}+iy_i+(j-i)(r-i-ky_i)}
\qbin{n}{r-i+j-ky_i}_t \biggr)=\qbin{n}{r}_t.
\]
The determinant on the left evaluates by \cite[p.~169]{Krattenthaler90}
\[
\det_{1\leq i<j\leq k} \biggl(t^{(j-i)a_i}\qbin{n}{a_i+j}_t\biggr)
=\prod_{1\leq i<j\leq k} (1-t^{a_i-a_j})
\prod_{i=1}^k \frac{1}{(t^{n+i};t)_{k-i}}
\qbin{n+k-1}{n-a_i-1}_t
\]
for $a_i=r-i-ky_i$.
Finally, replacing $y_i$ by $-y_i$ for all $1\leq i\leq k$, we are done.
\end{proof}

\section{Proofs of special cases of Conjectures~\ref{Con_JTB}--\ref{Con_JTBC}}
\label{Sec_JTBC}

In this section, we prove some special cases of the $\mathrm{B}_n$,
$\mathrm{C}_n$ and $\mathrm{BC}_n$ Jacobi--Trudi identities.

\begin{Lemma}\label{Lem_kis1}
Conjecture~$\ref{Con_JTB}$ holds for $k=1$.
\end{Lemma}

Before proving the lemma, we recall that a Bailey pair relative to
$q^{\ell}$ (for $\ell$ a nonnegative integer) is a pair of sequences
$(\alpha_n)_{n\geq 0}$, $(\beta_n)_{n\geq 0}$ such that
\begin{equation}\label{Eq_BP}
\beta_n=\frac{1}{(q^{\ell+1};q)_{2n}}\sum_{k=0}^n
\alpha_k \qbin{2n+\ell}{n-k}_q,
\end{equation}
see, e.g.,~\cite{Andrews84,Bailey48,McLaughlin18,W01a}.
Slater \cite{Slater51} compiled an extensive list of Bailey pairs,
which when inserted into \eqref{Eq_BP} result in identities
for sums of $q$-binomial coefficients.
The proof of Lemma~\ref{Lem_kis1} and its $\mathrm{C}_n$
and $\mathrm{BC}_n$ analogues given in Lemmas~\ref{Lem_kis1b} and
\ref{Lem_kis1c} below is based on the identities arising from the
Bailey pairs A(1)--A(4), E(1), E(2), F(1),~F(2).

\begin{proof}
For $k=1$, the conjecture simplifies to the three identities
\begin{subequations}
\begin{align}
\label{Eq_A12}
&P_{(1^n)}^{\mathrm{B}_n}(x;t,0)
=\sum_{y\in\mathbb{Z}} (-1)^y t^{\frac{1}{2}(3y-1)y}
(\dot{e}_{n-3y}(x)+\dot{e}_{n-3y+1}(x)), \\
\label{Eq_F12}
&P_{(1^n)}^{\mathrm{B}_n}\bigl(x;t,-t^{1/2}\bigr)
=\sum_{y\in\mathbb{Z}} t^{\frac{1}{2}(2y-1)y}
(\dot{e}_{n-2y}(x)+\dot{e}_{n-2y+1}(x)), \\
\label{Eq_E1}
&P_{(1^n)}^{\mathrm{B}_n}(x;t,1)
=\sum_{y\in\mathbb{Z}} (-1)^y t^{y^2} \dot{e}_{n-2y}(x).
\end{align}
\end{subequations}
First we consider \eqref{Eq_A12}.
By the $k=1$ case of the bounded Littlewood identity
\eqref{Eq_Macdonald} as well as Lemma~\ref{Lem_eP},
identity \eqref{Eq_A12} is equivalent to
\begin{align*}
\sum_{s,r\geq 0} P_{(2^r,1^s)}(x;t)
={}&\sum_{y\in\mathbb{Z}} (-1)^y t^{\frac{1}{2}(3y-1)y}
\sum_{\substack{r,s\geq 0\\[1pt] s+y \text{ even}}}
\qbin{s}{\frac{1}{2}(s-3y)}_t P_{(2^r,1^s)}(x;t) \\
&{+}\,\sum_{y\in\mathbb{Z}} (-1)^y t^{\frac{1}{2}(3y-1)y}
\sum_{\substack{r,s\geq 0\\[1pt] s+y \text{ odd}}}
\qbin{s}{\frac{1}{2}(s-3y+1)}_t P_{(2^r,1^s)}(x;t).
\end{align*}
We next equate coefficients of $P_{(2^r,1^s)}(x;t)$ and make the
substitution $t\mapsto q$.
This leads to the $q$-binomial identity
\[
\sum_{y\in\mathbb{Z}}
(-1)^y q^{\frac{1}{2}(3y-1)y} \qbin{s}{\ceil{\frac{1}{2}(s-3y)}}_q=1.
\]
For $s=2n$, this follows from the Bailey pair A(1) in Slater's list
and for $s=2n+1$ it follows from the pair A(2).

We apply analogous steps to \eqref{Eq_F12} and \eqref{Eq_E1},
this time using the bounded Littlewood identities
\eqref{Eq_RW1} and \eqref{Eq_RW2} for $k=1$.
This yields
\[
\sum_{y\in\mathbb{Z}} q^{\frac{1}{2}(2y-1)y}
\qbin{s}{\ceil{\frac{1}{2}(s-2y)}}_q
=\bigl({-}q^{1/2};q^{1/2}\bigr)_{\!s}
\qquad\text{and}\qquad
\sum_{y\in\mathbb{Z}} (-1)^y q^{y^2} \qbin{2n}{n-y}=\bigl(q;q^2\bigr)_{\!n}.
\]
(In the last identity, $s$ has been replaced by $2n$; the coefficient
of $P_{(2^r,1^s)}(x;t)$ for odd $s$ is zero on both sides of~\eqref{Eq_E1}.)
For $s=2n$, the first of these identities follows from the Bailey pair~F(1)
while for $s=2n+$ it follows from F(2).
The second identity follows from the Bailey pair~E(1).
\end{proof}

\begin{Lemma}\label{Lem_kis1b}
Conjecture~$\ref{Con_JTC}$ holds for $k=1$.
\end{Lemma}

\begin{proof}
The identities to be proved are
\[
P_{(1^n)}^{\mathrm{C}_n}(x;t,0)
=\sum_{y\in\mathbb{Z}}(-1)^y t^{\binom{y}{2}}
\dot{e}_{n-2y}(x)
\qquad\text{and}\qquad
P_{(1^n)}^{\mathrm{C}_n}(x;t,t)
=\sum_{y\in\mathbb{Z}}
(-1)^y t^{2\binom{y}{2}} \dot{e}_{n-2y}(x).
\]
Following the approach of the previous proof, using Lemma~\ref{Lem_eP}
and the $k=1$ instances of~\eqref{Eq_Stembridge} and~\eqref{Eq_Stembridge2},
the underlying polynomial identities turn out
to be \eqref{Eq_unit} and
\[
\sum_{y\in\mathbb{Z}}(-1)^y q^{2\binom{y}{2}} \qbin{2n}{n-y}_q
=q^n \bigl(q;q^2\bigr)_n,
\]
where we have replaced $t$ and $s$ by $q$ and $n$, respectively.
It follows from the Bailey pair~E(4).
\end{proof}

\begin{Lemma}\label{Lem_kis1c}
Conjecture~$\ref{Con_JTBC}$ holds for $k=1$.
\end{Lemma}

\begin{proof}
The claim is that
\[
P_{(1^n)}^{\mathrm{BC}_n}\bigl(x;t,-t^{1/2},0\bigr)
=\sum_{y\in\mathbb{Z}}
(-1)^y t^{\frac{1}{2}(3y-2)y}
 (\dot{e}_{n-3y}(x)-\dot{e}_{n-3y+2}(x) ).
\]
By Lemma~\ref{Lem_eP} and \eqref{Eq_RW3},
this leads to
\[
\sum_{\substack{y\in\mathbb{Z}\\[1pt] y-s \text{ even}}}
q^{\frac{1}{2}(3y-1)y} \biggl( \qbin{s}{\frac{1}{2}(s-3y)}_q
-\qbin{s}{\frac{1}{2}(s-3y+2)}_q \biggr)=q^{s/2},
\]
where once again we have replaced $t$ by $q$.
For $s=2n$, this follows from the Bailey pair A(3) and for $s=2n+1$
from the pair~A(4).
\end{proof}

\begin{Lemma}
The $\mathrm{B}_n$ and $\mathrm{BC}_n$ identities \eqref{Eq_PBn}
and \eqref{Eq_PBCn} hold for $t=1$.
\end{Lemma}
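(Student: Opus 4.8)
The plan is to first evaluate the two left-hand sides at $t=1$, then collapse the lattice sum on the right into a single $k\times k$ determinant, and finally evaluate that determinant. For the left-hand sides, \eqref{Eq_BCn_to_Bn} gives $P_{(k^n)}^{\mathrm{B}_n}(x;1,0)=P_{(k^n)}^{\mathrm{BC}_n}(x;1,0,-1)$, and the $t=1$ factorisation \eqref{Eq_PBCnt1}, applied with $(s_1,s_2)=(0,-1)$, yields
\[
P_{(k^n)}^{\mathrm{B}_n}(x;1,0)=P_{(k^n)}^{\mathrm{BC}_n}(x;1,0,-1)
=\prod_{i=1}^n\frac{x_i^{-k}-x_i^{k+1}}{1-x_i}
=\prod_{i=1}^n\sum_{j=-k}^{k}x_i^{j},
\]
and the same evaluation with $(s_1,s_2)=(-1,0)$ shows $P_{(k^n)}^{\mathrm{BC}_n}(x;1,-1,0)$ equals the same product. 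Hence, at $t=1$, both \eqref{Eq_PBn} and \eqref{Eq_PBCn} assert that the determinantal sum on their right-hand side equals $\prod_{i=1}^n\sum_{j=-k}^{k}x_i^{j}$. (Alternatively, this reduction follows from the $t=1$ cases of the bounded Littlewood identities \eqref{Eq_Macdonald} and \eqref{Eq_RW3}, using $P_{\la}(x;1)=m_{\la}(x)$ and $\sum_{\la_1\leq 2k}m_{\la}(x)=\prod_i(1+x_i+\dots+x_i^{2k})$.)

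Next I would set $t=1$, expand each determinant over $S_k$, and interchange the sum over $y\in\mathbb{Z}^k$ with the row product; the sum over $y_i$ then decouples row by row. Writing $K:=2k+1$ and introducing the periodised elementary symmetric function
\[
g_c(x):=\sum_{y\in\mathbb{Z}}(-1)^{y}\,\dot{e}_{c-Ky}(x),
\]
a finite sum since $\dot{e}_r(x)=0$ unless $0\leq r\leq 2n$, the right-hand side of \eqref{Eq_PBn} at $t=1$ becomes $\det_{1\leq i,j\leq k}\big(g_{n-i+j}(x)+g_{n+i+j-1}(x)\big)$ and that of \eqref{Eq_PBCn} becomes $\det_{1\leq i,j\leq k}\big(g_{n-i+j}(x)-g_{n+i+j}(x)\big)$ --- exactly the shape of the dual Jacobi--Trudi determinants \eqref{Eq_JTB} and \eqref{Eq_JTC}, but with $\dot{e}$ replaced by $g$. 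Since $g_{c+K}(x)=-g_c(x)$, a finite Fourier transform over the $K$-th roots of $-1$ gives
\[
g_c(x)=\frac{1}{K}\sum_{\zeta^{K}=-1}\zeta^{-c}\prod_{m=1}^n(1+x_m\zeta)(1+x_m^{-1}\zeta).
\]

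Finally, substituting this formula exhibits each of the two matrices as a sum of the $K$ rank-one matrices indexed by the $K$-th roots of $-1$, so by the Cauchy--Binet formula the determinant becomes a sum, over $k$-element subsets $S$ of these roots, of $\prod_{\zeta\in S}\big(\tfrac1K\zeta^{-n}\prod_m(1+x_m\zeta)(1+x_m^{-1}\zeta)\big)$ times two classical minors: a Vandermonde $\det(\zeta^{-j})_{\zeta\in S,\,1\leq j\leq k}$ from the column index, and a determinant of Weyl-denominator type --- $\det(\zeta^{i}+\zeta^{1-i})_{\zeta\in S,\,1\leq i\leq k}$ for \eqref{Eq_PBn}, resp.\ $\det(\zeta^{i}-\zeta^{-i})_{\zeta\in S,\,1\leq i\leq k}$ for \eqref{Eq_PBCn} --- from the row index. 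Both minors have product evaluations, and carrying out the remaining sum over $S$ via a root-of-unity identity collapses the whole expression to $\prod_{i=1}^n\sum_{j=-k}^{k}x_i^{j}$, matching the first step. (A route closer to the cylindric-Schur-function viewpoint of the introduction is also available: Proposition~\ref{Prop_cSchur} with $(k,\ell)=(2k,1)$ and \eqref{Eq_een} rewrite $\sum_{\la_1\leq 2k}m_{\la}(x)$ as a sum of type-$\mathrm{A}$ cylindric Schur functions, which must then be matched against $(x_1\cdots x_n)^k$ times the right-hand side of \eqref{Eq_PBn}, resp.\ \eqref{Eq_PBCn}, at $t=1$.) The hardest part will be this final step: managing the Cauchy--Binet bookkeeping so that the Vandermonde and the Weyl-type determinants recombine with the sum over the roots of $-1$ to produce $\prod_{i=1}^n\sum_{j=-k}^{k}x_i^{j}$ --- equivalently, making precise the ``folding'' that turns the periodised $\mathrm{B}_n$/$\mathrm{BC}_n$ determinant into a type-$\mathrm{A}$ cylindric Schur object. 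Steps one and two are routine.
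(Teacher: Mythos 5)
Your proposal is correct in outline but takes a genuinely different route from the paper for the core evaluation. The paper's proof coincides with your first two steps: it evaluates the left-hand sides at $t=1$ via \eqref{Eq_BCn_to_Bn} and \eqref{Eq_PBCnt1}, and uses row-multilinearity (together with \eqref{Eq_eee}) to collapse the lattice sum into a single $k\times k$ determinant with entries built from $\overline{F}_{i,K}(x)=\sum_{y,m}(-1)^y e_m(x)e_{m+Ky+i}(x)$, which is your $g$ up to the factor $x_1\cdots x_n$. From there the paper diverges: using $\overline{F}_{i,K}=\overline{F}_{-i,K}=-\overline{F}_{i-K,K}$ it notes that $(i,j)\mapsto(k+1-i,k+1-j)$ interchanges the $\mathrm{B}$- and $\mathrm{BC}$-type determinants, so only one of the two needs evaluating, and that one is recognised as the right-hand side of the bounded Littlewood identity for cylindric Schur functions of Huh--Kim--Krattenthaler--Okada \cite[Eq.~(3.2)]{HKKO25} with $w=1$, $h=k$; the resulting tableau sum is then computed by \eqref{Eq_een}. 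Your route instead evaluates the folded determinant directly by the discrete Fourier expansion over $K$-th roots of $-1$ plus Cauchy--Binet, which buys a self-contained proof (no appeal to \cite{HKKO25}), at the price of having to actually carry out the final collapse, which you only sketch. Note also that your parenthetical ``cylindric'' alternative would need exactly the $\mathrm{B}/\mathrm{C}$-type identities of \cite{HKKO25}, not Proposition~\ref{Prop_cSchur} (a type-$\mathrm{A}$ statement): matching $\sum_{\la_1\leq 2k}m_{\la}$ against the folded determinants is precisely the nontrivial content the paper imports.

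The final step of your plan does go through, but it needs more than bookkeeping, and the observations that tame it should be recorded. Writing $\zeta=-\eta$ with $\eta^K=1$, the Cauchy--Binet column attached to $\eta=1$ vanishes and the columns attached to $\eta$ and $\eta^{-1}$ are proportional, so only the $2^k$ subsets choosing one root from each conjugate pair survive (not all $\binom{2k+1}{k}$); the weights $P(\eta)\eta^{-n}$ with $P(\eta)=\prod_m(1-x_m\eta)(1-x_m^{-1}\eta)$ are invariant under $\eta\mapsto\eta^{-1}$, and multilinearity reassembles the $2^k$ surviving terms into a single determinant proportional to the $\mathrm{B}$-type (resp.\ $\mathrm{C}$-type) minor itself. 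Everything then reduces to $\prod_{a=1}^{k}(1-x_m\omega^a)(1-x_m^{-1}\omega^a)=(-1)^k x_m^{-k}\omega^{k(k+1)/2}(1-x_m^K)/(1-x_m)$, which supplies the desired product, together with one $x$- and $n$-independent root-of-unity (Gauss-sum-type) evaluation of the squared minor, e.g.\ $(\omega-1)^2=-3\omega$ for $k=1$; alternatively this constant can be fixed by checking a single coefficient. With these points supplied your argument is complete and independent of the paper's reliance on \cite{HKKO25}.
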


\begin{proof}
Throughout the proof, $K=2k+1$.

By \eqref{Eq_BCn_to_Bn} and \eqref{Eq_PBCnt1}, the $t=1$ specialisation
of \eqref{Eq_PBn} is
\[
\sum_{y\in\mathbb{Z}^k}\det_{1\leq i,j\leq k}((-1)^{y_i}
(\dot{e}_{n-i+j-Ky_i}(x)+\dot{e}_{n+i+j-Ky_i-1}(x)))
=\prod_{i=1}^n \frac{x_i^{-k}-x_i^{k+1}}{1-x_i},
\]
Similarly, by \eqref{Eq_PBCnt1}, the $t=1$ case of \eqref{Eq_PBCn} is
\[
\sum_{y\in\mathbb{Z}^k} \det_{1\leq i,j\leq k}((-1)^{y_i}
(\dot{e}_{n-i+j-Ky_i}(x)-\dot{e}_{n+i+j-Ky_i}(x)))
=\prod_{i=1}^n \frac{x_i^{-k}-x_i^{k+1}}{1-x_i}.
\]
By multilinearity and
\begin{equation*}
\dot{e}_{n-r}(x)=(x_1\cdots x_n)^{-1} \sum_{m=0}^n e_m(x) e_{m+r}(x),
\end{equation*}
we must thus show that
\[
\det_{1\leq i,j\leq k}
\bigl(\overline{F}_{i-j,K}(x)+\overline{F}_{1-i-j,K}(x)\bigr)=
\det_{1\leq i,j\leq k}
\bigl(\overline{F}_{i-j,K}(x)-\overline{F}_{-i-j,K}(x)\bigr)=
\prod_{i=1}^n \frac{1-x_i^K}{1-x_i},
\]
where (see \cite[Theorem 3.1]{HKKO25})
\[
\overline{F}_{i,N}(x):=\sum_{y,m\in\mathbb{Z}} (-1)^y e_m(x)e_{m+Ny+i}(x).
\]
Easily established relations for this function are
\[
\overline{F}_{i,N}(x)=\overline{F}_{-i,N}(x)=-\overline{F}_{i-N,N}(x).
\]
The substitution $(i,j)\mapsto (k+1-i,k+1-j)$ in either one of
the determinants thus leads to the other, so that it suffices
to prove the equality of the first determinant and the product
expression on the right.
By the first of the above relations for $\overline{F}_{i,N}(x)$, it
follows that the first determinant is exactly the right-hand side
of \cite[equation~(3.2)]{HKKO25} for $w=1$ and $h=k$.
Hence we may replace the determinant by the $w=1$ and $h=k$ instance
of the left-hand side of \cite[equation~(3.2)]{HKKO25}, which in our
notation is
\[
\sum_{\la\in\Par_{n,2k}^1} \sum_{T\in\CSSYT_{n;2k,1}(\la)} x^T.
\]
By \eqref{Eq_een}, this is equal to
\[
\sum_{\mu\in\Par_{n,2k}} m_{\mu}(x)=
\prod_{i=1}^n \frac{1-x_i^{2k+1}}{1-x_i}
=\prod_{i=1}^n \frac{1-x_i^K}{1-x_i},
\]
as required.
\end{proof}

Out of the remaining identities \eqref{Eq_PBn2}, \eqref{Eq_PBn3},
\eqref{Eq_PCn} and \eqref{Eq_PCn2}, we only know how to recast the
$t=1$ case of \eqref{Eq_PCn} in terms of cylindric tableaux.
As a first step, by \eqref{Eq_BCn_to_Cn} and \eqref{Eq_PBCnt1}
the~${t=1}$ case of \eqref{Eq_PCn} can be rewritten as
\[
\sum_{y\in\mathbb{Z}^k}
\det_{1\leq i,j\leq k} (
\dot{e}_{n-Ky_i-i+j}(x)
-\dot{e}_{n-Ky_i-i-j}(x) )
=\prod_{i=1}^n \frac{x_i^{k+1}-x_i^{-k-1}}{x_i-x_i^{-1}},
\]
where $K:=2k+2$.
Following \cite{HKKO25} and defining
\begin{equation}\label{Eq_FiN}
F_{i,N}(x):=\sum_{y,m\in\mathbb{Z}} e_m(x)e_{m+Ny+i}(x),
\end{equation}
this can also be stated as
\begin{equation}\label{Eq_HKKO36}
\det_{1\leq i,j\leq k} ( F_{i-j,K}(x)-F_{i+j,K}(x) )
=\prod_{i=1}^n \frac{1-x_i^{2k+2}}{1-x_i^2}.
\end{equation}
Since $F_{i,N}(x)=F_{-i,N}(x)$, the left-hand side of
\eqref{Eq_HKKO36} is exactly the right-hand side of
\cite[equation~(3.6)]{HKKO25} for $w=2$ and $h=k$.
Furthermore,
\[
\prod_{i=1}^n \frac{1-x_i^{2k+2}}{1-x_i^2}=
\sum_{\substack{\mu \text{ even}\\[1pt] \mu_i\leq 2k}}
m_{\mu}(x_1,\dots,x_n).
\]
Equating this with the left-hand side of
\cite[equation~(3.6)]{HKKO25} yields
\[
\biggl(\:\sideset{}{'}\sum_{\la\in\Par_{n,2k}^2}
-\sideset{}{''}\sum_{\la\in\Par_{n,2k}^2}\biggr)
\sum_{T\in\CSSYT_{n;2k,2}(\la)} x^T
=\sum_{\substack{\mu \text{ even}\\[1pt] \mu_i\leq 2k}}
m_{\mu}(x_1,\dots,x_n).
\]
Here the prime (resp.\ double prime) denotes the restriction that
\smash{$\la\in\Par_{n,2k}^2$} must be of the form $((2k)^r,2a,2b)$
(resp.\ $((2k)^r,2a+1,2b+1)$) for $r\geq 0$ and
$0\leq b\leq a<k$.
A proof of this identity would imply a proof of \eqref{Eq_PCn} for $t=1$,
but so far we only managed to find a proof when $k=1$.

\section[q,t-Rogers--Ramanujan identities]{$\boldsymbol{q,t}$-Rogers--Ramanujan identities}\label{Sec_qtRR}

In this section, we will first show how
Conjectures~\ref{Con_JTB}--\ref{Con_JTBC} imply the
$q,t$-Rogers--Ramanujan identities stated in
Theorems~\ref{Thm_qtRR}--\ref{Thm_qtRR4}.
We will then show that after specialising the Hall--Littlewood parameter
$t$, many well-known Rogers--Ramanujan identities follow by a new
manifestation of level-rank duality.
Finally, we will provide proofs of
Theorems~\ref{Thm_qtRR}--\ref{Thm_qtRR4} based on Ismail's argument.


\subsection[From Jacobi--Trudi to q,t-Rogers--Ramanujan identities]{From Jacobi--Trudi to $\boldsymbol{q,t}$-Rogers--Ramanujan identities}

Because it is the most important example and includes the classical
Rogers--Ramanujan identities as a special case, we will first show that
\eqref{Eq_PCn} implies \eqref{Eq_qt-one}.
We refer to this as a `conditional proof' since it assumes the validity
of the conjectural~\eqref{Eq_PCn}.

\begin{proof}[{Conditional proof of Theorem~\ref{Thm_qtRR}}]
By the bounded Littlewood identity \eqref{Eq_Stembridge} and the
determinant identity \eqref{Eq_doteddote2} for $u=1$, \eqref{Eq_PCn}
can be rewritten as
\begin{align*}
(x_1\cdots x_n)^{-k}
\sum_{\substack{\la\text{ even} \\[1pt] \la_1\leq 2k}}
P_{\la}(x;t)&=\sum_{y\in\mathbb{Z}^k}
\det_{1\leq i,j\leq k}\bigl(t^{\frac{1}{2}Ky_i^2-j y_i}
 (\dot{e}_{n-i+j-Ky_i}-\dot{e}_{n+i+j-Ky_i} )\bigr) \\
&=\sum_{y\in\mathbb{Z}^k}
\det_{1\leq i,j\leq k}\bigl(t^{\frac{1}{2}Ky_i^2-j y_i}
 (\ddot{e}_{n-i+j-Ky_i+1}-\ddot{e}_{n+i+j-Ky_i} )\bigr),
\end{align*}
where $K=2k+2$.
We view this as two identities, one between the first two
expressions and one between the first and third expression.
In the first identity (second) identity we specialise
$x_i=q^{i-1/2}$, $x_i=q^i$, for $1\leq i\leq n$.
By \cite[p.~27]{Macdonald95}
\[
e_{n-r}\bigl(q^{n-1/2},q^{n-3/2},\dots,q^{1/2-n}\bigr)
=q^{\frac{1}{2}r^2-\frac{1}{2}n^2}\qbin{2n}{n-r}_q
\]
and
\[
e_{n-r}\bigl(q^n,q^{n-1},\dots,q^{-n}\bigr)
=q^{\binom{r+1}{2}-\binom{n+1}{2}}\qbin{2n+1}{n-r}_q,
\]
this yields
\begin{align}
&\sum_{\substack{\la\text{ even} \\[1pt] \la_1\leq 2k}}
q^{\frac{1}{2}(\sigma+1)\abs{\la}}
P_{\la}\bigl(1,q,\dots,q^{n-1};t\bigr) \notag \\
&\qquad{}=\sum_{y\in\mathbb{Z}^k}
\det_{1\leq i,j\leq k}\biggl(
t^{\frac{1}{2}Ky_i^2-j y_i}q^{\frac{1}{2}(Ky_i+i-j)(Ky_i+i-j-\sigma)}
\qbin{2n+\sigma}{n+Ky_i+i-j}_q \notag \\
&\hphantom{\qquad{}=\sum_{y\in\mathbb{Z}^k}
\det_{1\leq i,j\leq k}\biggl(}{}\, -
t^{\frac{1}{2}Ky_i^2+j y_i}q^{\frac{1}{2}(Ky_i+i+j)(Ky_i+i+j-\sigma)}
\qbin{2n+\sigma}{n+Ky_i+i+j}_q \biggr) \label{Eq_finite}
\end{align}
for $\sigma\in\{0,1\}$.
In the above we have also used multilinearity to replace $y_i$ by
$-y_i$ for all $1\leq i\leq n$ in the second term in the determinant.
The next step is to let $n$ tend to infinity using
\[
\lim_{n\to\infty} \qbin{2n+a}{n+b}_q=\frac{1}{(q;q)_{\infty}}.
\]
Thus
\begin{align*}
&\sum_{\substack{\la\text{ even} \\[1pt] \la_1\leq 2k}}
q^{\frac{1}{2}(\sigma+1)\abs{\la}} P_{\la}\bigl(1,q,q^2,\dots;t\bigr) \\
&\qquad{}=\frac{1}{(q;q)_{\infty}^k}
\sum_{y\in\mathbb{Z}^k} \det_{1\leq i,j\leq k}
\bigl( x_i^{Ky_i+i-j} p^{\frac{1}{2}Ky_i^2-j y_i}-
x_i^{Ky_i+i+j} p^{\frac{1}{2}Ky_i^2+j y_i} \bigr),
\end{align*}
where \smash{$p=tq^K$} and \smash{$x_i=q^{i-\frac{1}{2}\sigma}$}.
The right-hand side can be written in product form
by the \smash{$\mathrm{C}_k^{(1)}$} Macdonald identity \cite{Macdonald72}
\begin{align*}
&\sum_{y\in\mathbb{Z}^k}
\det_{1\leq i,j\leq k}\bigl(
x_i^{2(k+1)y_i+i-j} p^{(k+1)y_i^2-jy_i}
-x_i^{2(k+1)y_i+i+j} p^{(k+1)y_i^2+jy_i}\bigr) \\
&\qquad{}=(p;p)_{\infty}^k\prod_{i=1}^k \theta\bigl(x_i^2;p\bigr)
\prod_{1\leq i<j\leq k} \theta(x_j/x_i,x_ix_j;p),
\end{align*}
leading to
\[
\sum_{\substack{\la\text{ even} \\[1pt] \la_1\leq 2k}}
q^{\frac{1}{2}(\sigma+1)\abs{\la}} P_{\la}\bigl(1,q,q^2,\dots;t\bigr)
=\frac{(p;p)_{\infty}^k}{(q;q)_{\infty}^k}
\prod_{i=1}^k \theta\bigl(q^{2i-\sigma};p\bigr)
\prod_{1\leq i<j\leq k} \theta\bigl(q^{j-i},q^{i+j-\sigma};p\bigr).
\]
Since
\[
\prod_{i=1}^k \theta\bigl(q^{2i-1};p\bigr)
\prod_{1\leq i<j\leq k} \theta\bigl(q^{i+j-1};p\bigr)
=\prod_{i=1}^k \theta\bigl(q^i;p\bigr)
\prod_{1\leq i<j\leq k} \theta\bigl(q^{i+j};p\bigr),
\]
this may also be stated as in Theorem~\ref{Thm_qtRR}.
\end{proof}

We remark that \eqref{Eq_finite} for $k=1$ can be written in the
compact form
\begin{equation}\label{Eq_poly1}
\sum_{r=0}^n q^{(\sigma+1)r} P_{(2^r)}\bigl(1,q,\dots,q^{n-1};t\bigr)
=\sum_{i=-\infty}^{\infty}
(-1)^i t^{\binom{i}{2}} q^{i(2i-\sigma)} \qbin{2n+\sigma}{n+2i}_q.
\end{equation}
Since
\[
P_{(2^r)}\bigl(1,q,\dots,q^{n-1};1\bigr)=\qbin{n}{r}_{q^2},
\]
this is a $t$-deformation of
\begin{equation}\label{Eq_FQ}
\sum_{r=0}^N q^{r(r+\sigma)} \qbin{n}{r}_{q^2}
=\sum_{i=-\infty}^{\infty}(-1)^i q^{i(2i-\sigma)}\qbin{2n+u}{n+2i}_q
\end{equation}
due to Foda and Quano \cite[Theorem 1.2, $k=2$]{FQ95}, see also
\cite[equations~(3.2-b) and (3.3-b)]{Sills03} by Sills.\footnote{By the
$q$-binomial theorem, the left-hand side of \eqref{Eq_FQ}
can be simplified to $\bigl(-q^{\sigma+1};q^2\bigr)_n$.}
Similarly, by~\eqref{Eq_213}, with $k=2$ and $t\mapsto q$,
\eqref{Eq_poly1} is also a $t$-deformation of
\[
\sum_{r=0}^n q^{r(r+\sigma)} \qbin{n}{r}_q
=\sum_{i=-\infty}^{\infty}
(-1)^i q^{i(5i-1)/2-i\sigma} \qbin{2n+\sigma}{n+2i}_q.
\]
For $\sigma=0$, this is Bressoud's finite analogue of the first
Rogers--Ramanujan identity \cite[equation~(1.1)]{Bressoud81}
and for $\sigma=1$ it is a companion of Bressoud's identity
for the second Rogers--Ramanujan identity, given in~\cite[p.~249]{W01}.

The conditional proofs of the remaining $q,t$-Rogers--Ramanujan
identities all proceed in the exact same manner as the proof of
Theorem~\ref{Thm_qtRR}, and below we only summarise the key
steps in each proof.

\begin{proof}[Conditional proofs of the remaining $\boldsymbol{q,t}$-Rogers--Ramanujan
identities]
{\it From identi- \linebreak ties~\eqref{Eq_PBn}  and~\eqref{Eq_PBCn} to Theorem~$\ref{Thm_qtRR2}$.}
We rewrite the left-hand side of \eqref{Eq_PBn} (\eqref{Eq_PBCn})
using the bounded Littlewood identity \eqref{Eq_Macdonald}
(\eqref{Eq_RW3}), while for the right-hand side we obtain a second
determinantal expression using \eqref{Eq_doteddote} (\eqref{Eq_doteddote2})
for $u=-1$.
Next we specialise $x_i=q^{i-1/2}$ using the first determinantal
expression on the right or $x_i=q^i$ using the second determinantal
expression, and take the $n\to\infty$ limit.
This leads to
\begin{align*}
\begin{split}
&\sum_{\substack{\la \\[1pt] \la_1\leq 2k}}
q^{\frac{1}{2}(\sigma+1)\abs{\la}}P_{\la}\bigl(1,q,q^2,\dots;t\bigr) \\
&\qquad = \frac{1}{(\sigma+1)(q;q)_{\infty}^k} \sum_{y\in\mathbb{Z}^k}
\det_{1\leq i,j\leq k}\bigl(
x_i^{Ky_i+i-j} p^{\frac{1}{2}Ky_i^2-(j-\frac{1}{2})y_i}-
x_i^{Ky_i+i+j-1} p^{\frac{1}{2}Ky_i^2+(j-\frac{1}{2})y_i}\bigr)
\end{split}
\end{align*}
and
\begin{align*}
&\sum_{\substack{\la \\[1pt] \la_1\leq 2k}}
q^{\frac{1}{2}(\sigma+1)\abs{\la}}t^{\frac{1}{2}l(\la^{\textup{o}})}
P_{\la}\bigl(1,q,q^2,\dots;t\bigr) \\
&\qquad = \frac{1}{(q;q)_{\infty}^k} \sum_{y\in\mathbb{Z}^k}
\det_{1\leq i,j\leq k}\bigl(
x_i^{Ky_i+i-j} p^{\frac{1}{2}Ky_i^2-jy_i}-
x_i^{Ky_i+i+j} p^{\frac{1}{2}Ky_i^2+jy_i}\bigr),
\end{align*}
where $\sigma\in\{0,1\}$, $p=tq^K$, $K=2k+1$ and
$x_i=-q^{i-(\sigma+1)/2}$, $x_i=-q^{i-\sigma/2}$, in the first
(second) identity.
By the \smash{$\mathrm{A}_{2k}^{(2)}$} Macdonald identity \cite{Macdonald72}
in the `$\mathrm{B}_k$ form'
\begin{align}
&\sum_{y\in\mathbb{Z}^k}
\det_{1\leq i,j\leq k}\bigl(
x_i^{(2k+1)y_i+i-j} p^{\frac{1}{2}(2k+1)y_i^2-(j-\frac{1}{2})y_i}
-x_i^{(2k+1)y_i+i+j-1} p^{\frac{1}{2}(2k+1)y_i^2+(j-\frac{1}{2})y_i}\bigr)
\notag \\
&\qquad=(p;p)_{\infty}^k\prod_{i=1}^k \theta(x_i;p)\theta\bigl(px_i^2;p^2\bigr)
\prod_{1\leq i<j\leq k} \theta(x_j/x_i,x_ix_j;p) \label{Eq_MacA2k2}
\end{align}
or in the `$\mathrm{C}_k$ form'
\begin{align*}
\begin{split}
&\sum_{y\in\mathbb{Z}^k}
\det_{1\leq i,j\leq k}\bigl(
x_i^{(2k+1)y_i+i-j} p^{\frac{1}{2}(2k+1)y_i^2-jy_i}
-x_i^{(2k+1)y_i+i+j} p^{\frac{1}{2}(2k+1)y_i^2+jy_i}\bigr) \\
&\qquad=(p;p)_{\infty}^k\prod_{i=1}^k
\theta\bigl(p^{1/2}x_i;p\bigr)\theta\bigl(x_i^2;p^2\bigr)
\prod_{1\leq i<j\leq k} \theta(x_j/x_i,x_ix_j;p),
\end{split}
\end{align*}
Theorem~\ref{Thm_qtRR2} results.

\textit{From \eqref{Eq_PBn2} to Theorem~$\ref{Thm_qtRR3}$.}
We rewrite the left-hand side of \eqref{Eq_PBn2} using the bounded
Littlewood identity \eqref{Eq_RW1} and obtain a
second determinantal expression on the right using~\eqref{Eq_doteddote} for $u=1$.
Then specialising as in the previous two cases and taking the
$n\to\infty$ limit yields
\begin{align*}
&\sum_{\substack{\la \\[1pt] \la_1\leq 2k}}
q^{\frac{1}{2}(\sigma+1)\abs{\la}}
\Biggl(\prod_{i=1}^{2k-1}\bigl({-}t^{1/2};t^{1/2}\bigr)_{m_i(\la)}\Biggr)
P_{\la}\bigl(1,q,q^2,\dots;t\bigr) \\
&\qquad = \frac{1}{(\sigma+1)(q;q)_{\infty}^k} \sum_{y\in\mathbb{Z}^k}
\det_{1\leq i,j\leq k}\bigl(
x_i^{Ky_i+i-j} p^{\frac{1}{2}Ky_i^2-(j-\frac{1}{2})y_i}-
x_i^{Ky_i+i+j-1} p^{\frac{1}{2}Ky_i^2+(j-\frac{1}{2})y_i}\bigr),
\end{align*}
where $\sigma\in\{0,1\}$, $x_i=q^{i-(\sigma+1)/2}$, $p=tq^K$ and
$K=2k$.
By the \smash{$\mathrm{D}_{k+1}^{(2)}$} Macdonald identity \cite{Macdonald72}
\begin{align*}
&\sum_{y\in\mathbb{Z}^k}
\det_{1\leq i,j\leq k}\bigl(
x_i^{2ky_i+i-j} p^{ky_i^2-(j-\frac{1}{2})y_i}
-x_i^{2ky_i+i+j-1} p^{ky_i^2+(j-\frac{1}{2})y_i}\bigr) \\
&\qquad=\bigl(p^{1/2};p^{1/2}\bigr)_{\infty}(p;p)_{\infty}^{k-1}
\prod_{i=1}^k \theta\bigl(x_i;p^{1/2}\bigr)
\prod_{1\leq i<j\leq k} \theta(x_j/x_i,x_ix_j;p),
\end{align*}
this gives Theorem~\ref{Thm_qtRR3}.

\textit{From \eqref{Eq_PCn2} to Theorem~$\ref{Thm_qtRR4}$.}
We rewrite the left-hand side of \eqref{Eq_PCn2} using the bounded
Littlewood identity \eqref{Eq_Stembridge2} and obtain a
second determinantal form on the right using
\eqref{Eq_doteddote2} for $u=1$.
Then specialising the $x_i$ as before and taking the $n\to\infty$ limit
results in
\begin{align*}
&\sideset{}{'}\sum_{\substack{\la \\[1pt] \la_1\leq 2k}}
q^{\frac{1}{2}(\sigma+1)}t^{\frac{1}{2}l(\la^{\textup{o}})}
\Biggl(\prod_{i=1}^{2k-1}\bigl(t;t^2\bigr)_{\ceil{m_i(\la)/2}}\Biggr)
P_{\la}\bigl(1,q,q^2,\dots;t\bigr) \\
&\qquad = \frac{1}{(q;q)_{\infty}^k}
\sum_{\substack{y\in\mathbb{Z}^k\\[1pt] \abs{y} \text{ even}}}
\det_{1\leq i,j\leq k}\bigl(
x_i^{Ky_i+i-j} p^{\frac{1}{2}Ky_i^2-jy_i}-
x_i^{Ky_i+i+j} p^{\frac{1}{2}Ky_i^2+jy_i}\bigr),
\end{align*}
where $\sigma\in\{0,1\}$, $x_i=q^{i-\sigma/2}$, $p=tq^K$ and
$K=2k$.
By the \smash{$\mathrm{A}_{2k-1}^{(2)}$} Macdonald identity \cite{Macdonald72}
\begin{align*}
&\sum_{\substack{y\in\mathbb{Z}^k\\[1pt] \abs{y} \text{ even}}}
\det_{1\leq i,j\leq k}\bigl(
x_i^{2ky_i+i-j} p^{ky_i^2-jy_i}-
x_i^{2ky_i+i+j} p^{ky_i^2+jy_i}\bigr) \\
&\qquad=\bigl(p^2;p^2\bigr)_{\infty}(p;p)_{\infty}^{k-1}
\prod_{i=1}^k \theta\bigl(x_i^2;p^2\bigr)
\prod_{1\leq i<j\leq k} \theta(x_j/x_i,x_ix_j;p),
\end{align*}
we obtain Theorem~\ref{Thm_qtRR4}.
\end{proof}

\subsection[From q,t-Rogers--Ramanujan to Rogers--Ramanujan identities for standard modules]{From $\boldsymbol{q,t}$-Rogers--Ramanujan to Rogers--Ramanujan identities\\
for standard modules}\label{Sec_standard}

Thanks to the groundbreaking work of Lepowsky together with his
students and collaborators
\cite{BM94,Capparelli93,Capparelli96,CLM06,CMPP22,Kanade18,KR15,Lepowsky82,
LM78b,LM78a,LP85,LW81b,LW81a,LW82,LW84,MP87,MP99,Misra84,Nandi14},
it is well known that many Rogers--Ramanujan identities admit an
interpretation as identities for principal characters of standard modules
of affine Lie algebras, see also \cite[Section~5]{Sills18}.
In a beautiful manifestation of level-rank duality \cite{Frenkel82},
the $q,t$-Rogers--Ramanujan identities of
Theorems~\ref{Thm_qtRR}--\ref{Thm_qtRR4} imply many
Rogers--Ramanujan identities for the characters of
standard modules of affine Lie algebras, albeit typically not principal
ones.
Most of these were previously found, such as the GOW identities
for~\smash{$\mathrm{A}_{2n}^{(2)}$},~\smash{$\mathrm{C}_n^{(1)}$} and
\smash{$\mathrm{D}_{n+1}^{(2)}$} \cite{GOW16} or the identities from \cite{RW21},
with a total of eight new identities.
Below~we will show how all these Rogers--Ramanujan arise via
transformations for theta functions of level-rank duality type,
such as
\begin{equation}\label{Eq_theta-duality}
\frac{(p;p)_{\infty}^k}{(q;q)_{\infty}^k}
\prod_{1\leq i<j\leq k} \theta\bigl(q^{j-i},q^{i+j-1};p\bigr)
=\frac{(p;p)_{\infty}^n}{(q;q)_{\infty}^n}
\prod_{1\leq i<j\leq n} \theta\bigl(q^{j-i},q^{i+j-1};p\bigr),
\end{equation}
for $p=q^{2k+2n-1}$.

Let $\mathfrak{g}$ be one of the affine Lie algebras shown in
Figure~\ref{Fig_1}, and let $L(\La)$ denote the integrable highest
weight or standard module of $\mathfrak{g}$ of highest weight $\La$.
The character of $L(\La)$ is defined as
\[
\ch L(\La)=\sum_{\mu\in\mathfrak{h}^{\ast}} \dim(V_{\mu}) \eup^{\mu},
\]
where $V_{\mu}$ is the weight space of weight $\mu$ in the weight-space
decomposition of $L(\La)$ and $\exp(\cdot)$ is a formal exponential.
Each Rogers--Ramanujan identity below is a $q$-series identity for
\begin{equation*}
\chi_{\La}(a,b;q):=\phi_{a,b;q}\bigl(\eup^{-\La} \ch L(\La)\bigr),
\end{equation*}
where $\phi_{a,b;q}$ is the specialisation
\begin{align*}
\phi_{a,b;q}\colon\ \mathbb{Z}\bigl[\hspace{-0.9pt}\bigl[\eup^{-\alpha_0},\dots,\eup^{-\alpha_n}\bigr]\hspace{-0.9pt}\bigr]
 \to\mathbb{Z}[[a,b,q]], \qquad
\eup^{-\alpha_i}\mapsto \begin{cases}
a & \text{for $i=0$}, \\
q & \text{for $1\leq i\leq n-1$}, \\
b & \text{for $i=n$}. \end{cases}
\end{align*}
When $a=b=q$, this is the well-known principal specialisation
\cite{Lepowsky79} for which Lepowsky's numerator formula
\cite{Lepowsky82} holds
\begin{equation}\label{Eq_Lepowsky}
\phi_{q,q;q}\bigl(\eup^{-\La} \ch L(\La)\bigr)=
\prod_{\alpha\in R^{+}}
\biggl(\frac{1-q^{\ip{\La+\rho}{\alpha^{\vee}}}}
{1-q^{\ip{\rho}{\alpha^{\vee}}}}\biggr)^{\mult(\alpha)},
\end{equation}
where $\rho$ is the Weyl vector of $\mathfrak{g}$ and, for $\alpha$
a root, $\alpha^{\vee}$ is the corresponding coroot.

\begin{figure}[t]\label{Fig_1}
\tikzmath{\x=2.2;}
\tikzmath{\y=0;}
\tikzmath{\z=-2.8;}
\tikzmath{\w=5.2;}
\tikzmath{\d=10;}
\tikzmath{\e=15;}

\begin{center}
\begin{tikzpicture}[scale=0.6]
\draw (-1,\x) node {$\mathrm{B}_n^{(1)}$};
\draw (1,\x)--(1,3.2);
\draw (0,\x)--(5,2.2);
\draw (5,\x+0.07)--(6,\x+0.07);
\draw (5,\x-0.07)--(6,\x-0.07);
\draw (5.4,\x+0.2)--(5.6,\x)--(5.4,\x-0.2);
\foreach \c in {0,...,6} \draw[fill=blue] (\c,\x) circle (0.08cm);
\draw[fill=blue] (1,\x+1) circle (0.08cm);
\draw (0,\x-0.45) node {$\sc 1$};
\draw (1,\x+1.45) node {$\sc 0$};
\draw (1,\x-0.45) node {$\sc 2$};
\draw (2,\x-0.45) node {$\sc 3$};
\draw (5,\x-0.45) node {$\sc n-1$};
\draw (6,\x-0.45) node {$\sc n$};
\draw (\d-1.2,\x) node {$\mathrm{A}_{2n-1}^{(2)}$};
\draw (1+\d,\x)--(1+\d,\x+1);
\draw (0+\d,\x)--(\d+5,\x);
\draw (\d+5.6,\x+0.2)--(\d+5.4,\x)--(\d+5.6,\x-0.2);
\draw (\d+5,\x+0.07)--(\d+6,\x+0.07);
\draw (\d+5,\x-0.07)--(\d+6,\x-0.07);
\draw[fill=blue] (1+\d,\x+1) circle (0.08cm);
\foreach \c in {0,...,6} \draw[fill=blue] (\c+\d,\x) circle (0.08cm);
\draw (\d,\x-0.45) node {$\sc 1$};
\draw (\d+1,\x+1.45) node {$\sc 0$};
\draw (\d+1,\x-0.45) node {$\sc 2$};
\draw (\d+2,\x-0.45) node {$\sc 3$};
\draw (\d+5,\x-0.45) node {$\sc n-1$};
\draw (\d+6,\x-0.45) node {$\sc n$};
\draw (-1,\y) node {$\mathrm{C}_n^{(1)}$};
\draw (0,\y+0.07)--(1,\y+0.07);
\draw (0,\y-0.07)--(1,\y-0.07);
\draw (0.4,\y+0.2)--(0.6,\y)--(0.4,\y-0.2);
\draw (1,\y)--(5,\y);
\draw (5.6,\y+0.2)--(5.4,\y)--(5.6,\y-0.2);
\draw (5,\y+0.07)--(6,\y+0.07);
\draw (5,\y-0.07)--(6,\y-0.07);
\foreach \c in {0,...,6} \draw[fill=blue] (\c,\y) circle (0.08cm);
\draw (0,\y-0.45) node {$\sc 0$};
\draw (1,\y-0.45) node {$\sc 1$};
\draw (2,\y-0.45) node {$\sc 2$};
\draw (5,\y-0.45) node {$\sc n-1$};
\draw (6,\y-0.45) node {$\sc n$};
\draw (\d-1.2,\y) node {$\mathrm{D}_{n+1}^{(2)}$};
\draw (\d,\y+0.07)--(\d+1,\y+0.07);
\draw (\d,\y-0.07)--(\d+1,\y-0.07);
\draw (\d+0.6,\y+0.2)--(\d+0.4,\y)--(\d+0.6,\y-0.2);
\draw (\d+1,\y)--(\d+5,\y);
\draw (\d+5.4,\y+0.2)--(\d+5.6,\y)--(\d+5.4,\y-0.2);
\draw (\d+5,\y+0.07)--(\d+6,\y+0.07);
\draw (\d+5,\y-0.07)--(\d+6,\y-0.07);
\foreach \c in {0,...,6} \draw[fill=blue] (\c+\d,\y) circle (0.08cm);
\draw (\d+0,\y-0.45) node {$\sc 0$};
\draw (\d+1,\y-0.45) node {$\sc 1$};
\draw (\d+2,\y-0.45) node {$\sc 2$};
\draw (\d+5,\y-0.45) node {$\sc n-1$};
\draw (\d+6,\y-0.45) node {$\sc n$};
\draw (-1,\z) node {$\mathrm{D}_n^{(1)}$};
\draw (1,\z)--(1,\z+1);
\draw (0,\z)--(6,\z);
\draw (5,\z)--(5,\z+1);
\foreach \c in {0,...,6} \draw[fill=blue] (\c,\z) circle (0.08cm);
\draw[fill=blue] (1,\z+1) circle (0.08cm);
\draw[fill=blue] (5,\z+1) circle (0.08cm);
\draw (0,\z-0.45) node {$\sc 1$};
\draw (1,\z+1.45) node {$\sc 0$};
\draw (1,\z-0.45) node {$\sc 2$};
\draw (2,\z-0.45) node {$\sc 3$};
\draw (5,\z+1.45) node {$\sc n-1$};
\draw (6,\z-0.45) node {$\sc n$};
\draw (-1,\w) node {$\mathrm{A}_2^{(2)}$};
\draw (0,\w+0.1)--(1,\w+0.1);
\draw (0,\w+0.033)--(1,\w+0.033);
\draw (0,\w-0.033)--(1,\w-0.033);
\draw (0,\w-0.1)--(1,\w-0.1);
\draw (0.6,\w+0.2)--(0.4,\w)--(0.6,\w-0.2);
\foreach \c in {0,...,1} \draw[fill=blue] (\c,\w) circle (0.08cm);
\draw (0,\w-0.45) node {$\sc 0$};
\draw (1,\w-0.45) node {$\sc 1$};
\draw (\d-1,\w) node {$\mathrm{A}_{2n}^{(2)}$};
\draw (\d,\w+0.07)--(\d+1,\w+0.07);
\draw (\d,\w-0.07)--(\d+1,\w-0.07);
\draw (\d+0.6,\w+0.2)--(\d+0.4,\w)--(\d+0.6,\w-0.2);
\draw (\d+1,\w)--(\d+5,\w);
\draw (\d+5.6,\w+0.2)--(\d+5.4,\w)--(\d+5.6,\w-0.2);
\draw (\d+5,\w+0.07)--(\d+6,\w+0.07);
\draw (\d+5,\w-0.07)--(\d+6,\w-0.07);
\foreach \c in {0,...,6} \draw[fill=blue] (\c+\d,\w) circle (0.08cm);
\draw (\d,\w-0.45) node {$\sc 0$};
\draw (\d+1,\w-0.45) node {$\sc 1$};
\draw (\d+2,\w-0.45) node {$\sc 2$};
\draw (\d+5,\w-0.45) node {$\sc n-1$};
\draw (\d+6,\w-0.45) node {$\sc n$};
\end{tikzpicture}
\end{center}
\caption{The Dynkin diagrams of the affine Lie algebras
\smash{$\mathrm{A}_{2n}^{(2)}$} ($n\geq 1$),
\smash{$\mathrm{B}_n^{(1)}$} ($n\geq 3$), \smash{$\mathrm{A}_{2n-1}^{(2)}$} ($n\geq 3$),
\smash{$\mathrm{C}_n^{(1)}$} ($n\geq 2$), \smash{$\mathrm{D}_{n+1}^{(2)}$} ($n\geq 2$)
and \smash{$\mathrm{D}_n^{(1)}$} ($n\geq 4$).}
\end{figure}
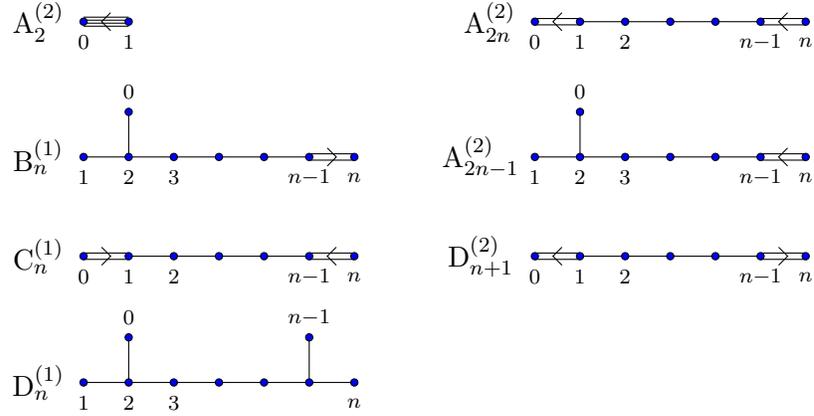

After these Lie algebraic preliminaries, we are ready to give our list of
Rogers--Ramanujan identities, together with an interpretation in terms of
specialised characters of standard modules.
In many cases, this identification needs a minor modification when
$n$ is small.
For example, \smash{$\mathrm{C}_1^{(1)}\cong \mathrm{A}_1^{(1)}$},
\smash{$\mathrm{A}_1^{(2)}\cong \mathrm{A}_1^{(1)}$},
\smash{$\mathrm{A}_3^{(2)}\cong \mathrm{D}_3^{(2)}$}, and so on.
In some but not all cases, this also means that for small $n$ the
highest weight indexing the character and/or the specialisation
$\phi_{a,b;q}$ are not correct as stated.
For example, in Corollary~\ref{Cor_17} for $n=1$ the correct
affine Lie algebra, standard module and specialisation are
\smash{$\mathrm{A}_1^{(1)}$}, \smash{$L(k\La_0+k\La_1)$} and \smash{$\phi_{q^3,q}$}, instead of
the stated~\smash{$\mathrm{A}_1^{(2)}$},~{$L(k\La_1)$} and $\phi_{q^2,q}$.
Similar such modifications apply to other corollaries.

We start with three applications of Theorem~\ref{Thm_qtRR}.

\begin{Corollary}[\smash{$\mathrm{A}_{2n}^{(2)}$} Rogers--Ramanujan identities,
{\cite[Theorem 1.1]{GOW16}}]\label{Cor_1}
For $k$, $n$ positive integers, let $p=q^{2k+2n+1}$.
Then
\begin{align}
\begin{aligned}[b]
\chi_{k\La_n}(-1,q;q)&=
\sum_{\substack{\la \textup{ even} \\[1pt] \la_1\leq 2k}}
q^{\abs{\la}/2} P_{\la}\bigl(1,q,q^2,\dots;q^{2n-1}\bigr)  \\
&=\frac{(p;p)_{\infty}^n}{(q;q)_{\infty}^n}
\prod_{i=1}^n \theta\bigl(q^{i+k};p\bigr)
\prod_{1\leq i<j\leq n} \theta\bigl(q^{j-i},q^{i+j-1};p\bigr)
\end{aligned}
\label{Eq_GOW1}
\end{align}
and
\begin{align}
\chi_{2k\La_0}(-1,q;q)&=
\sum_{\substack{\la \textup{ even} \\[1pt] \la_1\leq 2k}}
q^{\abs{\la}} P_{\la}\bigl(1,q,q^2,\dots;q^{2n-1}\bigr) \notag \\
&=\frac{(p;p)_{\infty}^n}{(q;q)_{\infty}^n}
\prod_{i=1}^n \theta\bigl(q^i;p\bigr)
\prod_{1\leq i<j\leq n} \theta\bigl(q^{j-i},q^{i+j};p\bigr),
\label{Eq_GOW2}
\end{align}
\end{Corollary}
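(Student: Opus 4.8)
The plan is to deduce Corollary~\ref{Cor_1} from the $\mathrm{C}_k^{(1)}$ $q,t$-Rogers--Ramanujan identity of Theorem~\ref{Thm_qtRR} by specialising the Hall--Littlewood parameter and then applying a level-rank transformation for theta functions. First I would set $t=q^{2n-1}$ in Theorem~\ref{Thm_qtRR}. Then $p=tq^{2k+2}=q^{2k+2n+1}$, which is exactly the modulus in the statement, and the cases $\sigma=0$ and $\sigma=1$ give
\[
\sum_{\substack{\la\text{ even}\\[1pt] \la_1\leq 2k}} q^{\abs{\la}/2}
P_{\la}\big(1,q,q^2,\dots;q^{2n-1}\big)
=\frac{(p;p)_{\infty}^k}{(q;q)_{\infty}^k}
\prod_{i=1}^k \theta\big(q^{2i};p\big)
\prod_{1\leq i<j\leq k}\theta\big(q^{j-i},q^{i+j};p\big)
\]
and
\[
\sum_{\substack{\la\text{ even}\\[1pt] \la_1\leq 2k}} q^{\abs{\la}}
P_{\la}\big(1,q,q^2,\dots;q^{2n-1}\big)
=\frac{(p;p)_{\infty}^k}{(q;q)_{\infty}^k}
\prod_{i=1}^k \theta\big(q^{i};p\big)
\prod_{1\leq i<j\leq k}\theta\big(q^{j-i},q^{i+j};p\big).
\]
This identifies the middle member of each display in the corollary with a specialisation ($x_i=q^i$) of the $\mathrm{C}_k^{(1)}$ Weyl--Kac denominator, divided by $(q;q)_{\infty}^k$.

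The second and main step is to transform these $k$-fold products into the $n$-fold products on the right of \eqref{Eq_GOW1} and \eqref{Eq_GOW2}, valid for the self-dual modulus $p=q^{2k+2n+1}=q^{2(k+n)+1}$. For \eqref{Eq_GOW2} the required identity is the $k\leftrightarrow n$ symmetry of $\frac{(p;p)_{\infty}^m}{(q;q)_{\infty}^m}\prod_{i=1}^m\theta(q^{i};p)\prod_{1\leq i<j\leq m}\theta(q^{j-i},q^{i+j};p)$, which is independent of $m$ once $p=q^{2(k+n)+1}$ with $k+n$ fixed; for \eqref{Eq_GOW1} it is the genuinely mixing identity
\[
\frac{(p;p)_{\infty}^k}{(q;q)_{\infty}^k}
\prod_{i=1}^k \theta\big(q^{2i};p\big)
\prod_{1\leq i<j\leq k}\theta\big(q^{j-i},q^{i+j};p\big)
=\frac{(p;p)_{\infty}^n}{(q;q)_{\infty}^n}
\prod_{i=1}^n \theta\big(q^{i+k};p\big)
\prod_{1\leq i<j\leq n}\theta\big(q^{j-i},q^{i+j-1};p\big).
\]
Both would be proved by writing each factor $\theta(q^a;q^N)=(q^a;q^N)_{\infty}(q^{N-a};q^N)_{\infty}$ with $N=2(k+n)+1$ and reorganising the arising products of $q$-shifted factorials so that the roles of $k$ and $n$ are exchanged; the essential point is that $N$ depends on $(k,n)$ only through $k+n$. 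These are the instances of level-rank duality for theta functions advertised in the introduction, \eqref{Eq_theta-duality} being an example with no single-index factors.

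Finally, the $\mathrm{A}_{2n}^{(2)}$-type products obtained this way are, by the Weyl--Kac character formula together with the $\mathrm{A}_{2n}^{(2)}$ Macdonald identity, equal to the specialised standard-module characters $\chi_{k\La_n}(-1,q;q)$ and $\chi_{2k\La_0}(-1,q;q)$ respectively; this is the content of \cite[Theorem~1.1]{GOW16} and may simply be quoted. Chaining the three steps yields \eqref{Eq_GOW1} and \eqref{Eq_GOW2}. I expect the main obstacle to be the second step: the reindexing in the triple-product expansions must be tracked carefully enough to match the two products term by term, in particular reproducing the shifts $q^{2i}\leftrightarrow q^{i+k}$ and $q^{i+j}\leftrightarrow q^{i+j-1}$, where an error of $1$ in an exponent is easy to make and hard to spot; I would therefore first verify the theta identities for small $k,n$ (already $k=n=1$ and $k=1$, $n=2$ reduce, via relations like $\prod_{a=1}^{3}\theta(q^a;q^7)=(q;q)_{\infty}/(q^7;q^7)_{\infty}$, to short calculations) before trusting the general argument.
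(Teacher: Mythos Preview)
Your proposal is correct and follows essentially the same route as the paper: specialise $t=q^{2n-1}$ in Theorem~\ref{Thm_qtRR} to obtain the $k$-fold $\mathrm{C}_k^{(1)}$-type product, then invoke the level-rank theta identity (the paper states it uniformly in $\sigma$ as a single equation, with your two cases being $\sigma=0$ and $\sigma=1$) to pass to the $n$-fold $\mathrm{A}_{2n}^{(2)}$-type product. The identification with $\chi_{k\La_n}(-1,q;q)$ and $\chi_{2k\La_0}(-1,q;q)$ is indeed simply quoted from \cite{GOW16}.
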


By the principal specialisation formula \cite[p.~213]{Macdonald95}
\begin{equation}\label{Eq_PS}
P_{\la}\bigl(1,q,q^2,\dots;q\bigr)=
\prod_{i\geq 1} \frac{q^{\binom{\la'_i}{2}}}
{(q;q)_{\la'_i-\la'_{i+1}}},
\end{equation}
it follows that Corollary~\ref{Cor_1} for $n=1$ yields the
$i=k+1$ and $i=1$ instances of the modulus $2k+3$ Andrews--Gordon
identities \cite{Andrews74}
\[
\sum_{n_1\geq\cdots\geq n_k\geq 0}
\frac{q^{n_1^2+\dots+n_k^2+n_i+\cdots+n_k}}
{(q;q)_{n_1-n_2}\cdots(q;q)_{n_{k-1}-n_k}(q;q)_{n_k}} =
\frac{(q^i,q^{2k-i+3},q^{2k+3};q^{2k+3})_{\infty}}{(q;q)_{\infty}}
\]
for $1\leq i\leq k+1$.

\begin{proof}
Corollary~\ref{Cor_1} follows after specialising $t=q^{2n-1}$ in
Theorem~\ref{Thm_qtRR} and using the theta function identity
\begin{align*}
&\frac{(p;p)_{\infty}^k}{(q;q)_{\infty}^k}
\prod_{i=1}^k \theta\bigl(q^{(2-\sigma)i};p\bigr)
\prod_{1\leq i<j\leq k} \theta\bigl(q^{j-i},q^{i+j};p\bigr) \\
&\qquad=\frac{(p;p)_{\infty}^n}{(q;q)_{\infty}^n}
\prod_{i=1}^n \theta\bigl(q^{i+(1-\sigma)k};p\bigr)
\prod_{1\leq i<j\leq n} \theta\bigl(q^{j-i},q^{i+j+\sigma-1};p\bigr),
\end{align*}
for $p=q^{2k+2n+1}$ and $\sigma\in\{0,1\}$.
\end{proof}

\begin{Corollary}[$\mathrm{C}_n^{(1)}$ Rogers--Ramanujan identities,
{\cite[Theorem 1.2]{GOW16}}]
For $k$, $n$ positive integers, let $p=q^{2k+2n+2}$.
Then
\begin{align}
\chi_{k\La_0}(q,q;q)&=
\sum_{\substack{\la \textup{ even} \\[1pt] \la_1\leq 2k}}
q^{\abs{\la}/2} P_{\la}\bigl(1,q,q^2,\dots;q^{2n}\bigr) \notag \\
&=\frac{\bigl(p^{1/2};p^{1/2}\bigr)_{\infty}(p;p)_{\infty}^{n-1}}
{\bigl(q;q^2\bigr)_{\infty}(q;q)_{\infty}^n}
\prod_{i=1}^n \theta\bigl(q^i;p^{1/2}\bigr)
\prod_{1\leq i<j\leq n} \theta\bigl(q^{j-i},q^{i+j};p\bigr).
\label{Eq_GOW3}
\end{align}
\end{Corollary}

\begin{proof}
The result follows by specialising $t=q^{2n}$ in the $\sigma=0$
instance of \eqref{Eq_qt-one} and using
\begin{align*}
&\frac{(p;p)_{\infty}^k}{(q;q)_{\infty}^k}
\prod_{i=1}^k \theta\bigl(q^{2i};p\bigr)
\prod_{1\leq i<j\leq k} \theta\bigl(q^{j-i},q^{i+j};p\bigr) \notag \\
&\qquad=\frac{\bigl(p^{1/2};p\bigr)_{\infty}(p;p)_{\infty}^n}
{\bigl(q;q^2\bigr)_{\infty}(q;q)_{\infty}^n}
\prod_{i=1}^n \theta\bigl(q^i;p^{1/2}\bigr)
\prod_{1\leq i<j\leq n} \theta\bigl(q^{j-i},q^{i+j};p\bigr),
\end{align*}
where $p=q^{2k+2n+2}$.
\end{proof}

\begin{Corollary}[\smash{$\mathrm{D}_{n+1}^{(2)}$} Rogers--Ramanujan identities,
{\cite[Theorem 1.2]{GOW16}}]\label{Cor_3}
For $k$, $n$ positive integers, let $p=q^{2k+2n}$.
Then
\begin{align}
\chi_{2k\La_0}(-1,-1;q)&=
\sum_{\substack{\la \textup{ even} \\[1pt] \la_1\leq 2k}}
q^{\abs{\la}} P_{\la}\bigl(1,q,q^2,\dots;q^{2n-2}\bigr) \notag \\
&=\frac{(p;p)_{\infty}^n}{\bigl(q^2;q^2\bigr)_{\infty}(q;q)_{\infty}^{n-1}}
\prod_{1\leq i<j\leq n} \theta\bigl(q^{j-i},q^{i+j-1};p\bigr).
\label{Eq_GOW4}
\end{align}
\end{Corollary}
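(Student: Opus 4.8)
The plan is to follow the template established in the proofs of Corollaries~\ref{Cor_1} and \ref{Cor_2}: obtain \eqref{Eq_GOW4} from Theorem~\ref{Thm_qtRR} by a single specialisation of the Hall--Littlewood parameter, followed by a theta-function identity of the level-rank type \eqref{Eq_theta-duality}, \eqref{Eq_theta2}. Since the left-hand side of \eqref{Eq_GOW4} carries the prefactor $q^{\abs{\la}}$, it is the $\sigma=1$ instance of \eqref{Eq_qt-one} that is relevant; setting $t=q^{2n-2}$ there gives $p=tq^{2k+2}=q^{2k+2n}$, matching the statement, and rewrites the sum in \eqref{Eq_GOW4} as
\[
\frac{(p;p)_{\infty}^k}{(q;q)_{\infty}^k}
\prod_{i=1}^k \theta\big(q^{i};p\big)
\prod_{1\leq i<j\leq k} \theta\big(q^{j-i},q^{i+j};p\big).
\]
The corollary then reduces entirely to the theta-function identity
\[
\frac{(p;p)_{\infty}^k}{(q;q)_{\infty}^k}
\prod_{i=1}^k \theta\big(q^{i};p\big)
\prod_{1\leq i<j\leq k} \theta\big(q^{j-i},q^{i+j};p\big)
=\frac{(p;p)_{\infty}^n}{(q^2;q^2)_{\infty}(q;q)_{\infty}^{n-1}}
\prod_{1\leq i<j\leq n} \theta\big(q^{j-i},q^{i+j-1};p\big),
\]
valid for $p=q^{2k+2n}$, which is the analogue of \eqref{Eq_theta-duality} and \eqref{Eq_theta2} appropriate to the present situation.

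To prove this theta identity I would put $N=2k+2n$ (which is even) and expand every factor into ordinary $q$-shifted factorials using $\theta(q^a;q^N)=(q^a;q^N)_{\infty}(q^{N-a};q^N)_{\infty}$ for $0<a<N$, together with $(q;q)_{\infty}=\prod_{a=1}^{N}(q^a;q^N)_{\infty}$, $(q^2;q^2)_{\infty}=\prod_{b=1}^{N/2}(q^{2b};q^N)_{\infty}$ and $(p;p)_{\infty}=(q^N;q^N)_{\infty}$. Both sides then become a single monomial in the $(q^a;q^N)_{\infty}$, $1\leq a\leq N-1$, so the claim becomes the assertion that two explicit exponent vectors coincide. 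The left-hand exponent of $(q^a;q^N)_{\infty}$ counts, modulo $N$ and up to the symmetry $a\leftrightarrow N-a$, the multiplicity of $a$ among $\{1,\dots,k\}$, among $\{j-i:1\leq i<j\leq k\}$ and among $\{i+j:1\leq i<j\leq k\}$, and then subtracts the contribution of $1/(q;q)_{\infty}^k$; on the right one performs the same count for rank $n$ with $i+j-1$ in place of $i+j$ and with the lone $(q^2;q^2)_{\infty}$ absorbing the difference in the number of $(q;q)_{\infty}$ factors. A cleaner high-level route, if preferred, is to recognise the left-hand product as the principally specialised Weyl--Kac denominator of $\mathrm{C}_k^{(1)}$ (as already used in the conditional proof of Theorem~\ref{Thm_qtRR}) and the right-hand side as the corresponding $\mathrm{D}_{n+1}^{(2)}$ object, so that the identity is an instance of the level-rank duality running through this section; but the residue count above is elementary and self-contained, and I have checked it in the small cases $(k,n)=(1,1)$ and $(1,2)$.

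The main obstacle is precisely this combinatorial bookkeeping, in particular the asymmetry created by the single $(q^2;q^2)_{\infty}$ and the shift $i+j\mapsto i+j-1$, which encode the two distinct root lengths of $\mathrm{D}_{n+1}^{(2)}$ and make the matching slightly more intricate than in Corollary~\ref{Cor_1}; everything else is routine product manipulation. Once the theta identity is in hand, the character interpretation is immediate: by the Weyl--Kac character formula the right-hand side of \eqref{Eq_GOW4} is, up to the factor $\eup^{-2k\La_0}$, the $\phi_{-1,-1;q}$-specialisation of $\ch L(2k\La_0)$ for $\mathfrak{g}=\mathrm{D}_{n+1}^{(2)}$, so that \eqref{Eq_GOW4} equals $\chi_{2k\La_0}(-1,-1;q)$ as defined in \eqref{Eq_phiLa}--\eqref{Eq_specialisation}, exactly as in \cite{GOW16}; this last step requires no new work.
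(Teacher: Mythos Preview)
Your approach is correct and essentially identical to the paper's: specialise $\sigma=1$ and $t=q^{2n-2}$ in Theorem~\ref{Thm_qtRR} (the paper in fact writes $t=q^{2n}$ at this point, which appears to be a typo), then invoke precisely the theta-function identity you state, which the paper records as \eqref{Eq_thetaGOW} and uses without further justification. Your sketch of how to verify that identity by expanding into $q$-shifted factorials goes slightly beyond what the paper supplies, but the overall route is the same.
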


\begin{proof}
This follows by specialising $t=q^{2n}$ in \eqref{Eq_qt-one} for
$\sigma=1$ and using
\begin{gather}
\frac{(p;p)_{\infty}^k}{(q;q)_{\infty}^k}
\prod_{i=1}^k \theta\bigl(q^i;p\bigr)
\prod_{1\leq i<j\leq k} \theta\bigl(q^{j-i},q^{i+j};p\bigr)\nonumber\\
\qquad{}
=\frac{(p;p)_{\infty}^n}{\bigl(q^2;q^2\bigr)_{\infty}(q;q)_{\infty}^{n-1}}
\prod_{1\leq i<j\leq n} \theta\bigl(q^{j-i},q^{i+j-1};p\bigr),\label{Eq_thetaGOW}
\end{gather}
where $p=q^{2k+2n}$.
\end{proof}

The next four corollaries for even $k$ are all consequences of
\eqref{Eq_A2k2a}, whereas for odd $k$ they follow from
Corollaries~\ref{Cor_1}--\ref{Cor_3} combined with \eqref{Eq_oddk1}
for $a=1$ and $n\to\infty$.

\begin{Corollary}[\smash{$\mathrm{D}_{n+1}^{(2)}$} Rogers--Ramanujan identities,
{\cite[Theorem 5.14]{RW21}}]\label{Cor_4}
For $k$, $n$ positive integers, let $p=q^{k+2n}$.
Then
\begin{align*}
\chi_{k\La_0}\bigl(q^{1/2},-1;q\bigr)&=
\sum_{\substack{\la \\[1pt] \la_1\leq k}}
q^{\abs{\la}/2} P_{\la}\bigl(1,q,q^2,\dots;q^{2n-1}\bigr) \\
&=\frac{\bigl(-q^{1/2};q\bigr)_{\infty}(p;p)_{\infty}^n}{(q;q)_{\infty}^n}
\prod_{i=1}^n \theta\bigl(p^{1/2}q^{i-1/2};p\bigr)
\prod_{1\leq i<j\leq n} \theta\bigl(q^{j-i},q^{i+j-1};p\bigr).
\end{align*}
\end{Corollary}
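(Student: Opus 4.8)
The plan is to prove the two equalities separately: the right-hand equality (the analytic Rogers--Ramanujan identity) is obtained from the $q,t$-Rogers--Ramanujan identities of Theorems~\ref{Thm_qtRR}--\ref{Thm_qtRR2} by specialising the Hall--Littlewood parameter to $t=q^{2n-1}$ and then applying a theta-function identity of level-rank-duality type, while the left-hand equality is the representation-theoretic identification of the resulting $q$-series with the specialised character $\phi_{q^{1/2},-1;q}\big(\eup^{-k\La_0}\ch L(k\La_0)\big)$ of $\mathrm{D}_{n+1}^{(2)}$; for the latter I would appeal to the Weyl--Kac numerator formula (the analogue of \eqref{Eq_Lepowsky} for the specialisation $\phi_{q^{1/2},-1;q}$) or simply cite \cite[Theorem 5.14]{RW21}, where this identification is already recorded. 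Following the remark preceding the corollary, the analytic step splits into two cases according to the parity of $k$.

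\emph{Even $k$.} Write $k=2m$. Replacing $k$ by $m$ in \eqref{Eq_A2k2a}, setting $\sigma=0$ and $t=q^{2n-1}$ (so that $p=tq^{2m+1}=q^{k+2n}$), the left-hand side of \eqref{Eq_A2k2a} becomes precisely $\sum_{\la_1\le k}q^{\abs{\la}/2}P_{\la}(1,q,q^2,\dots;q^{2n-1})$, the sum in the corollary. It then remains to transform the resulting $m$-fold product
\[
\frac{(p;p)_{\infty}^m}{(q;q)_{\infty}^m}\prod_{i=1}^m\theta\big({-}q^{i-1/2};p\big)\theta\big(pq^{2i-1};p^2\big)\prod_{1\le i<j\le m}\theta\big(q^{j-i},q^{i+j-1};p\big)
\]
into the $n$-fold product displayed in the statement. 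This is an identity purely between products of theta functions in $q$ at modulus $p=q^{k+2n}$, of the same flavour as \eqref{Eq_theta-duality}, and I would establish it either by recognising both sides as specialisations of the Macdonald (Weyl--Kac denominator) identities for the relevant affine root systems, or by an induction on $\min\{m,n\}$ using the quasi-periodicity $\theta(pa;p)=-a^{-1}\theta(a;p)$ and the reflection $\theta(a;p)=\theta(p/a;p)$.

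\emph{Odd $k$.} Write $k=2m+1$. Here I would start from the already-established $\mathrm{A}_{2n}^{(2)}$ identity \eqref{Eq_GOW1} of Corollary~\ref{Cor_1} with $k$ replaced by $m$, and apply the bounded Littlewood-type identity \eqref{Eq_oddk1} (also with $k\mapsto m$) at $a=1$. Using homogeneity, i.e.\ $q^{\abs{\la}/2}P_{\la}(1,q,q^2,\dots;t)=P_{\la}(q^{1/2},q^{3/2},q^{5/2},\dots;t)$, amounts to evaluating \eqref{Eq_oddk1} on the alphabet $x=(q^{1/2},q^{3/2},q^{5/2},\dots)$; then the prefactor $\prod_{i\ge1}(1+q^{i-1/2})=(-q^{1/2};q)_{\infty}$ appears, the sum over even $\mu$ with $\mu_1\le 2m$ is converted into the full sum over $\la$ with $\la_1\le 2m+1=k$, and the weight $a^{l(\la^{\odd})}$ collapses to $1$. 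This yields $\sum_{\la_1\le k}q^{\abs{\la}/2}P_{\la}(1,q,q^2,\dots;q^{2n-1})=(-q^{1/2};q)_{\infty}\times(\text{product in \eqref{Eq_GOW1} with }k\mapsto m)$, and since with $p=q^{2m+2n+1}=q^{k+2n}$ one has $p^{1/2}q^{i-1/2}=q^{m+n+i}$ and $p/q^{m+n+i}=q^{m+n+1-i}$, the reflection $\theta(a;p)=\theta(p/a;p)$ together with a reindexing $i\mapsto n+1-i$ turns $\prod_{i=1}^n\theta(q^{m+i};p)$ into $\prod_{i=1}^n\theta(p^{1/2}q^{i-1/2};p)$, completing this case.

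\textbf{Main obstacle.} Neither the $t$-specialisation nor the Littlewood-type step is delicate; the crux is the even-$k$ theta-function transformation, i.e.\ pinning down and verifying the level-rank-duality identity that carries the $m$-fold product over to $\frac{(-q^{1/2};q)_{\infty}(p;p)_{\infty}^n}{(q;q)_{\infty}^n}\prod_{i=1}^n\theta(p^{1/2}q^{i-1/2};p)\prod_{1\le i<j\le n}\theta(q^{j-i},q^{i+j-1};p)$ at modulus $p=q^{k+2n}$. Careful bookkeeping of the half-integer shifts $q^{i-1/2}$ and of the two moduli $p$ and $p^2$ is where sign and exponent errors are most likely; once that identity is established, the remainder of the argument is routine, and (modulo the character identification, which I take from \cite[Theorem 5.14]{RW21}) the corollary follows.
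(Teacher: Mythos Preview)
Your approach is correct and essentially identical to the paper's: the even-$k$ case is obtained by specialising $t=q^{2n-1}$, $\sigma=0$ in \eqref{Eq_A2k2a} (with its $k$ replaced by $m=k/2$) followed by a level-rank theta identity, and the odd-$k$ case combines \eqref{Eq_GOW1} with the $a=1$, $x_i=q^{i-1/2}$ specialisation of \eqref{Eq_oddk1} and a reflection $\theta(a;p)=\theta(p/a;p)$. The theta identity you flag as the main obstacle is exactly the $a=p^{1/2}$ case of \eqref{Eq_theta} (with $p=q^{2m+2n}$), which the paper simply records rather than deriving from scratch.
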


By \eqref{Eq_PS}, for $n=1$ the corollary is the $i=k+1$ case of
\cite[Theorem 3]{Andrews10}
\begin{align}
&\sum_{n_1\geq\cdots\geq n_k\geq 0}
\frac{q^{\frac{1}{2}(n_1^2+\dots+n_k^2)+n_i+n_{i+2}+\dots+n_{k-1}}}
{(q;q)_{n_1-n_2}\cdots(q;q)_{n_{k-1}-n_k}(q;q)_{n_k}} \notag \\
&\qquad{}=\frac{\bigl(-q^{1/2};q\bigr)_{\infty}
\bigl(q^{i/2},q^{k-i/2+2},q^{k+2};q^{k+2}\bigr)_{\infty}}{(q;q)_{\infty}},
\label{Eq_Andrews10}
\end{align}
where $1\leq i\leq k+1$ such that $i+k$ is odd, see also
\cite[p.~449]{BIS00} and \cite[Theorem 4.4]{W03}.

\begin{proof}
For $k$ even, the claim follows by specialising $t=q^{2n-1}$ in
\eqref{Eq_A2k2a} for $\sigma=0$ and applying the $a=p^{1/2}$ case of
\begin{align}
&\frac{(p;p)_{\infty}^k}{(q;q)_{\infty}^k}\,
\prod_{i=1}^k \theta\bigl({-}p^{1/2}q^{i-1/2}/a;p\bigr)
\theta\bigl(a^2q^{2i-1};p^2\bigr)
\prod_{1\leq i<j\leq k} \theta\bigl(q^{j-i},q^{i+j-1};p\bigr) \notag \\
&\qquad=\frac{\bigl(-q^{1/2};q\bigr)_{\infty}(p;p)_{\infty}^n}{(q;q)_{\infty}^n}
\prod_{i=1}^n \theta\bigl(aq^{i-1/2};p\bigr)
\prod_{1\leq i<j\leq n} \theta\bigl(q^{j-i},q^{i+j-1};p\bigr),
\label{Eq_theta}
\end{align}
where $p=q^{2k+2n}$ and \smash{$a\in\bigl\{1,p^{1/2}\bigr\}$}.
For $k$ odd, we combine \eqref{Eq_GOW1} with the $a=1$, $n\to\infty$
and~${x_i=q^{i-1/2}}$ specialisation of \eqref{Eq_oddk1}, and use
\[
\prod_{i=1}^n \theta\bigl(q^{i+k};q^{2k+2n+1}\bigr)=
\prod_{i=1}^n \theta\bigl(q^{i+k+n};q^{2k+2n+1}\bigr)
\prod_{i=1}^n \theta\bigl(q^{(k+n+1)/2}\cdot q^{i-1/2};q^{2k+2n+1}\bigr)
\]
to write the product of theta function in the desired form.
\end{proof}

The next result is our first new Rogers--Ramanujan identity.

\begin{Corollary}[\smash{$\mathrm{A}_{2n-1}^{(2)}$} Rogers--Ramanujan identities]
\label{Cor_5}
For $k$, $n$ positive integers, let $p=q^{k+2n}$.
Then
\begin{align}
\chi_{k\La_0}(q,q;q)&=
\sum_{\substack{\la \\[1pt] \la_1\leq k}}
q^{\abs{\la}} P_{\la}\bigl(1,q,q^2,\dots;q^{2n-1}\bigr) \notag \\
&=\frac{(p;p)_{\infty}^n}{\bigl(q;q^2\bigr)_{\infty}(q;q)_{\infty}^n}
\prod_{i=1}^n \theta\bigl(q^i;p\bigr)
\prod_{1\leq i<j\leq n} \theta\bigl(q^{j-i},q^{i+j};p\bigr).
\label{Eq_GOW5}
\end{align}
\end{Corollary}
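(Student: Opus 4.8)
The plan is to prove \eqref{Eq_GOW5} by establishing the middle equality; the identification of the theta-product on the right with $\chi_{k\La_0}(q,q;q)$ is the usual Weyl--Kac computation for $\mathrm{A}_{2n-1}^{(2)}$ — equivalently, Lepowsky's numerator formula \eqref{Eq_Lepowsky} for $L(k\La_0)$ under the principal specialisation $\phi_{q,q;q}$. For the middle equality I would split according to the parity of $k$, exactly as in the paragraph preceding Corollary~\ref{Cor_4}: the even case comes straight out of \eqref{Eq_A2k2a}, while the odd case is routed through \eqref{Eq_oddk1} together with the already-proved Corollary~\ref{Cor_1}.

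For $k=2m$ even, I would apply the simplified form of \eqref{Eq_A2k2a} at $\sigma=1$ displayed after Theorem~\ref{Thm_qtRR2}, with $k\mapsto m$ and $t=q^{2n-1}$, so that $p=tq^{2m+1}=q^{2m+2n}=q^{k+2n}$ and
\[
\sum_{\substack{\la\\ \la_1\leq k}} q^{\abs{\la}}\, P_{\la}\big(1,q,q^2,\dots;q^{2n-1}\big)
=\frac{(p;p)_\infty^{m}}{(q;q)_\infty^{m}}\prod_{1\leq i<j\leq m}\theta\big(q^{j-i},q^{i+j-1};p\big).
\]
It then remains to turn the product on the right into the theta-product of \eqref{Eq_GOW5}, and this is precisely \eqref{Eq_thetaGOW} with its two ranks relabelled by $k\mapsto n$ and $n\mapsto m$ (legitimate since then $p=q^{2n+2m}=q^{2n+k}$): dividing that identity through and simplifying the prefactor by means of the classical $(q^2;q^2)_\infty/(q;q)_\infty=1/(q;q^2)_\infty$ yields exactly the right-hand side of \eqref{Eq_GOW5}.

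For $k=2m+1$ odd, I would instead take the $a=1$ case of \eqref{Eq_oddk1} with $k\mapsto m$, extended to infinitely many variables, and evaluate it at the alphabet $x=(q,q^2,q^3,\dots)$. Using the homogeneity of the $P_{\la}$ and $\prod_{i\geq1}(1+q^i)=(-q;q)_\infty$, this reads
\[
\sum_{\substack{\la\\ \la_1\leq 2m+1}} q^{\abs{\la}}\, P_{\la}\big(1,q,q^2,\dots;t\big)
=(-q;q)_\infty\sum_{\substack{\mu\text{ even}\\ \mu_1\leq 2m}} q^{\abs{\mu}}\, P_{\mu}\big(1,q,q^2,\dots;t\big).
\]
Specialising $t=q^{2n-1}$ and invoking \eqref{Eq_GOW2} (Corollary~\ref{Cor_1}) with $k\mapsto m$, whence $p=q^{2m+2n+1}=q^{k+2n}$, rewrites the right-hand sum as the required theta-product, and Euler's identity $(-q;q)_\infty=1/(q;q^2)_\infty$ reconciles the prefactor with \eqref{Eq_GOW5}. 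The degenerate instance $k=1$ (so $m=0$) needs only the evident $k=0$ case of \eqref{Eq_GOW2}, which is a denominator identity.

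I do not expect a genuine obstacle here: all of the substantive input — the $q,t$-identities of Theorems~\ref{Thm_qtRR} and \ref{Thm_qtRR2}, the bounded Littlewood identity \eqref{Eq_oddk1}, and the level-rank theta transformation \eqref{Eq_thetaGOW} — is already available, and the only care required is to track the parameters $p=q^{k+2n}$, $t=q^{2n-1}$ through the even/odd split. The one mildly delicate point is checking that the even and odd formulas really are the same function of $q$, which is guaranteed by the two classical identities $(q^2;q^2)_\infty/(q;q)_\infty=1/(q;q^2)_\infty$ and $(-q;q)_\infty=1/(q;q^2)_\infty$ used above, together with the small-rank adjustments flagged in the corollary.
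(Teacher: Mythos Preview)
Your proposal is correct and follows essentially the same approach as the paper: split into even and odd $k$, handle the even case via \eqref{Eq_A2k2a} at $\sigma=1$ with $t=q^{2n-1}$ together with \eqref{Eq_thetaGOW} (with $k$ and $n$ interchanged), handle the odd case via \eqref{Eq_oddk1} at $a=1$, $x_i=q^i$ combined with \eqref{Eq_GOW2}, and identify the product as $\chi_{k\La_0}(q,q;q)$ through Lepowsky's formula \eqref{Eq_Lepowsky}.
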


By \eqref{Eq_PS}, the $n=1$ case yields the $i=0$ instance of
the following generating function identity for certain Kleshchev
multipartitions \cite[Theorem 3]{Andrews10}, \cite[Theorem 3.5]{CSXY22},
\cite[Theorem 1.4]{KY13} or tight cylindric partitions of two rows
\cite[Theorem 13]{KR26}:
\begin{equation}\label{Eq_AKY}
\sum_{n_1\geq\cdots\geq n_k\geq 0}
\frac{q^{\binom{n_1+1}{2}+\dots+\binom{n_k+1}{2}
-n_2-n_4-\dots-n_{2i}}}
{(q;q)_{n_1-n_2}\cdots(q;q)_{n_{k-1}-n_k}(q;q)_{n_k}}
=\frac{\bigl(q^{i+1},q^{k-i+1},q^{k+2};q^{k+2}\bigr)_{\infty}}
{\bigl(q;q^2\bigr)_{\infty}(q;q)_{\infty}},
\end{equation}
where $0\leq i\leq \floor{k/2}$.

\begin{proof}
For even $k$, the result follows by choosing $t=q^{2n-1}$ in
\eqref{Eq_A2k2a} for $\sigma=1$ and applying~\eqref{Eq_thetaGOW} with
$k$ and $n$ interchanged, recalling that
\smash{$(q;q)_{\infty}/\bigl(q^2;q^2\bigr)_{\infty}=\bigl(q;q^2\bigr)_{\infty}$}.
For odd $k$, the result follows from \eqref{Eq_GOW2} and the $a=1$,
$n\to\infty$ and $x_i=q^i$ case of \eqref{Eq_oddk1}.
The identification of the result as $\chi_{k\La_0}(q,q;q)$ for
the affine Lie algebra \smash{$\mathrm{A}_{2n-1}^{(2)}$} is a direct
consequence of \eqref{Eq_Lepowsky}.
This formula implies that for the level-$\ell$ dominant integral
weights of \smash{$\mathrm{A}_{2n-1}^{(2)}$} parametrised as
\[
\La=(\ell-\la_1-\la_2)\La_0+(\la_1-\la_2)\La_1+\dots+
(\la_{n-1}-\la_n)\La_{n-1}+\la_n\La_n,
\]
where $(\la_1,\dots,\la_n)$ a partition such that $\la_1+\la_2\leq\ell$,
\[
\chi_{\La}(q,q;q)=\frac{(p;p)_{\infty}^n}{\bigl(q;q^2\bigr)_{\infty}(q;q)_{\infty}^n}
\prod_{i=1}^n \theta\bigl(q^{\la_i+n-i+1};p\bigr)
\prod_{1\leq i<j\leq n}
\theta\bigl(q^{\la_i-\la_j-i+j},q^{\la_i+\la_j+2n-i-j+2};p\bigr).
\]
Here $p=q^{\lev(\La)+h^{\vee}}=q^{\ell+2n}$.
For $\ell=k$ and $\la_1=\dots=\la_n=0$ this yields the right-hand side
of~\eqref{Eq_GOW5}.
\end{proof}

\begin{Corollary}[\smash{$\mathrm{A}_{2n}^{(2)}$} Rogers--Ramanujan identities,
{\cite[Theorem 5.13]{RW21}}]
For $k$, $n$ positive integers, let $p=q^{k+2n+1}$.
Then
\begin{align*}
\chi_{k\La_0}\bigl(q^{1/2},q;q\bigr)&=
\sum_{\substack{\la \\[1pt] \la_1\leq k}}
q^{\abs{\la}/2} P_{\la}\bigl(1,q,q^2,\dots;q^{2n}\bigr) \\
&=\frac{\bigl(p^{1/2};p^{1/2}\bigr)_{\infty}(p;p)_{\infty}^{n-1}}
{ \bigl(q^{1/2};q^{1/2}\bigr)_{\infty}(q;q)_{\infty}^{n-1}}
\prod_{i=1}^n \theta\bigl(q^i;p^{1/2}\bigr)
\prod_{1\leq i<j\leq n} \theta\bigl(q^{j-i},q^{i+j};p\bigr).
\end{align*}
\end{Corollary}

\begin{proof}
For even $k$, we take $t=q^{2n}$ in the $\sigma=0$ case of
\eqref{Eq_A2k2a} and apply
\begin{align*}
&\frac{(p;p)_{\infty}^k}{(q;q)_{\infty}^k}
\prod_{i=1}^k \theta\bigl({-}q^{i-1/2};p\bigr)
\theta\bigl(pq^{2i-1};p^2\bigr)
\prod_{1\leq i<j\leq k} \theta\bigl(q^{j-i},q^{i+j-1};p\bigr) \\
&\qquad=\frac{\bigl(p^{1/2};p^{1/2}\bigr)_{\infty}
(p;p)_{\infty}^{n-1}}
{\bigl(q^{1/2};q^{1/2}\bigr)_{\infty}(q;q)_{\infty}^{n-1}}
\prod_{i=1}^n \theta\bigl(q^i;p^{1/2}\bigr)
\prod_{1\leq i<j\leq n} \theta\bigl(q^{j-i},q^{i+j};p\bigr),
\end{align*}
where $p=q^{2k+2n+1}$.
The odd case arises from \eqref{Eq_GOW3} and \eqref{Eq_oddk1},
specialised in the same way as in the proof of Corollary~\ref{Cor_4}.
\end{proof}

A fourth and final application of \eqref{Eq_A2k2a} gives
our second new Rogers--Ramanujan identity.

\begin{Corollary}[$\mathrm{B}_n^{(1)}$ Rogers--Ramanujan identities]
\label{Cor_7}
For $k$, $n$ positive integers, let $p=q^{k+2n-1}$.
Then
\begin{gather}\label{Eq_Bn1}
\chi_{k\La_0}(q,-1;q)=
\sum_{\substack{\la \\[1pt] \la_1\leq k}}
q^{\abs{\la}} P_{\la}\bigl(1,q,q^2,\dots;q^{2n-2}\bigr)
=\frac{(p;p)_{\infty}^n}{(q;q)_{\infty}^n}
\prod_{1\leq i<j\leq n} \theta\bigl(q^{j-i},q^{i+j-1};p\bigr),
\end{gather}
\end{Corollary}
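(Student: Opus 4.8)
The plan is to run the same two-case scheme used in the proofs of Corollaries~\ref{Cor_4}--\ref{Cor_7}: handle $k$ even by specialising \eqref{Eq_A2k2a}, handle $k$ odd by combining one of the GOW identities with \eqref{Eq_oddk1}, and in both cases reduce the right-hand side to the stated theta product by a level-rank theta transformation; finally identify the common $q$-series with $\chi_{k\La_0}(q,-1;q)$ for $\mathrm{B}_n^{(1)}$.

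\textbf{Even $k$.} Write $k=2m$ and specialise $t=q^{2n-2}$ in the $\sigma=1$ instance of \eqref{Eq_A2k2a}, taken with internal rank parameter $m$. In the compact form recorded in the remark following Theorem~\ref{Thm_qtRR2} this reads
\[
\sum_{\substack{\la \\[1pt] \la_1\leq 2m}} q^{\abs{\la}}
P_{\la}\big(1,q,q^2,\dots;q^{2n-2}\big)
=\frac{(p;p)_{\infty}^m}{(q;q)_{\infty}^m}
\prod_{1\leq i<j\leq m}\theta\big(q^{j-i},q^{i+j-1};p\big),
\]
with $p=tq^{2m+1}=q^{2m+2n-1}=q^{k+2n-1}$. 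The left-hand side is exactly the middle member of \eqref{Eq_Bn1}, so it only remains to rewrite the product on the right; this is precisely the level-rank theta transformation \eqref{Eq_theta-duality} with $k$ replaced by $m$ (legitimate since $p=q^{2m+2n-1}$), which turns $\prod_{1\leq i<j\leq m}$ into $\prod_{1\leq i<j\leq n}$ and $(p;p)_{\infty}^m/(q;q)_{\infty}^m$ into $(p;p)_{\infty}^n/(q;q)_{\infty}^n$.

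\textbf{Odd $k$.} Write $k=2k'+1$ and start from the $\mathrm{D}_{n+1}^{(2)}$ identity \eqref{Eq_GOW4} with rank parameter $k'$ and $t=q^{2n-2}$; there $p=q^{2k'+2n}=q^{k+2n-1}$ and the product is already $\prod_{1\leq i<j\leq n}\theta(q^{j-i},q^{i+j-1};p)$. To replace the sum over even $\mu$ with $\mu_1\leq 2k'$ by the sum over \emph{all} $\la$ with $\la_1\leq 2k'+1=k$, I would invoke \eqref{Eq_oddk1} with $a=1$ and $x_i=q^i$ and let the number of variables tend to infinity; the $n\to\infty$ justification is routine from the stability property of Hall--Littlewood polynomials. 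Using the homogeneity of $P_{\la}$ to absorb the rescaling of the alphabet into the factor $q^{\abs{\la}}$, this multiplies the even-$\mu$ sum by $\prod_{i\geq1}(1+q^i)=(-q;q)_{\infty}$, and since $(-q;q)_{\infty}=(q^2;q^2)_{\infty}/(q;q)_{\infty}$ the prefactor $(-q;q)_{\infty}/\big((q^2;q^2)_{\infty}(q;q)_{\infty}^{n-1}\big)$ coming from the right of \eqref{Eq_GOW4} collapses to $(q;q)_{\infty}^{-n}$, producing exactly the right-hand side of \eqref{Eq_Bn1}. (The degenerate case $k=1$ should be checked separately, as with the other small-parameter cases.)

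It then remains to identify this $q$-series with $\chi_{k\La_0}(q,-1;q)$ for $\mathrm{B}_n^{(1)}$. As in the treatment of $\mathrm{A}_{2n-1}^{(2)}$ in the proof of Corollary~\ref{Cor_5}, I would write down the specialised Weyl--Kac character formula for $L(\La)$ at a general level-$\ell$ dominant integral weight $\La$ under $\phi_{q,-1;q}$, note that the dual Coxeter number of $\mathrm{B}_n^{(1)}$ is $h^{\vee}=2n-1$ so that $p=q^{\lev(\La)+h^{\vee}}$ is the correct nome, and then set $\La=k\La_0$; the resulting theta product is the right-hand side of \eqref{Eq_Bn1}. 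The main obstacle I anticipate is exactly this last step: since $\phi_{q,-1;q}$ is not the principal specialisation, Lepowsky's numerator formula \eqref{Eq_Lepowsky} does not apply directly, so one must evaluate the specialised Weyl--Kac numerator and denominator by hand (equivalently, recognise the theta product on the right of \eqref{Eq_Bn1} as such an evaluation), and one has to track the small-$n$ degenerations flagged before Corollary~\ref{Cor_1} (here $\mathrm{B}_2^{(1)}\cong\mathrm{C}_2^{(1)}$), where the indexing weight and/or the specialisation are modified.
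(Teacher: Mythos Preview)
Your proposal is correct and matches the paper's proof essentially step for step: even $k$ via the $\sigma=1$ specialisation of \eqref{Eq_A2k2a} at $t=q^{2n-2}$ together with the theta-duality \eqref{Eq_theta-duality}, and odd $k$ via \eqref{Eq_GOW4} combined with \eqref{Eq_oddk1} at $a=1$, $x_i=q^i$, $n\to\infty$. The one point the paper makes explicit that you only flag as an ``obstacle'' is the character identification: the specialised Weyl--Kac denominator for $\mathrm{B}_n^{(1)}$ under $\phi_{q,-1;q}$ is evaluated via the $\mathrm{D}_n^{(1)}$ Macdonald identity (the $-1$ at the short simple root effectively folds $\mathrm{B}_n$ to $\mathrm{D}_n$, and $2n-2$ is the dual Coxeter number of $\mathrm{D}_n^{(1)}$), yielding the general formula \eqref{Eq_Bn-specialisation} which at $\la=0$ gives the right-hand side of \eqref{Eq_Bn1}.
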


\begin{proof}
For the even-$k$ case, we specialise $t=q^{2n-2}$ in
\eqref{Eq_A2k2a} for $\sigma=1$ and apply \eqref{Eq_theta-duality}.
The odd-$k$ case follows from \eqref{Eq_GOW4} and \eqref{Eq_oddk1}
along the lines of the proof of Corollary~\ref{Cor_5}.
The identification of the identity in terms of \smash{$\mathrm{B}_n^{(1)}$}
arises as follows.
We parametrise the \mbox{level-$\ell$}~dominant integral weights of
\smash{$\mathrm{B}_n^{(1)}$} as
\[
\La=(\ell-\la_1-\la_2)\La_0+(\la_1-\la_2)\La_1+\dots+
(\la_{n-1}-\la_n)\La_{n-1}+2\la_n\La_n,
\]
where $(\la_1,\dots,\la_n)$ is a partition or half-partition
such that $\la_1+\la_2\leq\ell$.
Specialising the Weyl--Kac character formula
\cite[equation~(10.4.1)]{Kac90} for the \smash{$\mathrm{B}_n^{(1)}$}-module
$L(\La)$ according to $\phi_{q,-1;q}$, and using the \smash{$\mathrm{D}_n^{(1)}$}
Macdonald identity -- the fact that this is the right Macdonald identity
is reflected in the fact that $2n-2$ is the dual Coxeter number of
\smash{$\mathrm{D}_n^{(1)}$} -- yields
\begin{align}
&\chi_{\La}(q,-1;q) \notag \\
&\qquad = \begin{cases}\displaystyle
\frac{(p;p)_{\infty}^n}{(q;q)_{\infty}^n}
\prod_{1\leq i<j\leq n}
\theta\bigl(q^{\la_i-\la_j-i+j},q^{\la_i+\la_j+2n-i-j+1};p\bigr)
&\text{if $\la$ is a partition}, \\[3mm]
0 &\text{if $\la$ is a half-partition},
\end{cases}\!\!\! \label{Eq_Bn-specialisation}
\end{align}
where \smash{$p=q^{\ell+h^{\vee}}=q^{\ell+2n-1}$}.
For $\ell=k$ and $\la_1=\dots=\la_n=0$, this is the product on the right
of~\eqref{Eq_Bn1}.
\end{proof}

The next four corollaries are consequences of \eqref{Eq_A2k2b}.

\begin{Corollary}[\smash{$\mathrm{A}_{2n-1}^{(2)}$} Rogers--Ramanujan identities,
{\cite[Theorem 5.17]{RW21}}]\label{Cor_8}
For $k$, $n$ positive integers, let $p=q^{2k+2n}$.
Then
\begin{align*}
\chi_{k\La_n}(q,q;q)&=\sum_{\substack{\la \\[1pt] \la_1\leq 2k}}
q^{\abs{\la}/2+(n-1/2)l(\la^{\textup{o}})}
P_{\la}\bigl(1,q,q^2,\dots;q^{2n-1}\bigr) \\
&=\frac{(p;p)_{\infty}^n}{\bigl(q;q^2\bigr)_{\infty}(q;q)_{\infty}^n}
\prod_{i=1}^n \theta\bigl(p^{1/2}q^{i-1};p\bigr)
\prod_{1\leq i<j\leq n} \theta\bigl(q^{j-i},q^{i+j-2};p\bigr).
\end{align*}
\end{Corollary}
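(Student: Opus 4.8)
The plan is to follow the template used for the conditional proof of Theorem~\ref{Thm_qtRR} and for Corollaries~\ref{Cor_5}--\ref{Cor_7}: specialise the Hall--Littlewood parameter $t$ in a suitable $q,t$-Rogers--Ramanujan identity, rewrite the resulting theta-function product in ``level-rank dual'' form, and finally recognise that product as a principally specialised character of a standard module via Lepowsky's numerator formula.

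First I would set $\sigma=0$ and $t=q^{2n-1}$ in \eqref{Eq_A2k2b}. Then $q^{(\sigma+1)\abs{\la}/2}=q^{\abs{\la}/2}$ and $t^{l(\la^{\textup{o}})/2}=q^{(n-1/2)l(\la^{\textup{o}})}$, so the left-hand side is exactly the Hall--Littlewood sum appearing in the corollary, while $p=tq^{2k+1}=q^{2k+2n}$. The right-hand side of \eqref{Eq_A2k2b} then reads
\[
\frac{(p;p)_{\infty}^k}{(q;q)_{\infty}^k}
\prod_{i=1}^k \theta\big({-}p^{1/2}q^{i};p\big)\theta\big(q^{2i};p^2\big)
\prod_{1\leq i<j\leq k}\theta\big(q^{j-i},q^{i+j};p\big),
\]
and the substantive step is to prove the theta-function identity
\begin{align*}
&\frac{(p;p)_{\infty}^k}{(q;q)_{\infty}^k}
\prod_{i=1}^k \theta\big({-}p^{1/2}q^{i};p\big)\theta\big(q^{2i};p^2\big)
\prod_{1\leq i<j\leq k}\theta\big(q^{j-i},q^{i+j};p\big) \\
&\qquad=\frac{(p;p)_{\infty}^n}{(q;q^2)_{\infty}(q;q)_{\infty}^n}
\prod_{i=1}^n \theta\big(p^{1/2}q^{i-1};p\big)
\prod_{1\leq i<j\leq n}\theta\big(q^{j-i},q^{i+j-2};p\big)
\end{align*}
for $p=q^{2k+2n}$. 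This is the level-$2k$ analogue of the theta identity used in the proof of Corollary~\ref{Cor_5}, and it is of the same level-rank duality type as \eqref{Eq_theta-duality} and \eqref{Eq_thetaGOW}. I would prove it with the standard theta-function toolkit: apply $\theta(x;p)\theta({-}x;p)=\theta(x^2;p^2)$ together with the Jacobi triple- and quintuple-product identities to move between the nomes $p$ and $p^2$, relabel the two telescoping triangular products via the reflection $\theta(q^a;p)=\theta(q^{2k+2n-a};p)$, and match the surviving single and infinite products; alternatively one checks that both sides have the same zeros as functions of $q$ and transform the same way under $q\mapsto pq$, then compares a single value.

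For the representation-theoretic identification, since $\chi_{k\La_n}(q,q;q)=\phi_{q,q;q}\big(\eup^{-k\La_n}\ch L(k\La_n)\big)$ is a \emph{principal} specialisation, Lepowsky's numerator formula \eqref{Eq_Lepowsky} expresses it as a product over the positive roots of $\mathrm{A}_{2n-1}^{(2)}$; collecting the contributions of the real and imaginary roots into modified Jacobi theta functions at nome $p=q^{\lev(k\La_n)+h^{\vee}}=q^{2k+2n}$ (so that $k\La_n$ is a level-$2k$ weight, consistent with $h^{\vee}=2n$) reproduces precisely the right-hand side of the corollary, exactly as in the proof of Corollary~\ref{Cor_5}. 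Equivalently, one specialises the Weyl--Kac character formula and invokes the relevant Macdonald identity, which yields the character formula for all level-$2k$ dominant integral weights of $\mathrm{A}_{2n-1}^{(2)}$ and hence the stated identity as the instance corresponding to $k\La_n$. As with the neighbouring corollaries, the identification needs the usual minor adjustments when $n$ is small (for example $\mathrm{A}_1^{(2)}\cong\mathrm{A}_1^{(1)}$ for $n=1$, with the highest weight and the specialisation changing accordingly).

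The main obstacle is the theta-function identity displayed above: producing the factor $(q;q^2)_{\infty}^{-1}$ and the precise shift $p^{1/2}q^{i-1}$ on the right takes some care, and it is the only step that is not essentially bookkeeping. Everything else is a direct repetition of the arguments already carried out for Theorem~\ref{Thm_qtRR} and Corollaries~\ref{Cor_5}--\ref{Cor_7}.
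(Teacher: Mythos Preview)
Your proposal is correct and follows essentially the same route as the paper: specialise $\sigma=0$ and $t=q^{2n-1}$ in \eqref{Eq_A2k2b}, then apply exactly the theta-function identity you displayed (which the paper simply states without proof), and identify the product as $\chi_{k\La_n}(q,q;q)$ via the principal specialisation formula for $\mathrm{A}_{2n-1}^{(2)}$. The only difference is that you supply additional detail on how one might verify the theta identity and the character identification, whereas the paper leaves these implicit (the latter being covered by the citation to \cite[Theorem~5.17]{RW21}).
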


This result, which for $n=1$ yields \eqref{Eq_AKY} for
$(i,k)\mapsto(k,2k)$, is a companion of Corollary~\ref{Cor_5}.

\begin{proof}
The claim follows by taking $\sigma=0$ and $t=q^{2n-1}$ in
\eqref{Eq_A2k2b} and applying the theta function identity
\begin{align*}
&\frac{(p;p)_{\infty}^k}{(q;q)_{\infty}^k}
\prod_{i=1}^k \theta\bigl({-}p^{1/2}q^i;p\bigr)
\theta\bigl(q^{2i};p^2\bigr)
\prod_{1\leq i<j\leq k}\theta\bigl(q^{j-i},q^{i+j};p\bigr) \\
&\qquad= \frac{(p;p)_{\infty}^n}{\bigl(q;q^2\bigr)_{\infty}(q;q)_{\infty}^n}
\prod_{i=1}^n \theta\bigl(p^{1/2}q^{i-1};p\bigr)
\prod_{1\leq i<j\leq n} \theta\bigl(q^{j-i},q^{i+j-2};p\bigr)
\end{align*}
for $p=q^{2k+2n}$.
\end{proof}

The next corollary, which is a companion to Corollary~\ref{Cor_4} and
gives \eqref{Eq_Andrews10} for $(k,i)\mapsto (2k,1)$, is our third new
Rogers--Ramanujan identity.

\begin{Corollary}[\smash{$\mathrm{D}_{n+2}^{(2)}$} Rogers--Ramanujan identities]
\label{Cor_9}
For $k$, $n$ positive integers, let $p=q^{2k+2n}$.
Then
\begin{align*}
\chi_{k\La_n}\bigl(q^{1/2},-1;q\bigr)&=
\sum_{\substack{\la \\[1pt] \la_1\leq 2k}}
q^{\abs{\la}+(n-1/2)l(\la^{\textup{o}})}
P_{\la}\bigl(1,q,q^2,\dots;q^{2n-1}\bigr) \\
&=\frac{\bigl(-q^{1/2};q\bigr)_{\infty}(p;p)_{\infty}^n}{(q;q)_{\infty}^n}
\prod_{i=1}^n \theta\bigl(q^{i-1/2};p\bigr)
\prod_{1\leq i<j\leq n} \theta\bigl(q^{j-i},q^{i+j-1};p\bigr).
\end{align*}
\end{Corollary}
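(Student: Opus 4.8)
The plan is to obtain Corollary~\ref{Cor_9} from identity~\eqref{Eq_A2k2b} by the same two-step procedure used in the proof of Corollary~\ref{Cor_8}: a specialisation of the Hall--Littlewood parameter, followed by a level-rank transformation of the resulting theta-function product, and then a representation-theoretic identification of that product. Concretely, I would take $\sigma=1$ in \eqref{Eq_A2k2b} and set $t=q^{2n-1}$, so that $p=tq^{2k+1}=q^{2k+2n}$. Since then $q^{(\sigma+1)\abs{\la}/2}t^{l(\la^{\textup{o}})/2}=q^{\abs{\la}+(n-1/2)l(\la^{\textup{o}})}$, the left-hand side of \eqref{Eq_A2k2b} is precisely the middle member of Corollary~\ref{Cor_9}, while the right-hand side becomes
\[
\frac{(p;p)_{\infty}^k}{(q;q)_{\infty}^k}
\prod_{i=1}^k \theta\big({-}p^{1/2}q^{i-1/2};p\big)\,\theta\big(q^{2i-1};p^2\big)
\prod_{1\leq i<j\leq k}\theta\big(q^{j-i},q^{i+j-1};p\big).
\]
This is exactly the $a=1$ instance of the theta-function identity~\eqref{Eq_theta} (the same identity that is invoked, at $a=p^{1/2}$, in the proof of Corollary~\ref{Cor_4}), whose right-hand side for $a=1$ is the $n$-fold product
\[
\frac{(-q^{1/2};q)_{\infty}(p;p)_{\infty}^n}{(q;q)_{\infty}^n}
\prod_{i=1}^n \theta\big(q^{i-1/2};p\big)\prod_{1\leq i<j\leq n}\theta\big(q^{j-i},q^{i+j-1};p\big).
\]
Hence the equality of the middle and right members of Corollary~\ref{Cor_9} follows with no work beyond invoking \eqref{Eq_A2k2b} and \eqref{Eq_theta}.

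It then remains to identify this $n$-fold product with $\chi_{k\La_n}(q^{1/2},-1;q)$ for $\mathrm{D}_{n+2}^{(2)}$, and here I would follow the template of the identification steps in the proofs of Corollaries~\ref{Cor_5} and~\ref{Cor_7}. Writing $\ell$ for the level of the weight $k\La_n$ (determined by the comark of the $n$th node of $\mathrm{D}_{n+2}^{(2)}$), I would parametrise the level-$\ell$ dominant integral weights of $\mathrm{D}_{n+2}^{(2)}$ by a partition (or half-partition) $(\la_1,\dots,\la_n)$, specialise the Weyl--Kac character formula via $\phi_{q^{1/2},-1;q}$, and apply the Macdonald identity matching the dual Coxeter number visible in $p=q^{2k+2n}$ --- the $\mathrm{B}_n^{(1)}$ or $\mathrm{D}_n^{(1)}$ identity, as dictated by the bookkeeping, in the same way that the $\mathrm{D}_n^{(1)}$ identity intervenes for $\mathrm{B}_n^{(1)}$ in Corollary~\ref{Cor_7} --- to bring the specialised denominator into product form. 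Setting $\la_1=\dots=\la_n=0$ should then reproduce exactly the product displayed above, and when $n=1$ the resulting $q$-series should coincide with the right-hand side of \eqref{Eq_Andrews10} at $(k,i)\mapsto(2k,1)$, which serves as a useful consistency check.

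The routine input (the $t$-specialisation, the theta transformation via \eqref{Eq_theta}, and the Weyl--Kac expansion) is mechanical given the machinery already assembled in this section, so I expect the only real obstacle to be the fine structure of the representation-theoretic identification: pinning down the exact highest weight $k\La_n$ and its level, the precise form of the specialisation $\phi_{q^{1/2},-1;q}$, and the correct Macdonald identity to invoke, and --- as flagged in the text for several of the preceding corollaries --- correctly handling the small-$n$ exceptional isomorphisms (e.g.\ $\mathrm{D}_3^{(2)}\cong\mathrm{A}_3^{(2)}$ for $n=1$), which necessitate a relabelling of both the module and the specialisation in low rank.
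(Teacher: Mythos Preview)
Your proposal is correct and follows essentially the same route as the paper: specialise $\sigma=1$, $t=q^{2n-1}$ in \eqref{Eq_A2k2b}, then apply the $a=1$ case of \eqref{Eq_theta} to pass from the $k$-fold product to the $n$-fold product. The only difference is in the identification step: where you outline a derivation from scratch via the Weyl--Kac formula and a Macdonald identity (as in Corollaries~\ref{Cor_5} and~\ref{Cor_7}), the paper simply cites the product formula for $\chi_{\La}(q^{1/2},-1;q)$ already recorded in \cite[p.~75]{RW21}, which short-circuits that computation.
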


\begin{proof}
This follows by specialising $t=q^{2n-1}$ in \eqref{Eq_A2k2b} for
$\sigma=1$ and using \eqref{Eq_theta} for $a=1$.
The identification of the $q$-series as
\smash{$\chi_{k\La_n}\bigl(q^{1/2},-1;q\bigr)$} for the affine Lie algebra
$\mathrm{D}_{n+1}^{(2)}$ follows from the product formula for
\smash{$\chi_{\La}\bigl(q^{1/2},-1;q\bigr)$} given on \cite[p.~75]{RW21}.
\end{proof}

\begin{Corollary}[\smash{$\textrm{A}_{2n}^{(2)}$} Rogers--Ramanujan identities
{\cite[Theorem 5.13]{RW21}}]\label{Cor_10}
For $k$, $n$ positive integers, let $p=q^{2k+2n+1}$.
Then
\begin{align*}
\chi_{k\La_n}\bigl(q^{1/2},q;q\bigr)&=
\sum_{\substack{\la \\[1pt] \la_1\leq 2k}}
q^{\abs{\la}/2+n\, l(\la^{\textup{o}})}
P_{\la}\bigl(1,q,q^2,\dots;q^{2n}\bigr) \\
&=\frac{\bigl(p^{1/2};p^{1/2}\bigr)_{\infty}
(p;p)_{\infty}^{n-1}}{(q^{1/2};q^{1/2})_{\infty}(q;q)_{\infty}^{n-1}}
\prod_{i=1}^n \theta\bigl(q^{i-1/2};p^{1/2}\bigr)
\prod_{1\leq i<j\leq n} \theta\bigl(q^{j-i},q^{i+j-1};p\bigr).
\end{align*}
\end{Corollary}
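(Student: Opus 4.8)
The plan is to follow the same route used for Corollaries~\ref{Cor_8} and~\ref{Cor_9}, which are likewise extracted from the $q,t$-identity~\eqref{Eq_A2k2b}: specialise~\eqref{Eq_A2k2b}, recast the resulting theta product through a level--rank duality, and identify the $q$-series with a specialised standard-module character.

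For the first step I would set $\sigma=0$ and $t=q^{2n}$ in~\eqref{Eq_A2k2b}. Its left-hand side then reads
\[
\sum_{\substack{\la \\[1pt] \la_1\leq 2k}} q^{\abs{\la}/2+n\,l(\la^{\textup{o}})}\, P_{\la}\big(1,q,q^2,\dots;q^{2n}\big),
\]
the middle member of the corollary, and $p=tq^{2k+1}=q^{2k+2n+1}$ is the prescribed value. No parity-of-$k$ distinction is needed here (in contrast with Corollaries~\ref{Cor_4}--\ref{Cor_7}): the weight $t^{l(\la^{\textup{o}})/2}$ in~\eqref{Eq_A2k2b} matches the summation range $\la_1\le 2k$ directly, with no need to halve $k$.

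The heart of the matter is then the theta-function identity
\begin{align*}
&\frac{(p;p)_{\infty}^k}{(q;q)_{\infty}^k}
\prod_{i=1}^k \theta\big({-}p^{1/2}q^{i};p\big)\,\theta\big(q^{2i};p^2\big)
\prod_{1\leq i<j\leq k}\theta\big(q^{j-i},q^{i+j};p\big) \\
&\quad=\frac{(p^{1/2};p^{1/2})_{\infty}(p;p)_{\infty}^{n-1}}
{(q^{1/2};q^{1/2})_{\infty}(q;q)_{\infty}^{n-1}}
\prod_{i=1}^n \theta\big(q^{i-1/2};p^{1/2}\big)
\prod_{1\leq i<j\leq n}\theta\big(q^{j-i},q^{i+j-1};p\big),
\end{align*}
valid for $p=q^{2k+2n+1}$, which converts the rank-$k$ product coming out of the first step into the rank-$n$ product on the right of the corollary. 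I would prove it exactly as the analogous identities in the proofs of the preceding corollaries and of \cite[Theorem~5.13]{RW21}: expand every theta factor into an infinite product via the Jacobi triple product, use the standard halving/doubling relations $\theta(a;p^{1/2})=\theta(a;p)\,\theta(p^{1/2}a;p)$ and $\theta(a^2;p^2)=\theta(a;p)\,\theta({-}a;p)$ to bring everything over the common modulus $p=q^{2k+2n+1}$, and check that both sides collapse to one and the same infinite product. The only real work lies in this bookkeeping --- tracking where the scalar prefactors and the quintuple-product-type factors $\theta(q^{2i};p^2)$ migrate --- and that is the step I expect to be the main obstacle.

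For the final step I would identify the $q$-series with $\chi_{k\La_n}(q^{1/2},q;q)$ for $\mathrm{A}_{2n}^{(2)}$. As $\phi_{q^{1/2},q;q}$ is not the principal specialisation, \eqref{Eq_Lepowsky} does not apply; instead I would use the product formula for $\chi_{\La}\big(q^{1/2},q;q\big)$ established in \cite[Theorem~5.13]{RW21} (the same theorem that records the $\La=k\La_0$ case), or re-derive it by applying $\phi_{q^{1/2},q;q}$ to the Weyl--Kac character formula for the $\mathrm{A}_{2n}^{(2)}$-module $L(\La)$ and evaluating the numerator with the $\mathrm{A}_{2n}^{(2)}$ Macdonald identity --- natural here since $2n+1$ is the dual Coxeter number of $\mathrm{A}_{2n}^{(2)}$ --- along the lines of the $\mathrm{B}_n^{(1)}$ identification in the proof of Corollary~\ref{Cor_7}. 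Parametrising the relevant dominant integral weights of $\mathrm{A}_{2n}^{(2)}$ as in \cite[Theorem~5.13]{RW21} and setting the parametrising partition to zero gives $\La=k\La_n$ and the right-hand side of the corollary. As usual a minor correction to the weight and to $\phi_{a,b;q}$ is needed for small $n$, because of the low-rank isomorphisms among affine Lie algebras noted before the list of corollaries; those cases are checked separately.
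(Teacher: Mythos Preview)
Your proposal is correct and follows essentially the same route as the paper: specialise $\sigma=0$ and $t=q^{2n}$ in \eqref{Eq_A2k2b}, then apply exactly the theta-function duality you wrote down (with $p=q^{2k+2n+1}$) to convert the rank-$k$ product into the rank-$n$ product. The paper's proof is more terse, simply stating the theta identity without indicating how to verify it and leaving the character identification implicit in the citation of \cite[Theorem~5.13]{RW21}, but your elaborations on both points are accurate and in keeping with how the surrounding corollaries are handled.
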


\begin{proof}
The claim follows by specialising $\sigma=0$ and $t=q^{2n}$ in
\eqref{Eq_A2k2b} and using
\begin{align*}
\begin{split}
&\frac{(p;p)_{\infty}^k}{(q;q)_{\infty}^k}
\prod_{i=1}^k \theta\bigl({-}p^{1/2}q^i;p\bigr)
\theta\bigl(q^{2i};p^2\bigr)
\prod_{1\leq i<j\leq k}\theta\bigl(q^{j-i},q^{i+j};p\bigr) \\
&\qquad=\frac{\bigl(p^{1/2};p^{1/2}\bigr)_{\infty}
(p;p)_{\infty}^{n-1}}{(q^{1/2};q^{1/2})_{\infty}(q;q)_{\infty}^{n-1}}
\prod_{i=1}^n \theta\bigl(q^{i-1/2};p^{1/2}\bigr)
\prod_{1\leq i<j\leq n} \theta\bigl(q^{j-i},q^{i+j-1};p\bigr)
\end{split}
\end{align*}
for $p=q^{2k+2n+1}$.
\end{proof}

The next identity, which is a companion of \eqref{Eq_Bn1}, is our fourth
new Rogers--Ramanujan identity.

\begin{Corollary}[\smash{$\textrm{B}_n^{(1)}$} Rogers--Ramanujan identities]
\label{Cor_11}
For $k$, $n$ positive integers, let $p\!=\!q^{2k+2n-1}$.
Then
\begin{align*}
\begin{split}
\chi_{2k\La_n}\bigl(q,-1;q\bigr)&{}=
\sum_{\substack{\la \\[1pt] \la_1\leq 2k}}
q^{\abs{\la}+(n-1)\,l(\la^{\textup{o}})}
P_{\la}\bigl(1,q,q^2,\dots;q^{2n-2}\bigr)\\
&{}=\frac{(p;p)_{\infty}^n}{(q;q)_{\infty}^n}
\prod_{1\leq i<j\leq n} \theta\bigl(q^{j-i},q^{i+j-2};p\bigr).
\end{split}
\end{align*}
\end{Corollary}

\begin{proof}
We take $\sigma=1$, $t=q^{2n-2}$ in \eqref{Eq_A2k2b} and use
\begin{align*}
&\frac{(p;p)_{\infty}^k}{(q;q)_{\infty}^k}
\prod_{i=1}^k \theta\bigl({-}p^{1/2}q^{i-1/2};p\bigr)
\theta\bigl(q^{2i-1};p^2\bigr)
\prod_{1\leq i<j\leq k}\theta\bigl(q^{j-i},q^{i+j-1};p\bigr) \\
&\qquad=\frac{(p;p)_{\infty}^n}{(q;q)_{\infty}^n}
\prod_{1\leq i<j\leq n} \theta\bigl(q^{j-i},q^{i+j-2};p\bigr)
\end{align*}
for $p=q^{2k+2n-1}$.
The identification as $\chi_{2k\La_n}\bigl(q,-1;q\bigr)$
follows from \eqref{Eq_Bn-specialisation} with $k\mapsto 2k$
and $\la_i=k$ for all $1\leq i\leq k$.
\end{proof}

Next we consider applications of Theorem~\ref{Thm_qtRR3}.

\begin{Corollary}[\smash{$\textrm{B}_n^{(1)}$} Rogers--Ramanujan identities
{\cite[Theorem 5.15 and~Remark 5.16]{RW21}}]
For~$k$,~$n$ positive integers, let $p=q^{k+2n-1}$.
Then
\begin{align*}
&\chi_{k\La_{(1-\sigma)\La_0+\sigma\La_n}}\bigl(q^{1/2},q;q\bigr) \\
&\qquad=
\sum_{\substack{\la \\[1pt] \la_1\leq k}}
q^{(\sigma+1)\abs{\la}/2}
\Biggl(\prod_{i=0}^{k-1}\bigl({-}q^{n-1/2};q^{n-1/2}\bigr)_{m_i(\la)}\Biggr)
P_{\la}\bigl(1,q,q^2,\dots;q^{2n-1}\bigr) \\
&\qquad=\frac{\bigl(-q^{n-1/2};q^{n-1/2}\bigr)_{\infty}(p;p)_{\infty}^n}
{\bigl(q^{1/2};q^{1/2}\bigr)_{\infty}(q;q)_{\infty}^{n-1}}
\prod_{i=1}^n \theta\bigl(q^{i+(1-\sigma)k/2-1/2};p\bigr)
\prod_{1\leq i<j\leq n} \theta\bigl(q^{j-i},q^{i+j+\sigma-2};p\bigr),
\end{align*}
where $m_0(\la):=\infty$ and $\sigma\in\{0,1\}$.
\end{Corollary}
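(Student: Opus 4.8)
The plan is to follow the pattern of Corollaries~\ref{Cor_4}--\ref{Cor_11} and split on the parity of $k$, deriving the even case from Theorem~\ref{Thm_qtRR3} and the odd case from the $\mathrm{A}_{2n-1}^{(2)}$ and $\mathrm{D}_{n+2}^{(2)}$ identities already established. Two bookkeeping points are used repeatedly. Since $P_{\la}$ is homogeneous of degree $\abs{\la}$, we have $q^{(\sigma+1)\abs{\la}/2}P_{\la}(1,q,q^2,\dots;t)=P_{\la}(q^{(\sigma+1)/2},q^{1+(\sigma+1)/2},\dots;t)$, so the weight $q^{(\sigma+1)\abs{\la}/2}$ is produced by the principal-type specialisation $x_i=q^{i-(1-\sigma)/2}$. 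And under the convention $m_0(\la)=\infty$ the $i=0$ factor of $\prod_{i=0}^{k-1}(-q^{n-1/2};q^{n-1/2})_{m_i(\la)}$ is the constant $(-q^{n-1/2};q^{n-1/2})_{\infty}$, which we pull outside the sum; the product side of the corollary carries a matching $(-q^{n-1/2};q^{n-1/2})_{\infty}$, so this constant simply rides along.

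For $k$ even, apply Theorem~\ref{Thm_qtRR3} with $k$ replaced by $k/2$ and $t=q^{2n-1}$, so that $p=tq^{2(k/2)}=q^{k+2n-1}$ and $\prod_{i=1}^{2(k/2)-1}(-t^{1/2};t^{1/2})_{m_i(\la)}=\prod_{i=1}^{k-1}(-q^{n-1/2};q^{n-1/2})_{m_i(\la)}$; after multiplying by $(-q^{n-1/2};q^{n-1/2})_{\infty}$ the left-hand side is exactly the sum in the corollary. It remains to turn the rank-$(k/2)$ theta product on the right of Theorem~\ref{Thm_qtRR3} into the rank-$n$ theta product of the corollary at the common modulus $p=q^{k+2n-1}$. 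This is a level-rank duality identity for theta functions of the same type as \eqref{Eq_theta-duality}, \eqref{Eq_theta2} and \eqref{Eq_thetaGOW}, which I would establish by first passing from base $p^{1/2}$ to base $p$ in $\prod_{i=1}^{k/2}\theta(-q^{i-(\sigma+1)/2};p^{1/2})$ via $\theta(x^2;p)=\theta(x;p^{1/2})\,\theta(-x;p^{1/2})$, then rearranging $\prod_{1\le i<j\le k/2}\theta(q^{j-i},q^{i+j-\sigma-1};p)$ into $\prod_{1\le i<j\le n}\theta(q^{j-i},q^{i+j+\sigma-2};p)$ by the usual quasi-periodicity and Jacobi-triple-product manipulations, collecting the leftover theta factors as $\prod_{i=1}^{n}\theta(q^{i+(1-\sigma)k/2-1/2};p)$ and reconciling the infinite-product prefactors.

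For $k$ odd, write $k=2k'+1$ and begin from Corollary~\ref{Cor_8} (if $\sigma=0$) or Corollary~\ref{Cor_9} (if $\sigma=1$) with $k\mapsto k'$; note that their modulus $q^{2k'+2n}$ already equals $p=q^{k+2n-1}$. Combining this with the bounded Littlewood identity \eqref{Eq_oddk2}, taken with $k\mapsto k'$, $t=q^{2n-1}$ and countably many variables specialised to $x_i=q^{i-(1-\sigma)/2}$, the prefactor $\prod_i(1+x_i)$ becomes $(-q^{(\sigma+1)/2};q)_{\infty}$, the left-hand side of \eqref{Eq_oddk2} becomes $(-q^{(\sigma+1)/2};q)_{\infty}$ times the series evaluated by Corollary~\ref{Cor_8} (or~\ref{Cor_9}), and the right-hand side becomes precisely the $\la_1\le 2k'+1$ sum of the present corollary, once more up to the constant $(-q^{n-1/2};q^{n-1/2})_{\infty}$. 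On the product side the needed theta manipulation is only a reindexing: $\theta(a;p)=\theta(p/a;p)$ gives $\prod_{i=1}^{n}\theta(q^{k'+n+i-1};p)=\prod_{i=1}^{n}\theta(q^{i+(1-\sigma)k/2-1/2};p)$, and the shifted-factorial prefactors match after $(q^{1/2};q^{1/2})_{\infty}=(q^{1/2};q)_{\infty}(q;q)_{\infty}$ and $(-q^{1/2};q)_{\infty}(q^{1/2};q)_{\infty}=(q;q^2)_{\infty}$; this is the same bookkeeping as in the odd-$k$ step of Corollary~\ref{Cor_4}.

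Finally, to identify the $q$-series as $\chi_{\La}(q^{1/2},q;q)$ for $\mathrm{B}_n^{(1)}$ with $\La=(1-\sigma)k\La_0+\sigma k\La_n$, I would proceed as in Corollary~\ref{Cor_7}: parametrise the level-$\ell$ dominant integral weights of $\mathrm{B}_n^{(1)}$ as $\La=(\ell-\la_1-\la_2)\La_0+(\la_1-\la_2)\La_1+\dots+(\la_{n-1}-\la_n)\La_{n-1}+2\la_n\La_n$, specialise the Weyl--Kac character formula under $\phi_{q^{1/2},q;q}$, evaluate the resulting Weyl-denominator sum by the $\mathrm{D}_n^{(1)}$ Macdonald identity, and set $\ell=k$, $\la_1=\dots=\la_n=0$ (equivalently, quote the product formula for $\chi_{\La}(q^{1/2},q;q)$ recorded in \cite{RW21}). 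I expect the only real difficulty to lie in the even-$k$ step --- pinning down the exact theta-function identity linking the rank-$(k/2)$, base-$p^{1/2}$ product of Theorem~\ref{Thm_qtRR3} to the rank-$n$, base-$p$ product of the corollary, uniformly in $\sigma\in\{0,1\}$ and with the correct half-integer $q$-powers; the odd-$k$ case and the Lie-algebraic identification are then routine, apart from the customary small-$n$ corrections (e.g.\ $\mathrm{B}_1^{(1)}\cong\mathrm{A}_1^{(1)}$) needed for the statement to hold literally.
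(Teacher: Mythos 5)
Your proposal reproduces the paper's proof essentially verbatim: for even $k$ it specialises $t=q^{2n-1}$ in Theorem~\ref{Thm_qtRR3} (with the theorem's rank equal to $k/2$) and invokes a level--rank theta-function identity that the paper likewise states without proof, and for odd $k$ it combines Corollaries~\ref{Cor_8} and~\ref{Cor_9} with \eqref{Eq_oddk2} specialised at $x_i=q^{i+(\sigma-1)/2}$, $t=q^{2n-1}$, with the same $(q;q^2)_{\infty}$-type bookkeeping. The only peripheral slip is in the character identification (which the paper handles by citing \cite{RW21}): since the product carries the $n$ single-variable factors $\theta\big(q^{i+(1-\sigma)k/2-1/2};p\big)$, the relevant denominator evaluation under $\phi_{q^{1/2},q;q}$ is of $\mathrm{B}_n^{(1)}$ rather than $\mathrm{D}_n^{(1)}$ Macdonald type, but your fallback of quoting \cite{RW21} covers this.
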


For $n=1$ and $q\mapsto q^2$, the above result implies the $i=k+1$
and $i=1$ cases of Bressoud's even modulus analogues
of the Andrew--Gordon identities \cite{Bressoud80,Bressoud80b}:
\[
\sum_{n_1\geq\cdots\geq n_k\geq 0}
\frac{q^{n_1^2+\dots+n_k^2+n_i+\dots+n_k}}
{(q;q)_{n_1-n_2}\cdots(q;q)_{n_{k-1}-n_k}\bigl(q^2;q^2\bigr)_{n_k}}
=\frac{\bigl(q^i,q^{2k-i+2},q^{2k+2};q^{2k+2}\bigr)_{\infty}}
{(q;q)_{\infty}},
\]
where $1\leq i\leq k+1$.

\begin{proof}
For even $k$, this follows by specialising $t=q^{2n-1}$ in
Theorem~\ref{Thm_qtRR3} and using
\begin{align*}
&\frac{\bigl(p^{1/2};p\bigr)_{\infty}
(p;p)_{\infty}^k}{(\sigma+1)(q;q)_{\infty}^k}\,
\prod_{i=1}^k \theta\bigl({-}q^{i-(\sigma+1)/2};p^{1/2}\bigr)
\prod_{1\leq i<j\leq k}\theta\bigl(q^{j-i},q^{i+j-\sigma-1};p\bigr) \\
&\qquad=\frac{(p;p)_{\infty}^n}{\bigl(q^{1/2};q\bigr)_{\infty}(q;q)_{\infty}^n}
\prod_{i=1}^n \theta\bigl(q^{i+(1-\sigma)k-1/2};p\bigr)
\prod_{1\leq i<j\leq n} \theta\bigl(q^{j-i},q^{i+j+\sigma-2};p\bigr)
\end{align*}
for $p=q^{2k+2n-1}$.
For odd $k$, the result follows from Corollaries~\ref{Cor_8} and~\ref{Cor_9} combined with~\eqref{Eq_oddk2} for $n\to\infty$ and $x_i=q^{i+(\sigma-1)/2}$.
\end{proof}

\begin{Corollary}[\smash{$\textrm{D}_{n+1}^{(2)}$} Rogers--Ramanujan identities
{\cite[Theorem 5.14]{RW21}}]
For $k$, $n$ positive integers, let $p=q^{k+2n}$.
Then
\begin{align*}
&\chi_{k\La_0}\bigl(q^{1/2},q^{1/2};q\bigr) \\
&\qquad=\sum_{\substack{\la \\[1pt] \la_1\leq k}}
q^{\abs{\la}/2}
\biggl(\prod_{i=0}^{k-1}\bigl({-}q^n;q^n\bigr)_{m_i(\la)}\biggr)
P_{\la}\bigl(1,q,q^2,\dots;q^{2n}\bigr) \\
&\qquad=\frac{\bigl(-q^{1/2};q\bigr)_{\infty}
\bigl(p^{1/2};p^{1/2}\bigr)_{\infty}(p;p)_{\infty}^{n-1}}
{\bigl(q^n;q^{2n}\bigr)_{\infty}\bigl(q^{1/2};q^{1/2}\bigr)_{\infty}(q;q)_{\infty}^{n-1}}
\prod_{i=1}^n \theta\bigl(q^{i-1/2};p^{1/2}\bigr)
\prod_{1\leq i<j\leq n} \theta\bigl(q^{j-i},q^{i+j-1};p\bigr),
\notag
\end{align*}
where $m_0(\la):=\infty$.
\end{Corollary}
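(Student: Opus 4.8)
The plan is to follow the pattern of the proofs of Corollaries~\ref{Cor_4}--\ref{Cor_11}: treat the cases of even and odd $k$ separately, and in each case reduce the claim to an already established identity together with a theta-function identity of level-rank-duality type, before identifying the resulting $q$-series as a specialised character.

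\emph{Even $k$.} Writing $k=2m$, I would specialise $\sigma=0$, $t=q^{2n}$ and $k\mapsto m$ in Theorem~\ref{Thm_qtRR3}, so that $p=tq^{2m}=q^{k+2n}$ and $t^{1/2}=q^n$. Multiplying both sides by the constant $(-q^n;q^n)_{\infty}$ and invoking the convention $m_0(\la):=\infty$ (so that this constant is exactly the $i=0$ factor $(-q^n;q^n)_{m_0(\la)}$), the left-hand side becomes $\sum_{\la_1\le k}q^{\abs{\la}/2}\big(\prod_{i=0}^{k-1}(-q^n;q^n)_{m_i(\la)}\big)P_{\la}(1,q,q^2,\dots;q^{2n})$, i.e.\ the middle member of the corollary (using $2m-1=k-1$). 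It then remains to identify $(-q^n;q^n)_{\infty}$ times the product furnished by Theorem~\ref{Thm_qtRR3} (a product over $1\le i\le m$ and $1\le i<j\le m$, with base $p=q^{2m+2n}$) with the product on the right of the corollary (over $1\le i\le n$ and $1\le i<j\le n$). This is a new theta-function identity of the same flavour as \eqref{Eq_theta} and \eqref{Eq_theta-duality}; I expect it to follow from the elementary relations $(-q^n;q^n)_{\infty}(q^n;q^{2n})_{\infty}=1$, $\theta(z;p^{1/2})=\theta(z;p)\theta(p^{1/2}z;p)$ and $\theta(-z;p^{1/2})\theta(z;p^{1/2})=\theta(z^2;p)$ together with a Macdonald-type Weyl--Kac denominator identity that is symmetric under $m\leftrightarrow n$.

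\emph{Odd $k$.} Writing $k=2m+1$, I would start from Corollary~\ref{Cor_10} with $k\mapsto m$, for which $p=q^{2m+2n+1}=q^{k+2n}$, and then apply the bounded-Littlewood-type identity \eqref{Eq_oddk2} with $t=q^{2n}$ and $k\mapsto m$, evaluated at $x_i=q^{i-1/2}$ and with the number of variables tending to infinity. By homogeneity of the Hall--Littlewood polynomials the substitution $x_i=q^{i-1/2}$ produces the weight $q^{\abs{\la}/2}$ (and turns $t^{l(\mu^{\odd})/2}$ into $q^{n\,l(\mu^{\odd})}$, matching the weight in Corollary~\ref{Cor_10}), while $\prod_{i\ge1}(1+q^{i-1/2})=(-q^{1/2};q)_{\infty}$. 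Thus $(-q^{1/2};q)_{\infty}\,\chi_{m\La_n}(q^{1/2},q;q)$ equals $\sum_{\la_1\le k}q^{\abs{\la}/2}\big(\prod_{i=1}^{k-1}(-q^n;q^n)_{m_i(\la)}\big)P_{\la}(1,q,q^2,\dots;q^{2n})$, which (again with $m_0(\la):=\infty$) is $(-q^n;q^n)_{\infty}^{-1}$ times the middle member of the corollary. Substituting the product form of $\chi_{m\La_n}(q^{1/2},q;q)$ from Corollary~\ref{Cor_10} and simplifying with $(-q^n;q^n)_{\infty}(q^n;q^{2n})_{\infty}=1$ then yields exactly the product on the right of the corollary; here no new theta identity is required.

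Finally, in both cases the remaining task is to recognise the displayed $q$-series as $\chi_{k\La_0}(q^{1/2},q^{1/2};q)$ for $\mathrm{D}_{n+1}^{(2)}$, which I would carry out exactly as in the proof of Corollary~\ref{Cor_9}: parametrise the level-$\ell$ dominant integral weights of $\mathrm{D}_{n+1}^{(2)}$, specialise the Weyl--Kac character formula via $\phi_{q^{1/2},q^{1/2};q}$, apply the Macdonald identity whose dual Coxeter number matches $2n$ to bring the numerator into product form, and read off the case $\ell=k$, $\la=(0^n)$ --- or, equivalently, quote the product formula for $\chi_{\La}(q^{1/2},q^{1/2};q)$ recorded in \cite{RW21}, with the small-$n$ (and $k=1$) caveats noted in the text. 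The main obstacle is the theta-function identity of the even case: it is the only genuinely new analytic input, and establishing it --- presumably by expanding every factor as an infinite $q$-product and matching, or by iterating the quasi-periodicity of $\theta$ --- is where the real work lies.
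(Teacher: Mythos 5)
Your proposal follows essentially the same route as the paper: for even $k$ it specialises Theorem~\ref{Thm_qtRR3} at $\sigma=0$, $t=q^{2n}$ (with the theorem's parameter equal to $k/2$) and absorbs the $i=0$ factor $(-q^n;q^n)_{\infty}=1/(q^n;q^{2n})_{\infty}$, so that the theta-function identity you require is, after cancelling this factor, exactly the level-rank transformation the paper displays (and likewise merely asserts) in its proof; for odd $k$ your use of Corollary~\ref{Cor_10} together with \eqref{Eq_oddk2} specialised at $x_i=q^{i-1/2}$ with the number of variables tending to infinity is precisely the paper's argument, and the character identification via \cite{RW21}/Weyl--Kac matches as well. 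The details you verify (homogeneity giving $q^{\abs{\la}/2}$, $t^{l(\mu^{\textup{o}})/2}\mapsto q^{n\,l(\mu^{\textup{o}})}$, $\prod_{i\geq 1}(1+q^{i-1/2})=(-q^{1/2};q)_{\infty}$) are correct, so the proposal is sound and coincides with the paper's proof in structure and level of detail.
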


By the \smash{$\textrm{D}_{n+1}^{(2)}$} diagram automorphism
$\alpha_i\mapsto\alpha_{n-i}$ for $0\leq i\leq n$, the above $q$-series
may also be identified as \smash{$\chi_{k\La_n}\bigl(q^{1/2},q^{1/2};q\bigr)$}.

\begin{proof}
For even $k$, this follows by specialising $\sigma=0$ and $t=q^{2n}$ in
Theorem~\ref{Thm_qtRR3} and the application of
\begin{align*}
&\frac{\bigl(p^{1/2};p\bigr)_{\infty}
(p;p)_{\infty}^k}{(q;q)_{\infty}^k}\,
\prod_{i=1}^k \theta\bigl({-}q^{i-1/2};p^{1/2}\bigr)
\prod_{1\leq i<j\leq k}\theta\bigl(q^{j-i},q^{i+j-1};p\bigr) \\
&\qquad=\frac{\bigl(-q^{1/2};q\bigr)_{\infty}\bigl(p^{1/2};p^{1/2}\bigr)_{\infty}
(p;p)_{\infty}^{n-1}}{\bigl(q^{1/2};q^{1/2}\bigr)_{\infty}(q;q)_{\infty}^{n-1}}
\prod_{i=1}^n \theta\bigl(q^{i-1/2};p^{1/2}\bigr)
\prod_{1\leq i<j\leq n} \theta\bigl(q^{j-i},q^{i+j-1};p\bigr)
\end{align*}
for $p=q^{2k+2n}$.
For odd $k$, the result follows from Corollary~\ref{Cor_10}
and \eqref{Eq_oddk2} for $n\to\infty$ and~${x_i=q^{i-1/2}}$.
\end{proof}

Our fifth new result is our first and only identity for \smash{$\mathrm{D}_n^{(1)}$}.

\begin{Corollary}[$\textrm{D}_n^{(1)}$ Rogers--Ramanujan identity]
\label{Cor_14}
For $k$, $n$ positive integers, let $p=q^{k+2n-2}$.
Then
\begin{align}
\chi_{k\La_0}(q,q;q)&=\sum_{\substack{\la \\[1pt] \la_1\leq k}}
q^{\abs{\la}}
\Biggl(\prod_{i=0}^{k-1}\bigl({-}q^{n-1};q^{n-1}\bigr)_{m_i(\la)}\Biggr)
P_{\la}\bigl(1,q,q^2,\dots;q^{2n-2}\bigr) \notag \\
&=\frac{(-q;q)_{\infty}\bigl(-q^{n-1};q^{n-1}\bigr)_{\infty}
(p;p)_{\infty}^n}{(q;q)_{\infty}^n}
\prod_{1\leq i<j\leq n} \theta\bigl(q^{j-i},q^{i+j-2};p\bigr),
\label{Eq_GOW6}
\end{align}
where $m_0(\la)=\infty$.
\end{Corollary}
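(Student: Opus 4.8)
The plan is to follow the two-step pattern of the preceding corollaries: first prove the $q$-series identity between the $\la$-sum in \eqref{Eq_GOW6} and the theta product, treating even and odd $k$ separately, and then identify that product with $\chi_{k\La_0}(q,q;q)$ for $\mathrm{D}_n^{(1)}$ via Lepowsky's numerator formula \eqref{Eq_Lepowsky}.

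For even $k$ I would write $k=2m$. Using the convention $m_0(\la):=\infty$, one has $\prod_{i=0}^{k-1}({-}q^{n-1};q^{n-1})_{m_i(\la)}=({-}q^{n-1};q^{n-1})_{\infty}\prod_{i=1}^{k-1}({-}q^{n-1};q^{n-1})_{m_i(\la)}$, so the left-hand side of \eqref{Eq_GOW6} is exactly $({-}q^{n-1};q^{n-1})_{\infty}$ times the $\sigma=1$, $t=q^{2n-2}$ specialisation of Theorem~\ref{Thm_qtRR3} taken with parameter $m$ (then $p=tq^{2m}=q^{k+2n-2}$, $t^{1/2}=q^{n-1}$, the product $\prod_{i=1}^{2m-1}$ becomes $\prod_{i=1}^{k-1}$, and the sum runs over $\la_1\le 2m=k$). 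Hence for even $k$ it remains only to verify the theta-function identity
\[
\frac{(p^{1/2};p^{1/2})_{\infty}(p;p)_{\infty}^{m-1}}{2(q;q)_{\infty}^m}\,
\prod_{i=1}^{m}\theta\big({-}q^{i-1};p^{1/2}\big)
\prod_{1\le i<j\le m}\theta\big(q^{j-i},q^{i+j-2};p\big)
=\frac{({-}q;q)_{\infty}(p;p)_{\infty}^{n}}{(q;q)_{\infty}^{n}}
\prod_{1\le i<j\le n}\theta\big(q^{j-i},q^{i+j-2};p\big),
\]
with $p=q^{2m+2n-2}$. This is a level-rank duality between the $\mathrm{D}_{m+1}^{(2)}$ and $\mathrm{D}_n^{(1)}$ Weyl--Kac denominators, of exactly the same genre as \eqref{Eq_theta-duality} and the theta identities used in the proofs of Corollaries~\ref{Cor_3} and \ref{Cor_4}; it can be proved by the standard matching-of-zeros argument, or by iterating the Jacobi triple product after expanding $\theta({-}1;p^{1/2})=2({-}p^{1/2};p^{1/2})_{\infty}^{2}$ (the factor $2$ cancels the $\tfrac{1}{2}$ and helps assemble $({-}q;q)_{\infty}$).

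For odd $k$ I would write $k=2m+1$ and avoid Theorem~\ref{Thm_qtRR3} altogether, instead applying the bounded Littlewood identity \eqref{Eq_oddk2} with parameter $m$, specialised at $x_i=q^i$, letting the number of variables tend to infinity (legitimate by stability of the Hall--Littlewood polynomials) and taking $t=q^{2n-2}$. Since $\prod_{i\ge 1}(1+q^i)=({-}q;q)_{\infty}$ and $P_\la(q,q^2,q^3,\dots;t)=q^{\abs{\la}}P_\la(1,q,q^2,\dots;t)$ by homogeneity, this gives
\[
\sum_{\substack{\la\\ \la_1\le 2m+1}} q^{\abs{\la}}
\bigg(\prod_{i=1}^{2m}\big({-}q^{n-1};q^{n-1}\big)_{m_i(\la)}\bigg)
P_\la\big(1,q,q^2,\dots;q^{2n-2}\big)
=({-}q;q)_{\infty}\sum_{\substack{\mu\\ \mu_1\le 2m}} q^{\abs{\mu}+(n-1)l(\mu^{\odd})}
P_\mu\big(1,q,q^2,\dots;q^{2n-2}\big).
\]
Multiplying through by $({-}q^{n-1};q^{n-1})_{\infty}$ turns the left-hand side into the left-hand side of \eqref{Eq_GOW6} (again via $m_0(\la):=\infty$), while the $\mu$-sum on the right is precisely the middle member of Corollary~\ref{Cor_11} with parameter $m$, whose product form carries $p=q^{2m+2n-1}=q^{k+2n-2}$ --- exactly the product in \eqref{Eq_GOW6}. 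So the odd-$k$ case needs no new theta identity.

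Finally, the character identification is routine: since $\phi_{q,q;q}$ is the principal specialisation, \eqref{Eq_Lepowsky} applied to the $\mathrm{D}_n^{(1)}$-module $L(k\La_0)$ writes $\chi_{k\La_0}(q,q;q)$ as a product over positive roots, and a direct computation with the $\mathrm{D}_n^{(1)}$ root data --- equivalently, a specialisation of the Weyl--Kac character formula combined with the $\mathrm{D}_n^{(1)}$ Macdonald identity ($2n-2$ being the dual Coxeter number of $\mathrm{D}_n^{(1)}$) --- shows this product equals the right-hand side of \eqref{Eq_GOW6}, with the usual exceptional-isomorphism adjustments when $n$ is small. The only step with genuine content is the even-$k$ theta identity above; everything else is bookkeeping on results already established in the paper.
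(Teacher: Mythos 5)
Your proposal is correct and follows essentially the same route as the paper: for even $k$ you specialise Theorem~\ref{Thm_qtRR3} at $\sigma=1$, $t=q^{2n-2}$ (with parameter $k/2$) and invoke exactly the theta-function identity the paper uses (stated there without proof, as you do), for odd $k$ you combine \eqref{Eq_oddk2} at $x_i=q^i$, $n\to\infty$ with Corollary~\ref{Cor_11}, and you identify the product as $\chi_{k\La_0}(q,q;q)$ via Lepowsky's formula \eqref{Eq_Lepowsky}. The only cosmetic differences are your added remarks on how one might verify the theta identity and the alternative Weyl--Kac/Macdonald phrasing of the character identification, neither of which changes the argument.
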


Once again, by the automorphisms of the \smash{$\textrm{D}_n^{(1)}$} Dynkin diagram,
we may replace $\La_0$ in the above by $\La_1$, $\La_{n-1}$ or $\La_n$.

\begin{proof}
For even $k$, the second equality in \eqref{Eq_GOW6} follows by
taking $\sigma=1$ and $t=q^{2n-2}$ in Theorem~\ref{Thm_qtRR3}
and the use of
\begin{align*}
&\frac{\bigl(p^{1/2};p\bigr)_{\infty}
(p;p)_{\infty}^k}{2(q;q)_{\infty}^k}
\prod_{i=1}^k \theta\bigl({-}q^{i-1};p^{1/2}\bigr)
\prod_{1\leq i<j\leq k}\theta\bigl(q^{j-i},q^{i+j-2};p\bigr) \\
&\qquad=\frac{(-q;q)_{\infty}(p;p)_{\infty}^n}{(q;q)_{\infty}^n}
\prod_{1\leq i<j\leq n} \theta\bigl(q^{j-i},q^{i+j-2};p\bigr)
\end{align*}
for $p=q^{2k+2n-2}$.
For odd $k$, the result follows from Corollary~\ref{Cor_11} and
\eqref{Eq_oddk2} for $n\to\infty$ and~${x_i=q^i}$.
The identification of the $q$-series in terms of the \smash{$\mathrm{D}_n^{(1)}$}
standard module $L(k\La_0)$ follows from \eqref{Eq_Lepowsky}.
For \smash{$\mathrm{D}_n^{(1)}$} with parametrisation
\[
\La=(\ell-\la_1-\la_2)\La_0+(\la_1-\la_2)\La_1+\dots+
(\la_{n-1}-\la_n)\La_{n-1}+(\la_{n-1}+\la_n)\La_n,
\]
where $(\la_1,\dots,\la_n)$ is a generalised\footnote{Generalised in the
sense that $\la_n$ need not be nonnegative, as long as
$\abs{\la_n}\leq\la_{n-1}$.}
partition or half-partition such that $\la_1+\la_2\leq\ell$,
\eqref{Eq_Lepowsky} implies that
\[
\chi_{\La}(q,q;q)=
\frac{(-q;q)_{\infty}\bigl(-q^{n-1};q^{n-1}\bigr)_{\infty}(p,p)_{\infty}^n}
{(q;q)_{\infty}^n}
\prod_{1\leq i<j\leq n}
\theta\bigl(q^{\la_i-\la_j-i+j},q^{\la_i+\la_j+2n-i-j};p\bigr).
\]
Here \smash{$p=q^{\ell+h^{\vee}}=q^{\ell+2n-2}$}.
For $\ell=k$ and $\la_1=\dots=\la_n=0$, this gives yields the
product on the right of \eqref{Eq_GOW6}.
\end{proof}

Finally, we consider four applications of Theorem~\ref{Thm_qtRR4},
all but one of which are new.

\begin{Corollary}[\smash{$\textrm{A}_{2n}^{(2)}$} Rogers--Ramanujan identities]
\label{Cor_15}
For $k$ a positive integer and $n$ a nonnegative integer,
let $p=q^{2k+2n+1}$.
Then
\begin{align}
&\chi_{k\La_n}(q,q;q) \notag \\
&\qquad{}=\sideset{}{'}\sum_{\substack{\la \\[1pt] \la_1\leq 2k}}
q^{\abs{\la}/2+(n+1/2)l(\la^{\textup{o}})}
\Biggl(\prod_{i=0}^{2k-1}\bigl(q^{2n+1};q^{4n+2}\bigr)_{\ceil{m_i(\la)/2}}\Biggr)
P_{\la}\bigl(1,q,q^2,\dots;q^{2n+1}\bigr) \notag \\
&\qquad{}=\frac{\bigl(q^{2n+1};q^{4n+2}\bigr)_{\infty}(p;p)_{\infty}^n}
{\bigl(q;q^2\bigr)_{\infty}(q;q)_{\infty}^n}
\prod_{i=1}^n \theta\bigl(q^{k+i};p\bigr)\theta\bigl(q^{2i-1};p^2\bigr)
\prod_{1\leq i<j\leq n}\theta\bigl(q^{j-i},q^{i+j-1};p\bigr),
\label{Eq_GOW7}
\end{align}
where $m_0(\la):=\infty$ and the prime denotes the restriction that
the odd parts of $\la$ have even multiplicity.
\end{Corollary}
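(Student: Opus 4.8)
The plan is to derive the middle equality in \eqref{Eq_GOW7} by specialising the $\mathrm{A}_{2k-1}^{(2)}$ identity of Theorem~\ref{Thm_qtRR4}, and the first equality from Lepowsky's numerator formula \eqref{Eq_Lepowsky} for the principal specialisation. First I would put $\sigma=0$ and $t=q^{2n+1}$ in Theorem~\ref{Thm_qtRR4}. Then $q^{(\sigma+1)\abs{\la}/2}=q^{\abs{\la}/2}$, $t^{l(\la^{\textup{o}})/2}=q^{(n+1/2)l(\la^{\textup{o}})}$, $(t;t^2)_{\ceil{m_i(\la)/2}}=(q^{2n+1};q^{4n+2})_{\ceil{m_i(\la)/2}}$ and $p=tq^{2k}=q^{2k+2n+1}$, which is exactly the data in Corollary~\ref{Cor_15}. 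Since $m_0(\la):=\infty$, the extra $i=0$ factor in \eqref{Eq_GOW7} equals the constant $(q^{2n+1};q^{4n+2})_\infty$, so the sum in \eqref{Eq_GOW7} is $(q^{2n+1};q^{4n+2})_\infty$ times the left-hand side of Theorem~\ref{Thm_qtRR4} under this specialisation. It then remains to check that $(q^{2n+1};q^{4n+2})_\infty$ times the right-hand side of Theorem~\ref{Thm_qtRR4} (at $\sigma=0$, $t=q^{2n+1}$, $p=q^{2k+2n+1}$) agrees with the product on the right of \eqref{Eq_GOW7}.

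After cancelling the common factor $(q^{2n+1};q^{4n+2})_\infty$, this reduces to the level--rank-type theta identity
\begin{align*}
&\frac{(p^2;p^2)_\infty(p;p)_\infty^{k-1}}{(q;q)_\infty^k}
\prod_{i=1}^k \theta\big(q^{2i};p^2\big)
\prod_{1\leq i<j\leq k}\theta\big(q^{j-i},q^{i+j};p\big) \\
&\qquad=\frac{(p;p)_\infty^n}{(q;q^2)_\infty(q;q)_\infty^n}
\prod_{i=1}^n \theta\big(q^{k+i};p\big)\theta\big(q^{2i-1};p^2\big)
\prod_{1\leq i<j\leq n}\theta\big(q^{j-i},q^{i+j-1};p\big),
\end{align*}
valid when $p=q^{2k+2n+1}$. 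I expect this to come out of the same elementary theta manipulations used for \eqref{Eq_theta-duality}, \eqref{Eq_theta2} and \eqref{Eq_thetaGOW}: once $p$ is the power $q^{2k+2n+1}$ the double product $\prod_{1\leq i<j\leq k}\theta(q^{j-i},q^{i+j};p)$ "completes" to a double product over $1\leq i<j\leq n$ at the cost of a single product, while the factor $(p^2;p^2)_\infty\prod_{i=1}^k\theta(q^{2i};p^2)$ together with the $(q;q^2)_\infty$ denominator dissects (via a quintuple-product-type splitting of theta functions of modulus $p$ into ones of modulus $p^2$) into the remaining $\prod_{i=1}^n\theta(q^{k+i};p)\theta(q^{2i-1};p^2)$. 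Checking $n=0$, where both sides must collapse to $(q;q^2)_\infty^{-1}$, is a convenient consistency test.

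Finally, to identify the $q$-series as $\chi_{k\La_n}(q,q;q)$ for $\mathrm{A}_{2n}^{(2)}$ I would argue as for Corollaries~\ref{Cor_5}, \ref{Cor_7} and \ref{Cor_14}. Since $\phi_{q,q;q}$ is the principal specialisation, Lepowsky's numerator formula \eqref{Eq_Lepowsky} — equivalently, the Weyl--Kac character formula with the $\mathrm{A}_{2n}^{(2)}$ Macdonald identity \eqref{Eq_MacA2k2} (with $k$ replaced by $n$) in the role of the denominator — expresses $\chi_\La(q,q;q)$, for every level-$\ell$ dominant integral weight $\La$ of $\mathrm{A}_{2n}^{(2)}$, as an explicit product of theta functions with nome $p=q^{\ell+2n+1}$ (including $\theta(\,\cdot\,;p^2)$ factors reflecting the short roots) and of $q$- and $p$-shifted factorials; reading this off at $\La=k\La_n$ (so $\ell=2k$) and simplifying with the reflection symmetry $\theta(x;p)=\theta(p/x;p)$ puts it in the shape of the right-hand side of \eqref{Eq_GOW7}. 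Alternatively, since Corollary~\ref{Cor_1} already identifies the right-hand side of \eqref{Eq_GOW1} as $\chi_{k\La_n}(-1,q;q)$ for the same standard module $L(k\La_n)$, one may compare the two specialisations $\phi_{-1,q;q}$ and $\phi_{q,q;q}$ directly. A small adjustment is needed for small $n$ — e.g.\ $n=1$, where the algebra is really $\mathrm{A}_2^{(2)}$ and \eqref{Eq_GOW7} reduces, via \eqref{Eq_PS}, to the $(i,k)\mapsto(k,2k)$ instance of \eqref{Eq_AKY}.

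The step I expect to be the main obstacle is the theta-function identity of the second paragraph. It is routine in spirit, but the presence of the two nomes $p$ and $p^2$ coming from the $\mathrm{A}_{2k-1}^{(2)}$ side, together with the $(q;q^2)_\infty$, makes it noticeably more delicate than the rank-duality identities of the earlier corollaries, and getting every theta argument and every infinite-product prefactor to line up takes care. A secondary point of care is fixing the correct normalisation of $\rho$ and of the coroots of the twisted algebra $\mathrm{A}_{2n}^{(2)}$ in the character identification.
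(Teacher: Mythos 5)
Your proposal is correct and follows essentially the same route as the paper: specialise Theorem~\ref{Thm_qtRR4} at $\sigma=0$, $t=q^{2n+1}$ (absorbing the constant $i=0$ factor $(q^{2n+1};q^{4n+2})_\infty$ coming from $m_0(\la)=\infty$), invoke exactly the theta identity you state --- which is the paper's identity after rewriting $(-p;p)_\infty(p;p)_\infty^k=(p^2;p^2)_\infty(p;p)_\infty^{k-1}$ and $(-q;q)_\infty=1/(q;q^2)_\infty$ --- and identify the product via Lepowsky's formula \eqref{Eq_Lepowsky} for $\mathrm{A}_{2n}^{(2)}$ at $\ell=2k$, $\la_1=\dots=\la_n=k$. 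Only your peripheral small-$n$ remark is off: at $n=1$ one has $t=q^3$, so \eqref{Eq_GOW7} does not reduce via \eqref{Eq_PS} to \eqref{Eq_AKY} (that is Corollary~\ref{Cor_8}); the degenerate case the paper records for this corollary is $n=0$, giving \eqref{Eq_strange} with $\sigma=0$.
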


For $n=0$, this is the $i=0$ case of the somewhat unusual
Rogers--Ramanujan-type identity\footnote{In an earlier version
of this paper, only the $i=0$ and $i=k$ cases of \eqref{Eq_strange}
were stated.
Matthew Russell subsequently discovered the remaining cases, which are proved
in the appendix.}
\begin{align}
\sideset{}{'}\sum_{n_1\geq\cdots\geq n_{2k}\geq 0} &
\frac{q^{\frac{1}{2}(n_1^2+\dots+n_{2k}^2)+
\frac{1}{2}(n_1-n_2+\dots+n_{2k-1}-n_{2k})+
\frac{1}{2}(n_1+n_2+\dots+n_{2i})}}
{\bigl(q^2;q^2\bigr)_{\floor{(n_1-n_2)/2}}\cdots
\bigl(q^2;q^2\bigr)_{\floor{(n_{2k-1}-n_{2k})/2}}
(q;q)_{n_{2k}}} \notag \\
&=\frac{\bigl(-q^{2i+1},-q^{2i+1},q^{2i+1};q^{2i+1}\bigr)_{\infty}}
{\bigl(q^2;q^2\bigr)_{\infty}},
\label{Eq_strange}
\end{align}
where $0\leq i\leq k$ and the prime denotes the restriction
that $n_{2j-1}-n_{2j}$ is even for all $1\leq j\leq k$.

\begin{proof}
We take $\sigma=0$ and $t=q^{2n+1}$ in Theorem~\ref{Thm_qtRR4} and apply
\begin{align*}
&\frac{(-p;p)_{\infty}(p;p)_{\infty}^k}{(q;q)_{\infty}^k}
\prod_{i=1}^k \theta\bigl(q^{2i};p^2\bigr)
\prod_{1\leq i<j\leq k}\theta\bigl(q^{j-i},q^{i+j};p\bigr) \\
&\qquad=\frac{(-q;q)_{\infty}(p;p)_{\infty}^n}
{(q;q)_{\infty}^n}
\prod_{i=1}^n \theta\bigl(q^{k+i};p\bigr)\theta\bigl(q^{2i-1};p^2\bigr)
\prod_{1\leq i<j\leq n}\theta\bigl(q^{j-i},q^{i+j-1};p\bigr)
\end{align*}
for $p=q^{2k+2n+1}$.
Parametrising the level-$\ell$ dominant integral weight of
\smash{$\mathrm{A}_{2n}^{(2)}$} as
\[
\La=(\ell-2\la_1)\La_0+(\la_1-\la_2)\La_1+\dots+
(\la_{n-1}-\la_n)\La_{n-1}+\la_n\La_n,
\]
where $(\la_1,\dots,\la_n)$ is a partition such that
$\la_1\leq\floor{\ell/2}$, the \smash{$\mathrm{A}_{2n}^{(2)}$} case
of \eqref{Eq_Lepowsky} takes the form
\begin{align*}
\chi_{\La}(q,q;q)
&=\frac{\bigl(q^{2n+1};q^{4n+2}\bigr)_{\infty}(p;p)_{\infty}^n}
{\bigl(q;q^2\bigr)_{\infty}(q;q)_{\infty}^n}
\prod_{i=1}^n \theta\bigl(q^{\la_i+n-i+1};p\bigr)
\theta\bigl(q^{\ell-2\la_i+2i-1};p^2\bigr) \\
&\quad\times\prod_{1\leq i<j\leq n}
\theta\bigl(q^{\la_i-\la_j-i+j},q^{\la_i+\la_j+2n-i-j+2};p\bigr),
\end{align*}
where \smash{$p=q^{\ell+h^{\vee}}=q^{\ell+2n+1}$}.
For $\ell=2k$ and $\la_1=\dots=\la_n=k$, this is gives
the product on the right of \eqref{Eq_GOW7}.
\end{proof}

\begin{Corollary}[\smash{$\textrm{B}_n^{(1)}$} Rogers--Ramanujan identities]
\label{Cor_16}
For $k$, $n$ positive integers, let $p\!=\!q^{2k+2n-1}$.
Then
\begin{align}
&\chi_{2k\La_n}\bigl(q^2,-1;q\bigr) \notag\\
&\qquad{}=\sideset{}{'}\sum_{\substack{\la \\[1pt] \la_1\leq 2k}}
q^{\abs{\la}+(n-1/2)l(\la^{\textup{o}})}
\Biggl(\prod_{i=0}^{2k-1}
\bigl(q^{2n-1};q^{4n-2}\bigr)_{\ceil{m_i(\la)/2}}\Biggr)
P_{\la}\bigl(1,q,q^2,\dots;q^{2n-1}\bigr) \notag \\
&\qquad{}=\frac{\bigl(q^{2n-1};q^{4n-2}\bigr)_{\infty}(p;p)_{\infty}^n}
{2\bigl(q^2;q^2\bigr)_{\infty}(q;q)_{\infty}^{n-1}}
\prod_{i=1}^n \theta\bigl({-}q^{i-1};p\bigr)
\prod_{1\leq i<j\leq n}\theta\bigl(q^{j-i},q^{i+j-2};p\bigr),
\label{Eq_GOW9}
\end{align}
where $m_0(\la):=\infty$ and the prime denotes the restriction that
the odd parts of $\la$ have even multiplicity.
\end{Corollary}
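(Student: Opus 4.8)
The plan is to obtain \eqref{Eq_GOW9} by specialising the Hall--Littlewood parameter in Theorem~\ref{Thm_qtRR4} and then rewriting the resulting product in its level--rank dual form, following the same scheme as the proof of Corollary~\ref{Cor_15}. Concretely, I would take $\sigma=1$ and $t=q^{2n-1}$ in Theorem~\ref{Thm_qtRR4}, so that $p=tq^{2k}=q^{2k+2n-1}$ and
$t^{l(\la^{\textup{o}})/2}=q^{(n-1/2)l(\la^{\textup{o}})}$, $(t;t^2)_{\ceil{m_i(\la)/2}}=\big(q^{2n-1};q^{4n-2}\big)_{\ceil{m_i(\la)/2}}$, and $q^{(\sigma+1)\abs{\la}/2}=q^{\abs{\la}}$. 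The sum on the left of \eqref{Eq_GOW9} differs from that of Theorem~\ref{Thm_qtRR4} only in that the product $\prod_{i=1}^{2k-1}$ is replaced by $\prod_{i=0}^{2k-1}$ with the convention $m_0(\la)=\infty$; since $\big(q^{2n-1};q^{4n-2}\big)_{\ceil{m_0(\la)/2}}=\big(q^{2n-1};q^{4n-2}\big)_{\infty}$ does not depend on $\la$, it factors out of the sum and reappears as the first factor on the right of \eqref{Eq_GOW9}.

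\textbf{The theta-function step.} After cancelling this common factor, what remains is a level--rank theta-function identity for $p=q^{2k+2n-1}$ relating the rank-$k$ product
$\frac{(p^2;p^2)_{\infty}(p;p)_{\infty}^{k-1}}{(q;q)_{\infty}^k}\prod_{i=1}^k \theta\big(q^{2i-1};p^2\big)\prod_{1\le i<j\le k}\theta\big(q^{j-i},q^{i+j-1};p\big)$
(the $(p;p)_\infty^2$ in the statement of Theorem~\ref{Thm_qtRR4} being a misprint for $(p^2;p^2)_\infty$, as is clear from the $\mathrm{A}_{2k-1}^{(2)}$ Macdonald identity used to prove it) to the rank-$n$ product
$\frac{(p;p)_{\infty}^n}{2(q^2;q^2)_{\infty}(q;q)_{\infty}^{n-1}}\prod_{i=1}^n \theta\big({-}q^{i-1};p\big)\prod_{1\le i<j\le n}\theta\big(q^{j-i},q^{i+j-2};p\big)$
occurring in \eqref{Eq_GOW9}. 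This is an identity of exactly the type already used in the proofs of Corollaries~\ref{Cor_14} and \ref{Cor_15}: both sides are finite products of the modified theta functions $\theta(q^a;p)$ and $\theta(q^a;p^2)$, and one checks it by repeatedly applying the relations $\theta(z;p)=\theta(p/z;p)$ and $\theta(pz;p)=-z^{-1}\theta(z;p)$ to move every argument into a fixed residue window, after which the powers of $(p;p)_\infty$, the factor $1/\big(2(q^2;q^2)_\infty\big)$, and the rearrangement of the double products fall into place. I expect this bookkeeping---in particular keeping track of the $(p^2;p^2)_\infty$ and of the $\theta(-q^{i-1};p)$ factors arising from the $i=1$ term---to be the only calculational part of the argument, but it needs no new idea.

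\textbf{The Lie-theoretic identification.} Finally I would identify the resulting $q$-series with $\chi_{2k\La_n}\big(q^2,-1;q\big)$ for $\mathrm{B}_n^{(1)}$. Since $\La_n$ has level $a_n^{\vee}=1$ and $h^{\vee}\big(\mathrm{B}_n^{(1)}\big)=2n-1$, the module $L(2k\La_n)$ has level $2k$ and $p=q^{\lev(\La)+h^{\vee}}=q^{2k+2n-1}$, which matches the modulus above. Applying the specialisation $\phi_{q^2,-1;q}$ to the Weyl--Kac character formula for $L(2k\La_n)$ and evaluating the numerator by the relevant Macdonald identity---exactly as in the proof of Corollary~\ref{Cor_7} and the computation \eqref{Eq_Bn-specialisation}---turns the rank-$n$ product above into $\chi_{2k\La_n}\big(q^2,-1;q\big)$, with the weight $2k\La_n$ corresponding to taking all parts equal to $k$ in the $\mathrm{B}_n^{(1)}$ weight parametrisation, as in the proof of Corollary~\ref{Cor_11}. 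This completes the proof of \eqref{Eq_GOW9}; if the identification as stated requires a small adjustment for the low-rank cases $n=1,2$, one replaces the affine Lie algebra, the highest weight, and the specialisation accordingly, as already noted for the other corollaries.
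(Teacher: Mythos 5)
Your proposal follows essentially the same route as the paper's proof: specialise Theorem~\ref{Thm_qtRR4} at $\sigma=1$, $t=q^{2n-1}$ (the paper's proof says $t=q^{2n+1}$, evidently a typo), factor out the constant $(q^{2n-1};q^{4n-2})_{\infty}$ coming from $m_0(\la)=\infty$, convert the rank-$k$ theta product into the rank-$n$ product by a level--rank identity, and identify the result as $\chi_{2k\La_n}(q^2,-1;q)$ via the specialised Weyl--Kac formula. The only caveat is in the last step: the identification is not literally the computation \eqref{Eq_Bn-specialisation} (which is for $\phi_{q,-1;q}$ and uses the $\mathrm{D}_n^{(1)}$ Macdonald identity) but a fresh, analogous evaluation at $\phi_{q^2,-1;q}$ using the $\mathrm{B}_n^{(1)}$ Macdonald identity, which factorises only for weights $\ell\La_n$ (and vanishes for odd $\ell$); since you invoke ``the relevant Macdonald identity'' and the method is identical, this is a minor imprecision rather than a gap.
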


For $n=1$, this is \eqref{Eq_strange} for $i=k$.

\begin{proof}
We take $\sigma=1$ and $t=q^{2n+1}$ in Theorem~\ref{Thm_qtRR4} and apply
\begin{align*}
&\frac{(-p;p)_{\infty}(p;p)_{\infty}^k}
{(q;q)_{\infty}^k}
\prod_{i=1}^k \theta\bigl(q^{2i-1};p^2\bigr)
\prod_{1\leq i<j\leq k}\theta\bigl(q^{j-i},q^{i+j-1};p\bigr) \\
&\qquad=\frac{(p;p)_{\infty}^n}
{2\bigl(q^2;q^2\bigr)_{\infty}(q;q)_{\infty}^{n-1}}
\prod_{i=1}^n \theta\bigl({-}q^{i-1};p\bigr)
\prod_{1\leq i<j\leq n}\theta\bigl(q^{j-i},q^{i+j};p\bigr)
\end{align*}
for $p=q^{2k+2n-1}$.
The stated specialisation is somewhat unusual in that it does
not lead to a~product form for weights other than $\ell\La_n$, for
which it follows from the Weyl--Kac formula and the \smash{$\mathrm{B}_n^{(1)}$}
Macdonald identity that
\begin{align*}
\begin{split}
& \chi_{\ell\La_n}\bigl(q^2,-1;q\bigr) \\
&\qquad=\begin{cases}\displaystyle
\frac{\bigl(q^{2n-1};q^{4n-2}\bigr)_{\infty}
(p;p)_{\infty}^n}
{2\bigl(q^2;q^2\bigr)_{\infty}(q;q)_{\infty}^{n-1}}
\prod_{i=1}^n \theta\bigl({-}q^{i-1};p\bigr)
\prod_{1\leq i<j\leq n}
\theta\bigl(q^{j-i},q^{i+j-2};p\bigr)
&\text{if $\ell$ is even}, \\[3mm] 0 &\text{if $\ell$ is odd},
\end{cases}
\end{split}
\end{align*}
where \smash{$p=q^{\ell+h^{\vee}}=q^{\ell+2n-1}$}.
The product on the right of \eqref{Eq_GOW9} corresponds to $\ell=2k$.
\end{proof}

\begin{Corollary}[\smash{$\textrm{A}_{2n-1}^{(2)}$} Rogers--Ramanujan identities
{\cite[Theorem 5.17]{RW21}}]\label{Cor_17}
For $k$, $n$ positive integers, let $p=q^{2k+2n}$.
Then
\begin{align*}
&\chi_{k\La_n}\bigl(q^2,q;q\bigr) \\
&\qquad=\sideset{}{'}\sum_{\substack{\la \\[1pt] \la_1\leq 2k}}
q^{\abs{\la}/2+n\,l(\la^{\textup{o}})}
\Biggl(\prod_{i=0}^{2k-1}\bigl(q^{2n};q^{4n}\bigr)_{\ceil{m_i(\la)/2}}\Biggr)
P_{\la}\bigl(1,q,q^2,\dots;q^{2n}\bigr) \\
&\qquad=\frac{\bigl(q^{2n};q^{4n}\bigr)_{\infty}\bigl(-p^{1/2};p\bigr)_{\infty}(p;p)_{\infty}^n}
{2(q;q)_{\infty}^n}
\prod_{i=1}^n \theta\bigl({-}q^{i-1},p^{1/2}q^{i-1};p\bigr)
\prod_{1\leq i<j\leq n}\theta\bigl(q^{j-i},q^{i+j-2};p\bigr),
\end{align*}
where $m_0(\la):=\infty$ and prime denotes the restriction that
the odd parts of $\la$ have even multiplicity.
\end{Corollary}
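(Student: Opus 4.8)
The plan is to obtain the corollary from Theorem~\ref{Thm_qtRR4} exactly as Corollaries~\ref{Cor_15} and~\ref{Cor_16} are obtained from it. First I would specialise $\sigma=0$ and $t=q^{2n}$ in Theorem~\ref{Thm_qtRR4}, so that $p=tq^{2k}=q^{2k+2n}$. Then $t^{l(\la^{\textup{o}})/2}=q^{n\,l(\la^{\textup{o}})}$ and $(t;t^2)_{\ceil{m_i(\la)/2}}=(q^{2n};q^{4n})_{\ceil{m_i(\la)/2}}$, so the left-hand side of Theorem~\ref{Thm_qtRR4} differs from the sum in the corollary only by the single factor $(q^{2n};q^{4n})_{\ceil{m_0(\la)/2}}=(q^{2n};q^{4n})_{\infty}$ contributed by the convention $m_0(\la):=\infty$. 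Reading the $(p;p)_{\infty}^2$ appearing in Theorem~\ref{Thm_qtRR4} as $(p^2;p^2)_{\infty}$ (this being what the $\mathrm{A}_{2k-1}^{(2)}$ Macdonald identity used in its proof produces), this yields
\[
\text{(left side of the corollary)}
=(q^{2n};q^{4n})_{\infty}\,
\frac{(p^2;p^2)_{\infty}(p;p)_{\infty}^{k-1}}{(q;q)_{\infty}^k}
\prod_{i=1}^k\theta\big(q^{2i};p^2\big)
\prod_{1\leq i<j\leq k}\theta\big(q^{j-i},q^{i+j};p\big),
\]
with $p=q^{2k+2n}$.

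Next I would establish the level-rank-duality theta function identity
\[
\frac{(p^2;p^2)_{\infty}(p;p)_{\infty}^{k-1}}{(q;q)_{\infty}^k}
\prod_{i=1}^k\theta\big(q^{2i};p^2\big)
\prod_{1\leq i<j\leq k}\theta\big(q^{j-i},q^{i+j};p\big)
=\frac{(-p^{1/2};p)_{\infty}(p;p)_{\infty}^n}{2(q;q)_{\infty}^n}
\prod_{i=1}^n\theta\big({-}q^{i-1},p^{1/2}q^{i-1};p\big)
\prod_{1\leq i<j\leq n}\theta\big(q^{j-i},q^{i+j-2};p\big)
\]
for $p=q^{2k+2n}$, which is of the same kind as \eqref{Eq_theta-duality} and the theta identities in the proofs of Corollaries~\ref{Cor_8},~\ref{Cor_15} and~\ref{Cor_16}. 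I would prove it by one of the two routes used there: either rewrite each side as a sum over $\mathbb{Z}^k$, respectively $\mathbb{Z}^n$, by the relevant Macdonald identity (the $\mathrm{A}_{2k-1}^{(2)}$ identity on the left and a $\mathrm{B}_n$- or $\mathrm{C}_n$-type identity on the right) and match the two sums after interchanging the roles of $k$ and $n$; or, more elementarily, observe that up to an overall power of $q$ both sides are entire functions of $q$ with the same zero lattice and the same quasi-periodicity, so they agree up to a constant, which is then fixed by a single coefficient comparison. Together with the previous step this puts the $q$-series side of the corollary in the stated product form.

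Finally I would identify that $q$-series with $\chi_{k\La_n}(q^2,q;q)$ for $\mathrm{A}_{2n-1}^{(2)}$. Since $\phi_{q^2,q;q}$ is not the principal specialisation, Lepowsky's numerator formula \eqref{Eq_Lepowsky} is not directly applicable (in contrast with Corollary~\ref{Cor_5}); instead, as in the proof of Corollary~\ref{Cor_16}, one specialises the Weyl--Kac character formula and evaluates the numerator by the relevant Macdonald identity. Because this specialisation only factorises for weights supported on $\La_n$, this produces a product formula for $\chi_{\ell\La_n}(q^2,q;q)$, and setting $\ell=2k$ reproduces the product on the right; alternatively one may invoke the product formula for $\chi_{\La}(q^2,q;q)$ recorded in \cite{RW21}, in the manner of the proof of Corollary~\ref{Cor_9}. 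Should either the theta identity or the character identification turn out to require $k$ even, the odd-$k$ case would follow by combining Corollary~\ref{Cor_15} with the bounded Littlewood identity \eqref{Eq_oddk2} specialised as in Corollaries~\ref{Cor_4}--\ref{Cor_14}; but I expect the whole argument to be uniform in $k$, as it is for Corollaries~\ref{Cor_15} and~\ref{Cor_16}.

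The reduction to Theorem~\ref{Thm_qtRR4} and the cancellation of the common factor $(q^{2n};q^{4n})_{\infty}$ are immediate, so the real content lies in the theta identity and the character identification. I expect the theta identity to be the main obstacle: matching the boundary products exactly --- $\theta(-q^{i-1},p^{1/2}q^{i-1};p)$ against $\theta(q^{2i};p^2)$, and $\theta(q^{i+j-2};p)$ against $\theta(q^{i+j};p)$ --- requires careful bookkeeping of Jacobi triple product factors, precisely the kind of identity the paper describes as not hard to supply once it has been guessed.
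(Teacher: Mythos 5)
Your proposal follows the paper's proof exactly: specialise $\sigma=0$, $t=q^{2n}$ in Theorem~\ref{Thm_qtRR4} (absorbing the extra $i=0$ factor $(q^{2n};q^{4n})_{\infty}$) and apply precisely the level--rank theta identity you state, which is the one the paper records (with $(p^2;p^2)_{\infty}(p;p)_{\infty}^{k-1}=(-p;p)_{\infty}(p;p)_{\infty}^{k}$), uniformly in $k$ with no parity split. Your added remarks on proving the theta identity and on identifying the series as $\chi_{k\La_n}(q^2,q;q)$ via the specialised Weyl--Kac formula (or \cite[Theorem~5.17]{RW21}) are consistent with how the paper treats these routine steps.
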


\begin{proof}
We set $\sigma=0$ and $t=q^{2n}$ in Theorem~\ref{Thm_qtRR4}.
The claim then follows from
\begin{align*}
&\frac{(-p;p)_{\infty}(p;p)_{\infty}^k}
{(q;q)_{\infty}^k}
\prod_{i=1}^k \theta\bigl(q^{2i};p^2\bigr)
\prod_{1\leq i<j\leq k}\theta\bigl(q^{j-i},q^{i+j};p\bigr) \\
&\qquad=
\frac{\bigl(-p^{1/2};p\bigr)_{\infty}(p;p)_{\infty}^n}{2(q;q)_{\infty}^n}
\prod_{i=1}^n \theta\bigl({-}q^{i-1},p^{1/2}q^{i-1};p\bigr)
\prod_{1\leq i<j\leq n}\theta\bigl(q^{j-i},q^{i+j-2};p\bigr)
\end{align*}
for $p=q^{2k+2n}$.
\end{proof}

\begin{Corollary}[\smash{$\textrm{D}_{n+1}^{(2)}$} Rogers--Ramanujan identities]
\label{Cor_18}
For $k$, $n$ positive integers, let $p=q^{2k+2n}$.
Then
\begin{align}
\chi_{k\La_0}(-1,q;q)&=
\sideset{}{'}\sum_{\substack{\la \\[1pt] \la_1\leq 2k}}
q^{\abs{\la}+n\,l(\la^{\textup{o}})}
\Biggl(\prod_{i=0}^{2k-1}\bigl(q^{2n};q^{4n}\bigr)_{\ceil{m_i(\la)/2}}\Biggr)
P_{\la}\bigl(1,q,q^2,\dots;q^{2n}\bigr) \label{Eq_GOW10}\\
&=\frac{\bigl(q^{2n};q^{4n}\bigr)_{\infty}\bigl(p^2;p^2\bigr)_{\infty}(p;p)_{\infty}^{n-1}}
{(q;q)_{\infty}^n} \prod_{i=1}^n \theta\bigl(q^{2i-1};p^2\bigr)
\prod_{1\leq i<j\leq n}\theta\bigl(q^{j-i},q^{i+j-1};p\bigr),
\notag
\end{align}
where $m_0(\la):=\infty$ and the prime denotes the restriction that
the odd parts of $\la$ have even multiplicity.
\end{Corollary}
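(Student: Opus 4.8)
The plan is to deduce Corollary~\ref{Cor_18} from Theorem~\ref{Thm_qtRR4} by the three-step recipe used for the preceding corollaries: specialise the Hall--Littlewood parameter, apply a level-rank theta-function identity, and recognise the product as a specialised character of a standard module. First I would put $\sigma=1$ and $t=q^{2n}$ in Theorem~\ref{Thm_qtRR4}, so that $p=tq^{2k}=q^{2k+2n}$. Then $q^{(\sigma+1)\abs{\la}/2}t^{l(\la^{\textup{o}})/2}=q^{\abs{\la}+n\,l(\la^{\textup{o}})}$ and $(t;t^2)_{\ceil{m_i(\la)/2}}=(q^{2n};q^{4n})_{\ceil{m_i(\la)/2}}$, so that the sum on the left of Theorem~\ref{Thm_qtRR4} already agrees with the one in \eqref{Eq_GOW10} except that its product over $i$ runs over $1\leq i\leq 2k-1$ rather than $0\leq i\leq 2k-1$. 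Multiplying both sides by the missing factor $(q^{2n};q^{4n})_{\ceil{m_0(\la)/2}}=(q^{2n};q^{4n})_{\infty}$ (using the convention $m_0(\la)=\infty$), which is independent of $\la$, turns the left-hand side into the middle member of \eqref{Eq_GOW10} and the right-hand side into $(q^{2n};q^{4n})_{\infty}$ times the $\sigma=1$ right-hand side of Theorem~\ref{Thm_qtRR4}.

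It then remains to identify this with the right-hand side of \eqref{Eq_GOW10}. Set
\[
S_k(q):=\frac{(p;p)_{\infty}^k}{(q;q)_{\infty}^k}\prod_{i=1}^k\theta\big(q^{2i-1};p^2\big)\prod_{1\leq i<j\leq k}\theta\big(q^{j-i},q^{i+j-1};p\big),\qquad p=q^{2k+2n}.
\]
Using $(p^2;p^2)_{\infty}=(p;p)_{\infty}(-p;p)_{\infty}$, the right-hand side of \eqref{Eq_GOW10} after dividing out the prefactor $(q^{2n};q^{4n})_{\infty}$ equals $(-p;p)_{\infty}S_n(q)$, while the $\sigma=1$ right-hand side of Theorem~\ref{Thm_qtRR4} equals $(-p;p)_{\infty}S_k(q)$; since $(-p;p)_{\infty}$ depends only on $p=q^{2k+2n}$, the claim reduces to $S_k(q)=S_n(q)$. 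As $p=q^{2k+2n}$ is symmetric in $k$ and $n$, this is just the invariance of $S_k(q)$ under $k\leftrightarrow n$, a level-rank duality of the same flavour as \eqref{Eq_theta-duality} (to which it degenerates when the factors $\theta(q^{2i-1};p^2)$ are removed). I would prove it by writing every $\theta(q^a;p)=(q^a;p)_{\infty}(pq^{-a};p)_{\infty}$ and $\theta(q^a;p^2)=(q^a;p^2)_{\infty}(p^2q^{-a};p^2)_{\infty}$, expanding $S_k(q)$ as an infinite product of factors $1-q^c$, and checking that the exponent of each $1-q^c$ is the same in $S_n(q)$ --- the elementary bookkeeping underlying the theta identities in the proofs of Corollaries~\ref{Cor_1}--\ref{Cor_17}.

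Finally, the resulting product must be recognised as $\chi_{k\La_0}(-1,q;q)$ for $\mathfrak{g}=\mathrm{D}_{n+1}^{(2)}$. As in the proofs of Corollaries~\ref{Cor_5}, \ref{Cor_7}, \ref{Cor_9} and~\ref{Cor_16}, I would parametrise the level-$\ell$ dominant integral weights of $\mathrm{D}_{n+1}^{(2)}$, specialise the Weyl--Kac character formula \cite[Equation~(10.4.1)]{Kac90} for $L(\La)$ according to $\phi_{-1,q;q}$, and sum the resulting alternating sum by the affine Macdonald identity appropriate to $\mathrm{D}_{n+1}^{(2)}$; this yields a general product formula for $\chi_{\La}(-1,q;q)$, in the spirit of the computations recorded in \cite{RW21}, which for $\ell=k$ and $\la=0$ gives the right-hand side of \eqref{Eq_GOW10}. (That the relevant algebra is $\mathrm{D}_{n+1}^{(2)}$ rather than the $\mathrm{B}_n^{(1)}$ of Corollary~\ref{Cor_16} is dictated simply by the parity of the exponent of $q$ in the specialisation $t=q^{2n}$.) I expect this last step to be the main obstacle: since $\phi_{-1,q;q}$ is not the principal specialisation, Lepowsky's numerator formula \eqref{Eq_Lepowsky} does not apply, the signs coming from $\eup^{-\alpha_0}\mapsto -1$ must be propagated through the Weyl--Kac formula with care, the precise value of the modulus $p$ attached to this specialisation must be pinned down, and the usual small-$n$ coincidences among affine types ($\mathrm{A}_3^{(2)}\cong\mathrm{D}_3^{(2)}$, and so on) need separate attention.
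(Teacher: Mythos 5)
Your proposal is correct and follows essentially the same route as the paper: set $\sigma=1$, $t=q^{2n}$ in Theorem~\ref{Thm_qtRR4}, absorb the $\la$-independent factor $(q^{2n};q^{4n})_{\infty}$ coming from $m_0(\la)=\infty$, reduce to the level--rank theta symmetry $S_k(q)=S_n(q)$ (the paper's $\Theta_{k,n}(q)=\Theta_{n,k}(q)$, stated there without proof), and identify the product as $\chi_{k\La_0}(-1,q;q)$ by specialising the Weyl--Kac formula for $\mathrm{D}_{n+1}^{(2)}$ under $\phi_{-1,q;q}$. The one detail to sharpen in your last step is that the Macdonald identity which sums the specialised Weyl--Kac numerator is the $\mathrm{A}_{2n-1}^{(2)}$ identity (not the $\mathrm{D}_{n+1}^{(2)}$ one), which is exactly how the paper closes the argument.
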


\begin{proof}
Let
\[
\Theta_{k,n}(q)=\frac{(p;p)_{\infty}^k}{(q;q)_{\infty}^k}
\prod_{i=1}^k \theta\bigl(q^{2i-1};p^2\bigr)
\prod_{1\leq i<j\leq k}\theta\bigl(q^{j-i},q^{i+j-1};p\bigr),
\]
where $p=q^{2k+2n}$.
Taking $\sigma=1$ and $t=q^{2n}$ in Theorem~\ref{Thm_qtRR4} and applying
\[
\Theta_{k,n}(q)=\Theta_{n,k}(q)
\]
yields the second equality in \eqref{Eq_GOW10}.
To obtain the first equality we parametrise the level-$\ell$
dominant integral weights of \smash{$\mathrm{D}_{n+1}^{(2)}$} as
\[
\La=(\ell-2\la_1)\La_0+(\la_1-\la_2)\La_1+\dots+
(\la_{n-1}-\la_n)\La_{n-1}+2\la_n\La_n,
\]
where $(\la_1,\dots,\la_n)$ is a partition or half-partition
such that $\la_1\leq\floor{\ell/2}$.
By specialising the Weyl--Kac character formula
for the \smash{$\mathrm{D}_n^{(1)}$}-module $L(\La)$ according to
$\phi_{-1,q;q}$, it follows from the~\smash{$\mathrm{A}_{2n-1}^{(2)}$} Macdonald identity that
\begin{align*}
\begin{split}
&\chi_{\La}(-1,q;q) \\
&\qquad{}=\begin{cases}\displaystyle
\frac{\bigl(q^{2n};q^{4n}\bigr)_{\infty}\bigl(p^2;p^2\bigr)_{\infty}
(p;p)_{\infty}^{n-1}}{(q;q)_{\infty}^n}
\prod_{i=1}^n \theta\bigl(q^{2\la_i+2n-2i+1};p^2\bigr) & \\[4mm]
\displaystyle \qquad\times
\prod_{1\leq i<j\leq n}
\theta\bigl(q^{\la_i-\la_j-i+j},q^{\la_i+\la_j+2n-i-j+1};p\bigr)
&\text{if $\la$ is a partition}, \\[3mm]
0 &\text{if $\la$ is a half-partition},
\end{cases}
\end{split}
\end{align*}
where \smash{$p=q^{\ell+h^{\vee}}=q^{\ell+2n}$}.
For $k=\ell$ and $\la_1=\dots=\la_n$ this is the product on the
right of~\eqref{Eq_GOW10}.
\end{proof}

\subsection{Proofs of Theorems~\ref{Thm_qtRR}--\ref{Thm_qtRR4}}
\label{Sec_Ismail}

So far, we have only given conditional proofs of the
$q,t$-Rogers--Ramanujan identities, based on the conjectural
Jacobi--Trudi identities.
For proofs that are unconditional, we apply Ismail's analytic
argument \cite{Ismail77}.
Because this works in the exact same manner for all four theorems,
we only present the details for Theorem~\ref{Thm_qtRR}.

\begin{proof}
Fix $q\in\mathbb{C}$ such that $\abs{q}<1$ and denote the difference
between the left- and right-hand sides of \eqref{Eq_qt-one} by
$f(t)$.
Let $D$ denote the open unit disk in $\mathbb{C}$.
Then $f\colon D\to\mathbb{C}$ is analytic in an open neighbourhood $U\subset D$
of $0$.
By \eqref{Eq_GOW1}, \eqref{Eq_GOW2}, \eqref{Eq_GOW3} and \eqref{Eq_GOW4},
which were previously proved in \cite[Theorems~1.1--1.3]{GOW16},
$f\bigl(q^N\bigr)=0$ for all positive integers $N$.
Since the (geometric) sequence \smash{$\bigl(q^N\bigr)_{N\geq 1}$} has limit point $0\in U$,
$f(t)=0$ for all $t\in D$ by the identity theorem for analytic functions.
\end{proof}

There is one problem with the above proof method in that it assumes the
truth of all of the Rogers--Ramanujan identities stated in
Section~\ref{Sec_standard}.
Eight of these are new and therefore require independent proofs.
It is not difficult however to adapt the method of proof from
\cite{GOW16,RW21} to obtain the missing proofs.
In each case, the required calculations are quite lengthy and hence
we will give the full details of the most complicated to prove case
(Corollary~\ref{Cor_7}) and for the remaining cases we will only
briefly indicate the key steps.

\begin{proof}[Proof of Corollary~\ref{Cor_7}]
We require the special case of \cite[Proposition~5.10]{RW21} obtained
by setting $(t_2,t_3)=(0,-1)$ and letting $N$ tend to infinity.
In this case, we may replace the nonnegative integer $m$ in the
proposition by the nonnegative integer or half-integer $k/2$, as is
explained just above~\cite[equation~(5.2.32)]{RW21}.
Then
\begin{align}
&\sum_{\substack{\la \\[1pt] \la_1\leq k}} t^{\abs{\la}/2} P'_{\la}(x;t)
=\prod_{i=1}^n \frac{\bigl(-t^{1/2}x_i;t\bigr)_{\infty}}{(tx_i^2;t)_{\infty}}
\prod_{1\leq i<j\leq n} \frac{1}{(tx_ix_j;t)_{\infty}} \notag \\
&\qquad\times \sum_{r_1,\dots,r_n\geq 0}\,
\frac{\Delta_{\mathrm{C}}(xt^r)}{\Delta_{\mathrm{C}}(x)}
\prod_{i=1}^n x_i^{(k+1)r_i} t^{\frac{1}{2}(k+1)r_i^2}
\prod_{i,j=1}^n \biggl({-}\frac{x_i}{x_j}\biggr)^{r_i} t^{\binom{r_i}{2}}
\frac{(x_ix_j;t)_{r_i}}{(tx_i/x_j;t)_{r_i}}, \label{Eq_RW}
\end{align}
where, for $x=(x_1,\dots,x_n)$,
\[
\Delta_{\mathrm{C}}(x)=\prod_{i=1}^n \bigl(1-x_i^2\bigr)
\prod_{1\leq i<j\leq n} (x_i-x_j)(x_ix_j-1)
=\det_{1\leq i,j\leq n}\bigl(x_i^{j-1}-x_i^{2n-j+1}\bigr).
\]
We note that the simultaneous substitutions
$\bigl(t^{1/2},x_1,\dots,x_n\bigr)\mapsto \bigl(-t^{1/2},-x_1,\dots,-x_n\bigr)$
leave \eqref{Eq_RW} invariant.
As our next step we observe that the series on the right of
\eqref{Eq_RW} is the function
\smash{$L_{\text{--},(\infty^n)}^{(0)}(x_1,\dots,x_n)$} defined in
\cite[equation~(A.1b)]{BW15} with $(m,q)\mapsto (k,t)$ and
${b_1,\dots,b_{k+1}\to\infty}$, \smash{$c_1=\dots=c_{k+1}=-t^{1/2}$}.
This puts us in a position to iteratively apply \cite[Lemma A.1]{BW15}.
In~particular, in \eqref{Eq_RW} we first replace
$n\mapsto 2n-2$ (for $n\geq 2$) and then make the substitution
\[
(x_1,\dots,x_{2n-2})\mapsto
(x_1,x_2,y_2,x_3,y_3,\dots,x_{n-1},y_{n-1},x_n).
\]
Taking the limits $y_i\to x_i^{-1}$ for all $2\leq i\leq n-1$ using
\cite[Lemma A.1]{BW15}, and then taking the further limits
$x_1\to t^{1/2}$ and $x_n\to 1$, yields
\begin{align*}
&\sum_{\substack{\la \\[1pt] \la_1\leq k}} t^{\abs{\la}/2}
P'_{\la}\bigl(x_1,x_2^{\pm},\dots,x_{n-1}^{\pm},x_n;t\bigr)
=\frac{1}{(t;t)_{\infty}^n}
\prod_{i=1}^n \frac{1}{\theta(-x_i;t)}
\prod_{1\leq i<j\leq n} \frac{1}{(-x_j)\theta\bigl(x_ix_j^{\pm};t\bigr)} \\
&\qquad\times \sum_{\substack{r_1,r_n\geq 0 \\[1pt]
r_2,\dots,r_{n-1}\in\mathbb{Z}}}
 (1+\chi(r_n>0) )
\Delta_{\mathrm{B}}(-xt^r) \prod_{i=1}^n x_i^{(k+2n-1)r_i}
t^{\frac{1}{2}(k+2n-1)r_i^2-(n-\frac{1}{2})r_i},
\end{align*}
where $x_1:=t^{1/2}$, $x_n:=1$, and
\[
\Delta_{\mathrm{B}}(x)=\prod_{i=1}^n (1-x_i)
\prod_{1\leq i<j\leq n} (x_i-x_j)(x_ix_j-1)
=\det_{1\leq i,j\leq n}\bigl(x_i^{j-1}-x_i^{2n-j}\bigr).
\]
Denoting the summand on the right by $S_r$, we have
\[
S_{(r_1,r_2,\dots,r_{n-1},r_n)}=
S_{(-1-r_1,r_2,\dots,r_{n-1},r_n)}=
S_{(r_1,r_2,\dots,r_{n-1},-r_n)}.
\]
These symmetries imply that the above sum over $r_1,\dots,r_n$
can be simplified to
\[
\sum_{\substack{r\in\mathbb{Z}^n\\[1pt] \abs{r} \text{ even}}}
\Delta_{\mathrm{B}}(-xt^r) \prod_{i=1}^n x_i^{(k+2n-1)r_i}
t^{\frac{1}{2}(k+2n-1)r_i^2-(n-\frac{1}{2})r_i},
\]
although in the following we find it slightly more convenient to
drop the condition that $\abs{r}$ is even and divide the series by
two.
We now specialise $t\mapsto q^{2n-2}$ and $x_i\mapsto q^{n-i}$
\big(which is consistent with $x_1=t^{1/2}$ and $x_n=1$\big) and use the
homogeneity of the Hall--Littlewood polynomials, \eqref{Eq_PPp-spec}~and multilinearity, to obtain
\begin{equation}\label{Eq_PN}
\sum_{\substack{\la \\[1pt] \la_1\leq k}} q^{\abs{\la}}
P_{\la}\bigl(1,q,q^2,\dots;q^{2n-2}\bigr)
=\frac{\mathcal{N}}{4(q;q)_{\infty}^n},
\end{equation}
where
\[
\mathcal{N}=
\det_{1\leq i,j\leq n} \biggl(\, \sum_{r\in\mathbb{Z}}
\bigl(y_j^{i-j-2(n-1)r}+y_j^{2n-i-j+2(n-1)r}\bigr)
p^{(n-1)r^2+(n-i)r}\biggr)
\]
for $y_i=q^{n-i+1/2}$ and $p=q^{k+2n-1}$.
Next we swap $i$ and $j$ in the determinant, replace $r$ by $-r$ in the
term of the summand on the right and then again appeal to multilinearity.
As a result,
\begin{align*}
\mathcal{N}&=\sum_{r\in\mathbb{Z}^n}
\prod_{i=1}^n y_i^{2(n-1)r_i-i+1} p^{2(n-1)\binom{r_i}{2}}
\det_{1\leq i,j\leq n}\bigl((y_ip^{r_i})^{j-1}+(y_ip^{r_i})^{2n-j-1}\bigr)
\\[2mm]
&=4(p;p)_{\infty}^n\prod_{1\leq i<j\leq n} \theta\bigl(y_iy_j^{\pm};p\bigr) \\
&=4(p;p)_{\infty}^n\prod_{1\leq i<j\leq n} \theta\bigl(q^{j-i},q^{2n-i-j+1};p\bigr) \\
&=4(p;p)_{\infty}^n\prod_{1\leq i<j\leq n} \theta\bigl(q^{j-i},q^{i+j-1};p\bigr).
\end{align*}
Here the second equality (which holds for arbitrary $y_i$) follows from
the variant of the \smash{$\mathrm{D}_n^{(1)}$} Macdonald identity stated in
\cite[Appendix A.2]{KRTW24} and the third equality uses that
$y_i=q^{n-i+1/2}$.
Substituting the above in \eqref{Eq_PN} completes the proof.
\end{proof}

\begin{proof}[Sketch of proof of Corollary~\ref{Cor_5}]
As in the proof of Corollary~\ref{Cor_7}, the staring point is
\eqref{Eq_RW}, but this time we carry out the substitutions
$n\mapsto 2n-1$ followed by
\[
(x_1,\dots,x_{2n-1})\mapsto
(x_1,x_2,y_2,x_3,y_3,\dots,x_n,y_n),
\]
and then take the limits $y_i\to x_i^{-1}$ for all $2\leq i\leq n$
and $x_1\to t^{1/2}$.
Then carrying out manipulations similar (but simpler, since only the
sum over $r_1$ instead of $r_1$ and $r_n$ requires special attention)
to the proof of Corollary~\ref{Cor_7}, specialising
$t\mapsto q^{2n-1}$ and $x_i\mapsto q^{n-i+1/2}$ and using the
\smash{$\mathrm{B}_n^{(1)}$} Macdonald identity yields the identity \eqref{Eq_GOW5}.
\end{proof}

\begin{proof}[Sketch of proof of Corollary~\ref{Cor_9}]
This time the starting point is the special case of
\cite[Proposition~5.10]{RW21} obtained
by setting $(t_2,t_3)=\bigl(0,-t^{1/2}\bigr)$ and letting $N$ tend to infinity:
\begin{align}
&\sum_{\substack{\la \\[1pt] \la_1\leq 2k}}
t^{(\abs{\la}+l(\la^{\odd}))/2} P'_{\la}(x;t)
=\prod_{i=1}^n \frac{(-tx_i;t)_{\infty}}{\bigl(tx_i^2;t\bigr)_{\infty}}
\prod_{1\leq i<j\leq n} \frac{1}{(tx_ix_j;t)_{\infty}} \notag \\
&\qquad\times \sum_{r_1,\dots,r_n\geq 0}\,
\frac{\Delta_{\mathrm{B}}(xt^r)}{\Delta_{\mathrm{B}}(x)}
\prod_{i=1}^n x_i^{(2k+1)r_i} t^{\binom{r_i+1}{2}+kr_i^2}
\prod_{i,j=1}^n \biggl({-}\frac{x_i}{x_j}\biggr)^{r_i} t^{\binom{r_i}{2}}
\frac{(x_ix_j;t)_{r_i}}{(tx_i/x_j;t)_{r_i}} \label{Eq_RWb}.
\end{align}
The rest of the proof is the exact same as the proof of
Corollary~\ref{Cor_5}, including the use of the~\smash{$\mathrm{B}_n^{(1)}$}
Macdonald identity in the final step.
\end{proof}

\begin{proof}[Sketch of proof of Corollary~\ref{Cor_11}]
The identity follows by taking \eqref{Eq_RWb} and then following
the exact same steps as in the proof of Corollary~\ref{Cor_7}.
\end{proof}

\begin{proof}[Sketch of proof of Corollary~\ref{Cor_14}]
We begin with the special case of \cite[Proposition~5.10]{RW21} obtained
by setting $(t_2,t_3)=\bigl(-1,-t^{1/2}\bigr)$, letting $N$ tend to infinity
and replacing $m$ by $k/2$.
Then
\begin{align*}
&\sum_{\substack{\la \\[1pt] \la_1\leq k}}
t^{\abs{\la}/2}
\Biggl(\prod_{i=1}^{k-1} \bigl(-t^{1/2};t^{1/2}\bigr)_{m_i(\la)}\Biggr)P'_{\la}(x;t)
=\prod_{i=1}^n \frac{1}{\bigl(t^{1/2}x_i;t^{1/2}\bigr)_{\infty}}
\prod_{1\leq i<j\leq n} \frac{1}{(tx_ix_j;t)_{\infty}} \\
&\qquad\times \sum_{r_1,\dots,r_n\geq 0}\,
\frac{\Delta_{\mathrm{B}}(xt^r)}{\Delta_{\mathrm{B}}(x)}
\prod_{i=1}^n x_i^{kr_i} t^{\frac{1}{2}kr_i^2+\frac{1}{2}r_i}
\prod_{i,j=1}^n \biggl({-}\frac{x_i}{x_j}\biggr)^{r_i} t^{\binom{r_i}{2}}
\frac{(x_ix_j;t)_{r_i}}{(tx_i/x_j;t)_{r_i}}.
\end{align*}
The remainder follows the exact same steps as in the proof of
Corollary~\ref{Cor_7}.
\end{proof}

\begin{proof}[Sketch of proof of Corollary~\ref{Cor_15}]
In \cite[Proposition~5.10]{RW21}, we set $t_2=-t_3=-t^{1/2}$ and
let $N$ tend to infinity.
Then
\begin{align}
&\sideset{}{'}\sum_{\substack{\la \\[1pt] \la_1\leq 2k}}
t^{(\abs{\la}+l(\la^{\textup{o}}))/2}
\Biggl(\prod_{i=1}^{2k-1}(t;t^2)_{\ceil{m_i(\la)/2}}\Biggr)
P'_{\la}\bigl(x;t\bigr)
=\prod_{i=1}^n \frac{1}{\bigl(tx_i^2;t^2\bigr)_{\infty}}
\prod_{1\leq i<j\leq n} \frac{1}{(tx_ix_j;t)_{\infty}} \notag \\
&\qquad\times \sum_{r_1,\dots,r_n\geq 0}\,
\frac{\Delta_{\mathrm{D}}(xt^r)}{\Delta_{\mathrm{D}}(x)}
\prod_{i=1}^n (-1)^{r_i} x_i^{2kr_i} t^{kr_i^2+r_i}
\prod_{i,j=1}^n \biggl({-}\frac{x_i}{x_j}\biggr)^{r_i} t^{\binom{r_i}{2}}
\frac{(x_ix_j;t)_{r_i}}{(tx_i/x_j;t)_{r_i}}, \label{Eq_RWc}
\end{align}
where the prime in the final sum over $\la$ denotes the
restriction that the odd parts of $\la$ must have even multiplicity,
and
\[
\Delta_{\mathrm{D}}(x)=\prod_{1\leq i<j\leq n}(x_i-x_j)(x_ix_j-1)
=\frac{1}{2}\det_{1\leq i,j\leq n}\bigl(x_i^{j-1}+x_i^{2n-j-1}\bigr).
\]
We now replace $n\mapsto 2n+1$ followed by
\[
(x_1,\dots,x_{2n+1})\mapsto
(x_1,y_1,x_2,y_2,\dots,x_n,y_n,x_{n+1}),
\]
and then take the limits $y_i\to x_i^{-1}$ for all $1\leq i\leq n$
and $x_{n+1}\to 1$.
Carrying out the specialisation $t\mapsto q^{2n+1}$ and
$x_i\mapsto q^{n-i}$ and using \eqref{Eq_PPp-spec} on the left and
the \smash{$\mathrm{A}_{2n}^{(2)}$} Macdonald identity on the right
yields the result.
\end{proof}

\begin{proof}[Sketch of proof of Corollary~\ref{Cor_16}]
As in the previous proof, we take \eqref{Eq_RWc} but now carry out the
same steps as in the proof of Corollary~\ref{Cor_5}, including the
use of the \smash{$\mathrm{B}_n^{(1)}$} Macdonald identity to obtain the
product of theta functions.
\end{proof}

\begin{proof}[Sketch of proof of Corollary~\ref{Cor_18}]
Yet again we take \eqref{Eq_RWc} but then follow the proof of
Corollary~\ref{Cor_7}, except that in the final step we use the
\smash{$\mathrm{A}_{2n-1}^{(2)}$} Macdonald identity.
\end{proof}

\section{Open problems}\label{Sec_open}

To conclude the paper, we list a number of open problems pertaining to
affine Jacobi--Trudi and $q,t$-Rogers--Ramanujan identities,
beyond the obvious problem of proving
Conjectures~\ref{Con_JTB}--\ref{Con_JTBC}.

One such problem is to better understand algebraically for what values
of the parameters~$t$,~$s$ in \smash{$P_{(k^n)}^{\mathrm{B}_n}(x;t,s)$} and
\smash{$P_{(k^n)}^{\mathrm{C}_n}(x;t,s)$}, or
$t$, $s_1$, $s_2$ in~\smash{$P_{(k^n)}^{\mathrm{C}_n}(x;t,s_1,s_2)$}
one should expect further Jacobi--Trudi formulas.
Our list of results is definitely not complete, and for example we
conjecture that the additional \smash{$\mathrm{A}_{2k-1}^{(2)}$} identity
\[
P_{(k^n)}^{\mathrm{B}_n}(x;t,t)
=\frac{1}{2}\sum_{y\in\mathbb{Z}^k} \det_{1\leq i,j\leq k}\bigl(
(-1)^{y_i} t^{\frac{1}{2}Ky_i^2-(j-1)y_i} (
\ddot{e}_{n-i+j-Ky_i}(x)+\ddot{e}_{n+i+j-Ky_i-1}(x) )\bigr)
\]
holds, where $K=2k$.
What is unusual about this Jacobi--Trudi formula is that there does not
appear to be a companion for $\dot{e}_r$.
As a consequence, the $q,t$-Rogers--Ramanujan identity
\begin{align*}
&\sum_{\substack{\la \\[1pt] \la_1\leq 2k}}
q^{\abs{\la}}
\Biggl(\prod_{i=1}^{2k-1} \bigl(t;t^2\bigr)_{\ceil{m_i(\la)/2}}\Biggr)
 P_{\la}\bigl(1,q,q^2,\dots;t\bigr) \\
&\qquad=\frac{\bigl(p^2;p^2\bigr)_{\infty}(p;p)_{\infty}^{k-1}}
{(q;q)_{\infty}^k}
\prod_{i=1}^k \theta\bigl(pq^{2i-1};p^2\bigr)
\prod_{1\leq i<j\leq k}
\theta\bigl(q^{j-i},q^{i+j-1};p\bigr),
\end{align*}
where $p=tq^{2k}$, does not appear to have a counterpart for
\[
\sum_{\substack{\la \\[1pt] \la_1\leq 2k}}
q^{\abs{\la}/2}
\Biggl(\prod_{i=1}^{2k-1} \bigl(t;t^2\bigr)_{\ceil{m_i(\la)/2}}\Biggr)
P_{\la}\bigl(1,q,q^2,\dots;t\bigr).
\]
It remains unclear if this is an isolated case or that further
such results exist.
In particular, we note that the affine Lie algebras
\smash{$\mathrm{B}_k^{(1)}$} and \smash{$\mathrm{D}_k^{(1)}$} are currently
missing from our list of $q,t$-Rogers--Ramanujan identities.

It would also be interesting to consider the determinants in
Conjectures~\ref{Con_JTB}--\ref{Con_JTBC} with $K=2k+a$ (where~$a$
is one of $0$, $1$, $2$ depending on the conjecture) replaced by $K=2k+2\ell+a$
for~$\ell$ a~nonnegative integer.
For example, what are the \smash{$P_{\la}^{\mathrm{C}_n}(x;t,0)$} and
\smash{$(x_1\cdots x_n)^{-k} P_{\la}(x;t)$} expansions~of%
\begin{equation}\label{Eq_generalise}
\sum_{y\in\mathbb{Z}^k} \det_{1\leq i,j\leq k}\bigl(
t^{(k+\ell+1)y_i^2-j y_i} (
\dot{e}_{n-i+j-2(k+\ell+1)y_i}(x)-
\dot{e}_{n+i+j-2(k+\ell+1)y_i}(x) )\bigr)?
\end{equation}
The answers to these questions should imply further character identities.
For instance, in the simplest possible case, namely $k=\ell=1$, we have
\begin{align*}
\sum_{y\in\mathbb{Z}}
t^{3y^2-y} (\dot{e}_{n-6y}(x)-\dot{e}_{n-6y+2}(x) )
&=\sum_{s=0}^{\floor{n/2}} t^{s^2} P_{(1^{n-2s})}^{\mathrm{C}_n}(x;t,0) \\
&=(x_1\cdots x_n)^{-1}
\sum_{r,s\geq 0} t^{s^2} P_{(2^r,1^{2s})}(x;t).
\end{align*}
Here the equality between the expression on the left and the second
expression on the right follows from
\[
\sum_{y\in\mathbb{Z}} q^{3y^2-y}\biggl( \qbin{2n}{n-3y}_q-\qbin{2n}{n-3y+1}
\biggr) = t^{n^2}
\]
-- an identity equivalent to the Bailey pair A(5) in Slater's list --
whereas the equality between the two expressions on the right may
be established using the $k=1$ case of \eqref{Eq_Macdonald} and
\[
P_{(1^n)}^{\mathrm{B}_n}(x;t,0)=
\sum_{k=0}^n P_{(1^k)}^{\mathrm{C}_n}(x;t,0).
\]
Specialising $x_i=q^{i-1/2}$ for $1\leq i\leq n$, or first using
\eqref{Eq_doteddote2} and then specialising $x_i=q^i$ for $1\leq i\leq n$,
yields
\begin{align*}
&\sum_{r,s\geq 0} q^{(\sigma+1)(r+s)} t^{s^2}
P_{(2^r,1^{2s})}\bigl(1,q,\dots,q^{n-1};t\bigr) \\
&\qquad =
\sum_{y\in\mathbb{Z}} t^{3y^2-y}
\biggl( q^{3y(6y-\sigma)}\qbin{2n+\sigma}{n-6y+\sigma}_q
-q^{(3y-1)(6y+\sigma-2)}\qbin{2n+\sigma}{n-6y+2}_q \biggr)
\end{align*}
for $\sigma\in\{0,1\}$.
Taking the large-$n$ limit and applying Watson's quintuple product
identity \cite{Watson29} -- the \smash{$\mathrm{A}_{2k}^{(2)}$} Macdonald
identity \eqref{Eq_MacA2k2} for $k=1$ -- gives the further
$q,t$-Rogers--Ramanujan identity
\[
\sum_{r,s\geq 0} q^{(\sigma+1)(r+s)} t^{s^2}
P_{(2^r,1^{2s})}\bigl(1,q,q^2,\dots;t\bigr)=
\frac{(p;p)_{\infty}}{(q;q)_{\infty}}\,
\theta\bigl(q^{2-\sigma};p\bigr)\theta\bigl(pq^{4-2\sigma};p^2\bigr),
\]
where $p=t^2q^{12}$.
For $t=q$, this is
\[
\sum_{r,s\geq 0} \frac{q^{(r+s)^2+2s^2+\sigma(r+s)}}{(q;q)_r(q;q)_{2s}}=
\frac{\bigl(q^{14};q^{14}\bigr)_{\infty}}{(q;q)_{\infty}}\,
\theta\bigl(q^{2-\sigma};q^{14}\bigr)\theta\bigl(q^{10+2\sigma};q^{28}\bigr),
\]
which is a well known formula for the (normalised) character
\smash{$\chi_{1,2-\sigma}^{(3,7)}(q)$} of the Virasoro algebra
of central charge $c=-25/7$ and conformal dimension $-5(1-\sigma)/28$,
see, e.g.,~\cite{BM96}.

Recalling the notation \eqref{Eq_FiN}, \eqref{Eq_generalise} for $t=1$
is
\[
(x_1\cdots x_n)^{-k} \det_{1\leq i,j\leq k}
 ( F_{i-j,2(k+\ell+1)}(x)-F_{i+j,2(k+\ell+1)}(x) ),
\]
for which Huh et al.\ \cite[Theorem 3.3]{HKKO25} obtained a combinatorial
expression in terms of cylindric tableaux as follows:
\[
(x_1\cdots x_n)^{-k}
\biggl(\:\sideset{}{'}\sum_{\la\in\Par_{n,2k}^{2\ell+2}}
-\sideset{}{''}\sum_{\la\in\Par_{n,2k}^{2\ell+2}}\biggr)
\sum_{T\in\CSSYT_{n;2k,2\ell+2}(\la)} x^T.
\]
Here the prime in the first sum denotes the restriction
that $\la$ must be even and the double prime in the second sum denotes
the restrictions that $\la'_1-\la'_{2k}=2\ell+2$, the last $2\ell+2$
parts of $\la$ are odd and the other parts are even.
Is it possible to extend this result to \eqref{Eq_generalise}
by introducing an additional statistic on cylindric tableaux?

A final question is to find partition theoretic interpretations
for the Rogers--Ramanujan identities listed in Section~\ref{Sec_standard}.
From the recent work in \cite{BKMP24,CMPP22,DK22,KRTW24,Primc24,PT25,Russell23}
on the Capparelli--Meurman--Primc--Primc (CMPP) conjectures, we know that,
at least conjecturally, the GOW identities of
Corollaries~\ref{Cor_1}--\ref{Cor_3} admit interpretations as
identities for the generating function of restricted sets of coloured
partitions.
It is natural to suspect that such an interpretation is not limited to
the first three corollaries, and it would be extremely
interesting to find a partition theoretic interpretation in terms of coloured
partitions for the remaining 14 corollaries.

\appendix

\section{Proof of (\ref{Eq_strange})}

In this appendix, we prove the full set of Rogers--Ramanujan identities \eqref{Eq_strange},
which for $1\leq i\leq k-1$ were conjectured by Matthew Russell after reading an earlier
version of this paper.

\begin{Theorem}\label{Thm_MR}
For $k$ a positive integer and $i$ an integer such that $0\leq i\leq k$,
\begin{align}
&\sideset{}{'}\sum_{n_1\geq\cdots\geq n_{2k}\geq 0}
\frac{q^{\frac{1}{2}(n_1^2+\dots+n_{2k}^2)+
\frac{1}{2}(n_1-n_2+\dots+n_{2k-1}-n_{2k})+
\frac{1}{2}(n_1+n_2+\dots+n_{2i})}}
{\bigl(q^2;q^2\bigr)_{\floor{(n_1-n_2)/2}}\cdots
\bigl(q^2;q^2\bigr)_{\floor{(n_{2k-1}-n_{2k})/2}}
(q;q)_{n_{2k}}} \notag \\
&\qquad{}=\frac{\bigl(-q^{2i+1},-q^{2i+1},q^{2i+1};q^{2i+1}\bigr)_{\infty}}
{\bigl(q^2;q^2\bigr)_{\infty}}, \label{Eq_MR}
\end{align}
where the prime denotes the restriction
that $n_{2j-1}-n_{2j}$ is even for all $1\leq j\leq k$.
\end{Theorem}

Before proving this theorem, we prepare some preliminary results.
For nonnegative integers~$r$ and $s$, let ${_r\phi_s}$ denote the
basic hypergeometric series
\[
\qhyp{r}{s}{a_1,\dots,a_r}{b_1,\dots,b_s}{q,z}=
\sum_{k=0}^{\infty} \frac{(a_1,\dots,a_r;q)_k}{(q,b_1,\dots,b_s;q)_k}
\biggl((-1)^k q^{\binom{k}{2}}\biggr)^{s-r+1} z^k.
\]

\begin{Lemma}
For $n$ a nonnegative integer,
\begin{equation}\label{Eq_RS-sum}
\sum_{r=0}^n w^r q^{\binom{r+1}{2}} H_{n-r}(z;q)\qbin{n}{r}_q=
\qhyp{2}{0}{-wq/z,q^{-n}}{\text{--}}{q,zq^n},
\end{equation}
where $H_m$ is a Rogers--Szeg\H{o} polynomial, see \eqref{Eq_RS}.
\end{Lemma}

\begin{proof}
Denote the expression on the left of \eqref{Eq_RS-sum} by $f_n(w,z;q)$.
Then
\[
f_n(w,z;q)
=\sum_{r=0}^n \sum_{k=0}^{n-r}
w^r z^k q^{\binom{r+1}{2}} \qbin{n-r}{k}_q\qbin{n}{r}_q
=\sum_{k=0}^n z^k \qbin{n}{k}_q
\sum_{r=0}^{n-k} w^r q^{\binom{r+1}{2}} \qbin{n-k}{r}_q.
\]
The sum over $r$ evaluates to $(-wq;q)_{n-k}$ by
the $q$-binomial theorem \cite[equation~(II.4)]{GR04}
\[
\sum_{k=0}^n (-z)^k q^{\binom{k}{2}}\qbin{n}{k}_q=(z;q)_n.
\]
Thus
\[
f_n(w,z;q)=\sum_{k=0}^n z^k (-wq;q)_{n-k} \qbin{n}{k}_q
=(-wq;q)_n\;\qhyp{2}{1}{0,q^{-n}}{-q^{-n}/w}{q,-\frac{z}{w}}.
\]
Taking the $b\to 0$ limit in the transformation formula
\cite[equation~(III.8)]{GR04} yields
\[
\qhyp{2}{1}{0,q^{-n}}{c}{q,z}=\frac{1}{\bigl(q^{1-n}/c;q\bigr)_n}\;
\qhyp{2}{0}{q/z,q^{-n}}{\text{--}}{q,\frac{z}{c}}.
\]
Applying this to the above expression for $f_n(w,z;q)$, we obtain
the right-hand side of \eqref{Eq_RS-sum}.
\end{proof}

\begin{Corollary}\label{Cor_sumtoone}
For $n$ a nonnegative integer,
\[
\sum_{r=0}^n w^r q^{\binom{r+1}{2}} H_{n-r}(-wq;q)\qbin{n}{r}_q=1.
\]
\end{Corollary}

\begin{proof}
We specialise $z=-wq$ in \eqref{Eq_RS-sum}.
Since the ${_2\phi_0}$ series has a numerator parameter equal to $1$, the series
trivialises to $1$, resulting in the claim.
\end{proof}

\begin{Corollary}\label{Cor_rs-sum}
For $n$ a nonnegative integer,
\begin{equation*}
g_n(q):=\sum_{\substack{0\leq s\leq r\leq n\\[1pt] r-s \textup{ even}}}
\frac{q^{\binom{r+1}{2}+\binom{s}{2}}(q;q)_n}
{\bigl(q^2;q^2\bigr)_{\floor{(n-r)/2}}\bigl(q^2;q^2\bigr)_{(r-s)/2}
(q;q)_s}=1.
\end{equation*}
\end{Corollary}

Before proving this result, we note that in the $n\to\infty$ limit it
yields \eqref{Eq_MR} for $i=0$ and $k=1$.

\begin{proof}
We begin by noting that
\[
\frac{1}{\bigl(q^2;q^2\bigr)_{\floor{m/2}}}=\frac{\bigl(q;q^2\bigr)_{\ceil{m/2}}}{(q;q)_m}
=\frac{H_m(-q;q)}{(q;q)_m},
\]
where the final equality follows from \cite[equation~(1.10c)]{W06a}.
Using the above identity and replacing the summation index
$s\mapsto r-2s$, it follows that
\begin{align*}
g_n(q)&=\sum_{r=0}^n \sum_{s=0}^{\floor{r/2}}
\frac{q^{r^2-2rs+s(2s+1)} (q;q)_n H_{n-r}(-q;q)}
{(q;q)_{n-r}\bigl(q^2;q^2\bigr)_s(q;q)_{r-2s}} \\
&=\sum_{r=0}^n q^{r^2} H_{n-r}(-q;q) \qbin{n}{r}_q\,
\qhyp{2}{1}{q^{-r},q^{1-r}}{0}{q^2,q^2}.
\end{align*}
According to the $c\to 0$ limit of the $q$-Chu--Vandermonde summation
\cite[equation~(II.6)]{GR04}
\[
\qhyp{2}{1}{a,q^{-n}}{0}{q,q}=a^n.
\]
Applying this for $(q,a,n)\mapsto \bigl(q^2,q^{1-2\ceil{r/2}},\floor{r/2}\bigr)$,
we find
\[
g_n(q)=\sum_{r=0}^n q^{\binom{r+1}{2}} H_{n-r}(-q;q) \qbin{n}{r}_q.
\]
By Corollary~\ref{Cor_sumtoone} for $w=1$, we are done.
\end{proof}

We are now ready to give a proof of Theorem~\ref{Thm_MR}
assuming the truth of the result for~${i=k}$.
This case is the $n=1$ instance of Corollary~\ref{Cor_16}.

\begin{proof}[Proof of Theorem~\ref{Thm_MR}]
As remarked above, we may assume the theorem holds for $i=k$.
Combining this with the remark immediately following
Corollary~\ref{Cor_rs-sum}, there is nothing to prove for $k=1$.
In the following we thus assume that $k\geq 2$ and $0\leq i<k$.
Now denote the sum on the left-hand side of \eqref{Eq_MR}
by $S_{i,k}(q)$ and its summand by $S_{i,k;n_1,\dots,n_{2k}}(q)$.
Then
\begin{align*}
S_{i,k}(q)&=\sideset{}{'}\sum_{n_1\geq\cdots\geq n_{2k}\geq 0}
S_{i,k;n_1,\dots,n_{2k}}(q) =\sideset{}{'}\sum_{n_1\geq\cdots\geq n_{2k-2}\geq 0}
S_{i,k-2;n_1,\dots,n_{2k-2}}(q) g_{n_{2k-2}(q)} \\
&=\sideset{}{'}\sum_{n_1\geq\cdots\geq n_{2k-2}\geq 0}
S_{i,k-2;n_1,\dots,n_{2k-2}}(q) =S_{i,k-1}(q),
\end{align*}
where the second equality uses the definition of $g_n(q)$
and the third equality follows from Corollary~\ref{Cor_rs-sum}.
If $i\geq 1$, this may be iterated to give
\[
S_{i,k}(q)=S_{i,i}(q)
=\frac{\bigl(-q^{2i+1},-q^{2i+1},q^{2i+1};q^{2i+1}\bigr)_{\infty}}
{\bigl(q^2;q^2\bigr)_{\infty}}.
\]
If, on the other hand, $i=0$, iteration leads to
\[
S_{0,k}(q)=S_{0,1}(q)=
\frac{(-q,-q,q;q)_{\infty}}{\bigl(q^2;q^2\bigr)_{\infty}}.
\]
This completes the proof.
\end{proof}

\subsection*{Acknowledgements}
I am grateful to Anne Schilling for helpful discussions, to
Matthew Russell for pointing out the full set of identities \eqref{Eq_strange},
and to both referees for their careful reading of the paper.
This work was supported by the Australian Research Council
Discovery Project DP200102316.

\pdfbookmark[1]{References}{ref}
\LastPageEnding

\end{document}